\documentclass[letterpaper, reqno,11pt]{article}
\usepackage[margin=1.0in]{geometry}
\usepackage{amsmath,amssymb,amsthm}
\usepackage{graphicx}
\graphicspath{{images_cropped/} }

\usepackage[percent]{overpic}

\usepackage{color}

\newtheorem{thm}{Theorem}[section]
\newtheorem{lem}{Lemma}[section]
\newtheorem{defn}{Definition}[section]

\newtheorem{prop}{Proposition}[section]
\newtheorem{cor}{Corollary}[section]

\newtheorem*{killingHeisenbergPropEnv}{Proposition \ref{killingHeisenbergProp}}
\newtheorem*{killingSl2PropEnv}{Proposition \ref{killingSl2Prop}}

\theoremstyle{remark}
\newtheorem{rem}{Remark}[section]
\numberwithin{equation}{section}


\newcommand{\eps}{\varepsilon}

\newcommand{\RR}{\mathbb{R}}

\newcommand{\CC}{\mathbb{C}}

\newcommand{\ZZ}{\mathbb{Z}}
\newcommand{\tubes}{\mathbb{T}}
\newcommand{\FP}{\mathbb{F}_p}
\newcommand{\dist}{\operatorname{dist}}
\newcommand{\pts}{\mathcal{P}}
\newcommand{\lines}{\mathcal{L}}

\renewcommand{\skew}{\operatorname{skew}}
 
 \newcommand{\itemizeEqnVSpacing}{\rule{0pt}{1pt}\vspace*{-12pt}}

\newcommand{\tube}{\mathcal{T}}

\begin{document}
 \title{An improved bound on the Hausdorff dimension of Besicovitch sets in $\mathbb{R}^3$}
\author{Nets Hawk Katz\thanks{California Institute of Technology, Pasadena CA, supported by NSF grants DMS 1266104 and DMS 1565904},\and Joshua Zahl\thanks{University of British Columbia, Vancouver BC, supported by an NSERC Discovery grant}}

\maketitle
\begin{abstract}
We prove that any Besicovitch set in $\mathbb{R}^3$ must have Hausdorff dimension at least $5/2+\epsilon_0$ for some small constant $\epsilon_0>0$. This follows from a more general result about the volume of unions of tubes that satisfy the Wolff axioms. Our proof grapples with a new ``almost counter example'' to the Kakeya conjecture, which we call the $SL_2$ example; this object resembles a Besicovitch set that has Minkowski dimension 3 but Hausdorff dimension $5/2$. We believe this example may be an interesting object for future study.
\end{abstract}
\section{Introduction}\label{introSection}
A Besicovitch set is a compact set $X\subset\RR^n$ that contains a unit line segment pointing in every direction. In this paper, we will prove the following theorem:
\begin{thm}\label{KakeyaHausdorffTheorem}
Every Besicovitch set in $\RR^3$ has Hausdorff dimension at least $5/2+\epsilon_0$, where $\epsilon_0>0$ is a small absolute constant.
\end{thm}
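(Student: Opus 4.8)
\medskip
\noindent\textbf{Proof proposal.}

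\emph{Reduction to one scale.} The plan is to deduce Theorem~\ref{KakeyaHausdorffTheorem} from a $\delta$-discretized statement and then prove the latter by induction on the scale $\delta$. Suppose $X\subset\RR^3$ were a Besicovitch set with $\dim_{\mathcal H}X<5/2+\epsilon_0$. After a standard two-ends reduction, the unit segments in $X$ fatten into a family $\tubes$ of $\delta$-tubes --- essentially one per $\delta$-separated direction --- satisfying the Wolff axioms, and a pigeonhole over dyadic scales extracts, for infinitely many $\delta$, such a family whose union has volume $\lesssim_\epsilon\delta^{1/2-\epsilon_0}$. So it suffices to prove a quantitative strengthening of Wolff's hairbrush bound: \emph{every family of $\delta$-tubes obeying the Wolff axioms has union of volume $\gtrsim\delta^{1/2-\epsilon_0}$}, for some absolute $\epsilon_0>0$. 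In practice I would carry the induction in a more flexible form involving the multiplicity function, i.e.\ an $L^p$ norm of $\sum_{T\in\tubes}\mathbf 1_T$, since that is what the inductive step consumes.

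\emph{The inductive step: stickiness and graininess.} Inside the inductive step I would first pigeonhole (a Keich/Tao-type argument) to reduce to \emph{sticky} families, where the tube through a point at scale $\delta$ is essentially contained in the tube through it at scale $\sqrt\delta$, so the configuration is self-similar across scales; both extremal examples below are sticky. For a sticky family I would then run the polynomial method: polynomial partitioning together with the grains decomposition (following Guth and Guth--Zahl) shows that inside a typical ball the tube segments cluster into thin slabs --- grains --- and that the directions through a typical point concentrate near a $2$-plane. The decisive move is to track how the grains, and their tangent planes, transform between consecutive scales.

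\emph{Dichotomy and the two extremal examples.} One of two things then happens. Either the plane field ``spreads out'', i.e.\ the grains through a typical point are quantitatively transverse, in which case a bush argument or a C\'ordoba-type $L^2$ argument between non-parallel grains manufactures an extra factor $\delta^{-\epsilon_0}$ of volume and we win at that pair of scales; or the plane field is almost integrable and rigid, and self-similarity pins the configuration to an approximate orbit of a $2$-dimensional Lie subgroup of the affine group on $\RR^3$ --- up to symmetry, the Heisenberg example or the $SL_2$ example of the abstract. The final and hardest step is to rule these out. The Heisenberg example carries enough curvature that a single pair of scales already breaks its rigidity. The $SL_2$ example is the genuine obstruction: it has Minkowski dimension $3$ and Hausdorff dimension exactly $5/2$, so the gain is invisible at any single scale and must be extracted from the fine-scale structure. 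To handle it I would set up the induction so that its hypothesis records not only the volume but the two-ends data, the Wolff constant, and a ``distance to the $SL_2$ example'', and show that each doubling of scale either produces a definite volume gain or strictly improves one of these structural quantities, with the gain eventually forced. Summing the $\sim\log(1/\delta)$ per-scale gains yields the required $\delta^{-\epsilon_0}$. The main obstacle --- and the reason $\epsilon_0$ comes out small --- is arranging that these gains genuinely compound rather than being swallowed by the $\delta^{o(1)}$ losses inherent in each pigeonholing.
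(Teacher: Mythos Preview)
Your proposal has a genuine structural gap: the reduction to sticky families. You write that both extremal examples are sticky, but this is false for the $SL_2$ example --- the paper stresses precisely that the $SL_2$ object is \emph{not} sticky (its $\delta^{1/2}$-thickening already fills the whole ball), and in fact uses this to argue that one cannot hope to first prove stickiness and then proceed as in Katz--\L{}aba--Tao. So a Keich/Tao-type pigeonhole down to sticky configurations would discard exactly the enemy you still need to confront, and there is no mechanism in your outline that survives once stickiness is removed. Relatedly, the induction-on-scales / compound-the-gains architecture you sketch is the Minkowski-dimension strategy of \cite{KLT}; the paper does not iterate over scales at all, and indeed the $SL_2$ example shows why a purely multiscale volume argument stalls at $5/2$ for Hausdorff dimension.

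The paper's route is quite different from what you describe. There is no induction on $\delta$: one works at a single scale with an $\eps$-extremal family and runs a hard dichotomy on how many tubes can be packed into a \emph{regulus strip} (a twisted $1\times\delta^{1/2}\times\delta$ slab). Few tubes per strip is the ``Heisenberg'' case; it is killed not by a bush or curvature argument but by manufacturing a planar point--line configuration with $\tilde\delta^{-3/2}$ incidences and feeding it into Bourgain's discretized sum-product/projection theorem --- the non-concentration hypotheses there are exactly what the Heisenberg-type assumption supplies. Many tubes per strip is the ``$SL_2$'' case; here the argument shows the fat $\delta^{1/2}$-tubes, viewed as points in $\RR^4$, must lie $\delta^{1/2}$-close to an honest quadric which, after a change of coordinates, is $\{ad-bc=1\}$. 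Over $\RR$ (as opposed to $\FP[t]/(t^2)$) this forces a planar point--curve arrangement with too many incidences, contradicting a Szemer\'edi--Trotter-type bound proved via polynomial partitioning. Your sketch of ``track a distance-to-$SL_2$ parameter and hope it improves'' does not supply the mechanism that actually produces the contradiction; the paper's point is that an extremal set of $SL_2$ type is already essentially \emph{on} $SL_2$, and it is a separate incidence-geometric fact about $\RR$ that then finishes the job.
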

Theorem \ref{KakeyaHausdorffTheorem} is a small improvement over a previous result of Wolff \cite{wolff}, who proved a version of Theorem \ref{KakeyaHausdorffTheorem} with $\epsilon_0=0$. Katz, \L{}aba, and Tao \cite{KLT} also proved a version of Theorem \ref{KakeyaHausdorffTheorem} where ``Hausdorff dimension'' was replaced by upper Minkowski dimension\footnote{We do not know whether the constant $\eps_0$ in Theorem \ref{KakeyaHausdorffTheorem} is larger or smaller than the corresponding constant in \cite{KLT}}.  These results and related work will be discussed further in Section \ref{previousWorkSection}.

Given a number $0<\delta<1$, we say a set $\tube\subset\RR^3$ is a $\delta$--tube (or just a tube) if $\tube$ is the $\delta$--neighborhood of a unit line segment.

\begin{defn}\label{defnTubeWolffAxioms}
We say that a set $\tubes$ of tubes satisfies the Wolff axioms if:
\begin{itemize}
\item Each $\tube\in\tubes$ is contained in the unit ball in $\RR^3$.
\item If $\delta\leq s,t\leq 1$, then at most $st\delta^{-2}$ tubes from $\tubes$ are contained in any rectangular prism of dimensions $2\times  s \times t$ (the prism need not be aligned with the coordinate axes).
\end{itemize}
\end{defn}

Note that the second condition implies that $|\tubes|\lesssim\delta^{-2}$. Theorem \ref{KakeyaHausdorffTheorem} is a corollary of the following result.

\begin{thm}\label{mainThm}
There exist positive constants $C$ (large) and $c>0,\ \eps_0>0$ (small) so that the following holds. Let $\delta>0,\ \delta\leq\lambda\leq 1$, and let $\tubes$ be a set of tubes that satisfy the Wolff axioms. For each $\tube\in\tubes,$ let $Y(\tube)\subset \tube$ and suppose that $\sum_{\tube\in\tubes}|Y(\tube)|\geq\lambda$. Then
\begin{equation}\label{volumeBdEqn}
\Big|\bigcup_{\tube\in\tubes}Y(\tube)\Big|\geq c\lambda^C \delta^{1/2-\eps_0}.
\end{equation}
\end{thm}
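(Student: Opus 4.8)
\emph{Strategy.} The plan is to bound the multiplicity of the union rather than estimate its volume directly. Write $X=\bigcup_{\tube\in\tubes}Y(\tube)$, and after dyadic pigeonholing let $\mu$ be the essentially constant value of the overlap function $\sum_{\tube\in\tubes}\mathbf 1_{Y(\tube)}$ on the dominant part of $X$. Since $\int_X\sum_{\tube\in\tubes}\mathbf 1_{Y(\tube)}=\sum_{\tube\in\tubes}|Y(\tube)|\geq\lambda$, this gives $|X|\gtrsim\lambda/\mu$, so it suffices to prove the multiplicity bound $\mu\lesssim\lambda^{-C}\delta^{-1/2+\eps_0}$. First I would carry out the standard preliminary reductions. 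By further dyadic pigeonholing assume the $|Y(\tube)|$ are all comparable, that $\sum_{\tube\in\tubes}\mathbf 1_{Y(\tube)}\sim\mu$ on a set of measure $\sim|X|$, and that the tube directions are $\delta$-separated, so $|\tubes|\sim\delta^{-2}$. Then, following Wolff, perform a two-ends reduction: after deleting a controlled fraction of each $Y(\tube)$ one may assume that no shading concentrates in a short sub-tube, i.e.\ $|Y(\tube)\cap\tube'|\lesssim\rho^{\eps_1}|Y(\tube)|$ for every $\delta$-tube $\tube'$ of length $\rho\le 1$, with $\eps_1>0$ a small fixed parameter. These operations cost only powers of $\lambda$ and factors $\delta^{-o(1)}$, and they preserve the Wolff axioms.

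\emph{The hairbrush, and where a gain is needed.} The baseline is Wolff's hairbrush estimate, which already gives $\mu\lesssim\delta^{-1/2}$, i.e.\ the $5/2$ bound: fix a tube $\tube_0$ carrying a typical amount of mass, form the union $H$ of the shadings of all tubes meeting $Y(\tube_0)$, foliate an $O(\delta)$-neighbourhood of $\tube_0$ into $\sim\delta^{-1}$ planar slabs through $\tube_0$, and in each slab apply C\'ordoba's two-dimensional $L^2$ estimate for $\delta$-tubes together with the two-ends property; here the Wolff axiom for $2\times s\times\delta$ prisms is what keeps the tubes essentially distinct inside a slab, and the axiom for $2\times 1\times\delta$ slabs is what rules out the trivial counterexample in which all the tubes lie near a single plane. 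To improve $5/2$ to $5/2+\eps_0$ one must extract an extra factor $\delta^{\eps_0}$ somewhere. The vehicle for this is an induction on scales: fix a lacunary sequence $\delta=\rho_0<\rho_1<\dots<\rho_N=1$ with $N\sim\log(1/\delta)$, organise the $\delta$-tubes into fat $\rho_j$-tubes at each scale $j$, and run Wolff's bush and hairbrush arguments at every scale. Either at some scale the fat tubes through a typical point fan out over a genuinely two-dimensional set of directions---not trapped near any one-dimensional arc---in which case the hairbrush at that scale already beats Wolff by the required power; or, at \emph{every} scale, the fat tubes incident to a typical fat tube are confined to the $\rho_j$-neighbourhood of a bounded-degree algebraic surface. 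The latter is the ``structured'' case.

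\emph{The structured case and the $SL_2$ example.} This is the delicate part, because the known near-extremisers---the Heisenberg example, and worst of all the $SL_2$ example with Minkowski dimension $3$ but Hausdorff dimension $5/2$---live precisely here. I would invoke the algebraic structure theory developed for Kakeya, namely a ``grains'' decomposition in the spirit of the polynomial method: at each scale the bulk of the mass concentrates in the $\rho_j$-neighbourhood of a bounded-degree algebraic surface $Z_j$, the relevant tubes become tangent to $Z_j$, and each such tube contributes a ``grain''---a thin, gently curved slab modelled on $Z_j$---whose union must be bounded below. The heart of the argument is to show that a ``sticky'', structured configuration cannot persist through all $N$ scales with losses that compound: tangency at scale $\rho_{j+1}$ to a $2$-plane (the Heisenberg geometry) or to a doubly ruled surface (the $SL_2$ geometry) is shown to force genuine transversality---hence a hairbrush gain---at scale $\rho_j$, unless the tube directions violate the Wolff axioms. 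This is the content of the two ``killing'' propositions, Propositions \ref{killingHeisenbergProp} and \ref{killingSl2Prop}, which excise the self-similar bad scenarios. Assembling the per-scale gains (or the single large gain from a transverse scale) yields $\mu\lesssim\lambda^{-C}\delta^{-1/2+\eps_0}$, and hence \eqref{volumeBdEqn}.

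\emph{Main obstacle.} The crux is the structured/$SL_2$ case: making the dichotomy quantitative, controlling how the Wolff axioms deform under rescaling to each intermediate scale and under the tangency decomposition into grains, and---above all---proving that the $SL_2$ geometry is genuinely unstable under iteration, so that a definite improvement $\eps_0>0$ survives rather than being destroyed by the self-similar near-example. The other real difficulty is bookkeeping: every pigeonholing step, and each of the $N\sim\log(1/\delta)$ scales, contributes losses, and the numerology must be arranged so that the accumulated $\delta^{-o(1)}$ factors and powers of $\lambda$ never erode the $\delta^{\eps_0}$ gain.
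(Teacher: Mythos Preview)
Your high-level setup (pass to multiplicity $\mu$, use Wolff's hairbrush as the $5/2$ baseline, aim for a dichotomy between a transverse case and a structured case, and invoke Propositions~\ref{killingHeisenbergProp} and~\ref{killingSl2Prop}) is in the right spirit, but the mechanism you describe does not match the paper and, as written, has a real gap.

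First, the paper does \emph{not} run an induction on scales over a lacunary sequence $\rho_0<\dots<\rho_N$ with $N\sim\log(1/\delta)$, nor does it accumulate per-scale gains. The argument works essentially at the single scale $\delta$ (with the coarse scale $\delta^{1/2}$ appearing only in the $SL_2$ analysis). The dichotomy is not ``fan out vs.\ trapped near an algebraic surface at each scale''; it is the regulus-strip dichotomy of Lemma~\ref{HeisenbergPlusSl2Lem}: either few tubes lie in any $1\times\delta^{1/2}\times\delta$ regulus strip (Heisenberg type), or the tubes can be partitioned into such strips, each containing $\gtrsim\delta^{-1/2+\alpha}$ tubes ($SL_2$ type). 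There is no iteration; one of the two pieces must itself be $\eps$-extremal, and that piece is then killed directly.

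Second, and more seriously, your description of how the two structured cases are killed is not what happens, and the mechanism you propose (``tangency at scale $\rho_{j+1}$ forces transversality and a hairbrush gain at scale $\rho_j$'') would not by itself defeat the $SL_2$ example---that example is \emph{designed} so that at scale $\delta^{1/2}$ the fat tubes fill out all of $B(0,1)$ with no structure to exploit, while at scale $\delta$ the union has volume $\delta^{1/2}$. The paper's actual arguments are quite different. In the Heisenberg case, one builds from the hairbrush of two tubes a planar point--line incidence configuration (Lemma~\ref{findingPtsAndLinesLem}), establishes a non-concentration condition on the points via the regulus map analysis, and then derives a contradiction from Bourgain's discretized projection/sum-product theorem (Theorem~\ref{bourgainThm}); this is where the absence of a half-dimensional subfield of $\RR$ enters. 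In the $SL_2$ case, one shows that the regulus strips are tangent where they meet (Lemma~\ref{regStripsAreTangent}), that each fat $\delta^{1/2}$-tube contains essentially one strip (Lemma~\ref{regStripsSeparatedLem}), and then that the $\delta^{1/2}$-tubes, viewed as points in $\RR^4$, lie near a quadric which can be normalised to $\{ad-bc=1\}$ (Lemmas~\ref{closetoZQ}--\ref{tubesNearStandardSL2}); the contradiction then comes from a discrete Szemer\'edi--Trotter bound via polynomial partitioning (Lemma~\ref{discreteST}). None of this is an induction-on-scales or grains/polynomial-method argument of the type you sketch, and your proposal does not supply a substitute for either the sum-product input in the Heisenberg case or the $\RR^4$ algebraic-surface reduction in the $SL_2$ case.
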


\subsection{Previous work}\label{previousWorkSection}
It is conjectured that every Besicovitch set in $\RR^n$ must have Hausdorff dimension $n$. The case $n=2$ was solved (in the affirmative) by Davies \cite{Davies}. For $n\geq 3$, the problem remains open. We will only discuss progress on this conjecture in $\RR^3$. A broader and more detailed survey can be found in the survey articles by Wolff \cite{wolff3} and by Katz-Tao \cite{KT2}.

There are now a number of (non-equivalent) conjectures that all fall under the umbrella of the ``Kakeya conjecture.'' For a given number $0<d\leq 3$, each of the estimates below implies the estimates preceding it.
\begin{itemize}
\item \textbf{Upper  Minkowski dimension estimate:} Any Besicovitch set $X\subset\RR^3$ must have upper Minkowski dimension at least $d$.
    \item \textbf{Lower Minkowski dimension estimate:} Any Besicovitch set $X\subset\RR^3$ must have lower Minkowski dimension at least $d$.
\item \textbf{Hausdorff dimension estimate:} Any Besicovitch set $X\subset\RR^3$ must have Hausdorff dimension at least $d$.
\item \textbf{Maximal function estimate:} Let $\tubes$ be a set of direction-separated $\delta$-tubes in $\RR^3$. Then for every $\epsilon>0$, there is a constant $C_\epsilon$ so that
 \begin{equation}\label{maximalFnBound}
 \Big\Vert\sum_{\tube\in\tubes}\chi_\tube\Big\Vert_{d^\prime}\leq C_\epsilon \delta^{1-3/d-\epsilon}.
 \end{equation}
\item \textbf{X-ray estimate:} There is a $\beta>0$ with the following property. Let $\tubes$ be a set of $\delta$--tubes in $\RR^3$. Suppose that no tube is contained in the two-fold dilate of any other tube. Suppose furthermore that for each direction $e\in S^2$, at most $m$ tubes point in a direction that is $\delta$--close to $e$. Then for every $\epsilon>0$, there is a constant $C_\epsilon$ so that
    \begin{equation}
    \Big\Vert\sum_{\tube\in\tubes}\chi_\tube\Big\Vert_{d^\prime}\leq C_\epsilon \delta^{1-3/d-\epsilon}m^{1-\beta}.
    \end{equation}
\end{itemize}

The Kakeya conjecture in $\RR^3$ asserts that all of the above estimates hold with $d=3$. We will say that a Minkowski dimension, Hausdorff dimension, etc.~estimate holds in dimension $d$ if the corresponding statement above has been established. Davies's estimate \cite{Davies} immediately implies that any Besicovitch set in $\RR^3$ must have Hausdorff dimension at least 2. A more elaborate argument by Cordoba established a maximal function estimate in dimension $d=2$.

In \cite{Bourgain3}, Bourgain showed that every Besicovitch set in $\RR^3$ has Hausdorff dimension at least $d=7/3$.
 In \cite{wolff}, Wolff established a maximal function estimate in dimension $d=5/2$, and then in \cite{wolff2} Wolff established a X-ray estimate in dimension $d=5/2$.

As we will discuss further in Section \ref{enemiesSection}, establishing Kakeya estimates above dimension $d=5/2$ is difficult because there are sets that closely resemble Besicovitch sets, but which have dimension 5/2. Thus any proof of a Kakeya estimate above dimension $d=5/2$ must grapple with these examples.

In \cite{KLT}, Katz, \L{}aba, and Tao proved that every Besicovitch set in $\RR^3$ must have upper Minkowski dimension at least $5/2+\epsilon_0$ for a small constant $\epsilon_0$. To do this, they extensively studied the structure of a (hypothetical) Besicovitch set with upper Minkowski dimension near $5/2$. They showed that such a Besicovitch set must have several properties, which they called ``planiness,'' ``graininess,'' and ``stickiness.'' In brief, planiness asserts that the tubes in a Besicovitch set passing through a typical point lie in a small neighborhood of a plane. Graininess asserts that as one moves the point in question, the corresponding planes change in a controlled way. Finally, stickiness asserts that if $\tubes$ is a set of direction-separated $\delta$--tubes whose union is a Besicovitch set with small volume, then the map $E\colon S^2\to\tubes$ which sends a direction $e\in S^2$ to the corresponding tube in $\tubes$ satisfies a weak Lipschitz-continuity type property.

After Katz, \L{}aba, and Tao's result, a natural question arises: are the properties planiness, graininess, and stickiness fundamental to Besicovitch sets, or are they artifacts of the proof techniques used in \cite{KLT}? In \cite{BCT}, Bennett, Carbery, and Tao established the multilinear Kakeya theorem. One of the implications of this theorem is that any Besicovitch set in $\RR^n$ with dimension less than $n$ must be plany. In \cite{Guth2}, Guth showed that any Besicovitch set in $\RR^3$ with dimension less than 3 must be grainy and similar ideas can be used to establish graininess-like properties in higher dimensions. Thus the properties of planiness and graininess are not merely artifacts of the Katz-\L{}aba-Tao proof; they are fundamental features of (hypothetical) Besicovitch sets that violate the Kakeya conjecture.

What about stickiness? It is not known whether every Besicovitch set in $\RR^n$ (or $\RR^3$) with dimension less than $n$ must be sticky. In a blog post \cite{T}, Tao gave a heuristic argument why a sticky Besicovitch set in $\RR^3$ with minimal dimension strictly less than $3$ is impossible. Thus if one could show that every Besicovitch set in $\RR^3$ with dimension less than three must be sticky, then this would be a promising step towards resolving the Kakeya conjecture in $\RR^3$. However, in proving Theorem \ref{mainThm} we are forced to deal with an object that resembles a non-sticky Besicovitch set. Informally, this means that unlike being plany and grainy, Besicovitch sets in $\RR^3$ with dimension less than three do not need to be sticky\footnote{Of course if the Kakeya conjecture is true then Besicovitch sets in $\RR^3$ with dimension less than three do not exist. Another interpretation of our result is that in order to prove the Kakeya conjecture in $\RR^3$, it will likely not be possible to first prove that every Besicovitch set of dimension less than three is sticky.}.

\subsection{Enemies old and new}\label{enemiesSection}
\subsubsection{The Heisenberg group}\label{HGroupEnemySec}One of the reasons that it is difficult to strengthen Wolff's result from \cite{wolff} is that the Heisenberg group is an ``almost counter-example'' to the Kakeya conjecture. the Heisenberg group is the set
\begin{equation}\label{defnOfH}
\mathbb{H}=\{(x,y,z)\in\CC^3\colon \operatorname{Im}(z)=\operatorname{Im}(x\bar y)\}.
\end{equation}
$\mathbb{H}$ shares many properties in common with a Besicovitch set. It is a subset of $\CC^3$ rather than $\RR^3$, but it contains a two (complex) dimensional family of lines. Indeed, for every $a,b\in\RR$ and $w\in\CC$, the complex line
\begin{equation*}
L_{a,b,w}=\{(s,w+as,s\bar w+b)\colon s\in\CC\}
\end{equation*}
is contained in $\mathbb{H}$. Furthermore, if we restrict our attention to $\mathbb{H}\cap B(0,1)$ and cover each point of $\mathbb{H}\cap B(0,1)$ by a ball of radius $\delta$, we obtain a set of complex $\delta$--tubes (the intersection of a unit ball with the $\delta$--neighborhood of a complex line in $\CC^3$), and these tubes satisfy the natural analogue of the Wolff axioms.

However, the Heisenberg group differs from a genuine Besicovitch set in two key respects. First, while the Heisenberg group contains a two (complex) dimensional family of lines that satisfy a natural analogue of the Wolff axioms, these lines do not all point in different directions. Instead, there is a $3/2$--dimensional family of directions, and there is a half dimensional family of lines pointing in each of these directions. In \cite{KLT}, Katz, \L{}aba, and Tao exploit the fact that the tubes in a Besicovitch set all point in different directions to obtain their improved estimate; this is how their proof distinguishes between the Heisenberg group and a genuine Besicovitch set.

The second difference between the Heisenberg group and a genuine Besicovitch set is that the Heisenberg group is a subset of $\CC^3$, while a Besicovitch set is a subset of $\RR^3$. This distinction is crucial. Observe from \eqref{defnOfH} that the definition of $\mathbb{H}$ makes use of the complex conjugation map $z\mapsto\bar z$, and the existence of this map is closely related to the fact that $\CC$ contains a half-dimensional subfield.  $\RR$, however does not contain a half-dimensional subfield. This observation (or rather a quantitative version of it) is a key component of the proof of Theorem \ref{mainThm}; this is how our proof distinguishes between the Heisenberg group and a genuine Besicovitch set.
\subsubsection{The $SL_2$ example}
In proving Theorem \ref{mainThm} we encountered a new difficulty that has not appeared before. We call this problem the $SL_2$ example. The $SL_2$ example is a (hypothetical) set of $\delta^{-2}$ tubes that satisfy the Wolff axioms. The union of these tubes has volume $\delta^{1/2}$, but the union of the $\delta^{1/2}$ neighborhoods of these tubes has volume $\sim 1$. Thus the $SL_2$ example is an almost counter-example to the Hausdorff version of the Kakeya conjecture in $\RR^3$, but it is not an almost counter-example to the upper Minkowski dimension version of the conjecture. Furthermore, the $SL_2$ example is not sticky, in the sense described in Section \ref{previousWorkSection} above.

Of course, Theorem \ref{mainThm} asserts that the $SL_2$ example cannot actually exist in $\RR^3$. However, it is possible to construct the $SL_2$ example in a slightly different setting. Let $R$ be the ring $\FP[t]/(t^2)$. Each number $x\in[0,1]\subset\RR$ can be written as $x = \delta^{1/2}x_1 + \delta x_2+O(\delta)$, where $x_1$ and $x_2$ are integers between $0$ and $\lfloor\delta^{-1/2}\rfloor$. The ring $R$ is meant to model this decomposition of the interval $[0,1]$. Elements of $\FP\subset \FP[t]/(t^2)$ represent the coarse (i.e.~$\delta^{1/2}$) scale, while elements of $t\cdot\FP$ represent the fine (i.e.~$\delta$) scale.

Define
\begin{equation}\label{defnOfSL2}
X = \{(x_1+x_2t,\ y_1+y_2t,\ z_1+z_2t)\in R^3\colon z_2 = x_1y_2 - x_2y_1\}.
\end{equation}
We have that $|X|=p^5 = |R|^{5/2}$. Observe that the definition of $X$ looks nearly identical to the definition of $\mathbb{H}$ from \eqref{defnOfH}. However, since the rings $\CC$ and $R$ have dramatically different properties, the resulting sets $\mathbb{H}$ and $X$ will differ markedly as well.

We will consider subsets of $R^3$ of the form $\{ (a,b,0)+s(c,d,1)\colon s\in R \}$, where $a,b,c,d\in R$. These are the analogue of lines in $\RR^3$ that are not parallel to the $xy$ plane. Each of these ``lines'' can be identified with the point $(a,b,c,d)\in R^4$.

With this identification, define
$$
\mathcal{L}=\{(a+\alpha at,\ b+\alpha bt,\ c+\alpha ct,\ d+\alpha dt)\in R^4\colon a,b,c,d,\alpha\in\FP,\ ad-bc=1\}.
$$

$\mathcal{L}$ is a set of $p^4=|M|^2$ lines, each of which is contained in $X$. The lines in $\mathcal{L}$ satisfy an analogue of the Wolff axioms. However, the lines in $\mathcal{L}$ are not ``sticky,'' and the analogue of the upper (and lower) Minkowski dimension of $X$ is large: if $\pi\colon R^3\to\FP^3$ is the projection $(x_1+x_2t, y_1+y_2t, z_1+z_2t)\mapsto(x_1,y_1,z_1)$, then
\begin{equation}\label{largeAtCoarseScales}
\pi(X)=\FP^3.
\end{equation}
If the $SL_2$ example were defined over $\RR^3$ rather than $R^3$, then \eqref{largeAtCoarseScales} says that the $\delta^{1/2}$--neighborhood of the union of lines in the $SL_2$ example has volume approximately one. This is dramatically different from the Heisenberg group example, in which the union would have volume approximately $\delta^{1/4}$. The $SL_2$ example will be discussed further in Appendix \ref{SL2Appendix}.

The ring $R$ has an ideal $t\FP\subset R$ of nilpotent elements, and this ideal plays a crucial role in the construction of the $SL_2$ example. The reals do not contain such an ideal, and this fact is (implicitly) exploited when showing no analogue of the $SL_2$ example can exist in $\RR^3.$ 

\subsubsection{The Regulus map}
To prove Theorem \ref{mainThm}, we will show that any hypothetical counter-example to the theorem must resemble either the Heisenberg or $SL_2$ examples
described above. To do this, we will consider an object called the ``regulus map,'' (defined in Section \ref{regMapSection}), which describes how much algebraic structure the counter-example possesses at coarse scales. At one extreme is the $SL_2$ example: at coarse scales, the tubes in the $SL_2$ example lie close to an algebraic variety in the parameter space of lines (namely $\{(a,b,c,d)\in\RR^4\colon ad-bc=1\}$). At the opposite extreme is the Heisenberg example: at coarse scales (indeed at all scales), few tubes in the Heisenberg example lie close to any low degree algebraic variety in the parameter space of lines.

Unfortunately, a hypothetical counter-example to Theorem \ref{mainThm} may lie anywhere between these two extremes. Quantifying this intermediate behavior is one of the major technical difficulties we encounter in this paper.

\subsection{Notation and and epsilon management}
Throughout this paper, $\delta$ will denote a small positive number. Unless noted otherwise, all constants will be independent of $\delta$.

The goal of this paper is to prove Theorem \ref{mainThm} for some absolute constant $\eps_0>0$. To do this, we will prove a series of statements that all depend on a small parameter $\eps>0$. Eventually we will show that if $\eps$ is sufficiently small then we arrive at a contradiction. Thus the variable $\eps$ will denote a small positive number (whose meaning may differ in different statements) that is always larger than some absolute constant $\eps_0>0$ that will be determined at the end of the paper. In particular, it will always be the case that $\delta^{\eps}\leq \delta^{\eps_0}\leq c|\log\delta|^{-1}$ for some absolute constant $c>0$.

We write $A\lessapprox_{\eps}B$ to mean that there exists a constant $C$ (independent of $\delta$) so that $A\leq \delta^{-C\eps}B$. The constant $C$ may vary from line to line. If $A\lessapprox_{\eps}B$ and $B\lessapprox_{\eps}A$, then we write $A\approx_{\eps}B$.

To avoid keeping track of many different constants, we will often use the following sort of notation:

\noindent Lemma A: Suppose that $P \leq \delta^{-\eps}Q$. Then $R\lessapprox_{\eps}S$.\\
\noindent Lemma B: Suppose that $R \leq \delta^{-\eps}S.$ Then $T\lessapprox_{\eps}V.$

By combining Lemmas A and B, we conclude that if $P\leq\delta^{-\eps}Q$ then $T\lessapprox_{\eps}V$. This is because Lemma A asserts that $P\leq\delta^{-\eps}Q$  implies $R\leq \delta^{-C_1\eps}S$ for some constant $C_1$, while Lemma B (with $\eps$ replaced by $C_1\eps$) asserts that $R\leq\delta^{-C_1\eps}S$ implies $T\leq \delta^{-C_2C_1\eps}V$ for some constant $C_2$. The latter statement is equivalent to $T\lessapprox_{\eps}V$. In future, we will chain together multiple lemmas that use $\lessapprox_{\eps}$ notation without further comment.

If $A$ is a set and $t>0$, we will use $N_{t}(A)$ to denote the $t$-neighborhood of $A$. For example, the statement ``$B\subset N_{\lessapprox_{\eps}t}A$'' means: there exists an absolute constant $C$ so that the set $B$ is contained in the $\delta^{-C\eps}t$-neighborhood of the set $A$.

The table below lists the notation for various objects used in the proof of Theorem \ref{mainThm}, and shows where in the paper a definition can be found.
\begin{table}[h]
\centering 
\begin{tabular}{r|l|l}
Notation & Meaning & Definition location\\
\hline
$\tube$ & $\delta$ tube & Section \ref{introSection}\\
$\tubes$ & Set of $\delta$ tubes & Section \ref{introSection}\\
$Y(\tube)$ & Shading of a tube & Section \ref{introSection}\\
$\skew(L_1,L_2)$ & Skewness of lines $L_1$ and $L_2$ & Definition \ref{defnOfSkew}\\
$R_{\tube_1,\tube_2,\tube_3}$ & Regulus generated by lines coaxial with $\tube_1,\tube_2,\tube_3$ & Definition \ref{algRegulusDefn}\\
$S$ & Regulus strip & Definition \ref{regulusStripDefn}\\
$H_Y(\tube_1,\ldots,\tube_k)$& The (joint) hairbrush of $\tube_1,\ldots,\tube_k$ & Definition \ref{hairbrushDef}\\
$R(\tube_1,\tube_2)$ & The regulus containing the joint hairbrush of $\tube_1$ and $\tube_2$ & Corollary \ref{hairbrushTwoTubesInFatRegulusOne}\\
$H^\prime(\tube_1,\tube_2)$ & The refined hairbrush of $\tube_1$ and $\tube_2$ & Definition \ref{definedHairbrushDef}.\\
$\tube_{\delta^{1/2}}(S)$ & The $\delta^{1/2}$ tube containing the regulus strip $S$ & Lemma \ref{regStripsAreTangent}\\
$\tubes(\tube_{\delta^{1/2}})$ & The $\delta$ tubes from $\tubes$ contained in $\tube_{\delta^{1/2}}$ & Section \ref{incidenceGeomRegStripsSec}\\
$C_s$ & The cone in $\RR^4$ containing the regulus strip $S$ & Section \ref{tubesStripsSection}\\
$\Sigma_S$ & The hyperplane in $\RR^4$ containing the cone $C_S$ & Section \ref{tubesStripsSection}\\
$G$ & A grain & Section \ref{discetizationScaleDelta12Sec}\\
$Q_G$ & The cube containing the grain $G$ & Section \ref{discetizationScaleDelta12Sec} 
\end{tabular}
\end{table}

%
%
%
%
\section{Reguli}\label{reguliiSection}
It is a well-known result (see e.g.~\cite{HC}) that the union of the set of lines intersecting three skew lines in $\RR^3$ forms a regulus (a doubly-ruled quadric surface). Reguli play a central role when studying the structure of Besicovitch sets near dimension $5/2$. In the next section, we will develop various quantitative statements about set of tubes.
\subsection{Quantitative skewness}

\begin{defn}[Linear cone]
Let $L$ be a line in $\RR^3$ and let $\Pi$ be a plane containing $L$. A \emph{linear cone} of angle $\alpha$ with vertex $L$ is a set of the form
$$
\{p\in\RR^3\colon \dist(p,\Pi)\leq \alpha \dist(\pi(p), L)\},
$$
where $\pi(p)$ is the orthogonal projection of $p$ to $\Pi$. For example, if $L=(0,0,0)+\RR(1,0,0)$ and $\Pi$ is the plane $\{y=0\}$, then the above set is given by $\{(x,y,z)\in\RR^3\colon |z|\leq \alpha |y|\}$.
\end{defn}

\begin{figure}[h!]
 \centering
\begin{overpic}[width=0.4\textwidth]{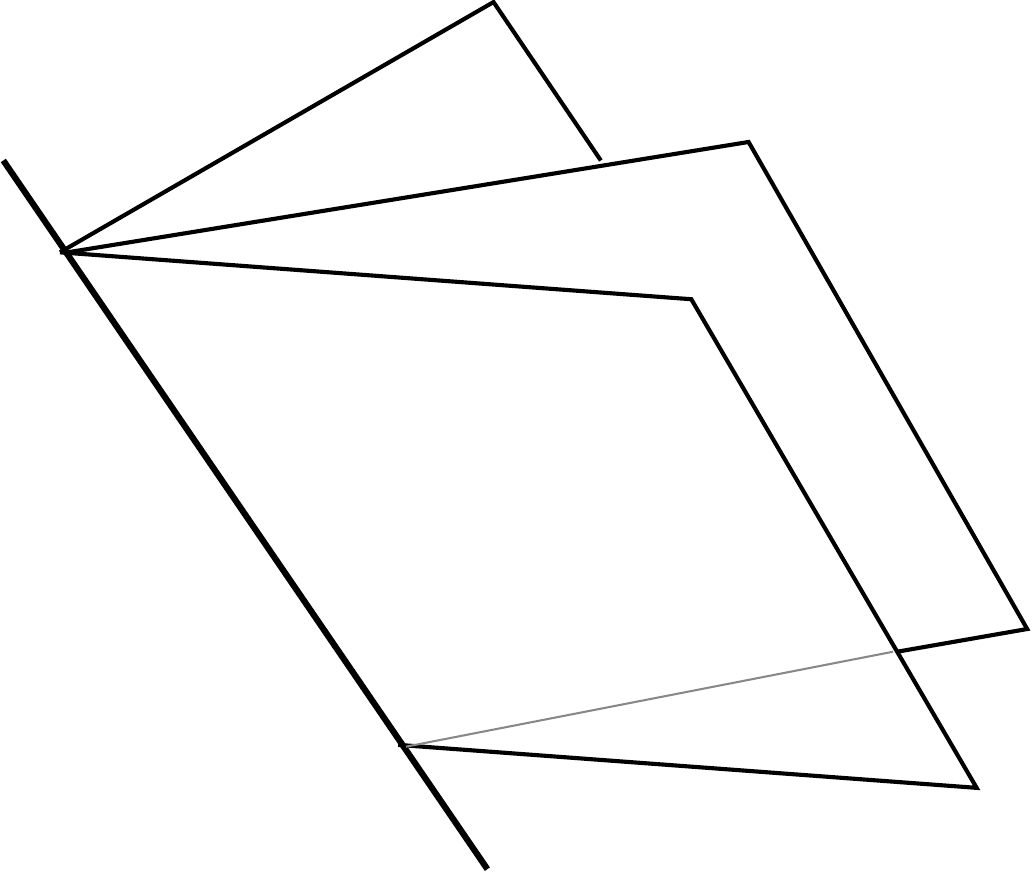}
 \put (39,2) {$L$}
  \put (68,60) {$\Pi$}
\end{overpic}
 \caption{A cone of angle $\alpha$. The angle between $\Pi$ and adjacent planes is $\arctan\alpha$. }\label{linearConePic}
\end{figure}

\begin{defn}[Quantitative skewness]\label{defnOfSkew}
Let $L_1$ and $L_2$ be two lines in $\RR^3$ that intersect the unit ball. We define $\operatorname{skew}(L_1,L_2)$ to be the minimum value of $\alpha$ so that $L_2\cap B(0,2)$ is contained in a linear cone of angle $\alpha$ with vertex $L_1$. We say that $L_1$ and $L_2$ are $\geq\delta^{\eps}$ skew if $\skew(L_1,L_2)\geq \delta^{\eps}$, and we say that $L_1$ and $L_2$ are $\approx_{\eps}1$ skew if $\skew(L_1,L_2)\approx_{\eps}1$.
\end{defn}

\begin{defn}[Quantitative separation]\label{defnOfSeparated}
Let $L_1,L_2$ be two distinct lines that intersect the unit ball. We say that $L_1,L_2$ are $t$ separated (with error $\delta^{\eps})$ if
\begin{equation}
\delta^\eps t\leq \dist(p,L_2)\leq \delta^{-\eps}t \quad\textrm{for all}\ p\in L_1\cap B(0,1).
\end{equation}
We say that $L_1$ and $L_2$ are $t$ separated (with error $\approx_{\eps}1$) if
\begin{equation}
\dist(p,L_2)\approx_{\eps} t\quad\textrm{for all}\ p\in L_1\cap B(0,1).
\end{equation}
We say that $L_1$ and $L_2$ are uniformly separated with error $\delta^{\eps}$ (resp. $\approx_{\eps}1$) if they are $t$ separated with error $\delta^{\eps}$ (resp. $\approx_{\eps}1$) for some value of $t$.
\end{defn}

\begin{figure}[h!]
 \centering
\begin{overpic}[width=0.3\textwidth]{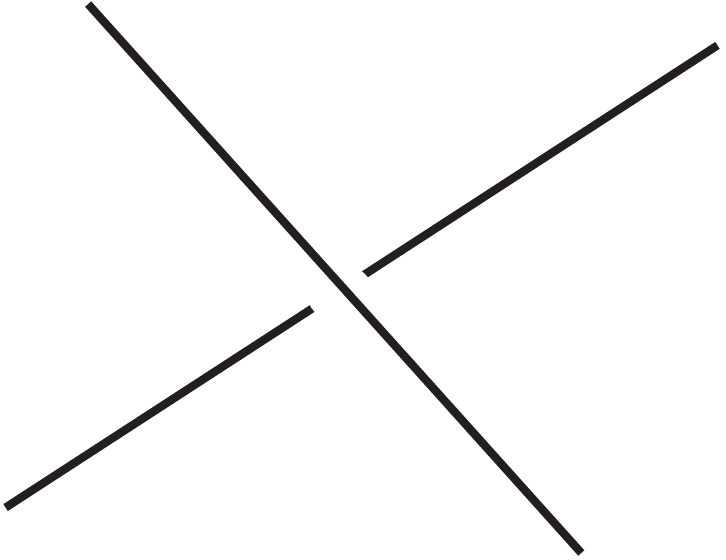}
\end{overpic}
 \caption{Two lines do not need to be quantitatively separated to be quantitatively skew.}\label{quantSepSkewPic}
\end{figure}

For brevity, we will say that two lines are $\geq\delta^{\eps}$ separated and skew if they are $\geq\delta^{\eps}$ skew and 1 separated with error $\delta^{\eps}$. We will say that two lines are $\approx_{\eps}1$ separated and skew if they are $\approx_{\eps}1$ skew and 1 separated with error $\approx_{\eps}1$.

If $\tube_1$ and $\tube_2$ are tubes, we define the skew (resp.~separation) of $\tube_1$ and $\tube_2$ to be the skew (resp.~separation) of their coaxial lines.

%
%
%
%
\subsection{Reguli and curvature}
In this section we will prove quantitative bounds on the Gauss curvature of reguli defined by certain triples of lines. Given two lines $L_1$ and $L_2$, we define $\angle(L_1,L_2)$ as
the angle between their orientations.

\begin{lem}\label{linesInDifferentPlanes}
Let $L,L_1$ and $L_2$ be lines. Suppose that $L$ makes an angle $\geq\delta^{\eps}$ with $L_1$ and $L_2$, and that $L$ intersects $L_1$ and $L_2$  inside the unit ball. Let $\Pi_i$ be the plane spanned by $L$ and $L_i$. Suppose that
$$
\delta^{\eps}\angle(\Pi_1,\Pi_2) \leq \dist(L\cap L_1,\ L\cap L_2) \leq \delta^{-\eps}\angle(\Pi_1,\Pi_2),
$$
and
\begin{equation}\label{ControlOnAngleL1L2}
\delta^{\eps} \angle(L_1,L_2) \leq \dist(L\cap L_1,\ L\cap L_2) \leq \delta^{-\eps} \angle(L_1,L_2).
\end{equation}

Then $L_1$ and $L_2$ are $\approx_{\eps} 1$ skew and are uniformly separated with error $\approx_{\eps}1$. See Figure \ref{lemma21_fig}.
\end{lem}

\begin{figure}[h!]
 \centering
\begin{overpic}[width=0.5\textwidth ]{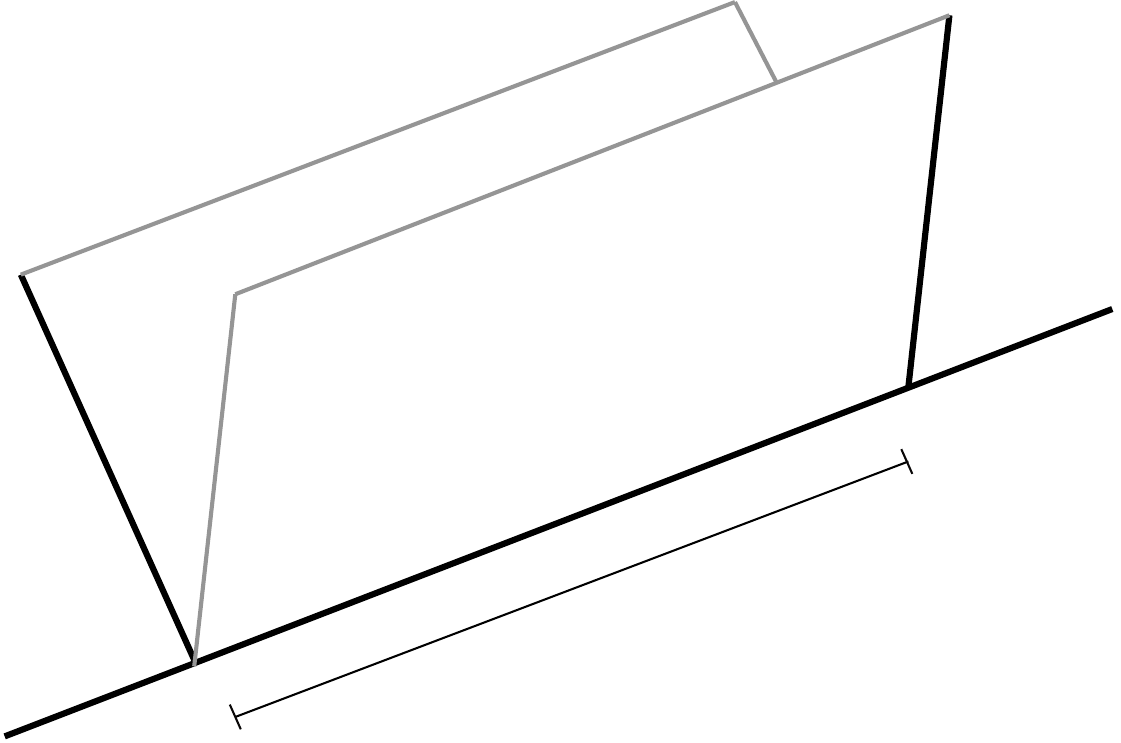}
 \put (90,30) {$L$} 
 \put (50,9) {$\dist(L\cap L_1,\ L\cap L_2)$}
  \put (5,20) {$L_1$} 
  \put (85,50) {$L_2$} 
  \put (60,40) {$\Pi_2$} 
  \put (10,39) {$\Pi_1$} 
\end{overpic}
 \caption{Illustration of Lemma \ref{linesInDifferentPlanes}.}\label{lemma21_fig} 
\end{figure}

\begin{proof} Without loss of generality, we can assume that $L\cap L_1$ is the origin. Applying a linear transformation that distorts angles by a factor of $\lessapprox_{\eps}1$, we can assume that $L_1$ is the $z$ axis and $L$ is the $x$ axis. Let $t = \dist(L\cap L_1,\ L\cap L_2)$. Then $\Pi_2$ is a plane containing the $x$ axis that makes an angle $\approx_\eps t$ with the $z$ axis, and $L_2$ is a line in this plane that makes an angle $\approx_\eps t$ with the $z$ axis; $L_2$ contains the point $(t,0,0)$. Thus we can write $L_2 = (t,0,0) + \RR(a,b,1)$, where $|a|\lessapprox_{\eps}t$ and $|b|\approx_{\eps}t$. This immediately implies that $L_1$ and $L_2$ are $t$-separated (and thus uniformly separated) with error $\approx_{\eps}1$. 

The point $(t+a,b,1)\in L_2$ has $y$-coordinate $\gtrapprox_\eps t$. Since this point has distance $\approx_\eps t$ from $L_1$, and since any linear cone with vertex $L_1$ that contains $L_2\cap B(0,2)$ must also contain the $xz$ plane, any linear cone with vertex $L_1$ that contains $L_2\cap B(0,2)$ must have angle $\gtrapprox_{\eps}1$. On the other hand, there clearly exists a linear cone of angle $\lessapprox_\eps 1$ with vertex $L_1$ that contains $L_2$; we conclude that $L_1$ and $L_2$ are $\approx_{\eps}1$ skew. 
 \end{proof}

\begin{lem}\label{speedOfSpannedPlane}
Let $L_1$ and $L_2$ be lines that intersect the unit ball. Suppose that $L_1$ and $L_2$ are uniformly separated with error $\delta^{\eps}$ and are $\geq\delta^{\eps}$ skew. For each $p\in L_1\cap B(0,1)$, let $\Pi_p$ be the plane spanned by $p$ and $L_2$. Let $p_0\in L_1\cap B(0,1)$, and define the function
$$
f\colon L_1\cap B(0,1)\to\RR,\quad p\mapsto \angle(\Pi_{p_0},\Pi_p),
$$
where $\angle(\Pi_{p_0},\Pi_p)$ denotes the signed angle between $\Pi_{p_0}$ and $\Pi_p$.

Then $f$ is continuously differentiable and has derivative $\approx_{\eps}1$ for all $p\in L_1\cap B(0,1)$. In particular,
\begin{equation}\label{angleSimDist}
\angle(\Pi_{p_1},\Pi_{p_2})\approx_{\eps} \dist(p_1,p_2)
\end{equation}
for all $p_1,p_2\in L_1\cap B(0,1)$. See Figure \ref{lemma22_fig}
\end{lem}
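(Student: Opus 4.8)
\textbf{Proof proposal for Lemma \ref{speedOfSpannedPlane}.}

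The plan is to parametrize $L_1$ by arclength and compute the derivative of the plane-angle function $f$ directly. First I would set up coordinates adapted to the configuration: place $L_1$ along an axis through the origin, so $p = p_0 + s v_1$ for a unit vector $v_1$, with $s$ ranging over an interval of length $\approx 1$ inside $B(0,1)$. The plane $\Pi_p$ is spanned by $v_2$ (the direction of $L_2$) and the vector $p - q$, where $q$ is a fixed point of $L_2$; thus a normal vector to $\Pi_p$ is $n(s) = v_2 \times (p_0 + s v_1 - q)$, which is \emph{affine} in $s$: $n(s) = n(0) + s\, (v_2 \times v_1)$. The signed angle $f(s)$ between $\Pi_{p_0}$ and $\Pi_p$ is then the angle between $n(0)$ and $n(s)$ (with an appropriate sign convention), so $f$ is manifestly $C^1$ (indeed real-analytic) on the interval where $n(s)\neq 0$, and
\begin{equation*}
f'(s) = \frac{|n(0)\times n(s)|'}{\cdots} = \frac{\text{stuff}}{|n(s)|^2}\cdot\big(\text{bilinear correction}\big),
\end{equation*}
but rather than push the exact quotient-rule expression, the cleaner route is: $\tan f(s)$ (or $\sin f(s)$) is a ratio of a linear function of $s$ over $|n(s)|$, so it suffices to show $|n(s)|\approx_\eps 1$ uniformly and that the numerator has derivative $\approx_\eps 1$.

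Next I would identify the two geometric quantities controlling these: $|v_2 \times v_1| = \sin\angle(L_1,L_2)$, which is $\geq\delta^\eps$ and $\leq 1$ since $L_1,L_2$ are $\geq\delta^\eps$ skew (quantitative skewness forces a lower bound on the angle between the lines — here I'd invoke Definition \ref{defnOfSkew}, noting that if the lines were nearly parallel then $L_2\cap B(0,2)$ would fail to lie in a thin linear cone with vertex $L_1$ unless they are also nearly coincident, which is excluded by the separation hypothesis); and $|n(s)| = \dist(p, L_2)\cdot|v_2|$ up to the normalization $|v_2|=1$, so $|n(s)| = \dist(p, L_2)$, which by the uniform-separation hypothesis (Definition \ref{defnOfSeparated}) is $\approx_\eps t$ for the relevant separation parameter $t$, \emph{uniformly for all $p\in L_1\cap B(0,1)$}. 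Hence $|n(s)|$ varies by at most a factor $\delta^{-C\eps}$ across the whole interval. Writing $\sin f(s) = \frac{(n(0)\times n(s))\cdot e}{|n(0)||n(s)|}$ for the appropriate sign vector $e$, and using $n(0)\times n(s) = s\,(n(0)\times(v_2\times v_1))$, the numerator is exactly linear in $s$ with slope of size $|n(0)|\cdot|v_2\times v_1|\cdot|\sin(\angle)| \approx_\eps t\cdot\delta^{\pm\eps}$; combined with the denominator $|n(0)||n(s)|\approx_\eps t^2$, one gets $\frac{d}{ds}\sin f(s)\approx_\eps 1/t \cdot t = $ — wait, I should be careful: the two $t$'s cancel against one factor of $t$ in the numerator, leaving $\frac{d}{ds}\sin f \approx_\eps |v_2\times v_1|/|n(0)| \cdot |n(0)|$… the point is that the scale-$t$ dependence cancels and what remains is comparable to $\sin\angle(L_1,L_2)/\dist(\cdot)$ times a length, which the second hypothesis is \emph{not} present in this lemma — so in fact the bound here is purely $\approx_\eps 1$ coming from skewness plus separation alone. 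Since $f$ stays in a range bounded away from $\pm\pi/2$ when its derivative is this controlled (or one restricts to such a range using $\angle(\Pi_{p_0},\Pi_p)\leq\delta^{-C\eps}\dist(p_0,p)$ a priori from the same computation), $|f'|\approx_\eps|\,(\sin f)'\,|\approx_\eps 1$. Integrating $f'$ from $p_1$ to $p_2$ and using $f(p_1)$ has a definite value gives $\angle(\Pi_{p_1},\Pi_{p_2}) = |f(p_2)-f(p_1)|\approx_\eps \dist(p_1,p_2)$, which is \eqref{angleSimDist}.

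The main obstacle I anticipate is the bookkeeping to keep $f$ in a regime where $|f'|\approx_\eps|(\sin f)'|$, i.e.~ruling out $f$ approaching $\pm\pi/2$; this requires showing the \emph{total} angle swept is $\leq\delta^{-C\eps}$ (not a full $\pi/2$ jump) over the unit interval, which again follows because $n(s)$ never vanishes on $L_1\cap B(0,1)$ — its norm is $\dist(p,L_2)\gtrsim\delta^\eps t > 0$ by uniform separation — so the normal direction varies continuously and the affine structure of $n(s)$ means $\arg$ of the projection of $n(s)$ turns monotonically by a total amount $\arctan(\text{slope}/|n(0)|)\leq\pi/2$, and quantitatively $\approx_\eps$ the length of the interval. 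A secondary subtlety is defining the \emph{signed} angle consistently (choosing a global orientation for the pencil of planes through $L_2$): I would fix this by using the half-plane bounded by $L_2$ containing $p$, parametrized by $s$, so that the sign is inherited from the sign of $s - s_{\perp}$ where $s_\perp$ is the (unique, by skewness) parameter at which $p$ is closest to $L_2$; with this convention $f$ is genuinely $C^1$ across $s_\perp$ and the derivative computation above is valid throughout.
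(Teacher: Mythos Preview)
The paper states this lemma without proof (as with Lemma~\ref{linesInDifferentPlanes}), treating it as a routine computation. Your approach --- parametrize $L_1$ by arclength, write $n(s)=v_2\times(p(s)-q)$ as an affine function of $s$, and differentiate the angle --- is the right one, and it leads cleanly to
\[
f'(s)\;=\;\frac{(v_1\times v_2)\cdot(p_0-q)}{|n(s)|^{2}}\;=\;\frac{(v_1\times v_2)\cdot(p_0-q)}{\dist(p(s),L_2)^{2}},
\]
with a \emph{constant} numerator (this is exactly the payoff of the affine structure of $n(s)$) and a denominator $\approx_\eps t^2$ by uniform separation. Your discussion of the sign convention and of keeping $|f|$ bounded away from $\pi/2$ is fine.

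The gap is in your estimate of that numerator, which is precisely where your argument visibly stalls. You claim skewness gives $\sin\angle(L_1,L_2)\geq\delta^\eps$, estimate the slope as $\approx_\eps t\cdot\delta^{\pm\eps}$, divide by $t^2$, and are left with a stray factor of $1/t$. What actually makes the $t$'s cancel is that the two hypotheses \emph{together} force $\sin\angle(L_1,L_2)\approx_\eps t$ (not merely $\gtrsim\delta^\eps$): uniform $t$-separation over a unit-length interval gives the upper bound $\sin\angle\lessapprox_\eps t$, since $\dist(p(s),L_2)^2=d^2+(s-s_0)^2\sin^2\angle$ must stay $\approx_\eps t^2$ as $s$ ranges over an interval of length $\sim 1$; and skewness $\geq\delta^\eps$ gives the lower bound $\sin\angle\gtrapprox_\eps t$, since for the plane $\Pi$ through $L_1$ containing the foot of the common perpendicular the cone ratio along $L_2\cap B(0,2)$ is $\lesssim\sin\angle/d$, where $d\approx_\eps t$ is the minimum distance between the lines. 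Combining with $d\approx_\eps t$, the numerator is $|(v_1\times v_2)\cdot(p_0-q)|=d\sin\angle(L_1,L_2)\approx_\eps t^2$, and the cancellation you were reaching for goes through.
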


\begin{figure}[h!]
 \centering
\begin{overpic}[width=0.4\textwidth ]{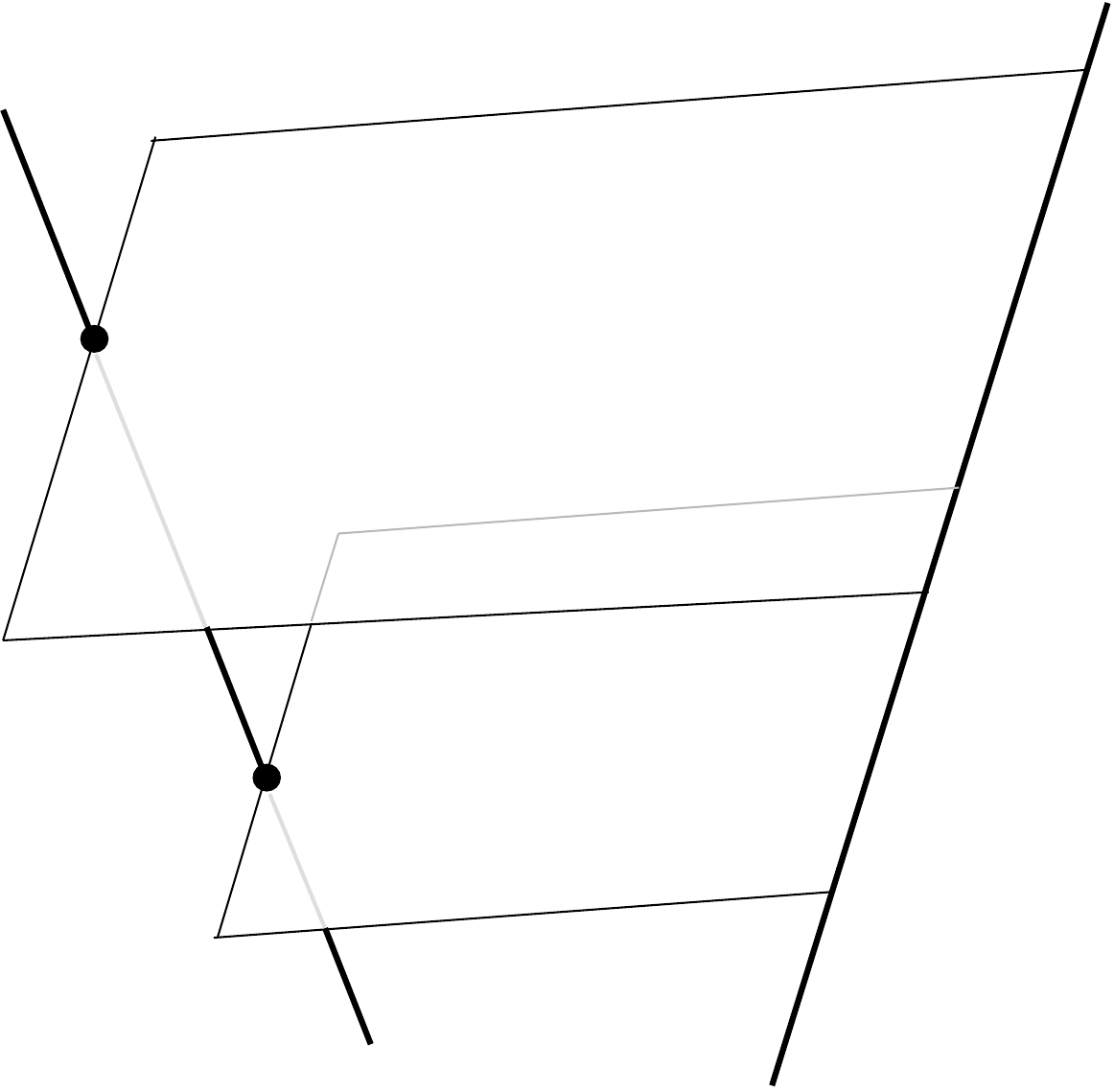}
  \put (25,5) {$L_1$} 
  \put (74,8) {$L_2$} 
  \put (16,27) {$p_0$} 
  \put (3,66) {$p$} 
  \put (50,66) {$\Pi_p$} 
  \put (50,28) {$\Pi_{p_0}$} 
\end{overpic}
 \caption{Illustration of Lemma \ref{speedOfSpannedPlane}.}\label{lemma22_fig} 
\end{figure}

\begin{proof}
Applying a rigid transformation, we can assume that $L_2$ is the $z$ axis, $p_0 = (t,0,0)$ for some $t\in (0,1)$, and $L_1$ is the line $(t, 0, 0) + \RR(a, b, 1)$, where $|b|\approx_{\eps}t$ and $|a|\lessapprox_{\eps}t$. Then $\Pi_{p_0}$ has unit normal vector $(0,1,0)$. If $p=p(s) = (t + as, bs, s)\in L_1$, then $\Pi_p$ has normal vector $(0,0,1)\times (t+as,bs,s)=(-b s, a s + t, 0)$. Thus if $\theta(s)$ is the angle between $\Pi_{p_0}$ and $\Pi_{p(s)}$, then $\sin(\theta(s)) = \frac{|bs|}{\Vert(-b s, a s + t, 0)\Vert}$. It is now a straightforward computation to verify that the function $f$ described above is continuously differentiable and has derivative $\approx_{\eps}1$ for all $p\in L_1\cap B(0,1)$. 
\end{proof}

\begin{lem}\label{BoxOfFourLines}
Let $L_1$ and $L_2$ be lines that intersect the unit ball. Suppose they are $\geq\delta^{\eps}$ separated and skew. Let $L_1^*$ and $L_2^*$  intersect both $L_1$ and $L_2$ inside the unit ball. Suppose
$$
\delta^{\eps}\dist(L_2\cap L_1^*,\ L_2\cap L_2^*)\leq \dist(L_1\cap L_1^*,\ L_1\cap L_2^*)\leq\delta^{-\eps}\dist(L_2\cap L_1^*,\ L_2\cap L_2^*).
$$
Then $L_1^*$ and $L_2^*$ are uniformly separated with error $\approx_{\eps}1$ and $\approx_{\eps} 1$ skew. Furthermore, $\angle(L_i,L_j^*)\approx_{\eps} 1$ for each pair $i,j  \in \{1,2\}$.
\end{lem}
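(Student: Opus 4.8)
The plan is to reduce Lemma~\ref{BoxOfFourLines} to Lemma~\ref{linesInDifferentPlanes} by checking the latter's hypotheses with $L$, $L_1$, $L_2$ there taken to be $L_1^*$ (or $L_2^*$), $L_1$, $L_2$ here. The geometric picture is a ``quadrilateral'' of four lines: $L_1$ and $L_2$ are the two ``long'' sides that are $\geq\delta^\eps$ separated and skew, while $L_1^*$ and $L_2^*$ are the two ``crossing'' lines, each meeting both $L_1$ and $L_2$ inside the unit ball, with the two sides of the quadrilateral comparable in length (that is the displayed hypothesis). I want to conclude that $L_1^*$ and $L_2^*$ behave, relative to each other, just as nicely as $L_1$ and $L_2$ do.

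First I would set up notation: write $a_i = L_1\cap L_i^*$ and $b_i = L_2\cap L_i^*$ for $i=1,2$, and let $d = \dist(a_1,a_2)$, $d' = \dist(b_1,b_2)$, so the hypothesis says $d\approx_\eps d'$. Since $L_1,L_2$ are $1$ separated with error $\delta^\eps$ and $\geq\delta^\eps$ skew, Lemma~\ref{speedOfSpannedPlane} applies to them: the plane $\Pi_p$ spanned by $p\in L_1$ and $L_2$ turns at unit speed, so $\angle(\Pi_{a_1},\Pi_{a_2})\approx_\eps d$. But $\Pi_{a_i}$ is exactly the plane spanned by $L_1$ and $L_i^*$ — call it $\Pi_i$ — because $L_i^*$ passes through $a_i\in L_1$ and through $b_i\in L_2$, hence lies in the plane spanned by $a_i$ and $L_2$. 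Thus $\angle(\Pi_1,\Pi_2)\approx_\eps d = \dist(L_1\cap L_1^*, L_1\cap L_2^*)$, which is the first hypothesis of Lemma~\ref{linesInDifferentPlanes} (with $L=L_1$). For the second hypothesis I need $\angle(L_1^*,L_2^*)\approx_\eps d$. Here I would argue symmetrically using $L_2$ as the pivot: since $L_1,L_2$ are $1$ separated, the endpoints $a_i$ and $b_i$ of $L_i^*$ are at distance $\approx_\eps 1$, and combining $d\approx_\eps d'$ with an elementary comparison (two segments of length $\gtrsim 1$ whose endpoints on one transversal line are at distance $\approx_\eps d$ and on a second, $\approx_\eps 1$-separated transversal are at distance $\approx_\eps d'\approx_\eps d$) forces the angle between the segments to be $\approx_\eps d$. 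I should also record $\angle(L_1,L_i^*)\approx_\eps 1$ along the way: this follows because $L_i^*$ joins a point of $L_1$ to a point of $L_2$ at distance $\approx_\eps 1$ from $L_1$, over a baseline of length $\lesssim 1$, so it cannot be nearly parallel to $L_1$; and it is not nearly perpendicular either by the skewness/separation bookkeeping. Feeding the two verified inequalities into Lemma~\ref{linesInDifferentPlanes} yields that $L_1^*$ and $L_2^*$ are $\approx_\eps 1$ skew and uniformly separated with error $\approx_\eps 1$.

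For the final clause, $\angle(L_i,L_j^*)\approx_\eps 1$ for all $i,j$: the cases $j$ against $L_1$ were handled above, and by the symmetry of the hypotheses (the displayed inequality is symmetric under swapping $L_1\leftrightarrow L_2$, and $L_1,L_2$ play symmetric roles) the same argument with $L_2$ in place of $L_1$ gives $\angle(L_2,L_j^*)\approx_\eps 1$. Alternatively, once $L_1^*,L_2^*$ are known to be $\approx_\eps 1$ separated and skew, one can re-run the whole argument with the roles of the starred and unstarred pairs interchanged — the displayed hypothesis is exactly symmetric in this exchange — to pick up any remaining angle estimates.

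The main obstacle I anticipate is the purely Euclidean step bounding $\angle(L_1^*,L_2^*)$ from below and above in terms of $d$: one has to be careful that the two transversals $L_1,L_2$ are genuinely transverse to the $L_i^*$ (which is where $\angle(L_1,L_i^*)\approx_\eps 1$ and the $\geq\delta^\eps$ skewness of $L_1,L_2$ get used) so that a displacement of size $d$ along $L_1$ between the feet $a_1,a_2$ does not get ``absorbed'' into a near-tangency, and conversely that comparability $d\approx_\eps d'$ is not an accident of looking down a bad direction. Making this quantitative — essentially a stability estimate for the crossing angle of two segments spanning a thin quadrilateral — is the one place a short but genuine computation (or a careful appeal to the cone geometry already set up in Definitions~\ref{defnOfSkew} and~\ref{defnOfSeparated}) is unavoidable. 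Everything else is bookkeeping with the $\approx_\eps$ calculus and repeated invocation of Lemmas~\ref{linesInDifferentPlanes} and~\ref{speedOfSpannedPlane}.
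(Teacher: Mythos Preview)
Your overall strategy matches the paper's exactly: use Lemma~\ref{speedOfSpannedPlane} to control the dihedral angle between the two planes determined by $L_1^*$ and $L_2^*$, obtain $\angle(L_1^*,L_2^*)\approx_\eps d$ by an elementary estimate, and feed both into Lemma~\ref{linesInDifferentPlanes}. The paper also records $\angle(L_i,L_j^*)\gtrapprox_\eps 1$ first, from the $1$-separation of $L_1,L_2$, just as you do.

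There is one genuine slip in your bookkeeping. You write that $\Pi_{a_i}$, the plane spanned by the point $a_i\in L_1$ and the line $L_2$, ``is exactly the plane spanned by $L_1$ and $L_i^*$''. That is false: $\Pi_{a_i}$ contains $L_2$ and $L_i^*$ and the single point $a_i$, but it does \emph{not} contain $L_1$ (indeed $L_1$ and $L_2$ are skew, so no plane contains both). Your own justification (``$L_i^*$ passes through $a_i$ and through $b_i\in L_2$, hence lies in the plane spanned by $a_i$ and $L_2$'') only shows $L_i^*\subset\Pi_{a_i}$, not $L_1\subset\Pi_{a_i}$. Consequently, when you invoke Lemma~\ref{linesInDifferentPlanes} you must take $L=L_2$, not $L=L_1$, and the relevant distance between intersection points is $\dist(L_2\cap L_1^*,L_2\cap L_2^*)=d'$, not $d$. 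This is harmless because $d\approx_\eps d'$ by hypothesis, and it is precisely what the paper does: it defines $\Pi_i$ as the plane spanned by $L_i^*$ and $L_2$, and the final line of its proof reads $\angle(L_1^*,L_2^*)\approx_\eps\angle(\Pi_1,\Pi_2)\approx_\eps\dist(L_1^*\cap L_2,\,L_2^*\cap L_2)$ before applying Lemma~\ref{linesInDifferentPlanes}. With this one-line correction your argument is the paper's argument.
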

\begin{proof}
Note that since $L_1$ and $L_2$ are 1 separated with error $\delta^{\eps}$, both $L_1^*$ and $L_2^*$ must make an angle $\gtrapprox_{\epsilon} 1$ with each of $L_1$ and $L_2$. Let $t=\dist(L_1\cap L_1^*,\ L_1\cap L_2^*)$. Let $\Pi_1$ be the plane spanned by $L_1^*$ and $L_2$, and let $\Pi_2$ be the plane spanned by $L_2^*$ and $L_2$. Since $\Pi_1$ and $\Pi_2$ intersect $L_1$ at $\approx_\eps t$ separated points, By Lemma \ref{speedOfSpannedPlane}, we have that $\angle(\Pi_1,\Pi_2)\approx_{\eps} t$. Thus $\angle(L_1^*,\ L_2^*)\gtrapprox_{\eps}t$. On the other hand, since $L_1$ and $L_2$ are $1$ separated with error $\delta^{\eps}$, we have $\dist(L_1\cap L_1^*,\ L_2\cap L_1^*)\gtrapprox_{\eps}1$. We also have $\dist(L_1\cap L_1^*,\ L_2^*)\leq \dist(L_1\cap L_1^*,\ L_1\cap L_2^*)\approx_{\eps}t,$ and $\dist(L_2\cap L_1^*,L_2^*)\leq \dist(L_2\cap L_1^*,\ L_2\cap L_2^*)\approx_{\eps}t,$ i.e. there are two $\approx_{\eps}1$-separated points $p\in L_1$ where $\dist(p,L_2)\lessapprox_{\eps}t$. This implies that $\angle(L_1^*,\ L_2^*)\lessapprox_{\eps}t$. Combined with our previous inequality, we conclude that $\angle(L_1^*,\ L_2^*)\approx_{\eps}t$.

Thus
$$
\angle(L_1^*, L_2^*)\approx_{\eps} \angle(\Pi_1,\Pi_2) \approx_{\eps} \dist(L_1^*\cap L_2,\ L_2^*\cap L_2),
$$
so by Lemma \ref{linesInDifferentPlanes}, $L_1^*$ and $L_2^*$ are $\approx_{\eps} 1$ skew and are uniformly separated with error $\approx_{\eps}1$.
\end{proof}

\begin{lem}\label{ThreeSkewLinesTwoTransversals}
Let $L_1,L_2,$ and $L_3$ be three lines that intersect the unit ball. Suppose that all three lines are pairwise $\geq\delta^{\eps}$ skew; $L_1,L_2$ are uniformly separated with error $\delta^{\eps}$; and $L_1,L_3$ and $L_2,L_3$ are $1$ separated with error $\delta^{\eps}$.

Let $L_1^*$ and $L_2^*$ be distinct lines intersecting each of $L_1,L_2,$ and $L_3$ inside the unit ball. Then $L_1^*$ and $L_2^*$ are uniformly separated with error $\approx_{\eps}1$ and $\approx_{\eps} 1$ skew. Furthermore, $\angle(L_1,L_1^*)\approx_{\eps} 1$ and $\angle(L_1,L_2^*)\approx_{\eps} 1$ (see Figure \ref{lemma24_fig}).
\end{lem}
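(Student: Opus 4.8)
The plan is to reduce Lemma~\ref{ThreeSkewLinesTwoTransversals} to Lemma~\ref{BoxOfFourLines}; the only real work is to verify the hypothesis of the latter, namely the two-sided comparison
$$
\delta^{\eps}\dist(L_2\cap L_1^*,\ L_2\cap L_2^*)\leq \dist(L_1\cap L_1^*,\ L_1\cap L_2^*)\leq\delta^{-\eps}\dist(L_2\cap L_1^*,\ L_2\cap L_2^*)
$$
for the pair $L_1,L_2$ (which by assumption is $\geq\delta^\eps$ separated and skew). The idea is that the third line $L_3$ acts as a ``control'' transversal: since $L_3$ is $1$ separated from both $L_1$ and $L_2$ with error $\delta^\eps$, the two transversals $L_1^*$ and $L_2^*$ hit $L_3$ at two points whose separation is comparable, up to $\delta^{\pm\eps}$, to their separation along $L_1$ and also to their separation along $L_2$.

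First I would set $t_1 = \dist(L_1\cap L_1^*,\ L_1\cap L_2^*)$, $t_2 = \dist(L_2\cap L_1^*,\ L_2\cap L_2^*)$, and $t_3 = \dist(L_3\cap L_1^*,\ L_3\cap L_2^*)$, and show $t_1\approx_\eps t_3$ and $t_2\approx_\eps t_3$, which together give exactly the comparison needed. To get $t_1\approx_\eps t_3$, apply Lemma~\ref{speedOfSpannedPlane} with the pair $(L_1,L_3)$: for each $p\in L_1\cap B(0,1)$ let $\Pi_p$ be the plane spanned by $p$ and $L_3$; then taking $p_1 = L_1\cap L_1^*$ and $p_2 = L_1\cap L_2^*$ we get $\angle(\Pi_{p_1},\Pi_{p_2})\approx_\eps t_1$. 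But $L_1^*$ lies in $\Pi_{p_1}$ (it passes through $p_1$ and meets $L_3$), and likewise $L_2^*$ lies in $\Pi_{p_2}$; so $L_1^*$ and $L_2^*$ live in two planes through $L_3$ whose dihedral angle is $\approx_\eps t_1$, and on $L_3$ their intersection points are at distance $\approx_\eps t_1$ from $L_3\cap L_1 \cap\cdots$ — more precisely, since $L_1^*,L_2^*$ each make angle $\gtrapprox_\eps 1$ with $L_3$ (this follows because $L_1,L_3$ are $1$ separated with error $\delta^\eps$ and $L_1^*$ meets both) one gets $t_3\approx_\eps\angle(\Pi_{p_1},\Pi_{p_2})\approx_\eps t_1$, using elementary plane geometry of two transversals to a line lying in two planes of small dihedral angle. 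Running the same argument with the pair $(L_2,L_3)$ in place of $(L_1,L_3)$ gives $t_3\approx_\eps t_2$, and hence $t_1\approx_\eps t_2$.

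With the comparison $t_1\approx_\eps t_2$ in hand, the hypotheses of Lemma~\ref{BoxOfFourLines} are met for the configuration $L_1,L_2,L_1^*,L_2^*$: $L_1,L_2$ are $\geq\delta^\eps$ separated and skew, and $L_1^*,L_2^*$ intersect both inside the unit ball with $\delta^\eps t_2\leq t_1\leq\delta^{-\eps}t_2$. That lemma then yields directly that $L_1^*$ and $L_2^*$ are uniformly separated with error $\approx_\eps 1$ and $\approx_\eps 1$ skew, and that $\angle(L_i,L_j^*)\approx_\eps 1$ for all $i,j\in\{1,2\}$; in particular $\angle(L_1,L_1^*)\approx_\eps 1$ and $\angle(L_1,L_2^*)\approx_\eps 1$, which is the remaining assertion.

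The main obstacle I anticipate is the elementary-geometry step hidden in ``$t_3\approx_\eps t_1$'': I must argue carefully that when $L_1^*$ and $L_2^*$ are transversals to $L_3$ lying in two planes through $L_3$ with dihedral angle $\theta\approx_\eps t_1$, and each transversal meets $L_3$ and $L_1$ at unit-comparable-angle and at points inside $B(0,1)$ that are $\gtrapprox_\eps 1$ apart, then the two feet on $L_3$ are at distance $\approx_\eps\theta$. This requires knowing that the intersection points $L_1^*\cap L_3$ and $L_2^*\cap L_3$ themselves lie in (a bounded dilate of) the unit ball and are not degenerately close to the axis — which should follow from the $1$-separation of $L_1,L_3$ and $L_2,L_3$ with error $\delta^\eps$ together with the pairwise $\geq\delta^\eps$ skewness, but bookkeeping the $\delta^{\pm C\eps}$ losses through these nested comparisons is where care is needed. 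Everything else is a direct appeal to Lemmas~\ref{linesInDifferentPlanes}, \ref{speedOfSpannedPlane}, and \ref{BoxOfFourLines}.
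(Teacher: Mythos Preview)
Your overall strategy---compare the separations of the transversals along two of the three lines and then invoke Lemma~\ref{BoxOfFourLines}---is the same as the paper's. But the execution has two problems, one minor and one substantive.

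The minor one: you apply Lemma~\ref{BoxOfFourLines} to the pair $L_1,L_2$. That lemma requires its base pair to be ``$\geq\delta^\eps$ separated and skew,'' which in this paper means $1$ separated with error $\delta^\eps$; the hypothesis of the present lemma only gives that $L_1,L_2$ are \emph{uniformly} separated (the separation parameter could be small). The paper applies Lemma~\ref{BoxOfFourLines} to $L_1,L_3$ instead, where the $1$-separation is available. Once $t_1\approx_\eps t_3$ is known, this is immediate and the detour through $t_2$ is unnecessary.

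The substantive one is the step you yourself flag. You take the fan of planes $\Pi_p$ to be the planes through $p\in L_1$ and $L_3$, obtain dihedral angle $\approx_\eps t_1$, and then claim that since $L_1^*,L_2^*$ live in these two planes and each makes angle $\gtrapprox_\eps 1$ with $L_3$, their feet on $L_3$ are at distance $\approx_\eps t_1$. This is false as stated: two lines lying in two planes through $L_3$ of dihedral angle $\theta$, each meeting $L_3$ at angle $\sim 1$, can hit $L_3$ at \emph{any} pair of points---the dihedral angle and the incidence angle do not constrain the feet. What pins down the foot is the additional requirement that each transversal also meets $L_2$, which your ``elementary plane geometry'' argument does not use. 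The paper avoids this by choosing the fan differently: let $\Pi_p$ be the plane through $p\in L_1$ and $L_2$. Then $L_i^*\subset\Pi_{p_i}$ automatically (since $L_i^*$ passes through $p_i$ and meets $L_2$), and---crucially---$L_3$, being skew to $L_2$, pierces each $\Pi_p$ in a single point. Hence $L_i^*\cap L_3=\Pi_{p_i}\cap L_3$, and the movement of this intersection point along $L_3$ is $\approx_\eps\angle(\Pi_{p_1},\Pi_{p_2})\approx_\eps t_1$ directly from the skewness of $L_2,L_3$. With $t_1\approx_\eps t_3$ in hand, Lemma~\ref{BoxOfFourLines} applied to $L_1,L_3,L_1^*,L_2^*$ finishes the proof.
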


\begin{figure}[h!]
 \centering
\begin{overpic}[width=0.6\textwidth]{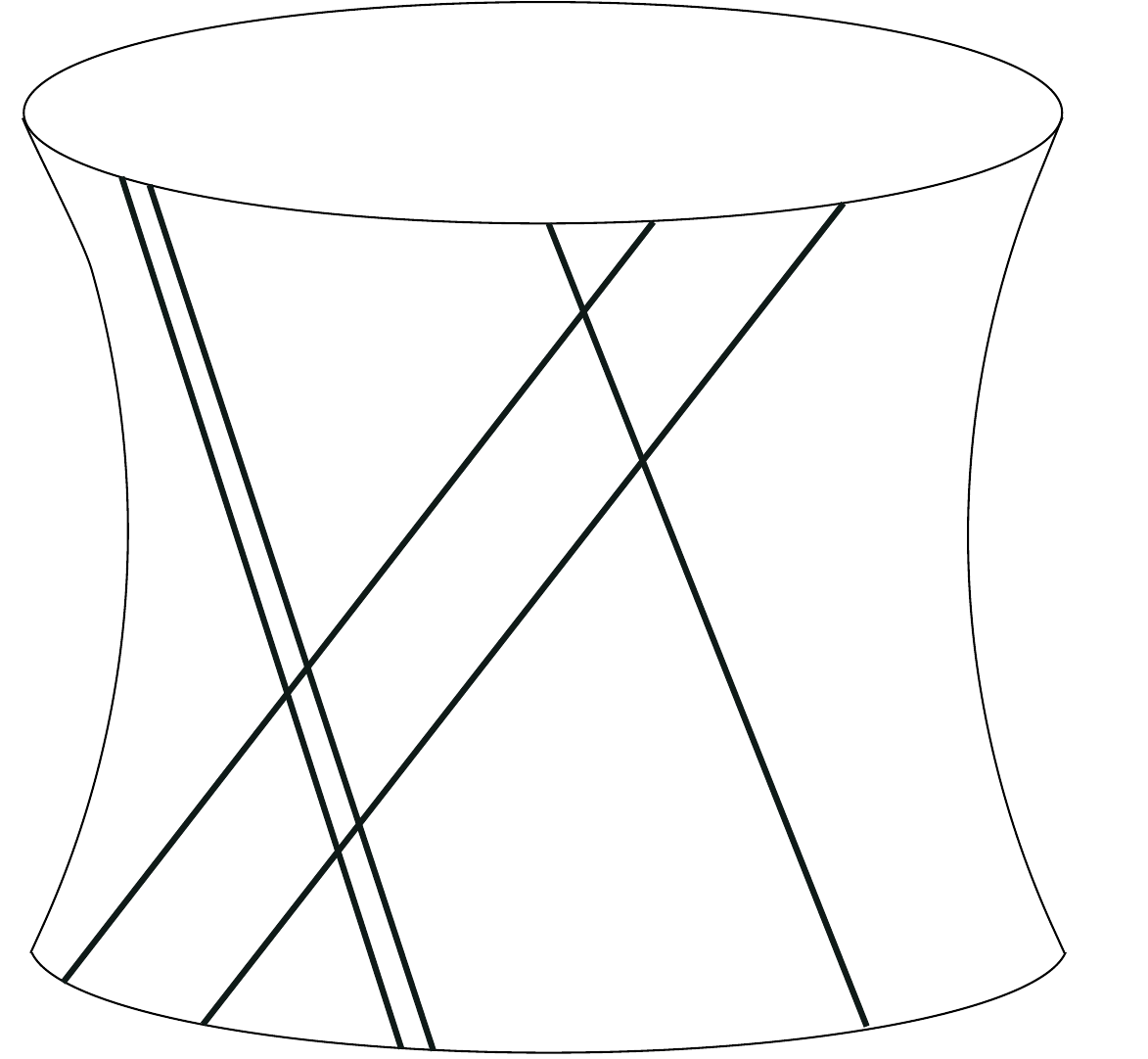}
  \put (9,8) {$L_1^*$} 
  \put (16,8) {$L_2^*$} 
  \put (28,5) {$L_1$} 
  \put (38,4) {$L_2$}
  \put (69,5) {$L_3$}
\end{overpic}
 \caption{Illustration of Lemma \ref{ThreeSkewLinesTwoTransversals}.}\label{lemma24_fig} 
\end{figure}

\begin{proof}
For $i=1,2$, let $p_i=L_i^*\cap L_1,$ and let $\Pi_{i}$ be the plane spanned by $p_i$ and $L_2$. Let $t=\dist(p_1,p_2)$. By Lemma \ref{speedOfSpannedPlane} applied to $L_1,L_2$, $p_1$, and $p_2$, we have $\angle(\Pi_1,\ \Pi_2)\approx_{\eps} t$. Next, observe that $\Pi_i\cap L_3 = L_i^*\cap L_3$; in particular, these intersections occur within the unit ball. Since $L_3$ is $\approx_{\eps}1$ skew to $L_1$, we have that $\angle(\Pi_1,L_3)\approx_{\eps}1$ and $\angle(\Pi_2,L_3)\approx_{\eps}1$. This means that $\operatorname{dist}(\Pi_1\cap L_3,\Pi_2\cap L_3)\approx_\eps\angle(\Pi_1,\Pi_2)\approx_{\eps}t$, i.e. $\operatorname{dist}(L_1^*\cap L_3,\ L_2^*\cap L_3)\approx \operatorname{dist}(L_1^*\cap L_1,\ L_2^*\cap L_1)$. Thus by Lemma \ref{BoxOfFourLines} (applied to $L_1,L_3,L_1^*,L_2^*$), we conclude that $L_1^*$ and $L_2^*$ are uniformly separated with error $\approx_{\eps}1$ and $\approx_{\eps} 1$ skew, and that $\angle(L_1,L_1^*)\approx_{\eps} 1$ and $\angle(L_1,L_2^*)\approx_{\eps} 1$.
\end{proof}

\begin{lem}\label{threeSkewLinesGetYouAnywhere}
Let $L_1,L_2,$ and $L_3$ be three lines that intersect the unit ball. Suppose that all three lines are pairwise $\geq\delta^{\eps}$ skew; $L_1,L_2$ are uniformly separated with error $\delta^{\eps}$; and $L_1,L_3$ and $L_2,L_3$ are $1$ separated with error $\delta^{\eps}$.

Let $R$ be the regulus containing $L_1,L_2,$ and $L_3$. Then for any $p\in R\cap B(0,1)$, there are lines $L_1^*,L_2^*,L_3^*$ and $L$ with the following properties
\begin{itemize}
\item $L_1^*,L_2^*,L_3^*,$ and $L$ are contained in $R$.
\item $L$ intersects each of $L_1^*,L_2^*,$ and $L_3^*$, and the points of intersection occur in $B(0, \delta^{-C\eps})$, where $C$ is an absolute constant.
\item $L\cap L_1^*=p$.
\item $\angle(L,L_1^*)\approx_{\eps} 1$.
\item $L_1^*, L_2^*,$ and $L_3^*$ are pairwise $\approx_{\eps}1$ separated and skew.
\end{itemize}
See figure \ref{lemma25_fig}.
\end{lem}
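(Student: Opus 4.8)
The plan is to parametrize the regulus $R$ by the two rulings and then use the separation/skewness lemmas already established to manufacture the required configuration near an arbitrary point $p\in R\cap B(0,1)$. Recall that $R$ is doubly ruled: one ruling, call it the $L$-ruling, contains $L_1,L_2,L_3$, and the other ruling, call it the transversal ruling, consists of lines each of which meets every line of the $L$-ruling. First I would fix the transversal line $M_p$ through $p$ (the unique member of the transversal ruling passing through $p$), and let $L_1^* := M_p$. Since $L_1^*$ meets $L_1,L_2,L_3$, and $L_1,L_2$ are uniformly separated with error $\delta^\eps$ while $L_1,L_3$ and $L_2,L_3$ are $1$-separated with error $\delta^\eps$, Lemma \ref{ThreeSkewLinesTwoTransversals} applies to $L_1^*$ together with any second transversal $L_2^*$: it already tells us that two transversals of $L_1,L_2,L_3$ are $\approx_\eps 1$ separated and skew and make angle $\approx_\eps 1$ with $L_1$. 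So the bulk of the work is choosing the \emph{second} transversal $L_2^*$, a \emph{third} transversal $L_3^*$, and a line $L$ of the $L$-ruling through $p$, so that all the incidences and quantitative bounds hold and the intersection points stay in $B(0,\delta^{-C\eps})$.

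The key steps, in order: (1) Set $L := $ the member of the $L$-ruling through $p$; since $p\in B(0,1)$ this is well-defined, and $L$ meets $L_1^* = M_p$ exactly at $p$, giving $L\cap L_1^* = p$. Because $L_1^*$ is a transversal and $L$ is a ruling line, they meet transversally; I would extract $\angle(L,L_1^*)\approx_\eps 1$ from the curvature bounds on $R$ — i.e., from Lemma \ref{ThreeSkewLinesTwoTransversals} the transversals make angle $\approx_\eps 1$ with the ruling line $L_1$, and by the bounded-curvature estimates (Lemmas \ref{speedOfSpannedPlane}, \ref{BoxOfFourLines}) the ruling line through a nearby point makes a comparable angle with $M_p$. (2) Choose $L_2^*$ to be the transversal through the point $q$ on $L$ at distance $\approx_\eps 1$ from $p$ (this $q$ exists in $B(0,\delta^{-C\eps})$ since $L$ has length comparable to $\delta^{-C\eps}$ inside that ball, using the angle lower bounds to control how fast $L$ exits $B(0,1)$). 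Then $L_1^*$ and $L_2^*$ are two transversals, and by Lemma \ref{ThreeSkewLinesTwoTransversals} they are $\approx_\eps 1$ separated and skew. (3) Choose $L_3^*$ to be the transversal through a third point $r$ on $L$ with $\dist(p,r),\dist(q,r),\dist(p,q)$ all $\approx_\eps 1$; applying Lemma \ref{ThreeSkewLinesTwoTransversals} to the pairs $(L_1^*,L_3^*)$ and $(L_2^*,L_3^*)$ gives pairwise $\approx_\eps 1$ separation and skewness for $L_1^*,L_2^*,L_3^*$. (4) Track the intersection points: $L$ meets $L_i^*$ at $p$, $q$, $r$ respectively, all within $B(0,\delta^{-C\eps})$ by construction; and I would check that replacing the original $L_1,L_2,L_3$ by $L_1^*,L_2^*,L_3^*$ does not leave $B(0,\delta^{-C\eps})$ using that $R$ has Gauss curvature $\approx_\eps 1$ on the relevant piece, so the regulus stays within a bounded dilate of the unit ball over the parameter range we use.

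I expect the main obstacle to be step (1)–(2): verifying that the ruling line $L$ through an \emph{arbitrary} $p\in R\cap B(0,1)$ still has a long enough piece (length $\gtrapprox_\eps 1$, or at least reaching out to $B(0,\delta^{-C\eps})$) on which to place the points $q,r$, and that the transversals through those points still meet all three of $L_1,L_2,L_3$ inside a ball of radius $\delta^{-C\eps}$. This is a uniformity-over-$R$ statement: the hypotheses give quantitative control of $R$ only near the configuration $L_1,L_2,L_3$, and one must propagate that control across $R\cap B(0,1)$. The mechanism will be the bounded-curvature estimates — Lemma \ref{speedOfSpannedPlane} says the spanned plane turns at speed $\approx_\eps 1$, so angles and distances on $R$ are all comparable up to $\delta^{\pm C\eps}$ factors — together with repeated application of Lemma \ref{BoxOfFourLines} to the relevant quadruples $(L_i, L, L_j^*, L_k^*)$ to upgrade "meets inside $B(0,\delta^{-C\eps})$" into the desired $\approx_\eps 1$ separation and skewness. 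Once the geometry of $R$ is pinned down this way, the construction of $L_1^*,L_2^*,L_3^*,L$ and the verification of the five bulleted properties is bookkeeping with the $\lessapprox_\eps$ calculus.
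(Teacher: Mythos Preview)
Your approach is correct and essentially the same as the paper's: take $L_1^*$ to be the transversal through $p$, take $L$ to be the $L$-ruling line through $p$, choose two further transversals $L_2^*,L_3^*$, and apply Lemma~\ref{ThreeSkewLinesTwoTransversals} pairwise.

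The one difference worth noting is that the paper parametrizes $L_2^*,L_3^*$ by their intersections with the \emph{original} line $L_1$ rather than with $L$. Since $L_1\cap B(0,1)$ is given by hypothesis, one can directly pick points on $L_1$ that are pairwise $\approx_\eps 1$ apart (and $\approx_\eps 1$ from $L_1^*\cap L_1$), so the hypothesis of Lemma~\ref{ThreeSkewLinesTwoTransversals} (``intersect $L_1,L_2,L_3$ inside the unit ball'') is satisfied, up to working in $B(0,\delta^{-C\eps})$. This sidesteps exactly what you flag as the main obstacle: you never need to first establish that $L$ has a long controlled piece on which to place $q,r$, because you place the transversals along $L_1$ instead. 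The line $L$ is then recovered afterward as the unique line through $p$ incident to $L_1^*,L_2^*,L_3^*$, and the angle bound $\angle(L,L_1^*)\approx_\eps 1$ comes from the argument of Lemma~\ref{BoxOfFourLines} applied to $L_1^*,L_3^*$. Your route works too, but requires the extra uniformity step you identify; the paper's choice of parametrization makes that step unnecessary.
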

\begin{figure}[h!]
 \centering
\begin{overpic}[width=0.6\textwidth]{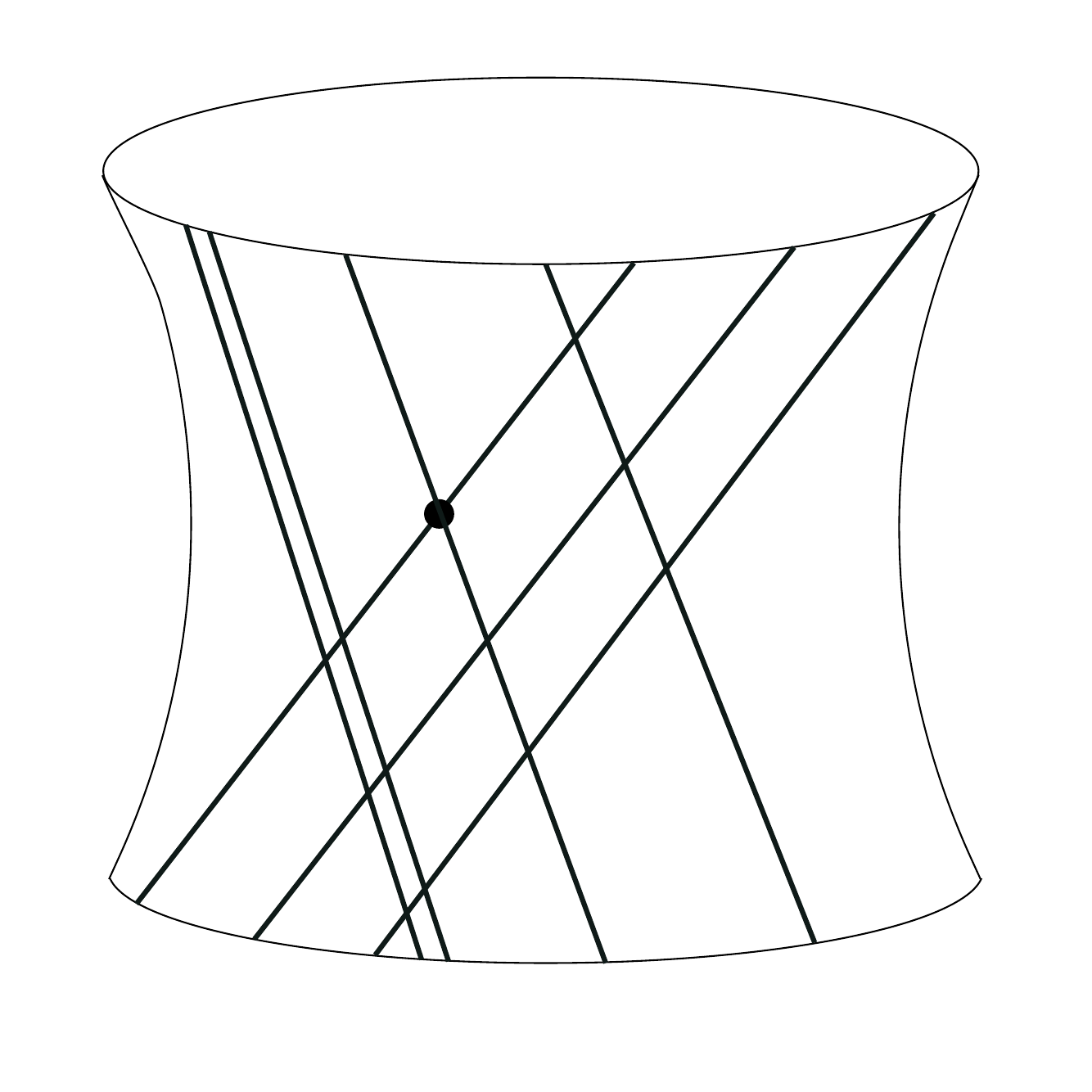}
  \put (10,13) {$L_1^*$} 
  \put (21,10) {$L_2^*$} 
  \put (29,9) {$L_3^*$} 
  \put (36,8) {$L_1$} 
  \put (41,8) {$L_2$}
  \put (54,7) {$L$}
  \put (73,9) {$L_3$}
  \put (40,57) {$p$}
  \put (55,83) {$R$}
\end{overpic}
 \caption{Illustration of Lemma \ref{threeSkewLinesGetYouAnywhere}.}\label{lemma25_fig} 
\end{figure}

\begin{proof}
Let $L_1^*$ be the unique line passing through $p$ that is incident to $L_1,L_2,$ and $L_3$. Select $L_2^*$ and $L_3^*$ to be any two lines incident to $L_1,L_2,$ and $L_3$ with $\dist(L_i^*\cap L_1,L_j^*\cap L_1)\approx_{\eps} 1$ for each pair $i,j$. Apply Lemma \ref{ThreeSkewLinesTwoTransversals} to each of the pairs $L_1^*,L_2^*$; $L_1^*,L_3^*$; and $L_2^*,L_3^*$. Select $L$ to be the unique line passing through $p$ that is incident to $L_1^*,L_2^*,$ and $L_3^*$. The proof that $\angle(L_1^*,L)\approx_{\eps} 1$ is the same as that in Lemma \ref{BoxOfFourLines} (applied to $L_1^*$ and $L_3^*$).
\end{proof}
\begin{lem}\label{CurvatureAtAPoint}
Let $L_1,L_2,$ and $L_3$ be three lines that intersect the unit ball. Suppose that all three lines are pairwise $\geq\delta^{\eps}$ separated and skew. Suppose as well that $\angle(L_1,L_2)\leq 1/10,\ \angle(L_1,L_3)\leq 1/10$.

Let $L$ be a line intersecting each of $L_1,L_2,$ and $L_3$, and suppose $\angle(L,L_i)\geq\delta^{\eps}$, for all $i=1,2,3$. Let $R$ be the regulus containing $L_1,L_2,$ and $L_3$ and let $p=L_1\cap L$. Then the Gauss curvature of $R$ at $p$ satisfies
\begin{equation}\label{boundOnK}
|K_p|\approx_{\eps} 1.
\end{equation}
\end{lem}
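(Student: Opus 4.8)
The plan is to reduce the curvature bound at the point $p$ to a normalized model computation by using the preceding lemmas to place three "good" rulings through a neighborhood of $p$ and then computing the second fundamental form of the regulus from an explicit parametrization. First I would apply Lemma \ref{threeSkewLinesGetYouAnywhere} to the triple $L_1,L_2,L_3$ and the point $p$ (noting that the hypotheses of that lemma are implied by the hypotheses here, since pairwise $\geq\delta^{\eps}$ separated and skew is stronger than what is needed, and the angle conditions let us take $L_1,L_2$ uniformly separated). This produces lines $L_1^*,L_2^*,L_3^*\subset R$ that are pairwise $\approx_\eps 1$ separated and skew, together with a transversal $L\subset R$ through $p$ with $L\cap L_1^*=p$ and $\angle(L,L_1^*)\approx_\eps 1$, and all incidences occurring in $B(0,\delta^{-C\eps})$. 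Since the regulus $R$ is determined by any three of its rulings from one family, $R$ is also the regulus containing $L_1^*,L_2^*,L_3^*$, so we may replace $(L_1,L_2,L_3,L)$ by $(L_1^*,L_2^*,L_3^*,L)$ and assume from now on that all four lines are $\approx_\eps 1$ separated and skew, with all relevant points of incidence inside $B(0,\delta^{-C\eps})$, and that we are computing $|K_p|$ at $p=L_1^*\cap L$.

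Next I would set up coordinates adapted to $p$. After a rotation and translation (which preserve Gauss curvature) one may assume $p$ is the origin, $L_1^*$ is the $x$-axis, and $L$ lies in the $xy$-plane; since $\angle(L,L_1^*)\approx_\eps 1$ the line $L$ has direction $(\cos\theta,\sin\theta,0)$ with $\sin\theta\approx_\eps 1$. A regulus through the origin containing these two lines can be written as the doubly-ruled quadric $z=y\,g(x)/h(x)$ or, more symmetrically, parametrize the ruling family by letting $\gamma(u)$ run along $L_1^*$ (i.e.\ $\gamma(u)=(u,0,0)$) and letting $v$ run along the ruling through $\gamma(u)$; the regulus is $\Phi(u,v)=\gamma(u)+v\, w(u)$, where $w(u)$ is the direction of the ruling through the point $u$ on $L_1^*$, and $w$ is a projective-linear (Möbius) function of $u$ because the rulings of a quadric form a projective line. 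The three lines $L_1^*,L_2^*,L_3^*$ correspond to three values $v_1,v_2,v_3$ of the second parameter, separated by $\approx_\eps 1$ (this is where the $\approx_\eps 1$ separation of the $L_i^*$ enters), which pins down the Möbius map $w(u)$ up to $\approx_\eps 1$ normalization; the transversal $L$ corresponds to the curve $u\mapsto\Phi(u,v_0)$ for a fixed $v_0$ with the point $p$ at $u=0$. One then computes $K_p$ via $K=(LN-M^2)/(EG-F^2)$ from $\Phi_u,\Phi_v,\Phi_{uu},\Phi_{uv},\Phi_{vv}$, all evaluated at $(0,v_0)$. The numerator $LN-M^2$ is, up to bounded factors, governed by $\det(\Phi_u,\Phi_v,\Phi_{uv})^2$ divided by the area element — and the mixed partial $\Phi_{uv}=w'(u)$ is exactly the "speed" at which the ruling direction turns, which by Lemma \ref{speedOfSpannedPlane} (applied to $L_1^*$ and, say, $L_2^*$, giving that the spanned plane turns at rate $\approx_\eps 1$) is $\approx_\eps 1$ and is non-degenerate transverse to the plane spanned by $\Phi_u,\Phi_v$ precisely because $L$ is $\geq\delta^\eps$ skew to the rulings. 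So both the upper bound $|K_p|\lessapprox_\eps 1$ and the lower bound $|K_p|\gtrapprox_\eps 1$ follow once every quantity in the formula is controlled above and below by $\delta^{\pm C\eps}$.

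The main obstacle, and the step that needs genuine care, is the lower bound on $|K_p|$: one must rule out the regulus degenerating to (a neighborhood of) a plane or a cone at $p$, i.e.\ one must show $LN-M^2$ is bounded below, not just above. Concretely this amounts to showing that the three-vector $\Phi_{uv}(0,v_0)=w'(0)$ has a component of size $\approx_\eps 1$ in the direction normal to $\mathrm{span}(\Phi_u,\Phi_v)=\mathrm{span}(L_1^*,L)$ at $p$. This is exactly the statement that $L$ is quantitatively skew to the nearby rulings: if $w'(0)$ were nearly tangent to that plane, then the rulings near $p$ would be nearly coplanar with $L$, contradicting $\angle(L,L_i)\geq\delta^\eps$ together with the $\approx_\eps 1$ separation of the $L_i^*$ supplied by Lemma \ref{ThreeSkewLinesTwoTransversals}. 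I would make this quantitative by writing the normal component of $w'(0)$ as (up to $\approx_\eps 1$ factors) $\dist(L\cap L_2^*,\ \Pi)/\dist(p, L\cap L_2^*)$ where $\Pi$ is the plane $\mathrm{span}(L_1^*,L)$, and then invoking the separation/skewness of $L$ from $L_2^*$ to bound this below. The remaining estimates — that $E,G,EG-F^2$, and all first and second derivatives of $\Phi$ at $(0,v_0)$ are $\approx_\eps 1$ — are routine consequences of the $\approx_\eps 1$ angle and separation bounds already in hand, since everything takes place inside $B(0,\delta^{-C\eps})$ where the Möbius parametrization has bounded coefficients.
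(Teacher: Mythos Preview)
Your overall architecture matches the paper's: parametrize $R$ along one ruling through $p$, write $K=-M^2/(EG-F^2)$ with $L=N=0$ (since $\Phi_{uu}=\Phi_{vv}=0$ for a ruled parametrization), and reduce everything to showing that $M=\Phi_{uv}\cdot n=w'(0)\cdot n$ has size $\approx_\eps 1$. The paper does \emph{not} first invoke Lemma~\ref{threeSkewLinesGetYouAnywhere}; it works directly with $L_1,L_2,L_3$, using the hypotheses $\angle(L_1,L_i)\le 1/10$ only so that each $L_i$ can be written as $(a_i,b_i,0)+\RR(c_i,d_i,1)$ with bounded coefficients. The core of the paper's argument is then an explicit algebraic identity: with $X_{ij}=\det\bigl(\begin{smallmatrix}a_i-a_j&b_i-b_j\\c_i-c_j&d_i-d_j\end{smallmatrix}\bigr)$ (six times a tetrahedron volume, hence $\approx_\eps 1$ by the separation/skewness hypotheses), one computes $K^{1/2}X_{12}X_{13}X_{23}$ and checks that it equals a product of quantities each $\approx_\eps 1$. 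The last of these, that the normal components $v_1,w_1$ of the directions of $L_2,L_3$ are $\approx_\eps 1$, is obtained by choosing a second transversal $L^*$ that is $\approx_\eps 1$ skew to $L$ (via Lemma~\ref{ThreeSkewLinesTwoTransversals}) and observing that $L_2$ runs from a point on $L$ (in the tangent plane) to a point on $L^*$ (at distance $\approx_\eps 1$ from the tangent plane) over a length $\approx_\eps 1$.

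Your proposal has a genuine gap at exactly this last step. The formula you wrote for the normal component of $w'(0)$, namely $\dist(L\cap L_2^*,\Pi)/\dist(p,L\cap L_2^*)$, is identically zero: the point $L\cap L_2^*$ lies on $L\subset\Pi$. What you actually need is that, as $u$ varies, the intersection point $q(u)$ of the ruling through $(u,0,0)$ with $L_2^*$ leaves the plane $\Pi$ at rate $\approx_\eps 1$; this requires both that $q(u)$ moves along $L_2^*$ at speed $\approx_\eps 1$ and that the direction of $L_2^*$ has a component $\approx_\eps 1$ out of $\Pi$. The second of these is exactly the paper's ``$v_1\approx_\eps 1$'' step and is \emph{not} a consequence of Lemma~\ref{speedOfSpannedPlane}: that lemma controls the rotation of the plane spanned by $(u,0,0)$ and $L_2^*$, which is a different plane from the tangent plane of $R$ at $(u,0,0)$, and the ruling direction $w(u)$ lying in a rotating plane does not by itself force $w(u)$ to acquire a normal component. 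The fix is precisely the auxiliary-transversal argument the paper uses; once you insert it, your geometric route and the paper's algebraic route become equivalent.
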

\begin{proof}
After applying a rigid transformation, we can assume that $L_1$ is the $z$ axis, $p$ is the origin, and $T_pR$ is the $yz$ plane. After re-scaling by a factor of $O(1)$, we can also assume that $L_1,L_2,$ and $L_3$ still intersect the unit ball. Let $\theta=\angle(L_1^*,L)$, so $L=(0, u \sin \theta,u \cos \theta)$. Then the lines $L_1^*,L_2^*,L_3^*$ have the form
\begin{equation}\label{threeLines}
\begin{split}
&L_1:\ (0,0,0) + \RR(0, 0, 1),\\
&L_2:\ (0, u_1 \sin \theta, u_1 \cos \theta)  + \RR (v_1,v_2,v_3),\\
&L_3:\ (0, u_2 \sin \theta, u_2 \cos \theta)  + \RR (w_1,w_2,w_3).
\end{split}
\end{equation}
We will call these three lines the ``generators'' of the regulus.

Next, we will calculate the function $\big(y_1(s),y_2(s),y_3(s)\big)$ so that the line through $(0,0,s)$ in the direction $\big(y_1(s),y_2(s),y_3(s)\big)$ intersects the second and third generators, i.e.
\begin{align*}
&\big(y_1(s),y_2(s),y_3(s)\big)\in \operatorname{span}\Big((0, u_1 \sin \theta,u_1 \cos \theta  - s),\ (v_1,v_2,v_3)\Big),\\
&\big(y_1(s),y_2(s),y_3(s)\big)\in \operatorname{span}\Big( (0, u_2 \sin \theta, u_2 \cos \theta - s),\ (w_1,w_2,w_3)\Big).
\end{align*}
We obtain
$$
(y_1,y_2,y_3) = \Big( (0,\ u_1 \sin \theta,\ u_1 \cos \theta  - s) \times (v_1,v_2,v_3)\Big)  \times
                 \Big(0,\ u_2 \sin \theta,\ u_2 \cos \theta - s) \times (w_1,w_2,w_3)\Big).
$$
The Gauss curvature at the origin is given by
$$
K=  -\frac{ (y_1^{\prime}(0))^2}{(y_2(0))^2}.
$$
This is because we can parameterize the surface as
$$
r(s,t)=(0,0,s) + t \big(y_1(s),\ y_2(s),\ y_3(s)\big).
$$
The Gauss curvature at the origin is then given by
$$
K=\frac{LN-M^2}{EG - F^2},
$$
where $(E,F,G)$ is the first fundamental form, and $(L,M,N)$ is the second fundamental form, i.e.
\begin{align*}
L&= r_{ss} \cdot n,\\
M&= r_{st} \cdot n,\\
N&= r_{tt} \cdot n,
\end{align*}
where $n$ is the normal vector of $R$ at the origin.

At the origin, $s=t=0$, so we have $r_{ss}=r_{tt} = 0$, and $r_{st} =\big(y_1^{\prime}(0),y_2^{\prime}(0),y_3^{\prime}(0)\big)$. Since $n= (1,0,0)$, we have $M= y_1^{\prime}(0)$. Similarly $E=1$, $G=(y_1(0))^2 + (y_2(0))^2 + (y_3(0))^2$ and $F = y_3(0)$, but since $y(0)$ is in the direction $(0, \sin \theta,\cos \theta)$ we have that $y_1(0)=0$.

Each of the lines $L_1,L_2,$ and $L_3$ from \eqref{threeLines} can be written in the form $L_i = (a_i, b_i, 0) + \RR(c_i, d_i,1).$ Define
$$
X_{ij}=\left|\begin{array}{ll}
a_i - a_j &    b_i - b_j\\
c_i - c_j &    d_i - d_j
\end{array}\right|.
$$

Observe that $|X_{ij}|$ is six times the volume of the tetrahedron spanned by the points $ (a_i, b_i, 0),\ (a_i + c_i, b_i + d_i, 1),\ (a_j, b_j, 0),\ (a_j + c_j, b_j + d_j, 1)$. Since $L_i$ and $L_j$ are $\geq\delta^{\eps}$ separated and skew,
\begin{equation}\label{XijIsOne}
|X_{ij}|\approx_{\eps} 1.
\end{equation}

The values of $(a_i,b_i,c_i,d_i)$ associated to the lines $L_1,L_2,$ and $L_3$ are as follows.

\begin{align*}
&L_1:\ \big(0,\ 0,\ 0,\ 0\big),\\
&L_2:\ \big(-(v_1/v_3)u_1\cos\theta,\ u_1\sin\theta- (v_2/v_3)u_1\cos\theta,\ v_1/v_3,\ v_2/v_3\big),\\
&L_3:\ \big(-(w_1/w_3)u_2\cos\theta,\ u_1\sin\theta- (w_2/w_3)u_2\cos\theta,\ w_1/w_3,\ w_2/w_3\big).
\end{align*}

The matrix determinants $X_{ij}$ are given by

\begin{align*}
X_{12} &= (u_1 v_1 / v_3) \sin \theta,\\
X_{13} &= (u_2 w_1 / w_3) \sin \theta,\\
X_{23} & = (u_1-u_2)(v_3w_1 \sin \theta -v_2w_1 \cos \theta - v_1w_3 \sin \theta + v_1w_2 \cos \theta) / v_3w_3.
\end{align*}

A computation shows that
$$
K^{1/2} X_{12} X_{13} X_{23} = (u_1-u_2)^2 v_1^2 w_1^2 \sin^2 \theta / (v_3^2 w_3^2).
$$
As noted above, $|X_{ij}|\approx_{\eps} 1$ whenever $i\neq j$.  Since $L_2$ and $L_3$ are $1$ separated with error $\delta^{\eps}$, $(u_1-u_2)\approx_{\eps} 1$. Since $\angle(L_1,L_2)\leq 1/10$, we have $v_3\sim 1$ and similarly $w_3\sim 1$.

We will now show that $v_1\approx_{\eps} 1$ and $w_1\approx_{\eps} 1$. Select another line $L^*$ that is incident to $L_1,L_2,$ and $L_3$ and is $\approx_{\eps} 1$ separated and skew to $L$ (Lemma \ref{ThreeSkewLinesTwoTransversals} guarantees that this is possible). Since $L$ lies in the $yz$ plane, we must have that $L^*$ makes an angle $\approx_{\eps} 1$ with the $yz$ plane. Furthermore, $L_2$ and $L_3$ intersect $L^*$ at locations that have distance $\approx_{\eps} 1$ from the origin, and thus the $x$--coordinate of these intersections has magnitude $\approx_{\eps} 1$. Since the $x$ coordinate of $L_2\cap L$ and $L_3\cap L$ is $0$, we conclude that $v_1\approx_{\eps} 1$ and $w_1\approx_{\eps} 1$. Thus we conclude that $|K|\approx_{\eps} 1$, which establishes \eqref{boundOnK}.
\end{proof}

\begin{lem}\label{GaussCurvatureOfRegulus}
Let $L_1,L_2,$ and $L_3$ be three lines that intersect the unit ball. Suppose that all three lines are pairwise $\geq\delta^{\eps}$ skew; $L_1,L_2$ are uniformly separated with error $\delta^{\eps}$; and $L_1,L_3$ and $L_2,L_3$ are $1$ separated with error $\delta^{\eps}$.

Let $R$ be the regulus containing $L_1,L_2,$ and $L_3$. Then for all $p\in R\cap B(0,1)$, we have the bound
$$
|K_p|\approx_{\eps} 1.
$$
\end{lem}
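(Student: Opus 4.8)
The plan is to establish the bound one point at a time by combining Lemma \ref{threeSkewLinesGetYouAnywhere} with Lemma \ref{CurvatureAtAPoint}. Observe first that the hypotheses of Lemma \ref{threeSkewLinesGetYouAnywhere} are exactly the hypotheses of the present lemma. So fix $p\in R\cap B(0,1)$ and apply Lemma \ref{threeSkewLinesGetYouAnywhere}: it produces lines $L_1^*,L_2^*,L_3^*,L\subset R$ with $L\cap L_1^*=p$, with $L_1^*,L_2^*,L_3^*$ pairwise $\approx_\eps 1$ separated and skew, with $\angle(L,L_1^*)\approx_\eps 1$, with $L$ meeting each $L_i^*$ at a point of $B(0,\delta^{-C\eps})$, and — from its construction — with the original lines $L_1,L_2,L_3$ transversal to each $L_i^*$. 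After a dilation by an $\approx_\eps 1$ factor, which Lemma \ref{CurvatureAtAPoint} tolerates (at the cost of the usual $\eps$-losses) and which alters $|K_p|$ by only an $\approx_\eps 1$ factor, we may assume this configuration lies in the unit ball. The goal is then to apply Lemma \ref{CurvatureAtAPoint} to $L_1^*,L_2^*,L_3^*$ with transversal $L$; since $p=L_1^*\cap L$, this computes the Gauss curvature of $R$ at $p$ and returns $|K_p|\approx_\eps 1$.

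Two hypotheses of Lemma \ref{CurvatureAtAPoint} must first be arranged. The first is $\angle(L,L_i^*)\geq\delta^\eps$ for every $i$, which Lemma \ref{threeSkewLinesGetYouAnywhere} gives only for $i=1$. Since $L_1$ and $L$ are two transversals of the triple $L_1^*,L_2^*,L_3^*$, and that triple is pairwise $\approx_\eps 1$ separated and skew, Lemma \ref{ThreeSkewLinesTwoTransversals} — applied with $L_2^*$, and then with $L_3^*$, in the role of the first of the three lines and with $L_1,L$ as the two transversals — yields $\angle(L,L_2^*)\approx_\eps 1$ and $\angle(L,L_3^*)\approx_\eps 1$.

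The real obstacle is the remaining hypothesis, $\angle(L_1^*,L_2^*)\leq 1/10$ and $\angle(L_1^*,L_3^*)\leq 1/10$: Lemma \ref{threeSkewLinesGetYouAnywhere} only furnishes $\angle(L_1^*,L_j^*)\approx_\eps 1$, which permits these angles to be as large as $\pi/2$. One circumvents this by re-entering the proof of Lemma \ref{threeSkewLinesGetYouAnywhere} and choosing the auxiliary generators $L_2^*,L_3^*$ to lie at small separation from $L_1^*$ along a transversal — small in absolute terms, while still $\gtrapprox_\eps 1$ after a further dilation. The direction of a ruling generator of $R$ turns at rate $\approx|K|^{1/2}$ as one slides the generator along a transversal, so a separation of order $\delta^{C'\eps}$ produces an angle $\lessapprox_\eps\delta^{C'\eps}\ll 1/10$ once $C'$ is large enough, and after dilating by $\delta^{-C'\eps}$ the three generators are again $\approx_\eps 1$ separated and skew. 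Making this quantitative requires an a priori upper bound $|K_q|\lessapprox_\eps 1$ for all $q\in R\cap B(0,\delta^{-C\eps})$, which I would obtain directly from the fact that $R$ is a quadric: writing $R=\{Q=0\}$ with $Q$ quadratic and normalized to have coefficients of size $O(1)$, the hypothesis that $R$ contains three lines that are pairwise $\geq\delta^\eps$ separated and skew forces $Q$ to be $\delta^{O(\eps)}$-nondegenerate — it cannot be $\delta^{O(\eps)}$-close to a cone or a pair of planes, since those quadrics carry no three lines that are pairwise $\geq\delta^\eps$ separated and skew — so that $|\nabla Q|\gtrapprox_\eps 1$ on $B(0,\delta^{-C\eps})$, and the implicit-surface formula for Gauss curvature (a ratio with the second derivatives of $Q$ in the numerator and $|\nabla Q|^4$ in the denominator) gives $|K|\lessapprox_\eps 1$. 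This a priori bound, together with the dilation bookkeeping, is the one genuinely new ingredient; with it in place the lemma is a direct consequence of Lemmas \ref{ThreeSkewLinesTwoTransversals}, \ref{threeSkewLinesGetYouAnywhere}, and \ref{CurvatureAtAPoint} as described above.
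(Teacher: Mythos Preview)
Your core approach matches the paper's exactly: the paper's entire proof reads ``Apply Lemma \ref{threeSkewLinesGetYouAnywhere} and then Lemma \ref{CurvatureAtAPoint}.'' You are being more careful than the paper in checking the hypotheses of Lemma \ref{CurvatureAtAPoint}, and you correctly flag that the condition $\angle(L_1^*,L_j^*)\leq 1/10$ is not supplied by Lemma \ref{threeSkewLinesGetYouAnywhere} (which only gives $\angle(L_1^*,L_j^*)\approx_\eps 1$, hence potentially near $\pi/2$). The paper simply does not address this. Your resolution of the other missing hypothesis, $\angle(L,L_i^*)\geq\delta^\eps$ for $i=2,3$, via Lemma \ref{ThreeSkewLinesTwoTransversals} with $L_1$ and $L$ as the two transversals, is clean and correct.

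Your fix for the $1/10$ condition --- obtain an a priori upper bound $|K|\lessapprox_\eps 1$, then choose $L_2^*,L_3^*$ at separation $\delta^{C'\eps}$ from $L_1^*$ along $L_1$ and rescale --- is the right idea, but the a priori bound is the one place where your argument is sketchy. Your claim that three pairwise $\geq\delta^\eps$ separated-and-skew lines in $Z(Q)$ force $|\nabla Q|\gtrapprox_\eps 1$ on $Z(Q)\cap B(0,\delta^{-C\eps})$ is morally right (a quadric with a singular point is a cone or a plane-pair, neither of which carries three pairwise skew lines), but making this quantitative requires a stability argument you have not given. Note also that Lemma \ref{regulusThreeLinesFarFromCommonPlane}, which proves exactly this gradient lower bound, carries the extra hypothesis that $v(L_3)$ makes angle $\geq\delta^\eps$ with the plane spanned by $v(L_1),v(L_2)$ --- a hypothesis the present lemma lacks --- so you cannot simply cite it; your direct argument must also cover the near-paraboloid case. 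None of this is fatal, but it is the step that would need the most work to make fully rigorous.
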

\begin{proof}
Apply Lemma \ref{threeSkewLinesGetYouAnywhere} and then Lemma \ref{CurvatureAtAPoint}.
\end{proof}
\subsection{Quantitatively non-degenerate reguli}\label{quantNonDegenerateRegSection}
A regulus in $\RR^3$ is specified by nine parameters, and a regulus can degenerate in many different ways as these parameters vary. A particularly subtle example is the following. As discussed at the beginning of Section \ref{reguliiSection}, a regulus can be expressed as the union of all lines that intersect three skew lines $L_1,L_2,$ and $L_3$. If $L_1,L_2$, and $L_3$ are all parallel to a common plane, then the regulus is called a hyperbolic paraboloid. Otherwise, the regulus is called a hyperboloid. We will generally work with the latter type of regulus. Since we require quantitative estimates about set of tubes, we will sometimes need to quantify the extent to which a regulus resembles a hyperbolic paraboloid versus a hyperboloid.

In the lemma below, we say a polynomial $Q\in\RR[x,y,z]$ is monic if each coefficient has magnitude at most one, and at least one coefficient has magnitude one. . We define $Z(Q) = \{(x,y,z)\in\RR^3\colon Q(x,y,z)=0\}$ to be the zero-locus of $Q$. 

\begin{lem}\label{regulusThreeLinesFarFromCommonPlane}
Let $L_1,L_2,$ and $L_3$ be three lines that intersect the unit ball. Suppose that $L_1,L_2,$ and $L_3$ are $\geq\delta^{\eps}$ separated and skew. Suppose furthermore that $L_3$ makes an angle $\geq\delta^{\eps}$ with the plane spanned by the vectors $v(L_1)$ and $v(L_2)$ (i.e.~$L_1,L_2$ and $L_3$ are far from being parallel to a common plane, and thus the regulus generated by $L_1,L_2,$ and $L_3$ is far from being a hyperbolic paraboloid). Then there is a monic degree-two polynomial $Q$ that vanishes on $L_1,L_2,$ and $L_3$ that satisfies $|\nabla Q|\approx_{\eps} 1$ on $Z(Q)\cap B(0,1)$. 
\end{lem}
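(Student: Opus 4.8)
The plan is to write down an explicit quadratic polynomial vanishing on $L_1, L_2, L_3$, show that the leading (degree-two) part is nonzero under the "far from a common plane" hypothesis so that we may normalize to a monic polynomial, and then control $|\nabla Q|$ on $Z(Q) \cap B(0,1)$ using the curvature bound from Lemma \ref{GaussCurvatureOfRegulus} together with the quantitative skewness/separation hypotheses. First I would recall the classical fact that three pairwise skew lines determine a unique quadric surface (a regulus), so the space of quadratic polynomials vanishing on $L_1 \cup L_2 \cup L_3$ is one-dimensional; fix a generator $Q_0$. The hypothesis that $L_3$ makes an angle $\geq \delta^\eps$ with $\operatorname{span}(v(L_1), v(L_2))$ means the regulus is quantitatively a hyperboloid rather than a hyperbolic paraboloid, so its degree-two part $Q_0^{(2)}$ is not only nonzero but has a quantitative lower bound: I would make this precise by choosing coordinates adapted to $L_1, L_2, L_3$ (as in the proof of Lemma \ref{CurvatureAtAPoint}) and computing the coefficients of $Q_0$ directly from the parametrizations $L_i = (a_i, b_i, 0) + \RR(c_i, d_i, 1)$, showing that the relevant coefficients of the homogeneous quadratic part are $\approx_\eps 1$ using that the $X_{ij} \approx_\eps 1$ and the angle conditions. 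Dividing $Q_0$ by the largest coefficient of its degree-two part yields a monic $Q$ whose degree-two part has all coefficients $\lessapprox_\eps 1$ and at least one $\approx_\eps 1$.

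Next I would bound $|\nabla Q|$ from above and below on $Z(Q) \cap B(0,1)$. The upper bound is immediate: all coefficients of $Q$ are $\lessapprox_\eps 1$ (the degree-one and degree-zero coefficients are controlled because $Q$ vanishes on lines passing through $B(0,1)$ whose parameters are controlled, again via the $X_{ij}$ and angle bounds), so $|\nabla Q| \lessapprox_\eps 1$ on $B(0,1)$ trivially. The lower bound is the substantive part. Here the idea is that $|\nabla Q|$ vanishing (to first order) at a point $p \in Z(Q)$ would force $Z(Q)$ to be singular at $p$, which is incompatible with the curvature bound $|K_p| \approx_\eps 1$ supplied by Lemma \ref{GaussCurvatureOfRegulus}: a smooth quadric with Gauss curvature bounded away from $0$ and $\infty$ at a point $p$, and whose second fundamental form is controlled, has a nondegenerate Hessian there, and combining the Hessian bound with $Q(p) = 0$ and the geometry of the surface inside $B(0,1)$ forces $|\nabla Q(p)|$ to be $\gtrapprox_\eps 1$. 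Concretely, I would argue by contradiction: if $|\nabla Q(p)|$ were much smaller than any fixed power $\delta^{-C\eps}$, then near $p$ the set $Z(Q)$ would be approximated by the zero set of the quadratic form $\tfrac12 (x-p)^T \nabla^2 Q (x-p)$, a cone, which cannot carry Gauss curvature $\approx_\eps 1$ at scale comparable to $1$ — contradicting Lemma \ref{GaussCurvatureOfRegulus}, which applies since the hypotheses there are exactly our hypotheses (the extra angle conditions $\angle(L_1,L_2), \angle(L_1,L_3) \le 1/10$ needed in Lemma \ref{CurvatureAtAPoint} can be arranged by relabeling and applying a bounded rotation, or by first invoking Lemma \ref{threeSkewLinesGetYouAnywhere} as in the proof of Lemma \ref{GaussCurvatureOfRegulus}).

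The main obstacle I anticipate is making the lower bound on $|\nabla Q|$ genuinely quantitative and uniform over all of $Z(Q) \cap B(0,1)$ rather than just at a single favorable point. The cleanest route is probably to avoid the contradiction argument above and instead directly relate $|\nabla Q(p)|$ to the geometry: for a quadric $Z(Q)$, at a smooth point $p$ one has the exact identity relating the Gauss curvature $K_p$, the gradient $|\nabla Q(p)|$, and the Hessian $\nabla^2 Q$ — roughly $|K_p| = |\det(\text{restricted Hessian})| / |\nabla Q(p)|^4$ up to explicit factors (this is the standard formula for the curvature of an implicitly defined surface). Since $|K_p| \approx_\eps 1$ by Lemma \ref{GaussCurvatureOfRegulus} and the entries of $\nabla^2 Q$ are $\lessapprox_\eps 1$ (bounded coefficients) while the relevant $2 \times 2$ minor of the Hessian restricted to $T_pZ(Q)$ is $\gtrapprox_\eps 1$ (this needs the quantitative-hyperboloid hypothesis, and is where I'd spend the most care, tracking it through the coordinate computation), solving for $|\nabla Q(p)|$ gives $|\nabla Q(p)|^4 \approx_\eps 1$, hence $|\nabla Q(p)| \approx_\eps 1$ as desired. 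The bookkeeping to confirm that the restricted Hessian minor is bounded below — i.e., that $Z(Q)$ is quantitatively non-flat in the directions tangent to itself — is the part most likely to require the explicit coordinate setup of Lemma \ref{CurvatureAtAPoint} and a careful use of the angle hypothesis on $L_3$ versus $\operatorname{span}(v(L_1), v(L_2))$.
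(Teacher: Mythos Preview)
Your approach is genuinely different from the paper's, but there is a circularity in the key step that you have not resolved.

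The paper never invokes Lemma \ref{GaussCurvatureOfRegulus}. Instead it builds, in several explicit steps, an affine map $T(x)=Ax+b$ with $|A_{ij}|\lessapprox_\eps 1$, $|\det A|\approx_\eps 1$, $|b|\lessapprox_\eps 1$, that carries $L_1,L_2,L_3$ to three fixed skew lines $L_1^*=\RR(1,0,0)$, $L_2^*=(0,1,0)+\RR(0,0,1)$, $L_3^*=(1,0,1)+\RR(0,1,0)$. For those three lines one writes down the explicit quadric $Q_0=xy-xz-yz+z$, checks by hand that $|\nabla Q_0|\gtrsim 1$ on $Z(Q_0)$ inside any bounded ball, and then pulls $Q_0$ back through $T$. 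The hypothesis that $v(L_3)$ is $\geq\delta^\eps$--transverse to $\operatorname{span}(v(L_1),v(L_2))$ is used precisely to make the determinant of one of the intermediate linear maps $\approx_\eps 1$.

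Your route through the implicit--surface curvature formula runs into the following difficulty. The formula you invoke reads (in an orthonormal tangent frame)
\[
K_p=\frac{\det\bigl(\nabla^2 Q\big|_{T_pZ(Q)}\bigr)}{|\nabla Q(p)|^2}.
\]
Knowing $|K_p|\approx_\eps 1$ therefore says exactly that $|\nabla Q(p)|^2$ and $\bigl|\det(\nabla^2 Q|_{T_p})\bigr|$ are comparable; it does \emph{not} bound either one from below. To extract $|\nabla Q(p)|\gtrapprox_\eps 1$ you must independently prove $\bigl|\det(\nabla^2 Q|_{T_p})\bigr|\gtrapprox_\eps 1$ for every $p\in Z(Q)\cap B(0,1)$, and that is not free: even when the full Hessian has all eigenvalues $\approx_\eps 1$, there exist $2$--planes on which its restriction is degenerate (for $Q_0$ above, the plane $\{x+y=0\}$ gives restricted determinant $0$). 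So one must argue that the \emph{tangent} planes of $Z(Q)$ inside $B(0,1)$ quantitatively avoid these bad directions --- but that statement is equivalent, via the same formula, to the gradient lower bound you are trying to prove. Your ``cone'' contradiction argument has the same issue: at the specific point $p$ where $|\nabla Q(p)|$ is hypothetically small, the restricted Hessian determinant is correspondingly small, so the curvature there can perfectly well remain $\approx_\eps 1$.

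You note that the Hessian--minor lower bound could be obtained by a coordinate computation using the $X_{ij}$'s and the angle hypothesis. That is true, and once you carry it out you have essentially reproduced the paper's argument without the normalization language --- at which point the appeal to Lemma \ref{GaussCurvatureOfRegulus} buys nothing. The paper's affine--normalization proof is shorter because it reduces the uniform gradient bound to a single explicit polynomial rather than an estimate that must be verified at every point of the surface.
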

\begin{proof}
We will describe an affine transformation $T\colon\RR^3\to\RR^3$ that sends $L_1,L_2,$ and $L_3$ to the lines $L_1^*=\RR(1,0,0)$, $L_2^*=(0,1,0)+\RR(0,0,1)$, and $L_3^*=(1,0,1)+\RR(0,1,0)$, respectively. This affine transformation is of the form $T(x)= Ax+b$, where the entries of $A$ have magnitude $\lessapprox_{\eps}1$, the determinant of $A$ has magnitude $\approx_{\eps}1$, and $|b|\lessapprox_{\eps}1$.

There is an explicit monic degree-two polynomial $Q_0$ that vanishes on $L_1^*,L_2^*$ and $L_3^*$ and that satisfies $|\nabla Q|\gtrapprox_{\eps}1$ on $Z(Q_0)\cap B(0,\delta^{-C\eps})$, where the implicit constant in the $\lessapprox_{\eps}$ notation depends on the constant $C$. Thus the pull-back of $Q_0$ by $T$ is a degree-two polynomial $Q_1$ whose largest coefficient has magnitude $\approx_{\eps}1$ and whose gradient has magnitude $\approx_{\eps} 1$ on $Z(Q_1)\cap B(0,1)$. Multiplying this polynomial by a constant of magnitude $\approx_{\eps} 1$, we obtain the desired monic polynomial $Q$. In the remainder of this proof we will describe the affine transformation $T$ in greater detail.

Applying a rotation and a translation of the form $(x,y,z)\mapsto (x,y,z)+w$ with $|w|\sim 1$, we can assume that $L_1$ is the $x$ axis $\RR(1,0,0)$; $L_2$ is parallel to the $xz$ plane, and $L_2$ passes through the point $(0, y_0,0)$, with $|y_0|\sim 1$. Furthermore, $\angle(L_1,L_2)\approx_{\eps} 1$. Applying a linear transformation with determinant $\approx_{\eps} 1$ that fixes $L_1$, we can assume that $L_2 = (0,1,0)+\RR(0,0,1)$. After applying this transformation, $L_1,L_2,$ and $L_3$ are still $1$ separated with error $\delta^{C_0\eps}$; $L_1,L_2,$ and $L_3$ are still $\approx_{\eps}1$ skew; $L_3$ makes an angle $\approx_{\eps}1$ with the $xz$ plane; and $L_3$ intersects $B(0, \delta^{-C_1\eps})$, where $C_1$ is an absolute constant.

Thus $L_3$ contains a point $(x_0,0,z_0)$ with $|z_0|\approx_{\eps} 1$ and $|x_0|\lessapprox_{\eps} 1$. Observe that any affine transformation of the form
$$
(x,y,z)\mapsto (ax+b(1-y),\ y,\ cz+dy)
$$
fixes the lines $L_1$ and $L_2$. Let $T_1$ be the transformation of the above type with $a=1,\ b=1-x_0,\ c=1/z_0,\ d=1$. Then $T(x_0,0,z_0)=(1,0,1)$, so $(1,0,1)\in T_1(L_3)$. Abusing notation slightly, we will apply the transformation $T_1$ to the lines $L_1,L_2,$ and $L_3$, and call their images $L_1,L_2$, and $L_3$ as well. Note that $L_3$ still makes an angle $\approx_{\eps} 1$ with the $xz$ plane. Thus we can select a point $(x_1,1/2,z_1)\in L_3$ with $|x_1|,|z_1|\lessapprox_{\eps} 1$.

Furthermore, since $L_3$ is $\approx_{\eps} 1$ skew to $L_2=(0,1,0)+\RR(0,0,1)$, we have $|x_1-1/2|\gtrapprox_{\eps} 1,$ i.e. $|x_1-1/2|\geq \delta^{C_2\eps}$ for some absolute constant $C_2$. If not, then $L_3$ must lie in the $\delta^{C_2\eps}$ neighborhood of the plane $(1,0,1)+(1,1,0)^\perp = (1,0,0)+(1,1,0)^\perp$. This plane contains the line $L_2$. Thus if $C_2$ is chosen sufficiently large, then this contradicts the assumption that $L_2$ and $L_3$ are $\approx_{\eps}1$ skew.

Now consider the map
$$
(x,y,z)\mapsto(ax+(1-a)(1-y),\ y,\ z+dy).
$$
This map fixes the lines $L_1$ and $L_2$, and the point $(1,0,1)$. We wish to select a map of the above form so that the image of $L_3$ under this map points in the $(0,1,0)$ direction, i.e.
\begin{align*}
&(1,0,0)\cdot\Big((ax_1+(1-a)(1-1/2), 1/2, z_1+d(1/2))-(1,0,1)\Big)=0,\\
&(0,0,1)\cdot\Big((ax_1+(1-a)(1-1/2), 1/2, z_1+d(1/2))-(1,0,1)\Big)=0.
\end{align*}

We conclude that $a=1/(2x_1-1),\ d = 2-2z_1$. This linear map has determinant
$$
\left|\begin{array}{lll}a&a&0\\0&1&0\\0&d&1\end{array}\right|=a=1/(2x_1-1)\approx_{\eps} 1,
$$
which completes the proof.
\end{proof}
\subsection{Regulus strips}

\begin{defn}\label{defnOfNonDegenerate}
Let $R$ be a regulus that intersects the unit ball. We say that $R$ is $\delta^{\eps}$ non-degenerate if there are three lines contained in $R$ that intersect the unit ball and that are pairwise $\geq\delta^\eps$ separated and skew. By Lemma \ref{GaussCurvatureOfRegulus}, the Gauss curvature of such a regulus has magnitude $\approx_{\eps} 1$ on $B(0,1)\cap R$. If the value of $\eps$ is apparent from context, we may say that a regulus is ``non-degenerate'' if it is $\delta^{C\eps}$ non-degenerate for some absolute constant $C$.

\end{defn}

\begin{defn}\label{algRegulusDefn}
Let $\tube_1,\tube_2,\tube_3$ be three tubes. The set $R_{\tube_1,\tube_2,\tube_3}$ is the union of all lines in $\RR^3$ that intersect the lines coaxial with $\tube_1,$ $\tube_2$, and $\tube_3$. In practice these lines will always be skew, so $R_{\tube_1,\tube_2,\tube_3}$ will always be a regulus.
\end{defn}

\begin{defn}\label{regulusStripDefn}
A $\delta^{\eps}$ regulus strip is the intersection of the $\delta$-neighborhood of a $\delta^{\eps}$ non-degenerate regulus with the $\delta^{1/2}$ neighborhood of a line in the ruling of the regulus. If $R_{\tube_1,\tube_2,\tube_3}$ is a regulus, a ``regulus strip'' will always refer to the $\delta^{1/2}$ neighborhood of a line in the ruling of the regulus that is dual to the lines coaxial with $\tube_1,\tube_2,$ and $\tube_3$.

Heuristically, one should think of a regulus strip as a rectangular prism of dimensions $1\times\delta^{1/2}\times\delta$ that has been ``twisted'' along its long axis. A regulus strip can also be thought of as a union of $\delta^{-1/2}$ disjoint rectangular prisms, each of dimensions $\delta^{1/2}\times\delta^{1/2}\times\delta$. Each of these prisms has a normal vector (defined up to uncertainty $\delta^{1/2})$, and these normal vectors point in $\gtrapprox_{\eps}\delta^{1/2}$--separated directions.
\end{defn}

\section{The structure of Besicovitch sets near dimension $5/2$}

\begin{defn}\label{defnEpsExtremalKakeyaSet}
We say a set $(\tubes,Y)$ of $\delta$--tubes satisfying the Wolff axioms is $\eps$--extremal if
\begin{equation}\label{mostlyFull}
\sum_{\tube\in\tubes}|Y(\tube)|\geq\delta^{\eps},
\end{equation}
and
\begin{equation}\label{smallVolume}
\Big|\bigcup_{\tubes}Y(\tube)\Big|\leq\delta^{1/2-\eps}.
\end{equation}
Observe that \eqref{mostlyFull} implies that $|\tubes|\gtrsim\delta^{-2+\eps}$. On the other hand, since $\tubes$ satisfies the Wolff axioms we must have $|\tubes|\lesssim \delta^{-2}$.
\end{defn}

\subsection{Wolff's Hairbrush estimate and its consequences}
In this section we will show that every $\eps$--extremal set of tubes must have a certain structure. The main tool we will use is Wolff's maximal function estimate at dimension $5/2$. The following theorem is a direct consequence of Theorem 1 from \cite{wolff}.


\begin{thm}\label{WolffBd}
 Let $(\tubes,Y)$ be a set of $\delta$--tubes that satisfy the Wolff axioms and suppose that $\sum_{T\in\tubes}|Y(\tube)|\geq \lambda \delta^{2}|\tubes|$ for some $\delta\leq\lambda\leq 1$. Then for each $s>0$, there is a constant $C_s$ so that
 \begin{equation}\label{Wolff52BoundEqn}
  \Big|\bigcup_{\tube\in\tubes}Y(\tube)\Big|\geq C_s \lambda^{5/2} \delta^{1/2+s} (\delta^{2}|\tubes|)^{3/4}.
 \end{equation}
\end{thm}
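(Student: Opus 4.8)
The plan is to follow Wolff's hairbrush argument from \cite{wolff}, with two routine modifications: an extra dyadic pigeonhole to accommodate the shadings $Y(\tube)$, and the observation that the argument uses only the Wolff axioms, not full direction separation. Write $V=\big|\bigcup_{\tube\in\tubes}Y(\tube)\big|$ and $\nu=\delta^{2}|\tubes|$; we may assume $V\le\delta^{1/2}$. Every pigeonholing step below costs at most a power of $\log(1/\delta)$, and since $\log(1/\delta)\le\delta^{-s}$ for small $\delta$ these losses are absorbed into the factor $\delta^{s}$ in the conclusion; I suppress them.

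First I would carry out the standard reductions. A two-ends reduction on each tube lets us assume no $Y(\tube)$ is concentrated near an endpoint of $\tube$. Dyadic pigeonholing on $|Y(\tube)|$ yields a subfamily $\tubes'\subseteq\tubes$ with $|Y(\tube)|\approx\beta\delta^{2}$ for a fixed dyadic $\beta\gtrsim\lambda$, and a further pigeonhole yields a set $E$ with $|E|\approx V$ on which the multiplicity $x\mapsto\#\{\tube\in\tubes':x\in Y(\tube)\}$ is comparable to a single value $\mu$. These give the book-keeping relation
\begin{equation}\label{bookkeeping}
\mu\,V\ \approx\ \beta\,\delta^{2}|\tubes'|\ \gtrsim\ \lambda\,\nu .
\end{equation}
Next I would extract a hairbrush and decompose it. By popularity there is a tube $\tube_{0}\in\tubes'$, with core line $\ell_{0}$, almost all of whose shading consists of points of multiplicity $\approx\mu$. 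Let $H$ be the set of tubes of $\tubes'$ whose shading meets $Y(\tube_{0})$. Counting incidences between $Y(\tube_{0})$ and the tubes of $H$, and using the Wolff axiom for a thin prism around $\ell_{0}$ to set aside the tubes nearly parallel to $\ell_{0}$, one finds a dyadic angle $\theta$ at which the sub-hairbrush $H_{\theta}$ (tubes meeting $Y(\tube_{0})$ at angle $\approx\theta$) has size $\gtrsim\mu\beta\theta/\delta$. Partition $H_{\theta}$ by which $2$-plane through $\ell_{0}$ contains the tube, $H_{\theta}=\bigsqcup_{\Pi}H_{\Pi}$. Inside the slab $N_{C\delta}(\Pi)$ the tubes of $H_{\Pi}$ form an essentially planar family with $\delta$-separated directions, all crossing $\tube_{0}$, each with shading $\approx\beta\delta^{2}$; C\'ordoba's $L^{2}$ inequality (Cauchy--Schwarz together with the coplanar overlap bound) then gives
$$
\Big|\bigcup_{\tube\in H_{\Pi}}Y(\tube)\Big|\ \gtrsim\ \beta^{2}\,|H_{\Pi}|\,\delta^{2},
$$
and, after a further popularity step, a definite proportion of this set lies at distance $\gtrsim\delta^{Cs}$ from $\ell_{0}$.

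Finally I would reassemble. The slabs $N_{C\delta}(\Pi)$ around distinct planes through $\ell_{0}$ overlap only near $\ell_{0}$: a point at distance $r$ from $\ell_{0}$ lies in $\lesssim\theta/r$ of them, so the overlap is $O(1)$ at distance $\sim\theta$. Summing the per-plane lower bounds over $\Pi$, dividing by this overlap, inserting $|H_{\theta}|\gtrsim\mu\beta\theta/\delta$, combining with \eqref{bookkeeping} to eliminate $\mu$, using $\beta\gtrsim\lambda$, and optimising the radius used in the two-ends step while tracking the dependence on $\beta$ and $\theta$, I expect to reach $V\gtrsim_{s}\lambda^{5/2}\,\delta^{1/2+s}\,\nu^{3/4}$. (As a consistency check: in the model case $\lambda,\nu,\beta,\theta\approx 1$ this chain produces $V\gtrsim\beta^{2}\delta^{2}|H_{\theta}|\approx\delta^{2}\cdot\mu/\delta\approx\delta^{1/2}$, as it must.)

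The hard part will be the reassembly near $\ell_{0}$ together with the handling of the angular scale $\theta$. This is precisely the step where the hairbrush --- whose central object is the \emph{line} $\ell_{0}$, carrying a one-parameter family of mutually transverse planes --- beats the elementary bush argument, whose central object is a single point and which only yields the exponent $2$. If the planar sets produced above were allowed to concentrate near $\ell_{0}$, the overlap there could be as large as $\theta/\delta$-fold and the gain would be lost; ruling this out for every $\theta$ (including small $\theta$, where $|H_\theta|$ degrades) is what forces the two-ends reduction and pins down the exponents $5/2$ and $3/4$. A secondary issue is that, with only the Wolff axioms, a single direction may be repeated (up to $\delta^{-1}$ times), so the incidence count and the per-plane count must be run through the prism axiom rather than through counting distinct directions; as in \cite{wolff}, this costs only powers of $\log(1/\delta)$.
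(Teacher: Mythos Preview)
The paper does not supply its own proof of this theorem; it is quoted from \cite{wolff} and used as a black box. Your sketch is the standard hairbrush argument of \cite{wolff} (two-ends reduction, dyadic uniformisation of the shading and of the multiplicity, selection of a popular stem $\tube_0$, angular and planar decomposition of its hairbrush, C\'ordoba's $L^2$ bound within each planar slab, and reassembly using bounded overlap of the slabs away from $\ell_0$), together with the correct observation that only the Wolff axioms --- not full direction separation --- are used. So your approach and ``the paper's approach'' coincide: both defer to Wolff.

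One caution on the bookkeeping: your final paragraph asserts that the chain yields the exponents $\lambda^{5/2}$ and $(\delta^{2}|\tubes|)^{3/4}$ after ``optimising the radius used in the two-ends step,'' but you do not actually carry this out, and the model-case check you give ($\lambda,\nu,\beta,\theta\approx 1$) does not test those exponents. In fact the naive version of the chain you wrote (summing $\beta^{2}\delta^{2}|H_\Pi|$ over $\Pi$ and plugging in $|H_\theta|\gtrsim\mu\beta\theta/\delta$ together with \eqref{bookkeeping}) gives $V^{2}\gtrsim\beta^{3}\lambda\delta\nu$, which is weaker in the $\nu$ exponent than what is claimed. Getting $\nu^{3/4}$ rather than $\nu^{1/2}$ requires one more ingredient that you have not made explicit: an iteration (or an $L^{5/2}$ duality / bilinear-to-linear step) on top of the single-hairbrush bound, as in Wolff's original paper. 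This is standard and is in \cite{wolff}, but since you flagged the reassembly as ``the hard part,'' be aware that the precise exponent $3/4$ does not drop out of one pass of the hairbrush alone.
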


A key lemma used to prove Wolff's result shows that the ``hairbrush'' of a tube has large volume. The following lemma is a slight variant of Lemma 3.4 from \cite{wolff}.

\begin{lem}\label{WolffHairbrushBd}
 Let $(\tubes,Y)$ be a set of $\delta$--tubes that satisfy the Wolff axioms and suppose that $\sum_{T\in\tubes}|Y(\tube)|\geq \lambda \delta^{2}|\tubes|$ for some $\delta\leq\lambda\leq 1$. Let $L\subset\RR^3$ be a line and suppose that each tube  $\tube\in\tubes$ intersects $L$ and satisfies $\angle(v(L), v(\tube))\geq\delta^\eps$. Then for each $s>0$, there is a constant $C_s$ so that
$$
\Big|\bigcup_{\tube\in\tubes}Y(\tube)\Big|\gtrapprox_{\eps}C_s\lambda^{5/2}\delta^{2+s}|\tubes|.
$$
\end{lem}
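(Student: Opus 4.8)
The statement is a (mild restatement of a) theorem of Wolff \cite{wolff}, so the task is to reproduce Wolff's hairbrush argument, checking that it only uses the Wolff axioms (Definition \ref{defnTubeWolffAxioms}) rather than honest direction‑separation, and phrasing the losses in the $\lessapprox_{\eps}$ notation. First I would pigeonhole: passing to a sub‑collection of $\tubes$ and losing only $\lessapprox_{\eps}1$ factors, I may assume that $\angle(v(L),v(\tube))$ is comparable to a single dyadic value $\theta$ with $\delta^{\eps}\lessapprox\theta\leq 1$, and that $|Y(\tube)|$ is comparable to a single dyadic value $\nu\gtrsim\lambda\delta^2$; write $m$ for the number of surviving tubes, so the total shaded volume is $\approx m\nu$, and fix a single dyadic ``popular multiplicity'' $\mu$ on the shaded region carrying most of this mass.

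\textbf{Organizing by planes through $L$.} For each surviving $\tube$, let $\Pi_{\tube}$ be the plane containing $L$ and parallel to $v(\tube)$; since $\tube$ meets $L$ and has unit length, $\tube$ lies in the $O(\delta)$‑neighborhood of $\Pi_{\tube}$. I would partition the pencil of planes through $L$ into $\sim\theta/\delta$ arcs of angular width $\sim\delta/\theta$, obtaining for each arc a ``plane slab'' $P$ (the $O(\delta)$‑neighborhood of a plane through $L$) and a subfamily $\tubes_P\subset\tubes$ with $\tube\subset P$ for $\tube\in\tubes_P$. Two geometric facts drive the argument: (i) outside the neighborhood $N_{\theta}(L)$ (which has volume $\sim\theta^2$, negligible relative to the target after pigeonholing), the slabs $P$ are pairwise disjoint, so $\sum_P\big|\bigcup_{\tube\in\tubes_P}Y(\tube)\setminus N_{\theta}(L)\big|\leq\big|\bigcup_{\tube\in\tubes}Y(\tube)\big|$; and (ii) inside a single slab the family $\tubes_P$ becomes a two‑dimensional configuration of $\delta$‑tubes crossing $L$ transversally at angle $\sim\theta$.

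\textbf{The planar estimate and summation.} Within each slab I would estimate $\big|\bigcup_{\tube\in\tubes_P}Y(\tube)\big|$ from below via the $L^2$/Cauchy--Schwarz inequality $\big(\sum_{\tube\in\tubes_P}|Y(\tube)|\big)^2\leq\big|\bigcup_{\tube\in\tubes_P}Y(\tube)\big|\cdot\sum_{\tube,\tube'\in\tubes_P}|Y(\tube)\cap Y(\tube')|$, bounding the overlap sum by a Córdoba‑type planar count: the Wolff axioms limit how many tubes of $\tubes_P$ can crowd into a thin prism along $L$, and the transversality angle $\sim\theta$ controls the size of each pairwise intersection. A parallel global $L^2$ argument handles cross‑slab interactions, using that two tubes in slabs whose planes meet $L$ at angle $\beta$ intersect in volume $\lesssim\delta^3/(\beta\theta)$; summing these dyadically over $\beta$ and using the Wolff axioms keeps the off‑slab contribution small. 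Combining the per‑slab bounds over the $\sim\theta/\delta$ slabs with fact (i) and optimizing the interplay between the planar shaded‑area bound and the multiplicity bound $\mu$ yields $\big|\bigcup_{\tube\in\tubes}Y(\tube)\big|\gtrapprox_{\eps} C_s\lambda^{5/2}\delta^{2+s}m$, exactly as in \cite{wolff}; the $\delta^{s}$ absorbs the $|\log\delta|$ factors produced by the dyadic sums, and the $\lessapprox_{\eps}$ absorbs the $\delta^{-C\eps}$ from $\theta\gtrsim\delta^{\eps}$.

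\textbf{Main obstacle.} The content is entirely in the planar estimate with shadings: without shadings the tubes in a slab are essentially disjoint away from $N_{\theta}(L)$ and the lower bound is trivial, but a general shading $Y(\tube)$ can be concentrated so as to defeat the naive Fubini argument, and it is precisely the $L^2$ method plus the Wolff‑axiom packing bounds — and the need to keep every exponent sharp through the optimization — that forces the $\lambda^{5/2}$ power. Since this is Wolff's result, the real work here is to confirm that his proof of Theorem \ref{WolffBd} and Lemma \ref{WolffHairbrushBd} uses only the Wolff axioms and to track the bookkeeping in the $\lessapprox_{\eps}$ framework.
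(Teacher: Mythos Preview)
The paper does not prove Lemma \ref{WolffHairbrushBd}; it is stated with the attribution ``Wolff \cite{wolff}'' and used as a black box, just like Theorem \ref{WolffBd} above it. So there is nothing in the paper to compare your proposal against.

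That said, your sketch is a faithful outline of Wolff's hairbrush argument: dyadically localize the angle to $L$, foliate into $\sim\theta/\delta$ plane slabs through $L$, observe disjointness of the slabs outside $N_{\theta}(L)$, and run a C\'ordoba-type $L^2$ bound inside each slab using the Wolff axioms to control packing. This is exactly the structure of the proof in \cite{wolff}, and you are right that only the Wolff axioms (not honest direction-separation) are needed, which is precisely why the present paper can invoke the lemma in its current form. One small correction: the volume of $N_{\theta}(L)\cap B(0,1)$ is $\sim\theta^2$, which is not automatically negligible compared to the target $\lambda^{5/2}\delta^{2}|\tubes|$; the standard way to handle this is not to discard $N_{\theta}(L)$ outright but to note that each $Y(\tube)$, having $|Y(\tube)|\gtrsim\lambda\delta^2$ and $\angle(v(\tube),v(L))\sim\theta$, has at least a $(1-O(\lambda^{-1}\theta))$-fraction of its mass outside $N_{c\lambda\theta}(L)$, and the slabs are already disjoint outside that thinner neighborhood (after a further $\lessapprox_{\eps}1$ loss). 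With that adjustment your plan goes through.
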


Theorem \ref{WolffBd} has several consequences, which we will detail below.

\begin{defn}
We say that a pair $(\tubes^\prime,Y^{\prime})$ is a refinement of $(\tubes,Y)$ if $\tubes^\prime\subset\tubes$,
$Y^{\prime}(\tube) \subset  Y(\tube)$
for each $\tube \in \tubes^\prime$, and
$$
\sum_{\tube\in\tubes^\prime}|Y^\prime(\tube)|\gtrapprox_{\eps}\sum_{\tube\in\tubes}|Y(\tube)|.
$$
The meaning of the variable $\eps$ will always be apparent from context.

If $(\tubes^\prime,Y^\prime)$ is a refinement of $(\tubes,Y)$ with $\tubes^\prime=\tubes,$ then we may refer to the refinement as $(\tubes,Y^\prime)$. If $(\tubes^\prime,Y^\prime)$ is a refinement of $(\tubes,Y)$ with $Y^\prime(\tube)=Y(\tube)$ for all $\tube\in\tubes^\prime$, then we may refer to the refinement simply as $\tubes^\prime$.
\end{defn}

In order to study a set $(\tubes,Y)$ of tubes and their associated shadings, it will often be helpful to consider the function $\sum_{\tube\in\tubes}\chi_{Y(T)}$. This is a function that takes integer values between $0$ and $|\tubes|$. Frequently, we will use dyadic pigeonholing to find a refinement $(\tubes,Y^\prime)$ so that $\sum_{\tube\in\tubes}\chi_{Y^\prime(T)}(p)\sim\mu\chi_A(p)$ for all $p\in\RR^3$. Here $\mu$ is an integer and $A\subset\RR^3$ with $\mu|A|\sim\sum_{\tube\in\tubes}|Y^\prime(\tube)|$. If $(\tubes,Y)$ is $\eps$-extremal, then $\mu\approx_{\eps}\delta^{-1/2}$ and $|A|\approx_{\eps}\delta^{1/2}$. In future, we will perform this type of dyadic pigeonholing without remarking on it further.

The next lemma allows us to reduce to the case where most pairs of intersecting tubes make a large angle. Arguments of this type appear frequently in the Kakeya literature (see e.g.~\cite[Section 2.3]{GZ}, and specifically Proposition 2.3) , and the following lemma is a standard application of common techniques.
\begin{lem}[Robust transversality]\label{robustTransLem}
Let $(\tubes,Y)$ be an $\eps$--extremal set of tubes. Then there is a refinement $(\tubes, Y^\prime)$ so that $\sum_{\tube\in\tubes}\chi_{Y^\prime(T)}\sim \mu\chi_A$, and for each $p\in A$, each $\theta\in(\delta,1)$, and each unit vector $v$, we have
\begin{equation}\label{tubesNotTooFocussed}
|\{\tube\in\tubes\colon p\in Y^\prime(\tube),\ \angle(v(\tube),v)\leq \theta \}| \lessapprox_\eps \theta^{1/10}|\{\tube\in\tubes\colon p\in Y^\prime(\tube)\}|.
\end{equation}
\end{lem}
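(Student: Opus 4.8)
The plan is to run a standard iterative pruning argument: we repeatedly discard, from the shadings, those tube–point incidences that are ``too focussed'' in some direction at some scale, and show that the total mass lost in this process is a small fraction of $\sum_{\tube}|Y(\tube)|$, so that what remains is still a refinement and satisfies \eqref{tubesNotTooFocussed}. First I would set up the bookkeeping: after dyadic pigeonholing we may assume $\sum_{\tube\in\tubes}\chi_{Y(\tube)}\sim\mu\chi_A$ with $\mu\approx_\eps\delta^{-1/2}$ and $|A|\approx_\eps\delta^{1/2}$. Call a quadruple $(p,\theta,v)$ (with $p\in A$, $\theta\in(\delta,1)$ a dyadic value, $v$ a direction from a $\delta$-net on $S^2$) \emph{bad} if $|\{\tube: p\in Y(\tube),\ \angle(v(\tube),v)\le\theta\}|\ge C\delta^{-C\eps}\theta^{1/10}\mu$. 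The key point is that for a fixed bad configuration the set of offending tubes is contained in a $O(\theta)\times O(\theta)\times 1$ slab (all passing near $p$ with direction within $\theta$ of $v$), so the Wolff axioms bound the number of such tubes by $\lesssim\theta^2\delta^{-2}$; combined with the fact that each tube contributes at most $|\tube\cap(\text{slab})|\lesssim\delta^2\theta$ of volume near any fixed point, this is where the gain $\theta^{1/10}$ (in fact any power $<1$) comes from when we sum against the multiplicity $\mu$.

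The main estimate to carry out is the following: if we let $Y'(\tube)$ be obtained from $Y(\tube)$ by deleting, for every dyadic $\theta$ and every $v$ in the net, the set of points $p$ where $\tube$ participates in a bad configuration $(p,\theta,v)$, then $\sum_\tube|Y(\tube)\setminus Y'(\tube)|\lessapprox_\eps \delta^{c}\sum_\tube|Y(\tube)|$ for some absolute $c>0$, provided the constant $C$ in the definition of ``bad'' is chosen large enough relative to the exponents. To see this, fix $\theta$ and $v$; the deleted mass associated to this pair is, by Cauchy–Schwarz or a direct incidence count, controlled by $\theta^{-1/10}\mu^{-1}$ times the $L^2$-type quantity $\int (\sum_{\tube: \angle(v(\tube),v)\le\theta}\chi_{Y(\tube)})\,(\sum_{\tube'}\chi_{Y(\tube')})$, and the inner sum over direction-restricted tubes is pointwise $\lesssim$ the slab bound $\theta^2\delta^{-2}$ on its support times a volume factor. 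Summing over the $\lesssim\log(1/\delta)$ dyadic values of $\theta$ and the $\lesssim\delta^{-2}$ directions $v$, the logarithmic and $\delta^{-C\eps}$ losses are absorbed into $\lessapprox_\eps$, and the genuine power $\delta^{c}$ survives because $\mu\approx_\eps\delta^{-1/2}$ beats the slab count after the $\theta^{1/10}$ gain. (One must also take a net of directions fine enough that passing from ``$\angle(v(\tube),v)\le\theta$ for some net point'' to arbitrary unit vectors $v$ only changes $\theta$ by a constant; this is harmless.)

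Having shown the deleted mass is negligible, we conclude $(\tubes,Y')$ is a refinement of $(\tubes,Y)$ (after possibly a further dyadic pigeonholing to restore the clean structure $\sum_\tube\chi_{Y'(\tube)}\sim\mu'\chi_{A'}$), and by construction no point of $A'$ supports a bad configuration, which is exactly \eqref{tubesNotTooFocussed}. The main obstacle is the quantitative incidence/$L^2$ estimate in the middle paragraph: one has to organize the double sum so that the Wolff-axiom slab bound is applied at the right scale and the multiplicity $\mu$ is used to full effect, since a careless count only gives $\theta^{0}$ rather than a positive power of $\theta$. This is precisely the kind of argument referenced in \cite[Section 2.3]{GZ}, so I would follow that template, adapting the exponent $1/10$ (any fixed positive power below the natural threshold works, and $1/10$ is comfortably within range).
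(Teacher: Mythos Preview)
The paper itself does not prove this lemma; it declares it a ``standard application of common techniques'' and defers to \cite[Section 2.3, Proposition 2.3]{GZ}. Your overall framework---iterative pruning of focussed incidences, followed by a final pigeonholing to restore the level-set structure $\sum_\tube\chi_{Y'(\tube)}\sim\mu\chi_A$---is indeed the standard one and matches the cited reference in spirit. So at the level of strategy your proposal and the paper agree.

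That said, the quantitative sketch in your middle paragraph does not close as written. The Wolff slab bound $N_{\theta,v}(p)\lesssim\theta^2\delta^{-2}$ is only smaller than the threshold $K\theta^{1/10}\mu\approx K\theta^{1/10}\delta^{-1/2}$ when $\theta\lesssim\delta^{15/19}$, so for larger $\theta$ it gives no information by itself, contrary to your claim that ``$\mu$ beats the slab count after the $\theta^{1/10}$ gain.'' If one runs your $L^2$-type estimate honestly---bounding the deleted mass at each scale $\theta$ by $(K\theta^{1/10}\mu)^{-1}\sum_{\angle(\tube_1,\tube_2)\le 2\theta}|Y(\tube_1)\cap Y(\tube_2)|$ and using $\sum_{\angle\le 2\theta}|Y\cap Y|\lesssim\min(\theta\delta^{-1},\mu^2|A|)$---the deleted fraction summed over dyadic $\theta$ comes out to $\sim K^{-1}\delta^{-1/20}$, with the worst scale at $\theta\sim\delta^{1/2}$. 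This forces $K\gtrsim\delta^{-1/20}$, which is a genuine power of $\delta$ and not $\delta^{-C\eps}$ for an absolute $C$; so the asserted $\delta^c$ gain does not survive. (A smaller issue: you should sum over a $\theta$-net of $\sim\theta^{-2}$ directions at each scale, not a fixed $\delta$-net of $\sim\delta^{-2}$ directions; the latter over-counts each focussed configuration by $(\theta/\delta)^2$.) The actual argument in \cite{GZ} uses more than the raw slab axiom---one needs the full strength of Wolff's $5/2$-dimensional estimate (Theorem~\ref{WolffBd} here), typically via a rescaling of the focussed tubes at each scale $\theta$---to avoid this loss. Your proposal correctly points to that template but does not supply the step that replaces the slab bound for intermediate $\theta$.
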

Note that if $(\tubes,Y)$ obeys the conclusions of Lemma \ref{robustTransLem} and if $(\tubes^\prime, Y^\prime)$ is a refinement of $(\tubes,Y)$ with $\sum_{\tube\in\tubes^\prime}\chi_{Y^\prime(\tube)}\sim\mu\chi_A$, then $(\tubes^\prime,Y^\prime)$ must also obey the conclusions of Lemma \ref{robustTransLem} (though the implicit constant in \eqref{tubesNotTooFocussed} may be larger).

The next lemma is a version of Lemma \ref{WolffHairbrushBd} where both the assumptions and conclusion have been weakened. It says that if all of the tubes in a Besicovitch set intersect the (thickened) neighborhood of a line segment, then the union of these tubes must have large volume.
\begin{lem}[Volume of a fat hairbrush]\label{concentratedTubesHaveLargeVolume}
Let $\rho>\delta$.
Let $(\tubes,Y)$ be a set of $\delta$ tubes that satisfy the Wolff axioms. Suppose that $\sum_{\tube\in\tubes}|Y(\tube)|\geq\delta^{2+\eps}|\tubes|$, and that there exists a line $L$ so that $\tube\cap N_{\rho}(L)\neq\emptyset$ and $\angle(v(\tube),\ L)\geq\delta^{\eps}$ for each $\tube\in\tubes$.  Then
\begin{equation}\label{volumeOfCompressedKakeyaSet}
\Big|\bigcup_{\tube\in\tubes}Y(\tube)\Big|\gtrapprox_{\eps} \delta^{1/2}\rho^{-1/4}(\delta^2|\tubes|).
\end{equation}
\end{lem}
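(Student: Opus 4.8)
The plan is to pigeonhole the fat hairbrush down to an honest thin hairbrush, or to a bush, and then to quote Wolff's hairbrush estimate (Lemma~\ref{WolffHairbrushBd}) or maximal function estimate (Theorem~\ref{WolffBd}), possibly after rescaling. First I would tidy up the configuration: by dyadic pigeonholing in the value of $|Y(\tube)|$ and passing to a refinement, I may assume that $|Y(\tube)|\approx_{\eps}\delta^2$ for every $\tube\in\tubes$ and that $\sum_{\tube\in\tubes}\chi_{Y(\tube)}\sim\mu\chi_A$ with $A=\bigcup_{\tube\in\tubes}Y(\tube)$ for some integer $\mu$. The transversality hypothesis $\angle(v(\tube),L)\geq\delta^{\eps}$ guarantees that each $\tube$ meets the convex set $N_\rho(L)$ in a set of diameter $\lessapprox_{\eps}\rho$: the distance from a point of the axis of $\tube$ to $L$ is a convex function that grows at rate $\gtrsim\sin\angle(v(\tube),L)\gtrsim\delta^{\eps}$ away from its minimum.

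Next I would cover $N_\rho(L)\cap B(0,1)$ by $\sim\rho^{-1}$ balls $B_j=B(x_j,\rho)$ with $x_j\in L$. By the diameter bound, each $\tube$ meets $\lessapprox_{\eps}1$ of the enlarged balls $2B_j$, so assigning each tube to one such ball and pigeonholing produces an index $j_0$ and a subcollection $\tubes_{j_0}\subset\tubes$ with $\sum_{\tube\in\tubes_{j_0}}|Y(\tube)|\gtrsim\rho\,\delta^{2+\eps}|\tubes|$, all of whose tubes pass through $B(x_{j_0},2\rho)$. Rescaling $B(x_{j_0},2\rho)$ to the unit ball turns $\tubes_{j_0}$ into a family of tubes of thickness $\delta'=\delta/(2\rho)$ that still satisfies the Wolff axioms at scale $\delta'$; applying Theorem~\ref{WolffBd} at scale $\delta'$ and undoing the scaling bounds $|\bigcup_{\tube\in\tubes_{j_0}}Y(\tube)|$, and hence $|\bigcup_{\tube\in\tubes}Y(\tube)|$, from below. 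An alternative reduction, which has the advantage of not disturbing the shadings, is to instead slice $N_\rho(L)$ into $\sim(\rho/\delta)^2$ thin sub-tubes $N_{\delta}(L_k)$ parallel to $L$, pigeonhole to a sub-tube that captures a $\gtrsim(\delta/\rho)^2$-fraction of $\sum|Y(\tube)|$, and observe that the tubes meeting this sub-tube form a thin hairbrush to which Lemma~\ref{WolffHairbrushBd} applies directly with the original shadings. Finally, when $\rho$ is bounded below by an absolute constant the hypotheses are essentially vacuous and Theorem~\ref{WolffBd} gives the conclusion at once, since $\delta^2|\tubes|\lesssim 1$ forces $(\delta^2|\tubes|)^{3/4}\geq\delta^2|\tubes|$.

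The main obstacle — and the reason the conclusion here is weaker than that of Lemma~\ref{WolffHairbrushBd} — is controlling the losses in these reductions uniformly over $\delta<\rho\le 1$. Rescaling a $\rho$-ball only exposes the part of each shading lying inside that ball, so one must argue that not too much shading is concentrated on the portions of the tubes far from $L$ (for instance, by an extra pigeonholing over where the shading sits, or by covering the rescaled picture by unit balls and summing with bounded overlap); this is delicate precisely because the lemma makes no stickiness or localization assumption on $Y$. Moreover, the exponent of $\rho$ emerging from Theorem~\ref{WolffBd} after rescaling has to be matched against the exponent coming from the thin-hairbrush reduction via Lemma~\ref{WolffHairbrushBd}: I expect that obtaining the clean, uniform bound $\delta^{1/2}\rho^{-1/4}(\delta^2|\tubes|)$ requires interpolating between these two inputs — the maximal estimate being efficient when $\rho$ is large and the hairbrush estimate when $\rho$ is small — together with the elementary bound $\delta^2|\tubes|\lesssim 1$.
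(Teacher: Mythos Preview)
Your proposal has the right instinct --- reduce to Wolff's estimates --- but the decomposition you choose is the wrong one, and the gap you flag in your last paragraph is real and not easily fixed by interpolation.

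The issue is that both of your reductions are \emph{longitudinal}: you slice $N_\rho(L)$ either by balls along $L$ or by thin tubes parallel to $L$. In either case the pieces of the decomposition do not contain the tubes of $\tubes$, only the short segments where they cross $N_\rho(L)$. Since the lemma puts no constraint whatsoever on where $Y(\tube)$ sits inside $\tube$, the shadings can live entirely outside your pieces, and the rescaling step simply throws them away. Your pigeonholed ball $B(x_{j_0},2\rho)$ contains $\gtrsim\rho|\tubes|$ tube \emph{crossings}, but possibly zero mass of $\sum|Y(\tube)|$; there is no further pigeonholing that recovers this, because the shading need not be localized at all.

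The paper's decomposition is \emph{angular}. Take $\rho^{-1}$ planes $\Pi_i$ containing $L$, with normals pointing in $\rho$--separated directions, and let $H_i=N_\rho(\Pi_i)$. The key geometric fact is that a unit $\delta$--tube meeting $N_\rho(L)$ is contained in $2H_i$ for some $i$: the tube and $L$ nearly span a plane. Thus each $\tube$, \emph{together with its entire shading} $Y(\tube)$, lives in a single slab. Moreover the slabs $H_i\setminus N_{\delta^{2\eps}}(L)$ are $\lessapprox_\eps 1$--overlapping, and the Wolff axioms give $N_i\le\rho\delta^{-2}$ tubes per slab. Now apply Theorem~\ref{WolffBd} inside each slab to get volume $\gtrapprox_\eps\delta^{1/2}(\delta^2 N_i)^{3/4}$, and sum using the bounded overlap. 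The exponent $\rho^{-1/4}$ drops out in one line: since $\delta^2 N_i\le\rho$,
\[
\sum_i(\delta^2 N_i)^{3/4}=\sum_i(\delta^2 N_i)(\delta^2 N_i)^{-1/4}\ge\rho^{-1/4}\sum_i\delta^2 N_i\ge\rho^{-1/4}\,\delta^2|\tubes|.
\]
No interpolation, no rescaling, and the shadings are never separated from their tubes.
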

\begin{proof}
Since $\tubes$ satisfies the Wolff axioms, we have $|\tubes|\lesssim\delta^{-2}$. Replacing $(\tubes,Y)$ by a refinement if necessary, we can assume that $|Y(\tube)|\geq\frac{1}{C}\delta^{\eps}|\tube|$ for each $\tube\in\tubes$, where $C$ is an absolute constant, and that $\sum_{\tube\in\tubes}|Y(T)|\gtrsim \delta^{\eps}$. Let $\Pi_i,\ i=1,\ldots,\rho^{-1}$ be a set of planes that contain $L$ and have normal vectors that point in $\rho$--separated directions. For each index $i,$ let $H_i$ be the $\rho$--neighborhood of $\Pi_i$. Then the sets $\{H_i \backslash N_{\delta^{2\eps}}(L)\}$ are at most $\delta^{-2\eps}$-overlapping. Note as well that since $\angle(v(\tube),\ L)\geq\delta^{\eps}$ for each $\tube\in\tubes,$
$$
|Y(\tube)\backslash N_{\delta^{2\eps}}(L)|\gtrsim\delta^{\eps}|\tube|.
$$

Observe that if $\tube$ is a $\delta$--tube that intersects $N_{\rho}(L)$, then there exists (at least one) index $i$ so that $\tube\subset 2H_i$. Assign each tube $\tube\in \tubes$ to an index $i$. For each index $i$, let $N_i$ be the number of tubes assigned to $i$.

By Theorem \ref{WolffBd}, for each index $i$ we have
 \begin{equation}
  \Big|\bigcup_{\substack{\tube\in\tubes\\ \tube\ \textrm{assigned to}\ H_i}}\!\!\!\!\!\!\!\! Y(\tube)\backslash N_{\delta^{2\eps}}(L)\Big|\gtrapprox_{\eps} \delta^{1/2} (\delta^{2}N_i)^{3/4}.
 \end{equation}
Since the tubes in $\tubes$ satisfy the Wolff axioms, we have $N_i\leq \rho\delta^{-2}$ for each index $i$, and of course $\sum_i N_i\geq|\tubes|$. Thus by appling Theorem \ref{WolffBd} to the set of tubes assigned to each $H_i$ and summing, we obtain
 \begin{equation*}
  \sum_i \Big|\bigcup_{\substack{\tube\in\tubes\\ \tube\ \textrm{assigned to}\ H_i}}\!\!\!\!\!\!\!\! Y(\tube)\backslash N_{\delta^{2\eps}}(L)\Big|\gtrapprox_{\eps} \delta^{1/2}\rho^{-1/4}(\delta^2|\tubes|).
 \end{equation*}
 Since the sets $\{H_i\}$ are at most $\delta^{2\eps}$--overlapping, we have
 \begin{equation*}
\Big|\bigcup_{\tube\in\tubes}Y(\tube)\Big|\gtrapprox_{\eps} \delta^{1/2}\rho^{-1/4}(\delta^2|\tubes|).\qedhere
 \end{equation*}
\end{proof}

\begin{cor}[Few tubes in a fat hairbrush]\label{fewTubesHitALine}
Let $(\tubes,Y)$ be an $\eps$--extremal set of tubes. Then for every line $L\subset\RR^3$, we have
$$
|\{\tube\in\tubes\colon \tube\cap N_{\rho}(L)\neq\emptyset,\ \angle(v(\tube),\ L)\geq\delta^{\eps}\}|\lessapprox_\eps\delta^{-2}\rho^{1/4}.
$$
\end{cor}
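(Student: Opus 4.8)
The plan is to read this bound off Lemma \ref{concentratedTubesHaveLargeVolume} (the fat hairbrush volume estimate) together with the small-volume hypothesis \eqref{smallVolume} that comes with $\eps$-extremality. Fix the line $L$. We may assume $\delta<\rho<1$: when $\rho\geq 1$ the asserted bound is weaker than the bound $|\tubes|\lesssim\delta^{-2}$ furnished by the Wolff axioms, so the corollary is immediate. By the dyadic pigeonholing that is by now standard in this setting (the discussion following Definition \ref{defnEpsExtremalKakeyaSet}) we may also assume that $|Y(\tube)|\approx_\eps\delta^2$ for every $\tube\in\tubes$. Now set
$$
\tubes'=\{\tube\in\tubes\colon \tube\cap N_\rho(L)\neq\emptyset,\ \angle(v(\tube),L)\geq\delta^\eps\},
$$
so that the corollary is exactly the claim $|\tubes'|\lessapprox_\eps\delta^{-2}\rho^{1/4}$.

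The key step is to apply Lemma \ref{concentratedTubesHaveLargeVolume} to the pair $(\tubes',Y)$. This is legitimate: $\tubes'$ inherits the Wolff axioms from $\tubes$; by the definition of $\tubes'$, every $\tube\in\tubes'$ satisfies $\tube\cap N_\rho(L)\neq\emptyset$ and $\angle(v(\tube),L)\geq\delta^\eps$; and since $|Y(\tube)|\approx_\eps\delta^2$ for all $\tube$, we have $\sum_{\tube\in\tubes'}|Y(\tube)|\geq\delta^{2+C\eps}|\tubes'|$ for an absolute constant $C$, which is the remaining hypothesis of the lemma (with $\eps$ replaced by $C\eps$). The lemma therefore gives
$$
\Big|\bigcup_{\tube\in\tubes'}Y(\tube)\Big|\gtrapprox_\eps\delta^{1/2}\rho^{-1/4}\bigl(\delta^2|\tubes'|\bigr).
$$

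To finish I would simply invoke \eqref{smallVolume}: the left side above is at most $\big|\bigcup_{\tube\in\tubes}Y(\tube)\big|\leq\delta^{1/2-\eps}$, so
$$
\delta^{1/2-\eps}\gtrapprox_\eps\delta^{1/2}\rho^{-1/4}\bigl(\delta^2|\tubes'|\bigr),
$$
and rearranging gives $|\tubes'|\lessapprox_\eps\delta^{-2}\rho^{1/4}$. I do not expect a genuine obstacle, as the corollary is a soft consequence of the fat-hairbrush estimate; the only point that deserves attention is verifying that the subfamily $\tubes'$ still obeys the shading hypothesis of Lemma \ref{concentratedTubesHaveLargeVolume}, which is precisely why one first normalizes the shadings to be comparable to $\delta^2$. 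Note also that robust transversality (Lemma \ref{robustTransLem}) is not needed here, since the transversality condition $\angle(v(\tube),L)\geq\delta^\eps$ is already built into the definition of $\tubes'$ and into the hypotheses of Lemma \ref{concentratedTubesHaveLargeVolume}.
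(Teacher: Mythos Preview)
Your argument is correct and is exactly the derivation the paper has in mind: the corollary is stated without proof immediately after Lemma \ref{concentratedTubesHaveLargeVolume}, and the intended reasoning is precisely to feed the subfamily $\tubes'$ into that lemma and then compare against the small-volume hypothesis \eqref{smallVolume}. Your remark about first normalizing the shadings so that $|Y(\tube)|\approx_\eps\delta^2$ is the right way to verify the hypothesis $\sum_{\tube\in\tubes'}|Y(\tube)|\geq\delta^{2+\eps}|\tubes'|$, and is consistent with the paper's standing convention that such refinements are performed freely.
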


At the beginning of Section \ref{quantNonDegenerateRegSection}, we observed that three skew lines either determine a hyperbolic paraboloid or a hyperboloid. The next lemma shows that if most triples of tubes in a Besicovitch set are parallel to a common plane, then the Besicovitch set must have large volume. In particular, this means that hyperbolic paraboloids will not play an important role in the analysis of Besicovitch sets. The proof of the next lemma is nearly identical to the proof of Lemma \ref{concentratedTubesHaveLargeVolume}.

\begin{lem}[Tubes parallel to a common plane have large union]\label{allTubesCoplanar}
Let $\rho>\delta$.
Let $(\tubes,Y)$ be a set of $\delta$ tubes that satisfy the Wolff axioms. Suppose that $\sum_{\tube\in\tubes}|Y(\tube)|\geq\delta^{\eps}$, and that there exists a unit vector $v_0$ so that $v(T)\cdot v_0\leq\rho$ for each $\tube\in\tubes$.  Then
\begin{equation}\label{volumeOfParallelKakeyaSet}
\Big|\bigcup_{\tube\in\tubes}Y(\tube)\Big|\gtrapprox_{\eps} \delta^{1/2}\rho^{-1/4}.
\end{equation}
\end{lem}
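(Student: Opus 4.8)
The statement says that if all tubes in a set satisfying the Wolff axioms are parallel to a common plane (their directions all have small inner product with a fixed unit vector $v_0$), then the union of their shadings has volume $\gtrapprox_{\eps}\delta^{1/2}\rho^{-1/4}$. As the paper remarks, the proof should closely mirror that of Lemma \ref{concentratedTubesHaveLargeVolume}: the key point is to slice the configuration into $\rho^{-1}$ slabs inside which the Wolff axioms give an improved count, apply Wolff's estimate (Theorem \ref{WolffBd}) on each slab, and sum.

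First I would pass to a refinement so that $|Y(\tube)|\geq \tfrac1C\delta^{\eps}|\tube|$ for every $\tube\in\tubes$ and $\sum_{\tube}|Y(\tube)|\gtrsim\delta^{\eps}$; this is the same dyadic pigeonholing used earlier, and it only costs a $\delta^{O(\eps)}$ factor. Next, let $\Pi$ be the plane through the origin with normal $v_0$; every tube in $\tubes$ makes an angle $\lesssim\rho$ with $\Pi$, so each $\tube$ is contained in the $O(\rho)$-neighborhood of some affine translate of $\Pi$. Cover $B(0,1)$ by $O(\rho^{-1})$ slabs $H_i$, each being the $\rho$-neighborhood of a translate $\Pi + c_i v_0$, with the translates $\rho$-separated; then the $H_i$ are $O(1)$-overlapping and every $\tube\in\tubes$ satisfies $\tube\subset 2H_i$ for some $i$. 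Assign each tube to one such index, and let $N_i$ be the number of tubes assigned to $H_i$.

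The crucial estimate is that the Wolff axioms force $N_i\lesssim \rho\delta^{-2}$: all tubes assigned to $H_i$ lie in a slab of thickness $O(\rho)$, which (after intersecting with the unit ball) is contained in a rectangular prism of dimensions $\sim 2\times 1\times\rho$, so the second Wolff axiom with $s=1$, $t=\rho$ (up to the harmless constant from the factor $2$ in $2H_i$) gives the bound. Since $\sum_i N_i\geq|\tubes|$, applying Theorem \ref{WolffBd} to the tubes assigned to each $H_i$ gives
\begin{equation*}
\Big|\bigcup_{\substack{\tube\in\tubes\\ \tube\ \textrm{assigned to}\ H_i}} Y(\tube)\Big|\gtrapprox_{\eps}\delta^{1/2}(\delta^2 N_i)^{3/4}.
\end{equation*}
Summing over $i$ and using that $x\mapsto x^{3/4}$ is concave together with the constraint $N_i\lesssim\rho\delta^{-2}$ — so that $\sum_i (\delta^2 N_i)^{3/4}\gtrsim (\delta^2\rho)^{3/4}\sum_i(N_i/(\rho\delta^{-2}))\gtrsim (\delta^2\rho)^{-1/4}\delta^2|\tubes|$ — yields $\sum_i|\bigcup\cdots|\gtrapprox_{\eps}\delta^{1/2}\rho^{-1/4}(\delta^2|\tubes|)$; since the $H_i$ are $O(1)$-overlapping, the left side is $\lesssim|\bigcup_{\tube\in\tubes}Y(\tube)|$. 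Finally, $\sum_\tube|Y(\tube)|\gtrsim\delta^\eps$ combined with $|\tubes|\lesssim\delta^{-2}$ forces $\delta^2|\tubes|\gtrapprox_\eps 1$, which absorbs the factor $\delta^2|\tubes|$ and gives \eqref{volumeOfParallelKakeyaSet}.

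The only place requiring genuine care — the \emph{main obstacle}, such as it is — is verifying that a slab of thickness $\rho$ meeting the unit ball really does fit inside a $2\times 1\times\rho$ prism to which the Wolff axioms apply, and that the overlap of the slabs $\{H_i\}$ is genuinely $O(1)$ (rather than, say, $\log$-many) so that no spurious loss enters; both are elementary geometry. Everything else is a verbatim adaptation of Lemma \ref{concentratedTubesHaveLargeVolume}, with the role of ``the $\rho$-neighborhood of planes through $L$'' replaced by ``the $\rho$-neighborhoods of parallel translates of $\Pi$,'' and with the direction hypothesis $\angle(v(\tube),L)\geq\delta^\eps$ no longer needed because here we do not excise a neighborhood of any line.
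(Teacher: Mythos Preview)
Your proposal is correct and is exactly the adaptation the paper has in mind: replace the pencil of planes through $L$ in Lemma \ref{concentratedTubesHaveLargeVolume} by the family of parallel translates $\Pi + c_i v_0$, note that these slabs are genuinely $O(1)$-overlapping (so no neighborhood needs to be excised), bound $N_i\lesssim\rho\delta^{-2}$ via the Wolff axioms, and sum Wolff's estimate over the slabs. One small slip: in your displayed chain $\sum_i(\delta^2 N_i)^{3/4}\gtrsim(\delta^2\rho)^{3/4}\sum_i(N_i/(\rho\delta^{-2}))\gtrsim(\delta^2\rho)^{-1/4}\delta^2|\tubes|$, the factors $(\delta^2\rho)^{\pm}$ should simply be $\rho^{\pm}$ (the relevant bound is $\delta^2 N_i\le\rho$), but your stated conclusion $\sum_i|\bigcup\cdots|\gtrapprox_\eps\delta^{1/2}\rho^{-1/4}(\delta^2|\tubes|)$ is correct.
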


%
%
\subsection{Planiness}
In this section, we will show that for each point $p\in \RR^3$, there is a plane $\Pi_p$ containing $p$ so that all of the tubes containing $p$ make a small angle with $\Pi_p$. This is a consequence of Bennett, Carbery, and Tao's multilinear Kakeya theorem from \cite{BCT}. The version we will use here is a variant due to Bourgain and Guth.

\begin{thm}[Bourgain-Guth, \cite{BG}, Theorem 6]
Let $\tubes$ be a set of $\delta$--tubes in $\RR^3$. Then for each $s>0$, there exists a constant $C_s$ so that
\begin{equation}\label{weightedMultiLineKakeya}
\int \Big(\sum_{\tube_1,\tube_2,\tube_3\in\tubes} \chi_{\tube_1}\chi_{\tube_2}\chi_{\tube_3}\ \big|v(\tube_1)\wedge v(\tube_2)\wedge v(\tube_3)\big|\Big)^{1/2}\leq C_s\delta^{-s}(\delta^2|\tubes|)^{3/2}.
\end{equation}
\end{thm}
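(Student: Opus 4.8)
The plan is to deduce \eqref{weightedMultiLineKakeya} from the multilinear Kakeya theorem of Bennett--Carbery--Tao \cite{BCT}, in the form: if $\tubes_1,\tubes_2,\tubes_3$ are families of $\delta$--tubes in $\RR^3$ whose directions lie within a small fixed angle of $e_1,e_2,e_3$ respectively, then for each $s>0$, $\int\big(\prod_{l=1}^3\sum_{\tube\in\tubes_l}\chi_\tube\big)^{1/2}\lesssim_s\delta^{-s}\prod_{l=1}^3(\delta^2|\tubes_l|)^{1/2}$. The bridge between the two statements is a decomposition of the direction sphere that linearizes the weight $|v(\tube_1)\wedge v(\tube_2)\wedge v(\tube_3)|$: after such a decomposition each block consists of three families that, up to a linear rescaling, are transverse in the sense above, and the Jacobian of the rescaling accounts for the weight.

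First I would dispose of the near-degenerate part of the integrand. On the region where every contributing triple $(\tube_1,\tube_2,\tube_3)$ satisfies $|v(\tube_1)\wedge v(\tube_2)\wedge v(\tube_3)|<\delta^2$, the integrand is at most $\delta\big(\sum_{\tube\in\tubes}\chi_\tube\big)^{3/2}$, and $\int\big(\sum_{\tube\in\tubes}\chi_\tube\big)^{3/2}\leq\big(\sup\sum_{\tube\in\tubes}\chi_\tube\big)^{1/2}\int\sum_{\tube\in\tubes}\chi_\tube\lesssim|\tubes|^{1/2}(\delta^2|\tubes|)=\delta^{-1}(\delta^2|\tubes|)^{3/2}$, so this contribution is $\lesssim(\delta^2|\tubes|)^{3/2}$, which is acceptable. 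For the rest, dyadically pigeonhole on the scale $\mu\in[\delta^2,1]$ of $|v(\tube_1)\wedge v(\tube_2)\wedge v(\tube_3)|$; there are $O(\log(1/\delta))$ scales, each of which will be absorbed into $\delta^{-s}$.

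For a fixed scale $\mu$, partition $S^2$ into caps $\tau$ of radius $c\mu$ ($c$ a small absolute constant) and sort the tubes of $\tubes$ by the cap containing their direction, $\tubes=\bigsqcup_\tau\tubes_\tau$. Since perturbing a unit vector by $O(\mu)$ changes a $3\times3$ determinant of unit vectors by $O(\mu)$, the weight $|v(\tube_1)\wedge v(\tube_2)\wedge v(\tube_3)|$ is $\approx\mu$ for every triple of tubes drawn from a cap triple $(\tau_1,\tau_2,\tau_3)$ whose centres span a parallelepiped of volume $\sim\mu$; call such cap triples active. For an active triple there is a linear map $T=T_{\tau_1,\tau_2,\tau_3}$ sending the three cap centres to $e_1,e_2,e_3$, with $|\det T|\approx\mu^{-1}$, and $T$ carries each $\delta$--tube of $\tubes_{\tau_l}$ to an elliptical tube which, after a harmless global rescaling, decomposes into round $\delta$--tubes pointing within a small fixed angle of $e_l$. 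Feeding these transverse families into the Bennett--Carbery--Tao bound, pulling the factor $\mu^{1/2}$ out of the square root, and undoing the change of variables reduces \eqref{weightedMultiLineKakeya} to summing the resulting estimates over active cap triples and then over the $O(\log(1/\delta))$ values of $\mu$.

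The hard part is this last bookkeeping. One must check that the gain $\mu^{1/2}$ from the weight, the Jacobian $\mu^{\pm1}$ of the rescaling, the loss incurred in writing the rescaled elliptical tubes as unions of round $\delta$--tubes, and the fact that only a $\mu$--fraction of all cap triples are active combine so that no net power of $\mu$ survives in front of $(\delta^2|\tubes|)^{3/2}$; the sum over $\tau$ is carried out by Cauchy--Schwarz, using $\sum_\tau|\tubes_\tau|=|\tubes|$ and the bounded overlap of the caps. An alternative route, which avoids the rescaling entirely, is to re-run Guth's proof of the endpoint multilinear Kakeya inequality, in which the weight $|v(\tube_1)\wedge\cdots\wedge v(\tube_n)|$ appears organically as the Jacobian in the Loomis--Whitney step for a curved graph; that argument yields \eqref{weightedMultiLineKakeya} essentially directly, at the cost of reproducing the proof.
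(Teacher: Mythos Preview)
The paper does not prove this theorem; it is quoted from Bourgain--Guth \cite{BG} as a black box and immediately applied to establish planiness. So there is no in-paper proof to compare against. Your outline is in fact a sketch of the argument in \cite{BG} itself: fix the transversality scale $\mu$ dyadically, sort tubes into $\mu$-caps on $S^2$, linearly rescale each active cap triple to standard transverse position, and apply Bennett--Carbery--Tao. In that sense your approach matches the original source rather than the present paper.

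As a proof, though, what you have written is a plan rather than an argument. You correctly flag that the content lies entirely in the final bookkeeping paragraph, and you list the ingredients that must balance, but you do not balance them. One point that is not ``harmless'': the image of a round $\delta$-tube under $T_{\tau_1,\tau_2,\tau_3}$ is an ellipsoidal tube whose cross-section is governed by the individual singular values of $T$, not merely by $|\det T|\approx\mu^{-1}$; these singular values vary with the geometry of the cap triple (two caps close and one far is different from three caps at comparable pairwise angles), so ``after a harmless global rescaling, decomposes into round $\delta$-tubes pointing within a small fixed angle of $e_l$'' hides a genuine case analysis, and the number of round tubes produced feeds directly into the right-hand side of the BCT estimate. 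Likewise, the Cauchy--Schwarz over caps needs the count of active triples containing a fixed cap, which you have not computed. If you want a complete proof, either carry out this accounting explicitly, or take the alternative route you mention at the end and follow Guth \cite{Guth}, where the weight $|v(\tube_1)\wedge v(\tube_2)\wedge v(\tube_3)|$ arises as a Jacobian and no rescaling is needed.
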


\begin{lem}[Planiness]\label{Planininess}\label{planinessProp}
Let $(\tubes,Y)$ be a set of $\eps$--extremal tubes. Then there is a refinement $Y^\prime$ of $Y$ so that for each $p\in\RR^3$ there is a plane $\Pi_p$ so that if $p\in Y^\prime(\tube)$, then
$$
\angle\big(v(\tube),\ \Pi_p\big)\lessapprox_{\eps}\delta^{1/2}.
$$
\end{lem}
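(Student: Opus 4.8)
### Proof proposal for Lemma \ref{Planininess} (Planiness)

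The plan is to deduce planiness from the weighted multilinear Kakeya inequality \eqref{weightedMultiLineKakeya} by a pigeonholing argument. First I would pass to a refinement $(\tubes, Y^{(0)})$ via Lemma \ref{robustTransLem}, so that $\sum_{\tube}\chi_{Y^{(0)}(\tube)}\sim\mu\chi_A$ with $\mu\approx_\eps\delta^{-1/2}$ and $|A|\approx_\eps\delta^{1/2}$, and so robust transversality holds. The point of robust transversality is that at a typical point $p\in A$ the tubes through $p$ are genuinely spread out in direction, so triple wedge products $|v(\tube_1)\wedge v(\tube_2)\wedge v(\tube_3)|$ are not automatically tiny; this is what lets me extract information from \eqref{weightedMultiLineKakeya}.

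The key computation is to evaluate the left side of \eqref{weightedMultiLineKakeya} on $Y^{(0)}$. For each $p\in A$, write $\nu(p)=|\{\tube\colon p\in Y^{(0)}(\tube)\}|\sim\mu\approx_\eps\delta^{-1/2}$, and define
$$
W(p)=\sum_{\substack{\tube_1,\tube_2,\tube_3\\ p\in Y^{(0)}(\tube_i)}}\big|v(\tube_1)\wedge v(\tube_2)\wedge v(\tube_3)\big|,
$$
so the left side of \eqref{weightedMultiLineKakeya} is $\int_A W(p)^{1/2}\,dp$, up to the $\chi_{\tube_i}$ versus $\chi_{Y^{(0)}(\tube_i)}$ distinction (the latter is smaller, which is the direction we want). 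The right side is $\lessapprox_\eps(\delta^2|\tubes|)^{3/2}\approx_\eps\delta^{3/2}\cdot\delta^{-3}= \delta^{-3/2}$ roughly; being careful, $(\delta^2|\tubes|)^{3/2}\lessapprox_\eps 1$ since $|\tubes|\lesssim\delta^{-2}$, so actually the right side is $\lessapprox_\eps 1$. Hence $\int_A W(p)^{1/2}\lessapprox_\eps 1$, and since $|A|\approx_\eps\delta^{1/2}$, Cauchy–Schwarz (or just Chebyshev) gives that for a $\gtrapprox_\eps 1$ fraction of $p\in A$ (in measure), $W(p)\lessapprox_\eps\delta^{-1}$. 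I would then restrict $Y^{(0)}$ to this good subset $A'$ of $A$, which is still a refinement.

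Now fix such a good point $p$ and let $V(p)\subset S^2$ be the (multi)set of $\sim\delta^{-1/2}$ directions $v(\tube)$ with $p\in Y^{(0)}(\tube)$. The bound $\sum_{v_1,v_2,v_3\in V(p)}|v_1\wedge v_2\wedge v_3|\lessapprox_\eps\delta^{-1}$, compared against the "expected" size $|V(p)|^3\approx_\eps\delta^{-3/2}$ if the directions were spread out over a cap of size $\approx_\eps 1$, forces the directions to concentrate near a great circle: the average triple wedge is $\lessapprox_\eps\delta^{-1}/\delta^{-3/2}=\delta^{1/2}$. To convert this into a genuine plane $\Pi_p$, I would argue as follows. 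Pick two tubes $\tube_a,\tube_b$ through $p$ with $\angle(v(\tube_a),v(\tube_b))\gtrapprox_\eps 1$; such a pair exists because robust transversality (applied with, say, $\theta$ a small constant) prevents all the directions from clustering in one small cap. Let $\Pi_p$ be the plane through $p$ spanned by $v(\tube_a)$ and $v(\tube_b)$. For any other tube $\tube$ through $p$, $|v(\tube_a)\wedge v(\tube_b)\wedge v(\tube)|\gtrapprox_\eps\angle(v(\tube),\Pi_p)$, and this quantity is one of the terms in $W(p)\lessapprox_\eps\delta^{-1}$ — but that single bound on the whole sum is not immediately enough for a pointwise conclusion on each $\tube$. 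To get the pointwise statement I instead do a second pigeonholing on $V(p)$: dyadically decompose the tubes through $p$ according to $\angle(v(\tube),\Pi_p)$, and use that if many tubes had angle $\gtrsim\delta^{1/2-\eps}$ with $\Pi_p$, then choosing a third such tube $\tube_c$ nearly orthogonal in angle to $v(\tube_a)\wedge v(\tube_b)$... — more cleanly: choose $\tube_a,\tube_b,\tube_c$ through $p$ realizing (up to $\approx_\eps$) the \emph{maximum} of $|v(\tube_1)\wedge v(\tube_2)\wedge v(\tube_3)|$ over triples through $p$, call this max $m$; then every term in $W(p)$ is $\leq m$, and taking $\Pi_p$ = plane spanned by the two of $v(\tube_a),v(\tube_b),v(\tube_c)$ with the largest mutual angle, every $\tube$ through $p$ has $\angle(v(\tube),\Pi_p)\lessapprox_\eps m$ (else it would form a triple with wedge $> m$ together with the two spanning directions, since those two already make angle $\gtrapprox_\eps 1$). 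Finally I need $m\lessapprox_\eps\delta^{1/2}$: if $m\geq\delta^{1/2-\eps'}$, then \emph{every} triple $v_1,v_2,v_3$ among the $\gtrapprox_\eps 1$ fraction of tubes with $\angle(v(\tube),\Pi_p)\approx_\eps m$... — here one uses that a positive-density (in direction) set of tubes makes angle $\approx_\eps m$ with $\Pi_p$, producing $\gtrsim(\delta^{-1/2})^3$ triples with wedge $\gtrapprox_\eps m$, contradicting $W(p)\lessapprox_\eps\delta^{-1}$ unless $m\lessapprox_\eps\delta^{1/2}$. Running this at every good $p$ and recording the plane $\Pi_p$ (defined arbitrarily, e.g. as any fixed plane through $p$, at the non-good points, which we have discarded) gives the refinement $Y'$.

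The main obstacle is the last step: turning the single scalar inequality $W(p)\lessapprox_\eps\delta^{-1}$ into the \emph{uniform} pointwise bound $\angle(v(\tube),\Pi_p)\lessapprox_\eps\delta^{1/2}$ for \emph{every} surviving tube through $p$, rather than merely for most of them. This is exactly where robust transversality is essential — it guarantees the set of directions through $p$ has positive density in any cap it meets, so any tube with large angle to $\Pi_p$ is not isolated but comes with a full $\approx_\eps\delta^{-1/2}$-sized family of equally-bad tubes, and that family contributes $\gtrsim\delta^{-3/2}\cdot(\text{large})$ to $W(p)$, which is a contradiction. Making the bookkeeping of the two nested pigeonholings (first over $p\in A$, then over the direction-annuli of tubes through each $p$) consistent with the $\lessapprox_\eps$ conventions, and checking that the final $Y'$ is still a legitimate refinement of $Y$, is routine but needs care.
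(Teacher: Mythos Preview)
Your overall strategy matches the paper's: apply robust transversality, plug into the weighted multilinear Kakeya inequality \eqref{weightedMultiLineKakeya}, and pigeonhole on points. The computation $\int_A W(p)^{1/2}\lessapprox_\eps 1$ with $|A|\approx_\eps\delta^{1/2}$, hence $W(p)\lessapprox_\eps\delta^{-1}$ for most $p$, is exactly right.

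The gap is in your final step, where you try to upgrade ``most tubes through $p$ are near $\Pi_p$'' to ``\emph{every} tube through $p$ is near $\Pi_p$'' by claiming robust transversality forces a single outlier to bring along a full $\approx_\eps\delta^{-1/2}$-sized family of outliers. That inference is not valid: Lemma \ref{robustTransLem} is an \emph{upper} bound on how many tube directions can cluster near a single vector $v$; it does not give any lower bound of the form ``positive density in any cap it meets.'' Nothing in the hypotheses prevents a scenario where $\approx_\eps\delta^{-1/2}$ tubes lie within $\delta^{1/2}$ of some plane $\Pi_p$ (spread out along the great circle, so robust transversality is satisfied) together with $O(1)$ isolated outlier tubes far from $\Pi_p$. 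In that scenario the contribution of the outliers to $W(p)$ is only $O(\mu^2)\cdot m\approx_\eps\delta^{-1}m$, which is consistent with $W(p)\lessapprox_\eps\delta^{-1}$ for any $m\lesssim 1$, so your contradiction does not fire.

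The fix---and this is what the paper does---is simply not to try: once you know that at each good $p$ there is a plane $\Pi_p$ (spanned by a pair of directions with mutual angle $\gtrapprox_\eps 1$ coming from a triple of small wedge) such that $\gtrapprox_\eps\delta^{-1/2}$ tubes through $p$ make angle $\leq\delta^{1/2-C\eps}$ with $\Pi_p$, you \emph{define}
\[
Y'(\tube)=\{p\in Y_1(\tube)\colon \angle(v(\tube),\Pi_p)\leq\delta^{1/2-C\eps}\}.
\]
The planiness conclusion then holds by fiat, and the refinement condition $\sum_\tube|Y'(\tube)|\gtrapprox_\eps 1$ follows by integrating the count $\gtrapprox_\eps\delta^{-1/2}$ over the good set of $p$'s. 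You never need to show the outliers don't exist; you just drop them.
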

\begin{proof}
First apply Lemma \ref{robustTransLem} to $(\tubes,Y)$, and let $(\tubes,Y_1)$ be the resulting refinement. Recall that $\sum_{\tube\in\tubes}\chi_{Y_1(\tube)}\sim\mu\chi_A$ for some number $\mu\approx_\eps\delta^{-1/2}$ and some set $A\subset\RR^3$ with $|A|\approx_{\eps}\delta^{1/2}$.
Next, there exists a subset $A_1\subset A$ with $|A_1|\gtrapprox_\eps|A|$ and a number $\tau$ so that for every point $p\in A_1$, there are $\gtrapprox_\eps\mu^3$ triples $\tube_1,\tube_2,\tube_3$ satisfying:
\begin{itemize}
\item $p\in Y_1(\tube_1)\cap Y_1(\tube_2)\cap Y_1(\tube_3)$,
\item $\big|v(\tube_1)\wedge v(\tube_2)\wedge v(\tube_3)\big|\sim\tau$,
\item $\angle(v(\tube_i),v(\tube_j))\gtrapprox_{\eps}1$ if $i\neq j$.
\end{itemize}
(The last item holds because each point $p\in A_1$ satisfies the conclusion of Lemma  \ref{robustTransLem} ). The bound \eqref{weightedMultiLineKakeya} implies that $\tau\lessapprox_{\eps}\delta^{1/2}$. Thus for each $p\in A_1$, there is a pair $(\tube_1,\tube_2)$ with $p\in Y_1(\tube_1)\cap Y_1(\tube_2)$ and $\angle(v(\tube_1),\ v(\tube_2))\gtrapprox_{\eps}1$ so that if $\Pi_p$ is the plane containing $p$ spanned by the vectors $v(\tube_1)$ and $v(\tube_2)$, then there is an absolute constant $C$ so that
$$
|\{\tube\in\tubes\colon p\in Y_1(\tube),\ \angle(v(\tube),\Pi_p)\leq \delta^{1/2-C\eps} \}|\gtrapprox_\eps\delta^{-1/2}.
$$

For each $\tube\in\tubes$, define
$$
Y^\prime(\tube)=\{p\in Y_1(\tube)\colon \angle(v(\tube), \Pi_p) \leq \delta^{1/2-C\eps}\}.\qedhere
$$
\end{proof}
\begin{defn}
If $(\tube,Y)$ satisfies the conclusions of Lemma \ref{planinessProp}, then we say it is plany.
\end{defn}
\subsection{Hairbrushes}
\begin{defn}\label{hairbrushDef}
Let $(\tubes,Y)$ be a set of tubes and let $\tube_1,\ldots,\tube_k\in\tubes$. We define the hairbrush of $\tube_1,\ldots\tube_k$ to be the set of tubes from $\tubes$ that intersect each of $\tube_1,\ldots\tube_k$. More precisely, we define
\begin{equation}\label{defnOfHYEqn}
H_{Y}(\tube_1,\ldots,\tube_k)=\{\tube\in\tubes\colon Y(\tube)\cap Y(\tube_i)\neq\emptyset,\ i=1,\ldots,k\}.
\end{equation}

If there is no chance of confusion we may omit the subscript $Y$ in \eqref{defnOfHYEqn}.
\end{defn}
One consequence of Wolff's hairbrush argument is that if $(\tubes,Y)$ is an $\eps$--extremal set of tubes, then for most $\tube\in\tubes$, most pairs of tubes in the hairbrush $H_Y(\tube)$ are $\gtrapprox_{\eps}$ separated and skew. To be more precise, there is an absolute constant $C$ so that
\begin{equation}\label{mostHairbrushSkew}
\begin{split}
|\{&(\tube,\tube_1,\tube_2)\in\tubes^{3}\colon \tube_1,\tube_2\in H(\tube);\ \tube_1,\tube_2\ \textrm{are}\ \geq \delta^{C\eps}\ \textrm{separated and skew}\}|\\
&\approx_{\eps} |\{(\tube,\tube_1,\tube_2)\in\tubes^{3}\colon \tube_1,\tube_2\in H(\tube)\}|.
\end{split}
\end{equation}

The proof of \eqref{mostHairbrushSkew} is standard, though somewhat lengthy, so we will only provide a brief sketch. If \eqref{mostHairbrushSkew} failed, then after pigeonholing, there is a refinement $(\tube^\prime,Y^\prime)$ so that for each $\tube\in\tubes^\prime$, the tubes in the hairbrush of $\tube$ are contained in the $\delta^{C\eps}$ neighborhood of a plane. On the one hand, the union of the tubes in this hairbrush have volume $\gtrapprox_{\eps}\delta^{1/2}$. On the other hand, the union of these tubes only covers a $\delta^{C\eps/C_1}$ fraction of the union $\bigcup_{T\in\tubes}Y(T)$, where $C_1$ is an absolute constant. Thus if $C$ is chosen sufficiently large, this contradicts the assumption that $(\tubes,Y)$ is an $\eps$--extremal set of tubes. An analogous similar argument also shows that a similar statement holds for $k$-tuples of tubes in $H_Y(\tube$).

\begin{lem}[Most tubes in a hairbrush are skew]\label{mostOfHairbrushIsSepAndSkew}
Let $(\tubes,Y)$ be an $\eps$--extremal set of tubes and let $k\geq 2$ be an integer. Then there is a constant $C$ (depending only on $k$) so that
\begin{align*}
|\{&(\tube,\tube_1,\ldots,\tube_k)\in\tubes^{k+1}\colon \tube_1,\ldots,\tube_k\in H(\tube);\ \tube_1,\ldots,\tube_k\ \textrm{are pairwise}\ \geq \delta^{C\eps}\ \textrm{separated and skew}\}|\\
&\approx_{\eps} |\{(\tube,\tube_1,\ldots,\tube_k)\in\tubes^{k+1}\colon \tube_1,\ldots,\tube_k\in H(\tube)\}|.
\end{align*}
\end{lem}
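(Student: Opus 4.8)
The plan is to bootstrap from the $k=2$ case, which is stated in the excerpt as a consequence of Wolff's hairbrush argument, via a pigeonholing argument. Write $N = |\{(\tube,\tube_1,\tube_2)\in\tubes^3\colon \tube_1,\tube_2\in H(\tube)\}|$ and let $G \subset \tubes^3$ be the subset of triples where additionally $\tube_1,\tube_2$ are $\geq \delta^{C_2\eps}$ separated and skew, so that $|G| \approx_\eps N$ by the $k=2$ statement. The idea is that a ``generic'' $(k+1)$-tuple $(\tube,\tube_1,\ldots,\tube_k)$ with all $\tube_i \in H(\tube)$ has the property that all $\binom{k}{2}$ pairs $(\tube_i,\tube_j)$ lie in $G$ (with $\tube$ as the common base), because for each fixed pair of coordinates, the ``bad'' tuples form only a $\lessapprox_\eps$-fraction.

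\textbf{Key steps.} First, fix $\tube$ and consider the hairbrush $H(\tube)$; by the $k=2$ statement applied after an initial refinement of $\tubes$ (using that any refinement of an $\eps$-extremal set obeying the conclusion of Lemma \ref{robustTransLem} still does), we may assume that for $\approx_\eps$-most $\tube$, $\approx_\eps$-most pairs $(\tube_1,\tube_2) \in H(\tube)^2$ are $\geq\delta^{C\eps}$ separated and skew. Second, for such a ``good'' $\tube$, let $m = |H(\tube)|$; the number of ordered pairs in $H(\tube)^2$ that fail the separated-and-skew condition is $\lessapprox_\eps m^2$, i.e.\ at most $\delta^{-C'\eps} m^2$ times a small... more precisely it is $\leq \delta^{c\eps} m^2$ after choosing the constant $C$ in the hypothesis large enough relative to the implicit constant (this is where the freedom to enlarge $C$ matters, and is why the lemma only claims $\approx_\eps$). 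Third, a union bound over the $\binom{k}{2}$ pairs of indices shows that the number of $k$-tuples $(\tube_1,\ldots,\tube_k)\in H(\tube)^k$ having \emph{some} bad pair is at most $\binom{k}{2}\delta^{c\eps} m^2 \cdot m^{k-2} = \binom{k}{2}\delta^{c\eps} m^k$, which is $\leq \tfrac12 m^k$ once $\delta$ is small (using $\delta^{c\eps}\leq \delta^{c\eps_0} \leq c|\log\delta|^{-1}$ from the epsilon-management conventions, or simply absorbing the constant). Hence at least $\tfrac12 m^k$ of the $k$-tuples are pairwise separated and skew. Fourth, sum over the good $\tube$: the count of pairwise-good $(k+1)$-tuples is $\gtrsim \sum_{\tube\ \mathrm{good}} m(\tube)^k$, while the total count is $\sum_{\tube} m(\tube)^k$; since the good $\tube$ account for a $\approx_\eps$ fraction of $\sum_\tube m(\tube)^2$ and (after a further dyadic pigeonhole fixing $m(\tube)\sim$ const) of $\sum_\tube m(\tube)^k$, we conclude the two counts are $\approx_\eps$. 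Finally, chain the $\lessapprox_\eps$ losses together per the epsilon-management conventions.

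\textbf{Main obstacle.} The subtle point is that the $k=2$ statement only gives control of pairs $(\tube_1,\tube_2)$ sharing the \emph{same} base tube $\tube$, whereas a priori one might worry about needing control uniformly. But the lemma is stated exactly so that $\tube$ is an explicit coordinate of the tuple, so this is not actually an issue --- the union bound is over pairs of indices among $1,\ldots,k$, all with the common base $\tube$. The real bookkeeping difficulty is the passage from ``$\approx_\eps$ fraction of pairs are good for $\approx_\eps$ fraction of $\tube$'' to ``a genuine fraction ($\geq 1/2$) of pairs are good for those $\tube$'', which requires a dyadic pigeonhole on the value of $m(\tube)$ together with enlarging the constant $C$ in the hypothesis; one should be slightly careful that this is legitimate, i.e.\ that a ``$\geq\delta^{C\eps}$ separated and skew'' condition with a \emph{larger} $C$ is a \emph{weaker} requirement on the tuples counted in the conclusion, hence the conclusion for larger $C$ follows from (and is what we prove via) the $k=2$ input. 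I expect the write-up to be short, essentially a pigeonholing exercise of the type the authors say they will perform ``without further comment.''
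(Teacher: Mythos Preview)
Your proposal is correct and matches the paper's approach: the paper's own proof of this lemma consists solely of the sentence ``By pigeonholing, a similar result holds if we consider $k$-tuples of tubes in $H_Y(\tube)$,'' preceded by the $k=2$ statement attributed to Wolff's hairbrush argument. Your write-up is exactly the pigeonholing the authors have in mind (union bound over the $\binom{k}{2}$ index pairs, after a dyadic pigeonhole on $|H(\tube)|$), and you have correctly flagged the one genuine bookkeeping point---that one needs the \emph{bad} pairs to be a $\delta^{c\eps}$-fraction, not merely the good pairs to be a $\delta^{-C\eps}$-fraction, which is obtained by enlarging the constant $C$ in the threshold $\delta^{C\eps}$ (equivalently, by going back to the quantitative content of Wolff's hairbrush bound rather than just the $\approx_\eps$ statement).
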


\subsection{The regulus map}\label{regMapSection}
Let $(\tubes,Y)$ be an $\eps$--extremal set of tubes and let $\tube\in\tubes$. For each $p\in Y(T)$, there is an associated plane $\Pi_p$, as described in Lemma \ref{planinessProp}. In this section, we will ask: as the point $p$ moves along the tube $\tube$, how does $\Pi_p$ change? Sometimes, a particularly interesting phenomenon occurs: at coarse scales, the plane $\Pi_x$ looks like the tangent plane to a regulus containing the line coaxial with $\tube$. If this happens, we say that the tube $\tube$ has a ``regulus map.'' The presence or absence of these regulus maps will steer our proof towards either the Heisenberg or $SL_2$ examples. This statement will be made precise in Section \ref{dichotomySection}.

\begin{defn}\label{failsToAvoidRegulii}
Let $(\tubes,Y)$ be an $\eps$--extremal set of tubes. We say that $(\tubes,Y)$ \emph{avoids reguli} (with parameters $\gamma>0$ and $\beta>0$) if whenever $\{R(\tube)\}_{\tube\in\tubes}$ is a set of non-degenerate reguli for which $R(\tube)$ contains the line coaxial with $\tube$, we have
$$
\sum_{\tube\in\tubes}|\{\tube^\prime\in H(\tube)\colon \angle(v(\tube^\prime), R(\tube))\leq \delta^{\gamma}\}| \leq \delta^{\beta}\sum_{\tube\in\tubes}|H(\tube)|.
$$
\end{defn}

If the above property does not hold, then we will say that $(\tubes,Y)$ \emph{fails to avoid reguli} (with parameters $\gamma$ and $\beta$). In practice, we will set $\gamma=C_1\eps$ and $\beta = C_2\eps$, where $C_1$ and $C_2$ are constants with $C_1$ much larger than $C_2$.

\begin{lem}\label{regulusMapLem}
Let $(\tubes,Y)$ be an $\eps$--extremal set of tubes that is plany, and assume that
\begin{equation}\label{KakeyaSetIsHomogeneous}
\sum_{\tube\in\tubes}\chi_{Y(\tube)}\sim\mu\chi_A
\end{equation}
 for some number $\mu$ and some set $A$. Suppose that $(\tubes,Y)$ fails to avoid reguli with parameters $\gamma$ and $\beta$. Then there exists
\begin{itemize}
\item A number $\delta^{1/2}\leq\theta\leq \delta^{\gamma}$.
\item A set $\tubes^\prime\subset\tubes$, and for each $\tube\in\tubes^\prime$, a set $Y^\prime(\tube)\subset Y(\tube)$ such that
$$
\sum_{\tube^\prime\in\tubes^\prime}|Y^\prime(\tube)|\gtrapprox_{\eps}\delta^{\beta}\sum_{\tube\in\tubes}|Y(\tube)|.
$$
\item For each $\tube\in\tubes^\prime$, a non-degenerate regulus $R(\tube)$ containing the line coaxial with $\tube$.
\end{itemize}
So that if $\tube\in\tubes^\prime$ and $p\in Y^\prime(\tube)$, then $\angle(\Pi_p, T_pR(\tube))\approx_{\eps}\theta$.
\end{lem}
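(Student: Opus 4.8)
\emph{Proof strategy.} The plan is to read off the reguli $R(\tube)$ and the angle $\theta$ from the failure of the avoidance property, and then do one geometric computation at the relevant incidence points. By Definition \ref{failsToAvoidRegulii}, the hypothesis supplies a family $\{R(\tube)\}_{\tube\in\tubes}$ of non-degenerate reguli, each $R(\tube)$ containing the line coaxial with $\tube$, for which
$$
\sum_{\tube\in\tubes}\big|\{\tube^\prime\in H(\tube)\colon \angle(v(\tube^\prime),R(\tube))\leq\delta^{\alpha}\}\big| > \delta^{\beta}\sum_{\tube\in\tubes}|H(\tube)| .
$$
Call a pair $(\tube,\tube^\prime)$ counted on the left a \emph{good pair}, and for each fix a point $p=p(\tube,\tube^\prime)\in Y(\tube)\cap Y(\tube^\prime)$. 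I will use two bookkeeping facts, both consequences of the homogeneity hypothesis \eqref{KakeyaSetIsHomogeneous} together with robust transversality (Lemma \ref{robustTransLem}, which may be assumed after a preliminary refinement): (i) by Lemma \ref{robustTransLem} most incidences $(\tube,\bar p,\tube^\prime)$ involve $\gtrapprox_{\eps}1$ transverse pairs, which meet in $\approx_{\eps}1$ pixels each, so $\sum_{\tube}|H(\tube)|\gtrapprox_{\eps}\sum_{\bar p\in A}\mu^2\approx_{\eps}\mu\delta^{-3}\sum_{\tube}|Y(\tube)|$; and (ii) again by Lemma \ref{robustTransLem}, discarding the good pairs $(\tube,\tube^\prime)$ with $\angle(v(\tube),v(\tube^\prime))$ below a suitably small fixed power of $\delta^{\beta}$ removes only a negligible fraction, so the surviving good pairs — still $\gtrapprox_{\eps}\delta^{\beta}\mu\delta^{-3}\sum_{\tube}|Y(\tube)|$ of them — are all $\gtrapprox_{\eps}1$ transverse. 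Finally I would pigeonhole the surviving good pairs on the dyadic value $\theta\geq\delta^{1/2}$ of $\angle(\Pi_p,T_pR(\tube))$, where $T_pR(\tube)$ denotes the tangent plane to $R(\tube)$ at the nearest point of $R(\tube)$ to $p$ (well-defined up to $\approx_{\eps}\delta$ by the curvature bound below), keeping a sub-collection $\mathcal{G}$ of $\gtrapprox_{\eps}\delta^{\beta}\mu\delta^{-3}\sum_{\tube}|Y(\tube)|$ good transverse pairs, each with $\angle(\Pi_p,T_pR(\tube))\approx_{\eps}\theta$.

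The geometric heart is to show $\theta\lessapprox_{\eps}\delta^{\alpha}$. Fix $(\tube,\tube^\prime)\in\mathcal{G}$ and its point $p$. Since $R(\tube)$ is non-degenerate, Lemma \ref{GaussCurvatureOfRegulus} gives $|K|\approx_{\eps}1$ on $B(0,1)\cap R(\tube)$; as $R(\tube)$ contains the coaxial line of $\tube$ and $p$ lies within $\delta$ of that line, this forces $v(\tube)$ to lie within angle $\lessapprox_{\eps}\delta$ of $T_pR(\tube)$. Since $\tube^\prime$ passes through $p$, makes angle $\gtrapprox_{\eps}1$ with $\tube$, and satisfies $\angle(v(\tube^\prime),R(\tube))\leq\delta^{\alpha}$, the same curvature bound lets me localize this last inequality to the single tangent plane at $p$, giving $\angle(v(\tube^\prime),T_pR(\tube))\lessapprox_{\eps}\delta^{\alpha}$. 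Planiness gives $\angle(v(\tube),\Pi_p)\lessapprox_{\eps}\delta^{1/2}$ and $\angle(v(\tube^\prime),\Pi_p)\lessapprox_{\eps}\delta^{1/2}$. Choosing coordinates at $p$ with $v(\tube)$ along the $z$-axis, the planes $\Pi_p$ and $T_pR(\tube)$ each contain a direction within $\lessapprox_{\eps}\delta^{1/2}$ of the $z$-axis; writing $v(\tube^\prime)=(a,b,c)$ one has $|b|\lessapprox_{\eps}\delta^{1/2}$ (near $\Pi_p$), $\sqrt{a^2+b^2}\gtrapprox_{\eps}1$ (transversality), and the component of $v(\tube^\prime)$ normal to $T_pR(\tube)$ of size $\lessapprox_{\eps}\delta^{\alpha}$; a one-line computation with these three inequalities yields $\angle(\Pi_p,T_pR(\tube))\lessapprox_{\eps}\delta^{\alpha}$. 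As this holds for every pair in $\mathcal{G}$, $\theta\lessapprox_{\eps}\delta^{\alpha}$, which together with $\theta\geq\delta^{1/2}$ gives the stated range for $\theta$ (up to the customary $\lessapprox_{\eps}$-slack, harmless under the convention $\alpha=C_1\eps$).

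To finish I would build the refinement by pushing $\mathcal{G}$ down to pairs $(\tube,\bar p)$, where $\bar p$ is the pixel containing $p(\tube,\tube^\prime)$. This map is $O(\mu)$-to-one, since for fixed $(\tube,\bar p)$ the second tube $\tube^\prime$ must have $\bar p$ in its shading, and only $\sim\mu$ tubes do. Taking $\tubes^\prime$ to be the tubes occurring as a first coordinate and $Y^\prime(\tube)$ the union of the pixels $\bar p$ with $(\tube,\bar p)$ in the image, we get $Y^\prime(\tube)\subset Y(\tube)$ and, using (i),
$$
\sum_{\tube\in\tubes^\prime}|Y^\prime(\tube)| = \delta^{3}\,\#\{(\tube,\bar p)\ \text{in the image}\} \gtrapprox_{\eps} \frac{\delta^{3}}{\mu}|\mathcal{G}| \gtrapprox_{\eps} \delta^{\beta}\sum_{\tube\in\tubes}|Y(\tube)| .
$$
For $\tube\in\tubes^\prime$ and $p\in Y^\prime(\tube)$, the pixel of $p$ arises from some pair in $\mathcal{G}$, so $\angle(\Pi_p,T_pR(\tube))\approx_{\eps}\theta$ (the two planes vary by $\lessapprox_{\eps}\delta\ll\delta^{1/2}$ across a pixel), and $R(\tube)$ is a non-degenerate regulus through the coaxial line of $\tube$, as required.

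The main obstacle is the geometric step of the second paragraph: passing from ``$v(\tube^\prime)$ is nearly tangent to the surface $R(\tube)$ somewhere'' to ``$v(\tube^\prime)$ is nearly tangent to $R(\tube)$ at the specific point $p$'', and then deducing that $\Pi_p$ itself is a near-tangent plane of $R(\tube)$. This is exactly where the quantitative curvature bound of Lemma \ref{GaussCurvatureOfRegulus} (so that the Gauss map of $R(\tube)$ is a quantitative local diffeomorphism and nearby tangent planes differ by $\approx_{\eps}\delta$) and the transversality of $\tube,\tube^\prime$ are essential; everything else — the pigeonholing for $\theta$ and the pair-count-to-volume bookkeeping — is routine given \eqref{KakeyaSetIsHomogeneous}.
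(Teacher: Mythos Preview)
Your proof is correct and follows essentially the same route as the paper's: associate the reguli $R(\tube)$ coming from the failure of Definition \ref{failsToAvoidRegulii}, define $\theta_{p,\tube}=\angle(\Pi_p,T_pR(\tube))$, use homogeneity \eqref{KakeyaSetIsHomogeneous} to convert the hairbrush pair-count into the volume estimate $\sum_{\tube}|\{p\in Y(\tube):\theta_{p,\tube}\leq\delta^{\alpha}\}|\gtrapprox_{\eps}\delta^{\beta}\sum_{\tube}|Y(\tube)|$, and then dyadically pigeonhole on $\theta$. The paper compresses your entire second paragraph and the pair-to-volume bookkeeping of your third paragraph into the single sentence ``By \eqref{KakeyaSetIsHomogeneous}, we have \ldots''; you have simply made explicit the geometric step (that $v(\tube)\in T_pR(\tube)$, $v(\tube^\prime)$ near $T_pR(\tube)$ to order $\delta^{\alpha}$, both in $\Pi_p$ to order $\delta^{1/2}$, and transversality together force $\angle(\Pi_p,T_pR(\tube))\lessapprox_{\eps}\delta^{\alpha}$) and the $O(\mu)$-to-one push-down from pairs $(\tube,\tube^\prime)$ to pairs $(\tube,\bar p)$, both of which the paper takes for granted.
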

\begin{proof}
Since $(\tubes,Y)$ fails to avoid regulii, for each $\tube\in\tubes$ we can associate a regulus $R(\tube)$, as described in Definition \ref{failsToAvoidRegulii}. For each $\tube\in\tubes$ and each $p\in Y(\tube)$, define
$$
\theta_{p,\tube} =\angle(\Pi_p, T_pR(\tube) ).
$$
Note that $\theta_{p,\tube}$ is defined up to uncertainty $\delta^{1/2}$. In particular, we can assume that $\theta_{p,\tube}\geq \delta^{1/2}$.

By \eqref{KakeyaSetIsHomogeneous}, we have
$$
\sum_{\tube\in\tubes}|\{p\in Y(\tube)\colon \delta^{1/2}\leq\theta_{p,\tube}\leq\delta^{\gamma} \}|\geq\delta^\beta\sum_{\tube\in\tubes}|Y(\tube)|.
$$
Thus by dyadic pigeonholing, we can find a dyadic number $\delta^{1/2}\leq \theta\leq\delta^{\gamma}$ so that if we define
$$
Y^\prime(\tube)=p\in Y(\tube)\colon \theta\leq \theta_{p,\tube}\leq 2\theta,
$$
then
$$
\sum_{\tube\in\tubes}|Y^\prime(\tube)|\geq\delta^\beta|\log\delta|^{-1}\sum_{\tube\in\tubes}|Y(\tube)|,
$$
which completes the proof.
\end{proof}

\begin{defn}
We say that an $\eps$-extremal set of tubes obeys the regulus map at scale $\theta$ if it satisfies the conclusions of Lemma \ref{regulusMapLem}.
\end{defn}

\begin{rem}\label{threeParametersRegulusMap}
If $(\tubes,Y)$ obeys the regulus map at scale $\theta<\delta^{C_0\eps}$ and if $\tube\in\tubes$, then the function $p\mapsto T_p R(\tube)$ is called the \emph{regulus map} associated to the tube $\tube$. If we identify the unit line segment coaxial with $\tube$ with the interval $[0,1]$, and if $\operatorname{Gr}(2,\RR^3)$ is the (affine) Grassmannian of two-dimensional subspaces of $\RR^3$, then the regulus map of $\tube$ becomes a function $f_\tube\colon [0,1]\to\operatorname{Gr}(2,\RR^3)$ that is described by three parameters.

In particular, if $p_1,p_2,p_3\in\tube$ are three $\gtrapprox_{\eps}1$ separated points, and if $\Pi_1,\Pi_2,$ and $ \Pi_3$ are three planes containing the points $p_1,p_2,$ and $p_3$ respectively, then there is a unique (up to uncertainty $\lessapprox_{\eps}\theta$ ) regulus map $p\mapsto T_p R(\tube)$ satisfying $\angle(T_{p_i}R(\tube),\ \Pi_i)\lessapprox_{\eps}\theta,\ i=1,2,3$.
\end{rem}
\begin{rem}\label{virtualTube}\label{virtualTubeRemark}
A useful way of interpreting the regulus map is as follows. Let $(\tubes,Y)$ obey the regulus map at scale $\theta$. Let $\tube_0\in\tubes$ and let $L$ be a line in $R(\tube_0)$ in the same ruling as $\tube_0$ that is $\delta/\theta$ separated (with error $\approx_{\eps}1$) and  $\approx_{\eps}1$ skew to the line coaxial with $\tube_0$. By Lemma \ref{ThreeSkewLinesTwoTransversals}, such an $L$ must exist. Let $\tube_1$ be the $\delta$--neighborhood of $L$.

Let $\tube$ be a tube (not necessarily from $\tubes$) with the following properties: $\tube$ intersects $\tube_0$; $\tube$ satisfies $\angle(v(\tube),\ v(\tube_0))\gtrapprox_{\eps}1$; and $\tube$ makes an angle $\leq \theta$ with the tangent plane of $R(\tube)$ at the point $\tube\cap \tube_0$. Then $\tube$ also intersects the $\lessapprox_\eps \delta$--neighborhood of $\tube_1$. Conversely, if $\tube$ intersects $\tube_0$; $\tube$ satisfies $\angle(v(\tube),\ v(\tube_0))\gtrapprox_{\eps}1$; and $\tube$ also intersects $\tube_1$, then $\tube$ must make an angle $\lessapprox_{\eps}\theta$ with the tangent plane of $R(\tube)$ every point in $\tube\cap \tube_0$.

This means that for each $\tube_0\in\tubes,$ there is a tube $\tube_1$ (not necessarily in $\tubes$) that is $\delta/\theta$ separated from $\tube_0$ with error $\approx_{\eps}1$ so that for each constant $C_1$, there are constants $C_2$ and $C_3$ so that

\begin{equation}\label{equivalenceOfRegulusMapVirtualTube}
\begin{split}
&\{\tube\in H(\tube_0)\colon \angle(v(\tube),\ v(\tube_0))\geq\delta^{\eps},\ \angle(v(\tube),\ T_p R(\tube_0))\leq \delta^{C_1\eps}\theta\}\\
&\quad \subset \{\tube\in H(\tube_0)\colon \angle(v(\tube),\ v(\tube_0))\geq\delta^{\eps},\ \tube\cap N_{\delta^{1-C_2\eps}}(\tube_1)\neq\emptyset\}\\
&\quad\quad \subset \{\tube\in H(\tube_0)\colon \angle(v(\tube),\ v(\tube_0)) \geq\delta^{\eps},\ \angle(v(\tube),\ T_p R(\tube_0))\leq\delta^{C_3\eps}\theta\},
\end{split}
\end{equation}
where in the above equation, $T_p R(\tube_0)$ is the tangent plane of the regulus $R(\tube_0)$ at a point $p\in\tube_0\cap \tube$ (of course the intersection consists of more than one point, but since $\angle(v(\tube),\ v(\tube_0))\gtrapprox_{\eps}1$, the intersection is contained in a ball of radius $\lessapprox_{\eps}\delta$, so the choice of $p$ does not matter). The requirement that $\angle(v(\tube),\ v(\tube_0))\geq\delta^{\eps}$ in each of the three sets above can also be replaced with a requirement of the form $\angle(v(\tube),\ v(\tube_0))\geq\delta^{C\eps}$; doing so will also change the constants $C_2$ and $C_3$.

We can think of $\tube_1$ as a ``virtual tube'' (we call it virtual since it need not be in the set $\tubes$) that describes the regulus map of $\tube_0$. We will write $\tube_1 = V(\tube_0)$.
\end{rem}

\begin{figure}[h!]
 \centering
\begin{overpic}[width=0.4\textwidth]{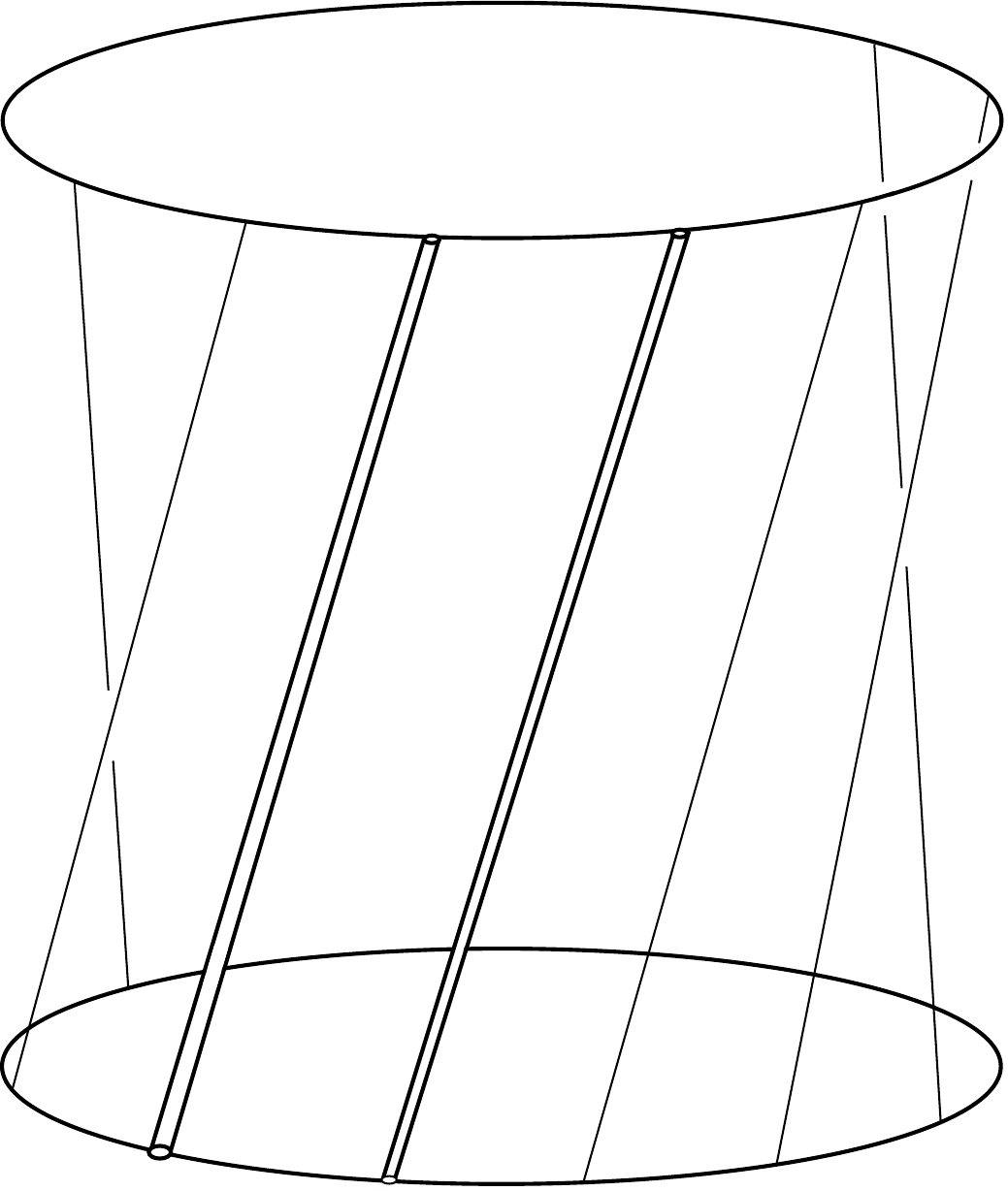}
\put (26,35) {$\tube_0$}
\put (46,35) {$V(\tube_0)$}
\end{overpic}
 \caption{The virtual tube of $\tube_0$ found along the ruling of the regulus}\label{virtualTubePic}
\end{figure}

\subsection{The hairbrush of two tubes is contained in a fat regulus}
If $L_1,L_2,$ and $L_3$ are three skew lines, then the union of all lines intersecting each of $L_1,L_2,$ and $L_3$ forms a regulus. On the other hand, the union of all lines that merely intersect $L_1$ and $L_2$ fill out all of $\RR^3$. In general, one would expect similar statements to hold if lines are replaced by tubes.

As we will see in the following section, however, if we restrict attention to tubes obeying the regulus map at scale $\theta$, then the union of all tubes that intersect \emph{two} skew tubes $\tube_1$ and $\tube_2$ is contained in the $\theta$-neighborhood of a regulus. This is because every tube intersecting $\tube_1$ and $\tube_2$ must also intersect the ``virtual tube'' associated to $\tube_1$. This fact will have several implications, which we will explore below.

\begin{lem}\label{thetaScaleHairbrush}
Let $(\tubes,Y)$ be an $\eps$-extremal set of tubes that obey the regulus map at scale $\theta$. Let $\tube_1,\tube_2\in\tubes$ be two tubes whose coaxial lines $L_1$ and $L_2$ are $\geq\delta^{C\eps}$ separated and skew. Let $L_1^\prime$ be the line coaxial with $V(\tube_1)$.

Then there is a number $\theta^\prime\approx_{\eps}\theta$ so that $H(\tube_1,\tube_2)$ is contained in a union of $\lessapprox_\eps\theta^{-1}$ distinct $\theta^\prime$-tubes (the $\theta^\prime$--neighborhoods of unit line segments) that are $\lessapprox_{\eps}1$ overlapping. Each of these $\theta^\prime$ tubes contains (at least one) line that intersects $L_1,L_2,$ and $L_1^\prime$. Each tube in $H(\tube_1,\tube_2)$ is contained in at least one of the $\theta^\prime$-tubes.
\end{lem}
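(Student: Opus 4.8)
The plan is to use the "virtual tube" characterization of the regulus map from Remark~\ref{virtualTube} to convert the two-tube hairbrush into an ordinary three-line hairbrush, and then exploit the fact that three skew lines determine a genuine regulus. Concretely, let $L_1^\prime$ be the line coaxial with $V(\tube_1)$; by construction $L_1^\prime$ lies in $R(\tube_1)$ in the same ruling as $L_1$, is $\approx_\eps 1$ skew to $L_1$, and is $\delta/\theta$ separated from $L_1$. First I would pass to a refinement of $H(\tube_1,\tube_2)$ consisting of those tubes $\tube$ with $\angle(v(\tube),v(\tube_1))\geq \delta^{C\eps}$ and $\angle(v(\tube),v(\tube_2))\geq \delta^{C\eps}$; by Lemma~\ref{mostOfHairbrushIsSepAndSkew} (applied with an appropriate exponent) this is almost all of the hairbrush, and the handful of remaining tubes can be absorbed into the $\theta$-tube covering at the end. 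For every tube $\tube$ in this refined hairbrush, planiness together with the regulus-map hypothesis forces $\angle(v(\tube),T_pR(\tube_1))\lessapprox_\eps \theta$ at the point $p = \tube\cap\tube_1$ — indeed $\tube$ intersects $\tube_1$, makes a $\gtrapprox_\eps 1$ angle with it, and $\tube$ lies in the plane $\Pi_p$ which is within angle $\approx_\eps\theta$ of $T_pR(\tube_1)$. Hence by the forward inclusion in \eqref{equivalenceOfRegulusMapVirtualTube}, $\tube$ must intersect $N_{\delta^{1-C_2\eps}}(\tube_1^\prime)$, i.e.\ $\tube$ meets (a slight fattening of) each of $L_1$, $L_2$, and $L_1^\prime$.

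Next I would analyze the set of lines that meet $L_1$, $L_2$, and $L_1^\prime$. Since $L_1$ and $L_1^\prime$ both lie in the non-degenerate regulus $R(\tube_1)$ in the same ruling, and $L_2$ is $\geq\delta^{C\eps}$ separated and skew to $L_1$, I would check that $L_1,L_2,L_1^\prime$ satisfy the hypotheses of Lemma~\ref{threeSkewLinesGetYouAnywhere} (pairwise $\geq\delta^{C'\eps}$ skew, with $L_1,L_1^\prime$ uniformly separated with error $\delta^{C'\eps}$ and $L_1,L_2$ and $L_2,L_1^\prime$ each $1$ separated with error $\delta^{C'\eps}$) — the skewness of $L_1^\prime$ and $L_2$ follows from Lemma~\ref{ThreeSkewLinesTwoTransversals} or a direct argument using that $L_1^\prime\subset R(\tube_1)$ and $L_2\pitchfork R(\tube_1)$ transversally. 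The unique regulus $R^\prime$ containing $L_1,L_2,L_1^\prime$ then has Gauss curvature $\approx_\eps 1$ everywhere on the unit ball by Lemma~\ref{GaussCurvatureOfRegulus}, so it is a non-degenerate regulus, and every line meeting all three of $L_1,L_2,L_1^\prime$ lies in $R^\prime$. (When the lines are only fattened by $\delta^{1-C_2\eps}$ rather than met exactly, a standard perturbation argument shows each such tube lies in $N_{\lessapprox_\eps\delta}(R^\prime)$; I would phrase this as: the coaxial line of $\tube$ is within $\lessapprox_\eps\delta$ of a genuine transversal, which lies in $R^\prime$.)

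Finally I would build the $\theta^\prime$-tube covering. Since $R^\prime$ is non-degenerate, its ruling in the family transverse to $L_1$ can be sliced into $\lessapprox_\eps \theta^{-1}$ "chunks," each chunk being the union of the ruling lines over an arc of length $\approx_\eps\theta$ in the parameter; by the curvature bound $|K|\approx_\eps 1$ each chunk is contained in a single tube of radius $\theta^\prime\approx_\eps\theta$ (this is exactly the geometry of a $\theta$-regulus-strip at the coarser scale $\theta$ in place of $\delta^{1/2}$, cf.\ Definition~\ref{regulusStripDefn}), and consecutive such tubes are $\lessapprox_\eps 1$ overlapping because the ruling lines turn at unit speed. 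Each tube $\tube\in H(\tube_1,\tube_2)$ has its coaxial line within $\lessapprox_\eps\delta\leq\theta^\prime$ of a ruling line of $R^\prime$, hence $\tube$ is contained in (a mild fattening of) one of these $\theta^\prime$-tubes; and each $\theta^\prime$-tube by construction contains a ruling line of $R^\prime$, which meets $L_1,L_2,L_1^\prime$. Re-inserting the $\lessapprox_\eps 1$ tubes of the hairbrush that were set aside at the start (each of which individually is a $\delta$-tube, hence trivially fits in an additional $\theta^\prime$-tube) only increases the count by a $\lessapprox_\eps 1$ factor, which completes the proof.

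I expect the main obstacle to be the second step: verifying cleanly that $L_1,L_2,L_1^\prime$ are quantitatively skew and separated in the precise sense demanded by Lemmas~\ref{threeSkewLinesGetYouAnywhere} and \ref{GaussCurvatureOfRegulus}. The delicate point is the pair $(L_2,L_1^\prime)$: $L_1^\prime$ is only $\delta/\theta$ separated from $L_1$, so one must rule out that $L_2$ and $L_1^\prime$ are nearly coplanar or nearly parallel, using the fact that $L_1^\prime\subset R(\tube_1)$ together with the $\geq\delta^{C\eps}$ skewness of $(L_1,L_2)$ and the unit curvature of $R(\tube_1)$; this is where the hypotheses that $\theta\leq\delta^\alpha$ and that $C$ is taken large relative to the other constants get used.
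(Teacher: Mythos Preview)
Your approach is correct and reaches the same conclusion, but it is more elaborate than the paper's argument. The paper does not build the regulus $R'=R_{L_1,L_2,L_1'}$ and then slice its ruling; instead it slices $\tube_1$ directly into $\approx\theta^{-1}$ segments of length $\theta$, associates to each segment the slab $N_{\theta_1}(T_pR(\tube_1))$ (where $\theta_1\lessapprox_\eps\theta$), observes that any $\tube\in H(\tube_1,\tube_2)$ is trapped in such a slab by planiness and the regulus map, and then uses only the $\geq\delta^{C\eps}$ separation and skewness of $L_1,L_2$ to see that each slab meets $\tube_2$ in a segment of length $\approx_\eps\theta$. This immediately gives the $\theta'$-tube containing the relevant hairbrush tubes, and the transversal line to $L_1,L_2,L_1'$ inside it is found afterwards as the intersection of the slab-plane with $L_2$. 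In particular the paper never needs Lemmas~\ref{threeSkewLinesGetYouAnywhere} or \ref{GaussCurvatureOfRegulus}, and so completely sidesteps the verification of quantitative skewness of $(L_2,L_1')$ that you correctly flag as the delicate point of your route.

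Two minor remarks on your write-up: first, the initial refinement to tubes with $\angle(v(\tube),v(\tube_i))\geq\delta^{C\eps}$ is unnecessary (and Lemma~\ref{mostOfHairbrushIsSepAndSkew} is not the right citation for it anyway), since any $\tube\in H(\tube_1,\tube_2)$ automatically makes angle $\gtrapprox_\eps 1$ with each $\tube_i$ by virtue of intersecting both of two $\geq\delta^{C\eps}$-separated tubes inside the unit ball. Second, your invocation of the Gauss curvature bound to control the $\theta'$-tube covering is morally right but would need to be unpacked a bit; the cleaner statement is that by Lemma~\ref{speedOfSpannedPlane} (applied to $L_1,L_2$ and to $L_1,L_1'$) the transversal through $p\in L_1$ moves at speed $\approx_\eps 1$ as $p$ varies, which gives both the containment in $\theta'$-tubes and the $\lessapprox_\eps 1$ overlap.
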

\begin{proof}
Recall that $\tube_1$ is a tube of length one and thickness $\delta$. Thus we can cover $\tube_1$ by union of $\theta^{-1}$ disjoint tube segments of length $\theta$ (and thickness $\delta)$. If $p,q\in\tube_1$ lie in the same tube segment then $\angle(T_{p}R(\tube_1),\ T_qR(\tube_1))\lessapprox_{\eps}\theta$. Select $\theta_1\approx_{\eps}\theta$ sufficiently large so that $\angle(T_{p}R(\tube_1),\ T_qR(\tube_1))\leq\theta_1$ whenever $p$ and $q$ lie in the same segment.

 We will be interested in the tube segments that intersect at least one tube from $H(\tube_1,\tube_2)$. For each such tube segment, associate the set $N_{\theta_1}(T_pR(\tube_1))$, where $p$ is a point in the tube segment. We will call these sets ``slabs''; each slab is the $\theta_1$--neighborhood of a plane. Observe that if $\tube\in H(\tube_1,\tube_2)$, then $\tube$ is contained in the slab associated to the segment at which the intersection $Y(\tube)\cap Y(\tube_1)$ occurs (if the set $Y(\tube)\cap Y(\tube_1)$ intersects more than one tube segment, then $\tube$ is contained in all of the corresponding slabs).

Since $\tube_1$ and $\tube_2$ are $\delta^{C\eps}$ separated and skew, $v(\tube_2)$ makes an angle $\gtrapprox_{\eps}1$ with any plane containing the line coaxial with $\tube_1$ that intersects $\tube_2$ in the unit ball. Thus each of the slabs described above intersects $\tube_2$ in a tube segment of length $\approx_{\eps}\theta$, and these tube segments are $\lessapprox_{\eps}1$ overlapping. Thus if $W$ is a tube segment from $\tube_1$, then the set of tubes in $H(\tube_1,\tube_2)$ that intersect $\tube_1$ at a point of $W$ are contained in the $\approx_{\eps}\theta_1\approx_{\eps}\theta$ neighborhood of a line segment. Define $\theta^\prime\approx_{\eps}\theta$ so that the aforementioned tubes are contained in the $\theta^\prime$ neighborhood of a line segment.

Finally, observe that $N_{\theta_1}(T_pR(\tube_1))\cap B(0,2)$ is comparable\footnote{We say that two sets $A$ and $B$ are comparable if there is an absolute constant $C\geq 1$ (independent of $\delta$) so that $B$ is contained in the $C$-fold dilate of $A$ and vice-versa.} to the intersection of $B(0,2)$ with the $\theta_1$--neighborhood of the plane spanned by $p$ and $L_1^\prime$. If this plane intersects $L_2$ at the point $q$, then the line containing $p$ and $q$ intersects each of $L_1,L_2$, and $L_1^\prime$, and the line is contained in $N_{\theta_1}(T_pR(\tube_1))\cap\tube_2$. Thus each of the $\theta^\prime$-tubes described above contains a line that intersects each of $L_1,L_2,$ and $L_1^\prime$.
\end{proof}

\begin{cor}\label{hairbrushTwoTubesInFatRegulusOne}
Let $(\tubes,Y)$ be an $\eps$-extremal set of tubes that obey the regulus map at scale $\theta$. Let $\tube_1,\tube_2\in\tubes$ be two tubes that are $\geq\delta^{C\eps}$ separated and skew. Then $H(\tube_1,\tube_2)$ is contained in the $\lessapprox_{\eps}\theta$-neighborhood of a non-degenerate regulus; we will call this regulus $R(\tube_1,\tube_2)$.
\end{cor}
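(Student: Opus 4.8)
The plan is to derive Corollary \ref{hairbrushTwoTubesInFatRegulusOne} directly from Lemma \ref{thetaScaleHairbrush} by packaging the $\lessapprox_\eps\theta^{-1}$ many $\theta'$-tubes produced there into the $\lessapprox_\eps\theta$-neighborhood of a single regulus. The key observation is that Lemma \ref{thetaScaleHairbrush} already tells us that every one of these $\theta'$-tubes contains a line meeting each of $L_1, L_2$, and $L_1'$ (the lines coaxial with $\tube_1$, $\tube_2$, and the virtual tube $V(\tube_1)$). So I would first verify that $L_1$, $L_2$, $L_1'$ are pairwise $\geq\delta^{C'\eps}$ separated and skew for a suitable constant $C'$: the pair $L_1,L_2$ is given, while $L_1'$ is $\delta/\theta$-separated (with error $\approx_\eps 1$) from $L_1$ and $\approx_\eps 1$ skew to it by Remark \ref{virtualTube}; since $\delta/\theta \geq \delta^{1/2} \geq \delta^{\eps}$ this is good separation, and the relative position of $L_2$ and $L_1'$ follows because $L_1'$ lies in $R(\tube_1)$ and Lemma \ref{BoxOfFourLines} (or directly Lemma \ref{linesInDifferentPlanes}) propagates skewness and separation. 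Having established this, let $R = R_{\tube_1, V(\tube_1), \tube_2}$ be the regulus through these three lines; by Definition \ref{defnOfNonDegenerate} it is non-degenerate, and by Lemma \ref{GaussCurvatureOfRegulus} its Gauss curvature is $\approx_\eps 1$ on $B(0,1)$.

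Next I would show that each of the $\theta'$-tubes from Lemma \ref{thetaScaleHairbrush} lies in $N_{\lessapprox_\eps\theta}(R)$. Each such $\theta'$-tube is the $\theta'$-neighborhood of a unit segment on a line $\ell$ that meets $L_1, L_2, L_1'$; but $\ell$ is then a line in the ruling of $R$ dual to these three lines, so $\ell \subset R$, and hence the $\theta'$-neighborhood of $\ell \cap B(0,1)$ is contained in $N_{\theta'}(R) \subset N_{\lessapprox_\eps\theta}(R)$ since $\theta' \approx_\eps \theta$. Taking the union over the $\lessapprox_\eps\theta^{-1}$ tubes and using that $H(\tube_1,\tube_2)$ is covered by them (again from Lemma \ref{thetaScaleHairbrush}) gives $H(\tube_1,\tube_2) \subset N_{\lessapprox_\eps\theta}(R)$, which is the claim with $R(\tube_1,\tube_2) := R$.

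The one subtlety I expect to be the main obstacle is making sure the points of intersection of $\ell$ with $L_1, L_2, L_1'$ genuinely occur inside a controlled ball (so that "$\ell$ is a ruling line of $R$" is meaningful at unit scale, not just formally). Lemma \ref{thetaScaleHairbrush} asserts each $\theta'$-tube contains a line meeting all three of $L_1, L_2, L_1'$, but one needs these incidences to happen in $B(0, \delta^{-C\eps})$ so that the regulus $R$, restricted to the unit ball, actually contains the relevant unit segment of $\ell$; this follows because $\tube_1, \tube_2$ and their hairbrush tubes all lie in the unit ball and the construction of the slabs in the proof of Lemma \ref{thetaScaleHairbrush} takes place there, but it should be stated explicitly. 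A secondary, purely bookkeeping point is tracking how the various constants $C$ in "$\geq\delta^{C\eps}$ separated and skew" degrade through Remark \ref{virtualTube}, Lemma \ref{BoxOfFourLines}, and Lemma \ref{GaussCurvatureOfRegulus}; by the $\lessapprox_\eps$ conventions set up in the introduction this chaining is harmless, so I would simply invoke it without chasing the exact constant.
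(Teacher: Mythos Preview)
Your approach is correct and matches the paper's implicit derivation of the corollary from Lemma \ref{thetaScaleHairbrush}: the $\theta'$-tubes produced there are $\theta'$-neighborhoods of lines in the ruling of $R_{\tube_1, V(\tube_1), \tube_2}$, so their union lies in $N_{\lessapprox_\eps\theta}(R)$.

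Two small corrections are worth noting. First, since $\theta \geq \delta^{1/2}$, the separation $\delta/\theta$ between $L_1$ and $L_1'$ is at most $\delta^{1/2}$, not at least. Second, and relatedly, in the paper's terminology ``$\geq\delta^{\eps}$ separated and skew'' means $1$-separated with error $\delta^{\eps}$, so $L_1, L_1'$ do \emph{not} satisfy this, and Definition \ref{defnOfNonDegenerate} cannot be verified directly on the triple $L_1, L_1', L_2$. The fix is exactly the lemma you already cite: Lemma \ref{GaussCurvatureOfRegulus} only requires $L_1, L_1'$ to be \emph{uniformly} separated (with the third line $L_2$ being $1$-separated from each), and this yields $|K_p|\approx_\eps 1$ on $R\cap B(0,1)$; from there one can pick a third line in the ruling through $L_1, L_2$ that is $\approx_\eps 1$ separated and skew from both (or invoke Lemma \ref{threeSkewLinesGetYouAnywhere}) to witness non-degeneracy. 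This is a bookkeeping slip, not a gap in the argument.
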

\begin{proof}
Apply Lemma \ref{thetaScaleHairbrush} to $\tube_1$ and $\tube_2$. The only thing to establish is that the regulus generated by the lines coaxial with $\tube_1,\ V(\tube_1)$, and $\tube_2$ are non-degenerate. Let $L_1$ and $L_2$  be the line coaxial with $\tube_1$ and $\tube_2$, respectively. Let $L_1^\prime$ be the line coaxial with $V(\tube_1)$. Recall from Remark \ref{virtualTubeRemark} that $L_1^\prime$ is $\delta/\theta\leq\delta^{1/2}$--separated (with error $\approx_{\eps}1$) and $\approx_{\eps}1$ skew to $L_1$. Since $L_2$ is $\approx_{\eps}1$--separated from $L_1$, we conclude that $L_2$ is $\approx_{\eps}1$--separated and $\approx_{\eps}1$ skew to $L_1^\prime$. Thus the regulus $R(\tube_1,\tube_2)=R_{L_1,L_1^\prime,L_2}$ is non-degenerate. 
\end{proof}

Note that Corollary \ref{hairbrushTwoTubesInFatRegulusOne} is only meaningful if $\theta$ is much smaller than one. In particular, if $\theta\sim 1$ then $R(\tube_1,\tube_2)$ could be any regulus containing the lines coaxial with $\tube_1$ and $\tube_2$ and the corollary would hold.
%
%
%
%
\subsection{The Hair on $H(\tube_1,\tube_2)$ is spiky}
In the previous section, we showed that $H(\tube_1,\tube_2)$ is contained in the $\theta$--neighborhood of a regulus. In this section, we will show that most tubes in the remainder of the Besicovitch set intersect this regulus transversely. A precise statement is given in Lemma \ref{PlaneMapTransverseToHairbrushOfTwoTubes}.

\begin{defn}
Let $\tube_1$ and $\tube_2$ be tubes. We say that the hairbrush of $\tube_1$ covers $\tube_2$ (with accuracy $s$) if
$$
\Big|Y(\tube_2)\cap \bigcup_{\tube\in H(\tube_1)} Y(\tube)\Big|\geq s|\tube_2|.
$$
\end{defn}
The following lemma is a simple consequence of Theorem \ref{WolffBd}.
\begin{lem}\label{lotsOfCoveringPairs}
Let $(\tubes,Y)$ be an $\eps$-extremal set of tubes. Then there is a constant $C$ so that there are $\gtrapprox_{\eps}|\tubes|^2$ pairs $(\tube_1,\tube_2)\in\tubes$ so that $\tube_1$ and $\tube_2$ are $\geq\delta^{C\eps}$ separated and $\tube_1$ covers tube $\tube_2$ with accuracy $\delta^{C\eps}$.
\end{lem}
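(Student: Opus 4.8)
The goal is Lemma \ref{lotsOfCoveringPairs}: in an $\eps$--extremal set of tubes, there are $\gtrapprox_\eps|\tubes|^2$ pairs $(\tube_1,\tube_2)$ that are $\geq\delta^{C\eps}$ separated and such that $\tube_1$ covers $\tube_2$ with error $\delta^{C\eps}$.

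The plan is as follows. First I would pass to the refinement provided by Lemma \ref{robustTransLem}, so that $\sum_{\tube\in\tubes}\chi_{Y(\tube)}\sim\mu\chi_A$ with $\mu\approx_\eps\delta^{-1/2}$, $|A|\approx_\eps\delta^{1/2}$, and most intersecting pairs of tubes through a point of $A$ make an angle $\gtrapprox_\eps 1$; in particular $\sum_{\tube\in\tubes}|Y(\tube)|\approx_\eps 1$ and $|\tubes|\approx_\eps\delta^{-2}$. The strategy is to count, for a fixed $\tube_2$, the volume of $Y(\tube_2)$ that is covered by the hairbrush of a random $\tube_1$. Concretely, I would estimate from below the quantity
\[
\Sigma=\sum_{\tube_1\in\tubes}\ \Big|Y(\tube_2)\cap\bigcup_{\tube\in H(\tube_1)}Y(\tube)\Big|
\]
and compare it to the trivial upper bound $|\tubes|\cdot|\tube_2|$. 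To lower bound $\Sigma$, observe that a point $p\in Y(\tube_2)$ is covered by $H(\tube_1)$ as soon as there is \emph{some} tube $\tube\in\tubes$ with $p\in Y(\tube)$ and $Y(\tube)\cap Y(\tube_1)\neq\emptyset$. Since through a typical $p\in A$ there are $\sim\mu\approx_\eps\delta^{-1/2}$ tubes, and each such tube $\tube$ has $\sum_{\tube_1}\chi_{\{Y(\tube_1)\cap Y(\tube)\neq\emptyset\}}$ at least the number of tubes meeting $Y(\tube)$, which by Theorem \ref{WolffBd} applied to $H(\tube)$ (whose union has volume $\gtrapprox_\eps\delta^{1/2}$ by Lemma \ref{WolffHairbrushBd}/Lemma \ref{concentratedTubesHaveLargeVolume}) is $\gtrapprox_\eps\delta^{-2}\approx_\eps|\tubes|$, we get that for most $p\in Y(\tube_2)$ the point $p$ lies in $\bigcup_{\tube\in H(\tube_1)}Y(\tube)$ for $\gtrapprox_\eps|\tubes|$ choices of $\tube_1$. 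Summing over $p$ gives $\Sigma\gtrapprox_\eps|\tubes|\,|Y(\tube_2)|\approx_\eps|\tubes|\,|\tube_2|$.

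Once $\Sigma\gtrapprox_\eps|\tubes|\,|\tube_2|$ is established for a $\gtrapprox_\eps|\tubes|$--fraction of choices of $\tube_2$ (those $\tube_2$ for which $Y(\tube_2)$ is mostly inside $A$ and mostly covered, which is again a pigeonholing from $\sum|Y(\tube)|\approx_\eps 1$), a standard pigeonholing in $\tube_1$ shows that for each such $\tube_2$ there are $\gtrapprox_\eps|\tubes|$ tubes $\tube_1$ with $|Y(\tube_2)\cap\bigcup_{\tube\in H(\tube_1)}Y(\tube)|\geq\delta^{C\eps}|\tube_2|$, i.e.\ $\tube_1$ covers $\tube_2$ with error $\delta^{C\eps}$. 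Finally I would discard the pairs that are \emph{not} $\geq\delta^{C\eps}$ separated: since $\tubes$ satisfies the Wolff axioms, for a fixed $\tube_2$ the number of $\tube_1\in\tubes$ that are not $\delta^{C\eps}$ separated from $\tube_2$ (i.e.\ lie in the $\delta^{C\eps}$--neighborhood of $L_2$) is at most $\approx\delta^{C\eps}\delta^{-2}$, which for $C$ large is $\ll\delta^{-2}\approx_\eps|\tubes|$ relative to the $\gtrapprox_\eps|\tubes|$ good $\tube_1$'s — wait, $\delta^{C\eps}\delta^{-2}$ is not small compared to $\delta^{C'\eps}\delta^{-2}$ for $C'>C$, so one must instead note that the $\gtrapprox_\eps$ lower bound has an explicit exponent and choose the separation parameter with a strictly larger constant, using the room in the Wolff axioms; alternatively, one builds the separation into the count from the start by only summing $\Sigma$ over $\tube_1$ outside $N_{\delta^{C\eps}}(L_2)$, which changes $\Sigma$ by at most $\delta^{C\eps}\delta^{-2}|\tube_2|$, still negligible against the main term $\delta^{-C'\eps}|\tubes|\,|\tube_2|$ once $C$ is chosen after $C'$. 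Multiplying the $\gtrapprox_\eps|\tubes|$ choices of $\tube_2$ by the $\gtrapprox_\eps|\tubes|$ choices of $\tube_1$ gives $\gtrapprox_\eps|\tubes|^2$ pairs, as desired.

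The main obstacle is the bookkeeping of the $\eps$--exponents: the lower bound on the number of covering $\tube_1$'s per $p$, the lower bound on the number of good $\tube_2$'s, and the separation pruning all produce powers of $\delta^{\eps}$, and one must make sure the separation constant $C$ in the conclusion is chosen \emph{last}, large relative to all the constants accumulated in the $\gtrapprox_\eps$ bounds, so that the pruned pairs form a strictly smaller-order contribution. Everything else — the appeal to Theorem \ref{WolffBd} (through Lemma \ref{concentratedTubesHaveLargeVolume} with $\rho\sim 1$, or directly) to see that each hairbrush $H(\tube)$ meets $\gtrapprox_\eps|\tubes|$ tubes and has union of volume $\gtrapprox_\eps\delta^{1/2}$, and the double-counting — is routine.
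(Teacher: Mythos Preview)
Your overall strategy is the right one and matches what the paper has in mind: the paper simply declares the lemma to be ``a simple consequence of Theorem \ref{WolffBd}'' and gives no further proof. The intended argument is exactly the double-count you describe --- Wolff's hairbrush bound gives $\big|\bigcup_{\tube\in H(\tube_1)}Y(\tube)\big|\gtrapprox_\eps\delta^{1/2}\approx_\eps|A|$ for most $\tube_1$, and then one averages over $\tube_1$ and $\tube_2$ and pigeonholes.

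There is, however, a genuine error in your justification of the key pointwise claim. You assert that for a single tube $\tube$ through $p$, the quantity $\sum_{\tube_1}\chi_{\{Y(\tube_1)\cap Y(\tube)\neq\emptyset\}}=|H(\tube)|$ is $\gtrapprox_\eps\delta^{-2}\approx_\eps|\tubes|$, and you try to extract this from the fact that the hairbrush of $\tube$ has volume $\gtrapprox_\eps\delta^{1/2}$. This is false: in an $\eps$-extremal configuration one has $|H(\tube)|\approx_\eps\delta^{-3/2}$, not $\delta^{-2}$ (since $\int_{Y(\tube)}\mu\approx_\eps\delta^{3/2}$ and a typical intersection $|Y(\tube)\cap Y(\tube_1)|$ has volume $\approx\delta^3$). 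The volume of the hairbrush-union tells you nothing like $|H(\tube)|\gtrapprox_\eps|\tubes|$. So your chain ``$\mu$ tubes through $p$, each with $|H(\tube)|\gtrapprox_\eps|\tubes|$, hence $\gtrapprox_\eps|\tubes|$ choices of $\tube_1$'' breaks at this step.

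The fix is to skip the pointwise step entirely and run the double-count at the level of volumes: summing over $\tube_1$ first gives
\[
\sum_{\tube_1\in\tubes}\sum_{\tube_2\in\tubes}\Big|Y(\tube_2)\cap\bigcup_{\tube\in H(\tube_1)}Y(\tube)\Big|
=\sum_{\tube_1\in\tubes}\int_{\bigcup_{\tube\in H(\tube_1)}Y(\tube)}\Big(\sum_{\tube_2}\chi_{Y(\tube_2)}\Big)
\approx_\eps\mu\sum_{\tube_1}\Big|\bigcup_{\tube\in H(\tube_1)}Y(\tube)\Big|\gtrapprox_\eps\mu\,|\tubes|\,\delta^{1/2}\approx_\eps|\tubes|,
\]
which matches the trivial upper bound $\sum_{\tube_1,\tube_2}|Y(\tube_2)|\approx_\eps|\tubes|$; your pigeonholing and separation-pruning then go through exactly as you wrote. (If you want the pointwise statement ``for most $p\in A$, the number of $\tube_1$ whose hairbrush-union contains $p$ is $\gtrapprox_\eps|\tubes|$'', it is true, but it follows from this same averaging --- integrate $\sum_{\tube_1}|\bigcup_{\tube\in H(\tube_1)}Y(\tube)|$ over $p$ --- not from any bound on $|H(\tube)|$.)
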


Note that if $\tube_1$ and $\tube_2$ are $\geq\delta^{\eps}$ separated and skew, and if the hairbrush of $\tube_1$ covers $\tube_2$, then there is a subset $H^\prime(\tube_1,\tube_2)\subset H(\tube_1,\tube_2)$ of cardinality $\gtrapprox_{\eps}\delta^{-1}$ so that the sets $\{\tube\cap\tube_2\colon \tube\in H^\prime(\tube_1,\tube_2)\}$ intersect with multiplicity $\lesssim 1$. Note that $H^\prime(\tube_1,\tube_2)$ need not be the same as $H^\prime(\tube_2,\tube_1)$.
\begin{defn}\label{definedHairbrushDef}
We will call $H^\prime(\tube_1,\tube_2)$ the refined hairbrush of $\tube_1$ and $\tube_2$.
\end{defn}

The next lemma says that the refined hairbrush $H^\prime(\tube_1,\tube_2)$ is evenly spread out over the $\theta$--neighborhood of the regulus $R(\tube_1,\tube_2)$.

\begin{lem}\label{coveringRT1T3ByRhoBalls}
Let $(\tubes,Y)$ be an $\eps$-extremal set of tubes that obey the regulus map at scale $\theta$. Let $\tube_1,\tube_2\in\tubes$ be tubes whose coaxial lines are $\delta^{C\eps}$ separated and skew. Suppose that the hairbrush of $\tube_1$ covers $\tube_2$ with accuracy $\delta^{C\eps}$. Let $H^\prime(\tube_1,\tube_2)\subset H(\tube_1,\tube_2)$ be the refined hairbrush of $\tube_1$ and $\tube_2$.

Then for each $\theta\leq\rho\leq 1$, $R(\tube_1,\tube_2)$ can be covered by $\sim\rho^{-2}$  boundedly overlapping $\rho$--balls. For each of these balls $B$, we have
$$
\sum_{\tube\in H^\prime(\tube_1,\tube_2)}|Y(\tube)\cap B|\lessapprox_{\eps} \rho^{2}\sum_{\tube\in H^\prime(\tube_1,\tube_2)}|Y(\tube)|\approx_{\eps}\rho^{2}\delta.
$$
\end{lem}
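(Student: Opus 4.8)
The plan is to pass to a refinement of $H^\prime(\tube_1,\tube_2)$ on which all the tubes behave uniformly, and then use the fact that the tubes in $H^\prime(\tube_1,\tube_2)$ are essentially transversal to $R(\tube_1,\tube_2)$ together with a ``fat hairbrush'' volume lower bound (Lemma~\ref{concentratedTubesHaveLargeVolume} or Corollary~\ref{fewTubesHitALine}) applied inside each $\rho$--ball. Concretely, fix $\theta\le\rho\le 1$ and cover $R(\tube_1,\tube_2)$ by $\sim\rho^{-2}$ boundedly overlapping $\rho$--balls; this is possible since $R(\tube_1,\tube_2)$ is a $\approx_\eps 1$--curvature surface (Lemma~\ref{GaussCurvatureOfRegulus}) meeting $B(0,1)$, hence has surface area $\approx_\eps 1$, and its $\theta$--neighborhood (inside which every $\tube\in H^\prime$ lies, by Corollary~\ref{hairbrushTwoTubesInFatRegulusOne}) is covered by $\sim\rho^{-2}$ such balls once $\rho\ge\theta$. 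The total mass identity $\sum_{\tube\in H^\prime}|Y(\tube)|\approx_\eps\delta$ follows because $|H^\prime|\gtrapprox_\eps\delta^{-1}$ and $|Y(\tube)\cap\tube_2|\approx_\eps\delta^2$ for each such tube, while the near-disjointness of $\{\tube\cap\tube_2\}$ caps the sum from above by $|\tube_2|\approx_\eps\delta^2\cdot\delta^{-1}$; so the content of the lemma is really the per-ball upper bound $\sum_{\tube\in H^\prime}|Y(\tube)\cap B|\lessapprox_\eps\rho^{-2}\delta$.

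For the per-ball bound, the key point is that a tube $\tube\in H^\prime(\tube_1,\tube_2)$ contributes to a ball $B$ of radius $\rho$ only if $\tube$ passes within $O(\rho)$ of the center of $B$, and, being essentially transversal to the surface, it can spend length at most $\lessapprox_\eps\rho$ in the $\theta$--neighborhood of $R(\tube_1,\tube_2)$ near that ball — more precisely $|Y(\tube)\cap B|\lessapprox_\eps\delta^2\cdot(\text{length in }B)\lesssim\delta^2$ with the additional gain coming from how many such tubes can cluster. Here is where I would invoke Corollary~\ref{fewTubesHitALine} (few tubes in a fat hairbrush) or a direct Wolff-axiom count: rescaling $B$ to the unit ball turns $H^\prime\cap B$ into a family of $\delta/\rho$--tubes satisfying the Wolff axioms, and they all lie in the $\theta/\rho$--neighborhood of a curve (a piece of one ruling of $R(\tube_1,\tube_2)$), so the number of such tubes is $\lessapprox_\eps(\delta/\rho)^{-2}(\theta/\rho)\lesssim\rho^{-1}\delta^{-2}\theta\cdot\rho^{-1}$ — wait, rather I would argue from transversality that the number is $\lessapprox_\eps\rho^{-2}$ times the average. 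Multiplying this count by the per-tube-per-ball mass $\lessapprox_\eps\delta^2$ and comparing against the global mass $\approx_\eps\delta$ gives exactly the asserted $\rho^{-2}\delta$, once one checks (via planiness, Lemma~\ref{Planininess}, and the regulus-map hypothesis) that the hair on $H(\tube_1,\tube_2)$ meets $R(\tube_1,\tube_2)$ transversally with angle $\gtrapprox_\eps 1$ away from a negligible set.

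An alternative, cleaner route I would try first: apply Lemma~\ref{concentratedTubesHaveLargeVolume} (volume of a fat hairbrush) with line $L$ taken to be the ruling line of $R(\tube_1,\tube_2)$ through (near) the center of $B$, rescaled so $B$ becomes the unit ball — this immediately bounds the number of tubes of $H^\prime$ meeting $B$ in terms of a power of $\rho/\delta$, and combined with the trivial $|Y(\tube)\cap B|\lesssim\delta^2\cdot\rho/\rho=\delta^2$ per-tube bound plus the near-disjointness of $\{\tube\cap\tube_2\}$ restricted to the $\rho$--window of $\tube_2$ corresponding to $B$ (which isolates $\lessapprox_\eps\rho\delta^{-1}$ of the $\delta^{-1}$ tubes), yields $\sum_{\tube\in H^\prime}|Y(\tube)\cap B|\lessapprox_\eps(\rho\delta^{-1})\cdot\delta^2=\rho\delta$. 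Summing the trivial bound $\lessapprox_\eps\rho\delta$ over the $\rho$--balls and matching it against $\rho^{-2}$ copies of the claimed bound shows the claimed $\rho^{-2}\delta$ is the natural pigeonholed (average) value; a final dyadic pigeonholing/refinement of $H^\prime$ at scale $\rho$ upgrades ``average ball'' to ``every ball.'' The main obstacle I anticipate is the bookkeeping to ensure transversality is genuinely quantitative — i.e.\ that after the refinements implicit in ``$\eps$--extremal'' and ``obeys the regulus map at scale $\theta$,'' the tubes of $H^\prime$ really do make angle $\gtrapprox_\eps 1$ with $R(\tube_1,\tube_2)$ at their intersection points rather than grazing it, since a grazing tube could dump length $\sim\sqrt{\theta}$ or more into a single $\rho$--ball and break the bound; controlling this is exactly where Lemma~\ref{thetaScaleHairbrush} (the structure of $H(\tube_1,\tube_2)$ as $\lessapprox_\eps\theta^{-1}$ thin $\theta$--tubes) should be brought to bear, as it localizes each tube of the hairbrush to one $\theta$--window and pins down its angle with the surface.
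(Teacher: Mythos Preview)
There is a typo in the statement: the intended bound is $\rho^{2}\delta$, not $\rho^{-2}\delta$. As written the inequality is trivial (each ball's contribution is at most the total $\approx_\eps\delta$, and $\rho^{-2}\ge 1$). The paper's own proof concludes with $\rho^2\delta$, and the only place the lemma is invoked (Lemma~\ref{PlaneMapTransverseToHairbrushOfTwoTubes}) uses the bound $\theta^2\delta$. You did not catch this, and as a result your proposal is aimed at a vacuous target.

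More seriously, the geometry in your first two paragraphs is backwards. The tubes in $H^\prime(\tube_1,\tube_2)$ are \emph{not} transverse to $R(\tube_1,\tube_2)$: by Corollary~\ref{hairbrushTwoTubesInFatRegulusOne} they lie in the $\lessapprox_\eps\theta$--neighborhood of $R(\tube_1,\tube_2)$, along the ruling dual to $\tube_1,\tube_2$. So there is no ``transversality with angle $\gtrapprox_\eps 1$'' to exploit, and the worry about ``grazing tubes'' is moot --- every tube of $H^\prime$ is, by construction, essentially tangent to the regulus along its full length. Consequently, invoking Corollary~\ref{fewTubesHitALine} or Lemma~\ref{concentratedTubesHaveLargeVolume} after rescaling a $\rho$--ball is not the mechanism here.

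The paper's proof is a direct three-line count. Lemma~\ref{thetaScaleHairbrush} covers $H^\prime(\tube_1,\tube_2)$ by $\approx_\eps\rho^{-1}$ boundedly overlapping $\rho$--tubes (since $\rho\ge\theta$, one can coarsen the $\theta$--tubes from that lemma). Each such $\rho$--tube meets $\tube_2$ in a segment of length $\approx_\eps\rho$, so by the defining near-disjointness of $\{\tube\cap\tube_2:\tube\in H^\prime\}$ it contains at most $\lessapprox_\eps\rho\delta^{-1}$ tubes of $H^\prime$. Any $\rho$--ball $B$ meets $O(1)$ of these $\rho$--tubes, and each $\delta$--tube inside contributes $|Y(\tube)\cap B|\le\rho\cdot\delta^2$. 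Multiplying gives $(\rho\delta^{-1})(\rho\delta^2)=\rho^2\delta$. Your ``alternative route'' glimpsed the key ingredient (the $\rho$--window on $\tube_2$ isolating $\rho\delta^{-1}$ tubes) but used the wrong per-tube-per-ball bound ($\delta^2$ instead of $\rho\delta^2$), arriving at $\rho\delta$ and then trying to repair it by pigeonholing --- which is unnecessary once the arithmetic is corrected, and in any case pigeonholing gives an ``average ball'' statement, not the ``every ball'' statement required.
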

\begin{proof}
By Lemma \ref{thetaScaleHairbrush}, $H^\prime(\tube_1,\tube_2)$ can be covered by $\approx_{\eps} \rho^{-1}$ $\rho$--tubes that are $\lessapprox_{\eps}1$ overlapping.

Since the sets $\{\tube\cap \tube_2\colon \tube\in H^\prime(\tube_1,\tube_2)\}$ are $\lesssim 1$ overlapping, at most $\lessapprox_{\eps}\rho\delta^{-1}$ of the tubes from $H^\prime(\tube_1,\tube_2)$ can be contained in each $\rho$--tube. Furthermore, if we cover each $\rho$--tube with $\rho^{-1}$ boundedly overlapping balls of radius $\rho$, then for each such ball $B$, we have
\begin{equation*}
\Big|B\cap \bigcup_{\tube\in H^\prime(\tube_1,\tube_2)}Y(\tube)\Big|\lessapprox_\eps\rho (\rho/\delta)(\delta^2) = \rho^2\delta.
\qedhere
\end{equation*}


\end{proof}
\begin{lem}[Geometric lemma]\label{geomLem}
Let $L$ be a line segment of length $\leq 1$ contained in the unit ball. Let $Q\in\RR[x,y,z]$ be a degree-two polynomial. Let $Z \subset Z(Q)\cap B(0,1)$. Suppose that $Z$ is smooth and that the Gauss curvature satisfies $\delta^{\eps}\leq|K_p|\lesssim 1$ for each $p\in Z$. Let $t>0$, and define
$$
Z_{\operatorname{tang}}(t)=\{p\in Z\colon \angle(L, T_pZ)\leq t\}.
$$
The for each $0<\rho<t$
$$
\mathcal{E}_\rho(Z_{\operatorname{tang}}(t))\lessapprox_{\eps}t/\rho^2,
$$
where $\mathcal{E}_\rho(\cdot)$ denotes the $\rho$-covering number
\end{lem}
\begin{proof}
This follows immediately from the requirement that $|K_p|\geq\delta^{\eps}$ for all $p\in Z$, and the observation that $Q$ has degree two (which gives us control over the higher derivatives of the Gauss map $p\mapsto T_pZ$).
\end{proof}

\begin{lem}\label{linesMakingSmallAngleWithRegulusTangPlane}
Let $(\tubes,Y)$ be an $\eps$-extremal set of tubes that obey the regulus map at scale $\theta<\delta^{C_0\eps}$. Let $\tube_1,\tube_2\in\tubes$; suppose that $\tube_1$ and $\tube_2$ are $\geq \delta^{C\eps}$ separated and skew, and that the hairbrush of $\tube_1$ covers $\tube_2$ with accuracy $\delta^{C\eps}.$ Let $Z = R(\tube_1,\tube_2)\cap B(0,1)$. Let $L$ be a line, let $t>\theta$, and let $p\in Z\backslash Z_{\operatorname{tang}}(t)$ with $\operatorname{dist}(p,L)\geq\delta^{C_1\eps}$.

Let $q=L\cap T_pR(\tube_1,\tube_2)$ (this is well-defined since $p\not\in Z_{\operatorname{tang}}(t)$, and thus $L$ is not parallel to $R(\tube_1,\tube_2)$), and let $v$ be the vector $q-p$. Let $s>\theta$, and let $\ell$ be a line that intersects $B(p,\theta)$; intersects $L$; and satisfies 
\begin{equation}\label{ellMustSatisfy}
\angle(\ell, T_pR(\tube_1,\tube_2))\leq s.
\end{equation}
If $C_0$ is chosen sufficiently large compared to the constants $C$ and $C_1$ above, then
$$
\angle(\ell,v)\lessapprox_{\eps} s/t,
$$
where the implicit constant depends on $C$ and $C_1$, but not on $C_0$
\end{lem}
\begin{proof}
First, we can assume that $C_0$ is sufficiently large so that $\operatorname{dist}(B(p,\theta), L)\geq \delta^{C_1\eps}/2$. Let $\ell$ be a line that intersects $B(p,\theta)$; intersects $L$; and satisfies \eqref{ellMustSatisfy}. Then $\ell\cap B(0,1)$ is contained in a rectangular prism of dimensions $1\times 1\times (s+\theta)$ (recall that $s>\theta$). This prism is comparable to the $s$--neighborhood of $T_pZ\cap B(0,1)$. Since $p\not\in Z_{\operatorname{tang}}(t)$, we have $\angle(L, T_pZ)\geq t$. Recall as well that $\operatorname{dist}(p, L)\geq\delta^{C_1\eps}$. Thus the aforementioned prism intersects $L$ in an interval $I$ of length $\lessapprox_{\eps}s/t$, where the implicit constant depends on $C_1$. Finally, since $\ell$ intersects $B(p,\theta)$ and intersects $I$, and since $\operatorname{dist}(B(p,\theta), I)\geq\delta^{C_1\eps}/2$, we have that $\angle(\ell, v)\lessapprox_{\eps}s/t$, as desired.
\end{proof}
\begin{lem}\label{PlaneMapTransverseToHairbrushOfTwoTubes}
Let $(\tubes,Y)$ be an $\eps$-extremal set of tubes that obey the regulus map at scale $\theta<\delta^{C_0\eps}$. Then if $C_0$ is sufficiently large, there is a pair of tubes $\tube_1,\tube_2\in\tubes$ so that the following holds
\begin{itemize}
\item $\tube_1$ and $\tube_2$ are $\gtrapprox_{\eps}1$ separated and skew.
\item The hairbrush of $\tube_1$ covers $\tube_2$ with accuracy $\gtrapprox_{\eps}1$.
\item There is a constant $C$ so that
\begin{equation}\label{manyTransverseTubes}
\sum_{\tube\in H^\prime(\tube_1,\tube_2)}\big|\{\tube^\prime\in H(\tube)\colon \angle(v(\tube^\prime), T_pR(\tube_1,\tube_2)\geq\delta^{C\eps}\ \textrm{for all}\ p\in \tube^\prime\cap R(\tube_1,\tube_2)\}\big|\gtrapprox_{\eps}\delta^{-5/2}.
\end{equation}
See Figure \ref{lemma312_fig} for the relationship between $\tube_1,\tube_2,\tube,$ and $\tube^\prime$.
\end{itemize}

\end{lem}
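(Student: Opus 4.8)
The plan is to normalise $(\tubes,Y)$, show that for every pair $(\tube_1,\tube_2)$ of the relevant type the \emph{total} hair $\sum_{\tube\in H^\prime(\tube_1,\tube_2)}|H_Y(\tube)|$ is $\gtrapprox_\eps\delta^{-5/2}$, then use the curvature of the regulus $R(\tube_1,\tube_2)$ --- through the geometric Lemma~\ref{geomLem} --- to show that the part of the hair running tangent to $R(\tube_1,\tube_2)$ is of strictly smaller order, so that the transverse part alone is $\gtrapprox_\eps\delta^{-5/2}$. First, by Lemma~\ref{robustTransLem}, dyadic pigeonholing, and the hypotheses we may assume $(\tubes,Y)$ is homogeneous with $\sum_{\tube}\chi_{Y(\tube)}\sim\mu\chi_A$, $\mu\approx_\eps\delta^{-1/2}$, $|A|\approx_\eps\delta^{1/2}$, and $|Y(\tube)|\approx_\eps\delta^{2+\eps}$ for every $\tube$, that it is plany, and that it obeys the regulus map at scale $\theta<\delta^{C_0\eps}$, with $C_0$ a large absolute constant to be chosen. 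We will repeatedly use the bookkeeping identities $\#\{\tube:p\in Y(\tube)\}\approx_\eps\mu$ for $p\in A$ and $\sum_{\tube\in\mathcal{S}}|Y(\tube)|\approx_\eps\delta^{2}|\mathcal{S}|$ for $\mathcal{S}\subseteq\tubes$.

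By Lemma~\ref{lotsOfCoveringPairs} there are $\gtrapprox_\eps|\tubes|^2$ pairs $(\tube_1,\tube_2)$ that are $\geq\delta^{C\eps}$ separated and for which $\tube_1$ covers $\tube_2$ with error $\delta^{C\eps}$. Among these, the pairs whose coaxial lines are not $\delta^{C^\prime\eps}$ skew have $\tube_2$ in one of $\lessapprox_\eps 1$ slabs of width $O(\delta^{C^\prime\eps})$ through the coaxial line of $\tube_1$; the Wolff axioms --- together with Lemma~\ref{allTubesCoplanar}, which prevents the covering pairs from being dominated by nearly-coplanar ones --- let us discard these and retain $\gtrapprox_\eps|\tubes|^2$ \emph{admissible} pairs: pairs that are $\gtrapprox_\eps 1$ separated and skew with $\tube_1$ covering $\tube_2$ with error $\lessapprox_\eps 1$. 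For an admissible pair, $H^\prime(\tube_1,\tube_2)$ is defined with $|H^\prime(\tube_1,\tube_2)|\gtrapprox_\eps\delta^{-1}$, and by Corollary~\ref{hairbrushTwoTubesInFatRegulusOne} the union of the tubes of $H(\tube_1,\tube_2)$ lies in $N_{\lessapprox_\eps\theta}(R)$ for a non-degenerate regulus $R=R(\tube_1,\tube_2)$, so $|K_p|\approx_\eps 1$ on $R\cap B(0,1)$ by Lemma~\ref{GaussCurvatureOfRegulus}.

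Fix an admissible pair. Using Lemma~\ref{mostOfHairbrushIsSepAndSkew} and robust transversality we may, losing only a $\lessapprox_\eps 1$ factor, restrict each $H_Y(\tube)$ to tubes making angle $\geq\delta^{C\eps}$ with $\tube$; every counted intersection $Y(\tube)\cap Y(\tube^\prime)$ then meets $\lessapprox_\eps 1$ of the $\delta$-balls, so $\sum_{\tube\in H^\prime(\tube_1,\tube_2)}|H_Y(\tube)|$ is $\gtrapprox_\eps$ the number of triples $(\tube,\tube^\prime,p)$ with $\tube\in H^\prime(\tube_1,\tube_2)$ and $p\in Y(\tube)\cap Y(\tube^\prime)$ a $\delta$-ball. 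Summing over $p$ first and using homogeneity and $\sum_{\tube\in H^\prime(\tube_1,\tube_2)}|Y(\tube)|\approx_\eps\delta$, this count is $\approx_\eps\mu\,\delta^{-3}\,\delta\approx_\eps\delta^{-5/2}$; hence $\sum_{\tube\in H^\prime(\tube_1,\tube_2)}|H_Y(\tube)|\gtrapprox_\eps\delta^{-5/2}$ (the undeduplicated count gives the matching upper bound). Since $\tube\subseteq N_{\lessapprox_\eps\theta}(R)$ while $\theta\ll\delta^{C\eps}$, any $\tube^\prime\in H(\tube)$ that is transverse to $R$ in the sense of \eqref{manyTransverseTubes} necessarily makes an angle $\gtrsim\delta^{C\eps}$ with $\tube$, so \eqref{manyTransverseTubes} follows once we bound the contribution of the \emph{flat} tubes --- those $\tube^\prime\in\bigcup_{\tube\in H^\prime(\tube_1,\tube_2)}H(\tube)$ with $\angle(v(\tube^\prime),T_pR)<\delta^{C\eps}$ at some $p\in\tube^\prime\cap R$ --- strictly below $\delta^{-5/2}$.

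This last point is the crux. Because $|K_p|\approx_\eps 1$ on $R$, a flat tube meets $N_{\lessapprox_\eps\theta}(R)$ in a segment of length $\lessapprox_\eps\theta^{1/2}$ along which it stays within angle $\lessapprox_\eps\theta^{1/2}$ of the tangent planes of $R$; combined with the organisation of $H^\prime(\tube_1,\tube_2)$ into $\lessapprox_\eps\theta^{-1}$ fat $\theta$-tubes along $R$ (Lemma~\ref{thetaScaleHairbrush}), this shows a flat tube lies in $H(\tube)$ for at most $\lessapprox_\eps\theta^{1/2}\delta^{-1}$ tubes $\tube\in H^\prime(\tube_1,\tube_2)$. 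It remains to bound the number of flat tubes that appear, and for this one applies Lemma~\ref{geomLem} with $Z=R$, with $\ell$ the line coaxial with $\tube_2$ and $\{\Pi_j\}$ the tangent planes of $R$ along it, and with $\rho\approx_\eps\theta$: the tubes of $H^\prime(\tube_1,\tube_2)$ (hence, after attaching their short grazing segments, the flat tubes touching them) lie in the set $Z_{\operatorname{covered}}$, whose $\rho$-covering number is $\lessapprox_\eps\theta^{-7/4}$ rather than the trivial $\theta^{-2}$. Feeding this saving into the Wolff-axiom packing bound for $\delta$-tubes tangent to $R$, together with the per-flat-tube bound $\theta^{1/2}\delta^{-1}$, yields a bound on the flat hair of the form $\delta^{-5/2+c}$ for an absolute $c>0$. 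The delicate part --- and the main obstacle --- is the bookkeeping of the three scales $\theta$, $\delta$, and the tangency threshold $\delta^{C\eps}$ (forcing $C$ small relative to $C_0$ so that the comparisons $\theta^{1/2},\delta^{C\eps}\lesssim\theta$ close), and the separate treatment of the regime where $\theta$ is only $\delta^{O(\eps)}$ small. Given such a bound --- for every admissible pair, or, after averaging over the $\gtrapprox_\eps|\tubes|^2$ admissible pairs, for at least one of them --- the total-hair lower bound leaves $\gtrapprox_\eps\delta^{-5/2}$ transverse tubes, which is \eqref{manyTransverseTubes}.
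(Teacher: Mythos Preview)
Your overall strategy---count the total hair, then subtract the flat part---is reasonable, but the execution has a genuine gap at the crucial step where you invoke Lemma~\ref{geomLem}. You apply it with $Z=R(\tube_1,\tube_2)$, $\ell$ the line coaxial with $\tube_2$, and $\Pi_j$ the tangent planes of $R$ along $\tube_2$. But the line of $\tube_2$ is a ruling line of $R$, so the dual ruling lines through points of $\tube_2$ lie \emph{in} $R$, pass through $\ell$, and are contained in the tangent planes $\Pi_j$ at their intersection points with $\ell$. Hence every such dual ruling line belongs to $\mathcal{L}_\ell\cap\mathcal{L}_Z$, and together they sweep out all of $R$: one gets $Z_{\operatorname{covered}}=N_\rho(R)$ with $\mathcal{E}_\rho(Z_{\operatorname{covered}})\sim\rho^{-2}$, not $\rho^{-7/4}$. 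In fact the tubes of $H^\prime(\tube_1,\tube_2)$ are themselves (approximately) these dual ruling lines, so the configuration gives no saving whatsoever. The related claim that a flat tube stays in $N_\theta(R)$ only for length $\lessapprox_\eps\theta^{1/2}$ also fails for tubes near a ruling direction of $R$, so the per-flat-tube bound $\theta^{1/2}\delta^{-1}$ is not uniform.

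The paper circumvents this by introducing an \emph{auxiliary} tube $\tube_0$ before choosing $\tube_1,\tube_2$: one first restricts every shading to lie inside $\bigcup_{\tube\in H(\tube_0)}Y(\tube)$, and only then selects $\tube_1,\tube_2$ via Lemma~\ref{lotsOfCoveringPairs}. Now every $\tube^\prime$ appearing in the hair is automatically in $H(\tube_0)$, and by the regulus map of $\tube_0$ it is nearly tangent to $R(\tube_0)$ where it meets $\tube_0$---so $\tube^\prime\in\mathcal{L}_\ell$ with $\ell$ the line of $\tube_0$ and $\Pi_j$ tangent to $R(\tube_0)$. The point is that $\ell$ is now a \emph{generic} line relative to $R(\tube_1,\tube_2)$, not a ruling line, so Lemma~\ref{geomLem} genuinely bounds $\mathcal{E}_\rho(Z_{\operatorname{covered}})\lessapprox_\eps\rho^{-7/4}$. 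Combined with the spreading estimate of Lemma~\ref{coveringRT1T3ByRhoBalls} (each $\rho$-ball on $R$ carries at most $\lessapprox_\eps\rho^2\delta$ of the mass $\sum_{\tube\in H^\prime}|Y(\tube)|$), most of $H^\prime(\tube_1,\tube_2)$ lies outside $Z_{\operatorname{covered}}$; at those points any $\tube^\prime\in H(\tube_0)$ must be $\gtrsim\rho$-transverse to $R(\tube_1,\tube_2)$, and robust transversality upgrades this to \eqref{manyTransverseTubes}. The trick of filtering through $H(\tube_0)$ is what your argument is missing.
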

\begin{figure}[h!]
 \centering
\begin{overpic}[width=0.6\textwidth ]{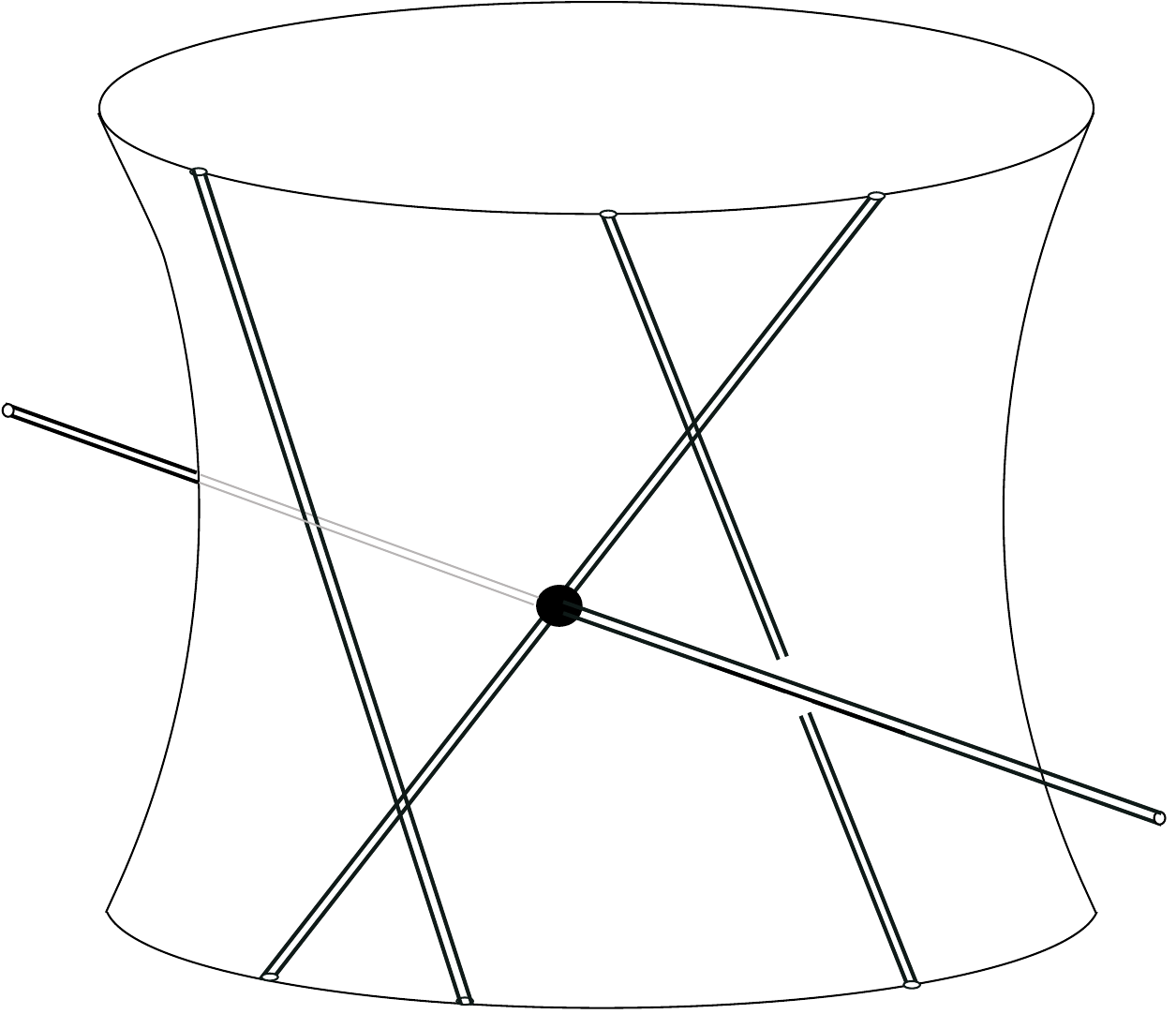}
\put (20,6) {$\tube$} 
\put (40,4) {$\tube_1$} 
\put (70,5) {$\tube_2$} 
\put (80,25) {$\tube^\prime$} 
\put (45,39) {$p$}
\put (45, 75) {$R(\tube_1,\tube_2)$}
 
\end{overpic}
 \caption{The relationship between $\tube_1,\tube_2,\tube$, and $\tube^\prime$.}\label{lemma312_fig} 
\end{figure}

\begin{proof}
Let $\tube_0\in\tubes$ with $\Big|\bigcup_{\tube\in H(\tube_0)}Y(\tube)\Big|\gtrapprox_{\eps}\delta^{1/2}$. For each $\tube\in\tubes,$ define $Y_1(\tube)=Y(\tube)\cap\bigcup_{\tube^\prime\in H(\tube_0)}Y(\tube^\prime)$. Let $Y_2$ be a refinement of $Y_1$ so that $\sum_{\tube\in\tubes}\chi_{Y_2(\tube)}\sim\mu\chi_A$ for some $\mu\approx_{\eps}\delta^{-1/2}$. Note that $(\tubes, Y_2)$ is still an $\eps$-extremal set of tubes that obey the regulus map at scale $\theta$.

Using Lemma \ref{lotsOfCoveringPairs} along with standard pigeonholing arguments, we can find tubes $\tube_1$ and $\tube_2$ so that $\tube_1$ covers $\tube_2$ with accuracy $\gtrapprox_{\eps}1$ and
\begin{equation}\label{largeJointHairbrushEst}
\sum_{\tube\in H^\prime_Y(\tube_1,\tube_2)}|Y_2(\tube)|\gtrapprox_{\eps}\delta.
\end{equation}
In particular, this means $\sum_{\tube\in H^\prime_Y(\tube_1,\tube_2)}|H_{Y_2}(\tube)|\gtrapprox_{\eps}\delta^{-5/2}$.

By Lemma \ref{coveringRT1T3ByRhoBalls}, we can cover $R(\tube_1,\tube_2)$ by $\approx_{\eps}\theta^{-2}$ $\theta$-balls, and each of these balls $B$ satisfies 
\begin{equation}\label{ballContribution}
\sum_{\tube\in H^\prime_Y(\tube_1,\tube_2)}|B\cap Y_2(\tube)|\lessapprox_{\eps}\theta^2\delta.
\end{equation} 

Since at most $\delta^{C_1\eps}\theta^{-2}$ of these balls can satisfy $\operatorname{dist}(B,\ \tube_0)<\delta^{C_1\eps}$, if we select $C_1$ sufficiently large then
$$
\sum_{\substack{B\\ \operatorname{dist}(B,\ \tube_0)\geq\delta^{C_1\eps}}}\sum_{\tube\in H^\prime_Y(\tube_1,\tube_2)}|B\cap Y_2(\tube)|\gtrapprox_{\eps}\delta.
$$
Fix this value of $C_1$. Let $Z=R(\tube_1,\tube_2)\backslash N_{\delta^{C_1\eps}(\tube_0)}\cap B(0,1);$ thus $N_{\theta}(Z)$ contains each of the balls in the above sum. By Lemma \ref{geomLem}, if we choose $C_2$ sufficiently large, then if we set $t = \delta^{C_2\eps}$, then
$$
\sum_{\substack{B\\ 
\operatorname{dist}(B,\ \tube_0)\geq\delta^{C_1\eps}\\
B\cap Z_{\operatorname{tang}(t)=\emptyset}
}}\sum_{\tube\in H^\prime_Y(\tube_1,\tube_2)}|B\cap Y_2(\tube)|\gtrapprox_{\eps}\delta.
$$

Finally, we will apply Lemma \ref{linesMakingSmallAngleWithRegulusTangPlane} and the fact that the tubes passing through each point of $A$ satisfy the robust transversality condition from Lemma \ref{robustTransLem}. Select $C$ to be a sufficiently large constant and let $s = \delta^{C\eps}$. If we choose $C_0$ sufficiently large, then for each of the balls $B$ in the above sum; for each $\tube\in H^\prime_Y(\tube_1,\tube_2)$ with $B\cap Y_2(\tube);$ and for each $x\in B\cap Y_2(\tube)$, at most half the tubes $\tube^\prime\in H_{Y_2}(\tube)$ with $x\in Y_2(\tube^\prime)$ satisfy
$$
\angle(\tube^\prime,\ T_p Z)\leq\delta^{C\eps}\quad \textrm{for some}\ p\in \tube^\prime\cap Z.
$$
With this choice of $C$ (which depends only on $C_1$ and $C_2$, which in turn are absolute constants (independent of $C_0$) ), we have that
$$
\sum_{\substack{B\\ 
\operatorname{dist}(B,\ \tube_0)\geq\delta^{C_1\eps}\\
B\cap Z_{\operatorname{tang}(t)=\emptyset}
}}\sum_{\tube\in H^\prime_Y(\tube_1,\tube_2)}|\{\tube^\prime\in Y_2(\tube)\cap B\colon \angle(\tube^\prime,\ T_p Z)\geq\delta^{C\eps}\quad \textrm{for all}\ p\in \tube^\prime\cap Z |\gtrapprox_{\eps}\delta^{-5/2},
$$
and thus
$$
\sum_{\tube\in H^\prime_Y(\tube_1,\tube_2)}|\{\tube^\prime\in Y_2(\tube)\colon \angle(\tube^\prime,\ T_p Z)\geq\delta^{C\eps}\quad \textrm{for all}\ p\in \tube^\prime\cap R(\tube_1,\tube_2) |\gtrapprox_{\eps}\delta^{-5/2}.
$$
Thus the tubes $\tube_1$ and $\tube_2$ satisfy the conclusions of the lemma. 
\end{proof}

\begin{rem}\label{lotsOfProperlyIntersectingQuadruples}
The above proof shows that not only does there exist one pair $\tube_1,\tube_2\in\tubes$ satisfying the conclusions of the lemma, but there actually exists $\gtrapprox_{\eps}\delta^{-4}$ such pairs. Thus, we have
\begin{equation}
\begin{split}
|\{&(\tube,\tube_1,\tube_2,\tube_3)\colon \tube_1,\tube_2,\tube_3\ \textrm{are}\ \gtrapprox_{\eps}1\ \textrm{separated and skew},\\ &\tube\in H(\tube_1,\tube_2,\tube_3),\ \angle(v(\tube_3),\ T_p R(\tube_1,\tube_2)\geq\delta^{C\eps}\ \textrm{for all}\ p\in \tube_3\cap R(\tube_1,\tube_2)\} |\gtrapprox_{\eps}\delta^{-13/2}.
\end{split}
\end{equation}
This observation will be used in Section \ref{incidenceGeomRegStripsSec}.
\end{rem}

\subsection{Regulus maps talk to each other}
Let $(\tubes,Y)$ be an $\eps$-extremal set of tubes that obey the regulus map at scale $\theta$. Let $\tube_1,\tube_2\in\tubes$ be two tubes that are $\geq\delta^{C\eps}$ separated and skew and suppose that the hairbrush $\tube_1$ covers $\tube_2$ with accuracy $\delta^{C\eps}$. One of the consequences of Lemma \ref{thetaScaleHairbrush} is that if $L$ is a line that intersects $\tube_1$ and $\tube_2$ at points $p$ and $q$, respectively, and if $\angle(L, T_pR(\tube_1))\lessapprox_{\eps}\theta$, then
\begin{equation}\label{regulusMapsSynchronized}
\angle(L,T_qR(\tube_2))\lessapprox_{\eps}\theta.
\end{equation}


The next lemma is an elaboration of the above observation.
\begin{lem}\label{reguliiTalkingLem}
Let $(\tubes,Y)$ be an $\eps$-extremal set of tubes that obey the regulus map at scale $\theta$. Let $\tube_1,\tube_2\in\tubes$ be two tubes whose coaxial lines are $\geq\delta^{C\eps}$ separated and skew and suppose that the hairbrush $\tube_1$ covers $\tube_2$ with accuracy $\delta^{C\eps}$. Let $p\in \tube_1$ and let $L$ be a line passing through $p$ and intersecting $\tube_2$. Let $q$ be a point of intersection of $L$ and $\tube_2$. Then
\begin{equation}\label{anglesTalkToEachOther}
\begin{split}
\angle(L,\ T_pR(\tube_1))&\lessapprox_\eps \theta+\angle(L,\ T_q R(\tube_2)),\\
\angle(L,\ T_qR(\tube_1))&\lessapprox_\eps \theta+\angle(L,\ T_p R(\tube_2)).
\end{split}
\end{equation}
Furthermore,
\begin{equation}\label{reguliiAlmostTalking}
L\cap B(0,1)\subset N_w(R(\tube_1,\tube_2)),
\end{equation}
where
$$
w\lessapprox_\eps \angle(L,\ T_pR(\tube_1))+\theta.
$$
\end{lem}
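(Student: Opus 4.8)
The plan is to leverage Lemma \ref{thetaScaleHairbrush} twice, once for the ruling of $R(\tube_1)$ through $\tube_1$ and once for the ruling of $R(\tube_2)$ through $\tube_2$, and to interpolate between the two endpoints $p\in\tube_1$ and $q\in\tube_2$ using the fact that the regulus maps of $\tube_1$ and $\tube_2$ are ``synchronized'' along any line that meets both tubes. First I would dispose of the degenerate cases: if $L$ makes an angle $\lessapprox_{\eps}\theta$ with $v(\tube_1)$ or $v(\tube_2)$ then both sides of \eqref{anglesTalkToEachOther} are $\lessapprox_{\eps}\theta$ and the claim is trivial, and similarly the conclusion \eqref{reguliiAlmostTalking} with $w\lessapprox_{\eps}\theta$ follows from Corollary \ref{hairbrushTwoTubesInFatRegulusOne} since $L$ is essentially contained in $\tube_1\cup\tube_2$; so I may assume $\angle(L,v(\tube_1))\gtrapprox_{\eps}1$ and $\angle(L,v(\tube_2))\gtrapprox_{\eps}1$. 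The key geometric input is that the virtual tube $V(\tube_1)$ of Remark \ref{virtualTube} sits in the ruling of $R(\tube_1)$ opposite to $\tube_1$, and a line $L$ through $p\in\tube_1$ with $\angle(L,v(\tube_1))\gtrapprox_{\eps}1$ satisfies $\angle(L,T_pR(\tube_1))\lessapprox_{\eps}\theta$ if and only if it meets $N_{\lessapprox_{\eps}\delta}(V(\tube_1))$ (the two-sided containment \eqref{equivalenceOfRegulusMapVirtualTube}).

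The first step is to establish \eqref{reguliiAlmostTalking}. Write $\alpha=\angle(L,T_pR(\tube_1))$ and cover $\tube_1$ by length-$(\alpha+\theta)$ segments, so that on each segment the tangent plane $T_\bullet R(\tube_1)$ varies by $\lessapprox_{\eps}(\alpha+\theta)$ (by the regulus map at scale $\theta$ together with the curvature bound $|K|\approx_{\eps}1$ from Lemma \ref{GaussCurvatureOfRegulus}); then the slab $N_{\lessapprox_{\eps}(\alpha+\theta)}(T_pR(\tube_1))$ associated to the segment containing $p$ contains $L\cap B(0,1)$. On the other hand, exactly as in the proof of Lemma \ref{thetaScaleHairbrush}, this slab is comparable to the $(\alpha+\theta)$-neighborhood of the plane spanned by that segment of $\tube_1$ and the line $L_1'$ coaxial with $V(\tube_1)$; and since $\tube_1,\tube_2$ are $\geq\delta^{C\eps}$ separated and skew, $L_1'$ is $\approx_{\eps}1$ skew and $\gtrapprox_{\eps}1$ far from $\tube_2$, so this slab meets $\tube_2$ in a segment of length $\approx_{\eps}(\alpha+\theta)$ containing $q$. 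The plane through the $\tube_1$-segment and $L_1'$ also meets $\tube_2$; the line through that intersection point is incident to $L_1, L_2, L_1'$, hence its $\delta$-tube lies in one of the $\theta'$-tubes of Lemma \ref{thetaScaleHairbrush} and therefore in $N_{\lessapprox_{\eps}\theta}(R(\tube_1,\tube_2))$. Since $L$ lies in the $(\alpha+\theta)$-neighborhood of that same plane, we get $L\cap B(0,1)\subset N_{w}(R(\tube_1,\tube_2))$ with $w\lessapprox_{\eps}\alpha+\theta$, which is \eqref{reguliiAlmostTalking}.

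For \eqref{anglesTalkToEachOther}, I would run the same argument with the roles of $\tube_1$ and $\tube_2$ interchanged: \eqref{reguliiAlmostTalking} in particular places $L$ in $N_{\lessapprox_{\eps}\alpha+\theta}(R(\tube_1,\tube_2))$, and since $q\in\tube_2\subset R(\tube_1,\tube_2)$ with $\angle(L,v(\tube_2))\gtrapprox_{\eps}1$, the angle $\angle(L,T_qR(\tube_2))$ is controlled by $\angle(L,T_qR(\tube_1,\tube_2))$ up to $\lessapprox_{\eps}\theta$ (both planes are tangent to a curvature-$\approx_{\eps}1$ regulus along $\tube_2$ near $q$, and by Lemma \ref{thetaScaleHairbrush} they agree to within $\lessapprox_{\eps}\theta$ there), which in turn — again via the slab picture, now anchored at $q$ — equals $\angle(L,T_pR(\tube_1,\tube_2))$ up to an error $\lessapprox_{\eps}\dist(p,q)\cdot 1$; but $\dist(p,q)\lessapprox_{\eps}\alpha+\theta$... this last route is circular, so instead I would argue directly: the signed angle of $L$ relative to $T_\bullet R(\tube_1)$ along $\tube_1$ is, by the hyperboloid geometry and Lemma \ref{speedOfSpannedPlane}, an affine-like function of position whose value at the foot of $V(\tube_1)$-type line is $0$, and the symmetric statement holds along $\tube_2$; the two functions are forced to agree at the common transversal line of the $\theta'$-tube (that is the content of \eqref{regulusMapsSynchronized}), so their values at $p$ and at $q$ differ by $\lessapprox_{\eps}\theta$. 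The main obstacle is making this ``synchronization'' quantitative uniformly in $L$ — i.e.\ showing the $\theta$-scale error does not compound as $L$ ranges over all transversals — and I expect to handle it exactly as in Lemma \ref{thetaScaleHairbrush}: cover $\tube_1$ by $\theta^{-1}$ segments, note the associated slabs cut $\tube_2$ into $\approx_{\eps}1$-overlapping $\theta$-segments, and check the angle identity one slab at a time.
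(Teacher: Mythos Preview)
Your argument for \eqref{reguliiAlmostTalking} is correct and is essentially the slab reformulation of what the paper does: the paper picks a single auxiliary line $L'$ through $p$ that meets $\tube_2$ and satisfies $\angle(L',T_pR(\tube_1))\leq\theta$, observes that $\dist(L\cap\tube_2,\ L'\cap\tube_2)\lessapprox_\eps \alpha+\theta$, and deduces \eqref{reguliiAlmostTalking} from that. Your ``slab of thickness $\alpha+\theta$ through the segment of $\tube_1$'' is the same object viewed differently.

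For \eqref{anglesTalkToEachOther}, however, there is a genuine gap. You correctly identify that your first route is circular, but your fallback (``the signed angle of $L$ relative to $T_\bullet R(\tube_1)$ along $\tube_1$ is an affine-like function\ldots the two functions are forced to agree at the common transversal\ldots'') is not a proof: $L$ is a fixed line through the fixed points $p$ and $q$, so there is no function along $\tube_1$ to speak of, and the slab-by-slab covering you propose at the end controls the error for varying transversals, not for the single line $L$. The paper avoids all of this by reusing the same auxiliary line $L'$: by \eqref{regulusMapsSynchronized} one has $\angle(L',T_rR(\tube_2))\lessapprox_\eps\theta$ at $r=L'\cap\tube_2$, and then a direct triangle-inequality estimate
\[
\angle(L,T_qR(\tube_2))\ \leq\ \angle(L',T_rR(\tube_2))+\angle(T_qR(\tube_2),T_rR(\tube_2))\ \lessapprox_\eps\ \theta+\dist(q,r)\ \lessapprox_\eps\ \theta+\alpha
\]
gives one inequality in \eqref{anglesTalkToEachOther}; the reverse follows by symmetry. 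The missing idea in your write-up is precisely this: rather than tracking a function along the tube, anchor everything to the one tangent transversal $L'$ and use the already-established distance bound $\dist(q,r)\lessapprox_\eps\alpha+\theta$ together with the $\approx_\eps 1$ curvature of $R(\tube_2)$.
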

\begin{proof}
Let $L^\prime$ be a line passing through $q$ that intersects $\tube_1$ and satisfies $\angle(v(L^\prime),\ T_qR(\tube_2))\leq\theta$. Since $\tube_1$ and $\tube_2$ are $\gtrapprox_{\eps}1$ separated and skew, the angle between $L$ an $L^\prime$ is comparable to the angle between the planes spanned by $\tube_2,L$ and $\tube_2,L^\prime$. In particular, $\angle(v(L), v(L^\prime))\lessapprox_{\eps}\angle(v(L), T_qR(\tube_2))+\theta$. Since $\dist(p, L^\prime\cap T_1)\lesssim \angle(v(L), v(L^\prime))$, we have
\begin{equation}\label{distanceAngleRelation}
\dist(p, L^\prime\cap T_1)\lessapprox_{\eps}\angle(v(L),\ T_qR(\tube_2))+\theta.
\end{equation}

By \eqref{regulusMapsSynchronized}, we have $\angle(v(L^\prime),\ T_rR(\tube_1))\lessapprox_{\eps}\theta$ whenever $r$ is a point of intersection of $L^\prime$ and $\tube_1$. Thus

 \begin{equation*}
 \begin{split}
 \angle(L,\ T_p R(\tube_1)) &\leq \angle(v(L^\prime), T_rR(\tube_1))+\angle(T_pR(\tube_1), T_rR(\tube_1)) \\
 &\lessapprox_\eps\theta + \dist(q, r)\\
 &\lessapprox_\eps \theta + \angle(v(L), T_qR(\tube_2)),
 \end{split}
 \end{equation*}
 where the second inequality follows from the fact that if we identify the line coaxial with $\tube_1$ with the interval $[0,1]$ (we will use the variable $x$ to denote points in this interval), then 
 $$
\frac{d}{dx}\angle(T_{x_0}R(\tube_1), T_x(R(\tube_1))\approx_{\eps} 1.
$$

This gives us the first inequality of \eqref{anglesTalkToEachOther}. The second inequality follows from the fact that $\tube_1$ and $\tube_2$ play symmetric roles.

It remains to establish \eqref{reguliiAlmostTalking}. Recall that the lines $L$ and $L^\prime$ described above both pass through the point $q$, and their points of intersection with $\tube_1$ are $\lessapprox_\eps \angle(L,\ T_pR(\tube_1))+\theta$ separated. Since $\tube_1$ and $\tube_2$ are $\geq\delta^{C\eps}$ separated, the points of intersection $L\cap \tube_1$ and $L^\prime\cap \tube_1$ have separation $\gtrapprox_{\eps}1$ from $q$. This implies that $L^\prime\cap B(0,1)$ is contained in the $\lessapprox_\eps \angle(L,\ T_pR(\tube_1))+\theta$ neighborhood of $L$. Equation \eqref{reguliiAlmostTalking} follows from the fact that $L$ is contained in the $\lessapprox_{\eps}\theta$ neighborhood of $R(\tube_1,\tube_2)$.
\end{proof}

\subsection{The regulus map and the hairbrush of two tubes}
Let $(\tubes,Y)$ be an $\eps$-extremal set of tubes that obey the regulus map at scale $\theta$. Let $\tube_1,\tube_2\in\tubes$ be two tubes that are $\geq\delta^{\eps}$ separated and skew. Recall that the hairbrush $H(\tube_1,\tube_2)$ is contained in the $\lessapprox_{\eps}\theta$--neighborhood of the regulus $R(\tube_1,\tube_2)$. Let $\tube\in H(\tube_1,\tube_2)$, and let $\tube_3\in H(\tube)$. 
At each point $p\in \tube_3$, there are two interesting planes containing $p$: the tangent plane $T_p R(\tube_3)$ arising from the regulus map of $\tube_3$, and the tangent plane $T_p(R_{\tube_1,\tube_2,\tube_3})$ arising from the regulus determined by the lines coaxial with $\tube_1,\tube_2$, and $\tube_3$.

If $p$ is a point of intersection of $\tube_3$ and $R(\tube_1,\tube_2)$, then we expect $T_p R(\tube_3)$ and $T_p(R_{\tube_1,\tube_2,\tube_3})$ to be equal---this is a consequence of planiness. If $p$ is not a point of intersection of $\tube_3$ and $R(\tube_1,\tube_2)$, however, then there is no reason to expect that the two planes should be equal. The following lemma asserts that indeed, they are not.

\begin{lem}\label{decoherenceOfPlaneMap}
Let $(\tubes,Y)$ be an $\eps$-extremal set of tubes that obey the regulus map at scale $\theta$. Let $\tube_1,\tube_2\in\tubes$ be two tubes whose coaxial lines are $\geq\delta^{C\eps}$ separated and skew. Suppose that the hairbrush of $\tube_1$ covers $\tube_2$ with accuracy $\delta^{C\eps}$. Let $\tube\in H(\tube_1,\tube_2)$ and let $\tube_3\in H(\tube)$ be a tube whose hairbrush covers each of $\tube_1$ and $\tube_2$ with accuracy $\delta^{C\eps}$; whose coaxial line is $\geq\delta^{C\eps}$ separated and skew to the coaxial lines of $\tube_1$ and $\tube_2$; and which satisfies the estimate
\begin{equation}\label{largeIntersectionAngleT3RT1T2}
\angle(v(\tube_3),\ T_pR(\tube_1,\tube_2))\geq\delta^{C\eps}\ \textrm{for all}\ p\in \tube_3\cap R(\tube_1,\tube_2).
\end{equation}

Then there exist two points $p_1,p_2\in \tube_3$, so that if $p\in \tube_3$, then
\begin{equation}\label{compareAngleDistance}
\angle(T_pR(\tube_3),\ T_pR_{\tube_1,\tube_2,\tube_3})\gtrapprox_{\eps} \min\big(\dist(p,\ p_1),\ \dist(p,\ p_2)\big)-\theta.
\end{equation}
\end{lem}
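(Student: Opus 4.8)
The plan is to reduce \eqref{compareAngleDistance} to a statement about a single real function. Write $L_3$ for the line coaxial with $\tube_3$, identify $\tube_3\cap B(0,1)$ with an interval $I\subset\RR$ of length $\approx 1$ via arc length $x\mapsto p(x)$, and set $\phi_1(x)=T_{p(x)}R(\tube_3)$ and $\phi_2(x)=T_{p(x)}R_{\tube_1,\tube_2,\tube_3}$; both are planes through $L_3$, i.e.\ points of the pencil of planes through $L_3$ (a copy of $\RP^1$), and we let $\psi(x)$ be the signed angle from $\phi_1(x)$ to $\phi_2(x)$. Since $L_1,L_2,L_3$ are pairwise $\geq\delta^{C\eps}$ separated and skew, Lemma~\ref{GaussCurvatureOfRegulus} shows $R_{\tube_1,\tube_2,\tube_3}$ is non--degenerate with $|K_p|\approx_\eps 1$ on $B(0,1)$, and $R(\tube_3)$ is non--degenerate because $(\tubes,Y)$ obeys the regulus map. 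The key structural fact is that $\phi_1,\phi_2$ are projective (M\"obius) maps from $L_3$ to this pencil — classically, the tangent planes of a quadric along a ruling line form a projective pencil, and the two rulings are projectively related via their common transversals (see \cite{HC}). Hence, in an affine chart, $\psi$ agrees with a rational function $A(x-x_1)(x-x_2)/P(x)$ with $\deg P\le 2$ and $x_1,x_2\in\CC\cup\{\infty\}$; in particular $\phi_1,\phi_2$ agree at $\le 2$ points, which will supply the two points $p_1,p_2$ in the statement. Feeding the curvature bound through Lemma~\ref{speedOfSpannedPlane} — writing $\phi_i(x)=\operatorname{span}(L_3,q_i(x))$ with $q_i(x)$ on a fixed opposite--ruling line that is $\gtrapprox_\eps 1$ separated and skew to $L_3$ (which exists by Lemma~\ref{ThreeSkewLinesTwoTransversals}) — gives $\phi_i'\approx_\eps 1$ on $I$, so on $I$ we have $|P(x)|\approx_\eps 1$, $|A|\lessapprox_\eps 1$, and trivially $|\psi|\le\pi/2$.

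Next I would locate one agreement point. Let $p(x_*)$ be a point where $\tube$ meets $\tube_3$; we may assume $\angle(v(\tube),v(\tube_3))\gtrapprox_\eps 1$ (else pass to a subcollection using Lemma~\ref{mostOfHairbrushIsSepAndSkew}). As $\tube$ meets each of $\tube_1,\tube_2,\tube_3$, its coaxial line is an $O(\delta)$--approximate transversal of $L_1,L_2,L_3$, so $\angle(v(\tube),\phi_2(x_*))\lessapprox_\eps\theta$. By planiness at $p(x_*)$ together with the regulus map ($\angle(\Pi_{p(x_*)},T_{p(x_*)}R(\tube_3))\approx_\eps\theta$) we likewise get $\angle(v(\tube),\phi_1(x_*))\lessapprox_\eps\theta$; since both $\phi_1(x_*),\phi_2(x_*)$ contain $v(\tube_3)$ and nearly contain the $\gtrapprox_\eps 1$--transverse direction $v(\tube)$, this forces $|\psi(x_*)|\lessapprox_\eps\theta$. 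By Corollary~\ref{hairbrushTwoTubesInFatRegulusOne}, $\tube\subset N_{\lessapprox_\eps\theta}(R(\tube_1,\tube_2))$, and by \eqref{largeIntersectionAngleT3RT1T2} the line $L_3$ crosses $R(\tube_1,\tube_2)$ at angle $\gtrapprox_\eps 1$, so $p(x_*)$ lies within $\lessapprox_\eps\theta$ of the (essentially unique) point of $\tube_3\cap R(\tube_1,\tube_2)$.

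The heart of the proof — and, I expect, the main obstacle — is the lower bound $|\psi'(x_*)|\gtrapprox_\eps 1$: the tangent plane of $R(\tube_3)$ and that of $R_{\tube_1,\tube_2,\tube_3}$ drift apart at a definite rate as $p$ leaves $p(x_*)$ along $L_3$. The mechanism is that this rate of drift is comparable to the angle $\angle(v(\tube_3),T_pR(\tube_1,\tube_2))$ at which $\tube_3$ pierces $R(\tube_1,\tube_2)$: intuitively, $R(\tube_1,\tube_2)$ synchronizes the regulus maps of tubes lying in it (this is Lemma~\ref{reguliiTalkingLem} and \eqref{regulusMapsSynchronized}), $\tube_3$ is synchronized with this ambient structure only at the single point where it touches $R(\tube_1,\tube_2)$, and the synchronization decays at a rate controlled by the piercing angle, which is $\gtrapprox_\eps 1$ by \eqref{largeIntersectionAngleT3RT1T2}. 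Making this quantitative requires combining the virtual--tube description of $R(\tube_3)$ (Remark~\ref{virtualTube}), the description of $R(\tube_1,\tube_2)$ in Lemma~\ref{thetaScaleHairbrush}, and Lemma~\ref{reguliiTalkingLem}; an alternative is to argue by contradiction, since if $|\psi'(x_*)|$ were $\le\delta^{C'\eps}$ for a large constant $C'$ then $\psi$ (a rational function of the above form with $|P|\approx_\eps 1$) would have a near--double zero at $x_*$, forcing $\phi_1,\phi_2$ to agree to second order there, hence identically (a M\"obius map is determined by its $2$--jet), hence $R(\tube_3)$ and $R_{\tube_1,\tube_2,\tube_3}$ to share all tangent planes along $L_3$ — incompatible with $L_3$ meeting $R(\tube_1,\tube_2)$ transversally.

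Granting $|\psi(x_*)|\lessapprox_\eps\theta$ and $|\psi'(x_*)|\gtrapprox_\eps 1$, the conclusion is routine. The smallness of $\psi(x_*)$ forces one root, say $x_1$, to satisfy $|x_*-x_1|\lessapprox_\eps\theta$; then $|\psi'(x_*)|\approx_\eps|A|\,|x_*-x_2|\gtrapprox_\eps 1$ with $|A|\lessapprox_\eps 1$ forces $|x_*-x_2|\gtrapprox_\eps 1$ (or $x_2=\infty$, which only helps), so the two roots are $\gtrapprox_\eps 1$ separated. Then for every $x\in I$ at least one of $|x-x_1|,|x-x_2|$ is $\gtrapprox_\eps 1$, so $|\psi(x)|\approx_\eps|A|\,|x-x_1|\,|x-x_2|\gtrapprox_\eps\min(|x-x_1|,|x-x_2|)$. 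Choosing $p_1,p_2\in\tube_3$ to be the points closest to $\operatorname{Re}x_1,\operatorname{Re}x_2$ (or the relevant endpoints of $\tube_3$ if a root lies outside $I$), and absorbing the $\lessapprox_\eps\theta$--scale discrepancies coming from planiness and from the $\delta^{1/2}$ ambiguity in $\angle(T_pR(\tube_3),T_pR_{\tube_1,\tube_2,\tube_3})$ into the $-\theta$ term, yields \eqref{compareAngleDistance}.
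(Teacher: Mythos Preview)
Your setup is reasonable, but the route you take is considerably more complicated than the paper's, and the step you yourself flag as ``the main obstacle'' --- the derivative bound $|\psi'(x_*)|\gtrapprox_\eps 1$ --- is never actually established. Your contradiction sketch for it does not close: if $\phi_1\equiv\phi_2$, i.e.\ $R(\tube_3)$ and $R_{\tube_1,\tube_2,\tube_3}$ share all tangent planes along $L_3$, you assert this is ``incompatible with $L_3$ meeting $R(\tube_1,\tube_2)$ transversally''. But $R(\tube_1,\tube_2)$ is a \emph{third} regulus (the one from Corollary~\ref{hairbrushTwoTubesInFatRegulusOne}, generated by $L_1$, $L_2$, and the virtual tube $V(\tube_1)$), generally distinct from $R_{\tube_1,\tube_2,\tube_3}$; the coincidence $R(\tube_3)=R_{\tube_1,\tube_2,\tube_3}$ says nothing direct about how $L_3$ sits relative to $R(\tube_1,\tube_2)$. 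So as written there is a genuine gap.

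The paper's proof bypasses your derivative bound entirely by working at \emph{every} point $p\in\tube_3$ rather than at a single $x_*$. For each $p$ on $L_3$, let $L$ be the unique line through $p$ meeting $L_1$ and $L_2$; since $p\in L_3$, this $L$ is a transversal of all three and hence lies in $R_{\tube_1,\tube_2,\tube_3}$, so $\angle(L,T_pR(\tube_3))\approx_\eps\psi(p)$. Now apply the angle--synchronization part \eqref{anglesTalkToEachOther} of Lemma~\ref{reguliiTalkingLem} to the pairs $(\tube_3,\tube_1)$ and $(\tube_3,\tube_2)$ to transfer this angle to $q_i=L\cap\tube_i$: one obtains $\angle(L,T_{q_i}R(\tube_i))=\psi(p)+O(\theta)$. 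Then apply \eqref{reguliiAlmostTalking} of the same lemma to the pair $(\tube_1,\tube_2)$ --- this is the pair for which the covering hypothesis is available --- to conclude $L\cap B(0,1)\subset N_w(R(\tube_1,\tube_2))$ with $w\lessapprox_\eps\psi(p)+\theta$. In particular $\dist(p,R(\tube_1,\tube_2))\lessapprox_\eps\psi(p)+\theta$. Finally, hypothesis~\eqref{largeIntersectionAngleT3RT1T2} says $L_3$ crosses $R(\tube_1,\tube_2)$ at angle $\gtrapprox_\eps 1$, so $\dist(p,R(\tube_1,\tube_2))\gtrapprox_\eps\min(\dist(p,p_1),\dist(p,p_2))$, where $p_1,p_2$ are the two crossing points. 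Combining gives \eqref{compareAngleDistance} directly --- no M\"obius--map algebra, no derivative estimate, no analysis of roots of a rational function.

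In short: your $\psi$ and the paper's agree, but the paper proves the pointwise lower bound on $|\psi|$ directly from the geometry of $R(\tube_1,\tube_2)$ via Lemma~\ref{reguliiTalkingLem}, whereas you try to deduce it from local information at a single point plus rigidity of M\"obius maps, and the rigidity step is not justified.
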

\begin{rem}
Note that (as discussed in Remark \ref{threeParametersRegulusMap}) the tangency information from the regulus map $p\mapsto T_pR(\tube_3)$ is defined by three parameters (and is defined up to uncertainty $\theta$). If the point $p\in Y(\tube_3)$ is also contained in some tube from $H(\tube_1,\tube_2,\tube_3)$, then as mentioned above,
$$
\angle\big(T_pR(\tube_3),\ T_pR_{\tube_1,\tube_2,\tube_3}\big)\lessapprox_\eps\theta.
$$
The tubes from $H(\tube_1,\tube_2,\tube_3)$ intersect $\tube_3$ in at most two intervals of length $\lessapprox_\eps\theta$. This forces the regulus map $p\mapsto T_pR(\tube_3)$ to agree with the plane map $p\mapsto T_pR_{\tube_1,\tube_2,\tube_3}$ at two places. But since $R(\tube_3)$ is determined by three parameters, we do not expect the two maps to agree for all points $p\in \tube_3$.
\end{rem}
\begin{proof}[Proof of Lemma \ref{decoherenceOfPlaneMap}]
Let $p_1,p_2$ be the two points of intersection of $\tube_3\cap R(\tube_1,\tube_2)$ (these points are defined up to uncertainty $\approx_\eps\theta$). Let $L_3$ be the line coaxial with $\tube_3$ and let $p\in L_3$.

Let $L$ be the (unique) line passing through $p$ that intersects the lines coaxial with $\tube_1$ and $\tube_2$. Applying Lemma \ref{reguliiTalkingLem} to $\tube_3$ and $\tube_1$ and then to $\tube_3$ and $\tube_2$ (using the line $L$), we conclude that if $q_i$ is a point of intersection of $L$ and $\tube_i,\ i=1,2$, then
$$
\angle(L, T_{q_i} R(\tube_i))\lessapprox_{\eps}\theta+ \angle\big(L, T_pR(\tube_3)\big).
$$
On the other hand, since $L$ is a line in the regulus $R_{\tube_1,\tube_2,\tube_3}$, we have 
$$
\angle\big(L, T_pR(\tube_3)\big)\leq\angle\big(T_pR(\tube_3),\ T_pR_{\tube_1,\tube_2,\tube_3}\big),
$$ 
and thus 
$$
\angle(L, T_{q_i} R(\tube_i))\lessapprox_{\eps}\theta+ \angle\big(T_pR(\tube_3),\ T_pR_{\tube_1,\tube_2,\tube_3}\big).
$$
By \eqref{reguliiAlmostTalking} from Lemma \ref{reguliiTalkingLem}, we have that $L\cap B(0,1)$ is contained in the
$$
\lessapprox_\eps \theta + \angle\big(T_pR(\tube_3),\ T_pR_{\tube_1,\tube_2,\tube_3}\big)
$$
neighborhood of $R(\tube_1,\tube_2)$, and thus (since $p\in L\cap B(0,1)$), we have
\begin{equation}\label{distanceBd}
\dist(p,\ R(\tube_1,\tube_2))\lessapprox_\eps \theta + \angle\big(T_pR(\tube_3),\ T_pR_{\tube_1,\tube_2,\tube_3}\big).
\end{equation}
However by \eqref{largeIntersectionAngleT3RT1T2} we have
\begin{equation}\label{distComparablep1p2}
\dist(p,\ R(\tube_1,\tube_2))\gtrapprox_\eps \min\big(\dist(p, p_1),\ \dist(p,p_2)\big).
\end{equation}
Combining \eqref{distanceBd} and \eqref{distComparablep1p2} we obtain \eqref{compareAngleDistance}.
\end{proof}

\section{Dichotomy: Heisenberg or $SL_2$}\label{dichotomySection}
\begin{defn}[Heisenberg]
Let $(\tubes,Y)$ be a set of tubes. We say that $(\tubes,Y)$ is of \emph{Heisenberg type} (with parameter $\alpha$ and $\eps$) if at most $\delta^{-1/2+\alpha}$ tubes from $\tubes$ are contained in any $\delta^{\eps}$-regulus strip.
\end{defn}

\begin{defn}[$SL_2$]
Let $(\tubes,Y)$ be a set of tubes. We say that $(\tubes,Y)$ is of \emph{$SL_2$ type} (with parameter $\alpha$ and $\eps$) if we can write $\tubes=\bigsqcup_{S\in\mathcal{S}}\tubes(S)$; here each $S\in\mathcal{S}$ is a $\delta^{\eps}$-regulus strip that contains $\geq\delta^{-1/2+\alpha}$ tubes.
\end{defn}

\begin{rem}
If $(\tubes,Y)$ is an $\eps$--extremal set of tubes, we will abuse notation and say that $\tubes$ is of Heisenberg (resp. $SL_2$) type with parameter $\alpha$ if it is of Heisenberg (resp. $SL_2$) type with parameters $\alpha$ and $\epsilon$.
\end{rem}

\begin{lem}[Decomposing a Besicovitch set into Heisenberg + $SL_2$]\label{HeisenbergPlusSl2Lem}
Let $\epsilon>0$ and $\alpha>0$ and let $(\tubes,Y)$ be a set of $\delta$ tubes. Then we can write $\tubes=\tubes_1\sqcup\tubes_2$, where $\tubes_1$ is of Heisenberg type (with parameters $\alpha$ and $\eps$), and $\tubes_2$ is of $SL_2$ type (with parameters $\alpha$ and $\eps$).
\end{lem}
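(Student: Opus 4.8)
The proof is a routine greedy decomposition, so I will just describe the procedure rather than belabor the (elementary) verifications. First I would set $\tubes^{(0)}:=\tubes$ and $\mathcal{S}:=\emptyset$, and then iterate the following step for $k=0,1,2,\ldots$: if there exists a $\delta^{\eps}$-regulus strip $S$ that contains at least $\delta^{-1/2+\alpha}$ tubes of $\tubes^{(k)}$, I choose one such strip, call it $S_k$, set $\tubes(S_k):=\{\tube\in\tubes^{(k)}:\tube\subset S_k\}$, $\tubes^{(k+1)}:=\tubes^{(k)}\setminus\tubes(S_k)$, and $\mathcal{S}:=\mathcal{S}\cup\{S_k\}$; otherwise I stop.

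Since $\tubes$ is finite and each iteration deletes at least one tube (indeed at least $\delta^{-1/2+\alpha}$ of them) from the pool, the procedure halts after some finite number $N$ of steps. I would then take $\tubes_1:=\tubes^{(N)}$ and $\tubes_2:=\bigsqcup_{k=0}^{N-1}\tubes(S_k)$. By construction $\tubes_1$ and the sets $\tubes(S_0),\ldots,\tubes(S_{N-1})$ are pairwise disjoint, since the tubes of $\tubes(S_k)$ are removed from the pool before iteration $k+1$ begins, and their union is all of $\tubes$; hence $\tubes=\tubes_1\sqcup\tubes_2$ is a partition.

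Finally I would verify the two type conditions. Because the iteration stopped at stage $N$, no $\delta^{\eps}$-regulus strip contains at least $\delta^{-1/2+\alpha}$ tubes of $\tubes^{(N)}=\tubes_1$, which is precisely the assertion that $\tubes_1$ is of Heisenberg type with parameters $\alpha$ and $\eps$. For $\tubes_2$, we have exhibited a decomposition $\tubes_2=\bigsqcup_{S\in\mathcal{S}}\tubes(S)$ in which each $S\in\mathcal{S}$ is a $\delta^{\eps}$-regulus strip, each $\tubes(S)$ consists of tubes contained in $S$, and $|\tubes(S_k)|\geq\delta^{-1/2+\alpha}$ by the rule under which $S_k$ was selected; hence $\tubes_2$ is of $SL_2$ type with parameters $\alpha$ and $\eps$.

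There is no genuine obstacle here; the only points that need (minor) care are that $\tubes$ is finite, which guarantees termination, and that one deletes the chosen tubes from the pool at each stage. The latter is what forces the pieces $\tubes(S_k)$ to be disjoint even though distinct regulus strips $S_k$ may overlap as subsets of $\RR^3$, and also forces the strips in $\mathcal{S}$ to be distinct, since once $S$ has been chosen it contains no tube of the remaining pool.
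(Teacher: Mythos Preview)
Your proof is correct and follows essentially the same greedy extraction argument as the paper: iteratively peel off a $\delta^{\eps}$-regulus strip containing $\geq\delta^{-1/2+\alpha}$ of the remaining tubes until none such exists, then declare the residual set to be $\tubes_1$ and the union of the peeled pieces to be $\tubes_2$. The paper's version is slightly terser but the logic, termination argument, and verification of the two type conditions are identical to yours.
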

\begin{proof}
Define $\tubes_1^{(0)}=\tubes$ and define $\tubes_2^{(0)}=\emptyset$. Assuming that $\tubes_1^{(j-1)}$ and $\tubes_2^{(j-1)}$ have already been defined, we proceed as follows. If there exists a $\delta^{\eps}$ regulus strip that contains at least $\delta^{-1/2+\alpha}$ tubes from $\tubes_1^{(j-1)}$, then remove these tubes from $\tubes_1^{(j-1)}$ and place them in $\tubes_2^{(j-1)}$. Call the resulting sets $\tubes_1^{(j)}$ and $\tubes_2^{(j)}$. If no such set exists, then define $\tubes_1=\tubes_1^{(j-1)}$, define $\tubes_2=\tubes_2^{(j-1)}$, and halt the process. Since $|\tubes_1^{(j)}|\leq|\tubes|-j\delta^{-1/2+\alpha}$, this process must halt after finitely many steps.
\end{proof}
\begin{rem}\label{ifExtremalOneOfT1T2Extremal}
If we apply Lemma \ref{HeisenbergPlusSl2Lem} to an $\eps$--extremal set of tubes, then at least one of $\tubes_1$ or $\tubes_2$ must also be $\eps$--extremal.
\end{rem}
\section{Killing the Heisenberg example}
In this section we will show that an $\eps$--extremal set of tubes of Heisenberg type cannot exist. More precisely, we have the following.
\begin{prop}\label{killingHeisenbergProp}
There exist absolute constants $C$ (large) and $c>0$ (small) so that the following holds. For every $\alpha>0$, there is a $\delta_0>0$ so that if $0<\delta\leq\delta_0$, and if $(\tubes,Y)$ is an $\eps$--extremal collection of $\delta$ tubes that is of Heisenberg type with parameter $\alpha$ and $C\eps$, then $\eps>c\alpha$.
\end{prop}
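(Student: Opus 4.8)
The plan is to first put $(\tubes,Y)$ into the setting where the regulus--map machinery of Section~\ref{regMapSection} applies, then to run the ``spiky hairbrush'' analysis of Lemmas~\ref{PlaneMapTransverseToHairbrushOfTwoTubes}--\ref{decoherenceOfPlaneMap}, and finally to distil from the resulting configuration a discretized approximate subring of $\RR$ whose dimension is pinned away from $0$ and from $1$; the contradiction then comes from a quantitative form of the statement that $\RR$ has no half--dimensional subfield (a discretized sum--product / ring estimate of Bourgain type). Concretely, fix $C$ large, assume towards a contradiction that $\eps\le c\alpha$ with $c$ small, and work with $\delta$ small depending on $\alpha$. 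Begin by applying Lemma~\ref{robustTransLem}, Lemma~\ref{planinessProp}, and dyadic pigeonholing to pass to a refinement that is plany, robustly transverse, and homogeneous, $\sum_{\tube\in\tubes}\chi_{Y(\tube)}\sim\mu\chi_A$ with $\mu\approx_\eps\delta^{-1/2}$, $|A|\approx_\eps\delta^{1/2}$; since refining only shrinks $\tubes$ and the shadings, the set stays $\eps$--extremal (after enlarging $\eps$ by an absolute factor, which is harmless for the conclusion) and of Heisenberg type with the same parameters. Then dichotomize on whether $(\tubes,Y)$ avoids reguli with parameters $C_1\eps$ and $C_2\eps$, $C_1\gg C_2$. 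If it does, the hairbrush $H(\tube_0)$ of a typical tube cannot concentrate in the $\delta^{C_1\eps}$--neighborhood of any regulus through the coaxial line of $\tube_0$; feeding this into Wolff's hairbrush estimate (Lemma~\ref{WolffHairbrushBd}), so that the leaves $N_{\delta^{1/2}}(\Pi_p)$, $p\in\tube_0$, are forced to turn transversally to every quadric instead of hugging one, should improve Wolff's lower bound $\delta^{1/2}$ for the union by a factor $\delta^{-C_1\eps}$, which for $C_1>1$ contradicts \eqref{smallVolume} once $\delta$ is small. Hence we may assume $(\tubes,Y)$ fails to avoid reguli with parameters $C_1\eps,\ C_2\eps$, and Lemma~\ref{regulusMapLem} supplies a scale $\delta^{1/2}\le\theta\le\delta^{C_1\eps}$ and a further refinement obeying the regulus map at scale $\theta$, still $\approx_\eps$--extremal and of Heisenberg type. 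Taking $C$ large makes $\theta<\delta^{C_0\eps}$, so Lemmas~\ref{PlaneMapTransverseToHairbrushOfTwoTubes}--\ref{decoherenceOfPlaneMap} and Remark~\ref{virtualTube} are available.

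Next, invoke Lemma~\ref{PlaneMapTransverseToHairbrushOfTwoTubes} and Remark~\ref{lotsOfProperlyIntersectingQuadruples} to produce $\gtrapprox_\eps\delta^{-4}$ pairs $(\tube_1,\tube_2)$ that are $\gtrapprox_\eps 1$ separated and skew, whose refined hairbrush $H^\prime(\tube_1,\tube_2)$ has $\gtrapprox_\eps\delta^{-1}$ tubes spread over the regulus $R(\tube_1,\tube_2)$ as in Lemma~\ref{coveringRT1T3ByRhoBalls}, and which support a family of $\gtrapprox_\eps\delta^{-5/2}$ tubes $\tube_3$ crossing $R(\tube_1,\tube_2)$ transversally. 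For each such $\tube_3$, Lemma~\ref{decoherenceOfPlaneMap} shows that the regulus--map plane field $p\mapsto T_pR(\tube_3)$ and the plane field $p\mapsto T_pR_{\tube_1,\tube_2,\tube_3}$ agree only near two points of $\tube_3$, the disagreement growing linearly away from them (modulo $\theta$). Re--expressing the regulus maps through the virtual tubes $V(\cdot)$ of Remark~\ref{virtualTube}, and using that regulus maps talk to each other (Lemma~\ref{reguliiTalkingLem}), this says precisely that the coarse--scale ($\delta^{1/2}$) data of all these reguli — where the ruled lines of $R_{\tube_1,\tube_2,\tube_3}$ and of the various $R(\tube)$ sit in the parameter space $\RR^4$ of lines — forms a genuinely three--parameter family subject to exact additive/multiplicative compatibility relations, rather than being pinned to a one--parameter variety.

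At this point the Heisenberg hypothesis enters: because at most $\delta^{-1/2+\alpha}$ tubes of $\tubes$ lie in any $\delta^{C\eps}$--regulus strip, the $\gtrapprox_\eps\delta^{-1}$ tubes of $H^\prime(\tube_1,\tube_2)$ together with the virtual tubes above cannot cluster on a sub--$\delta^{1/2}$--dimensional set of ruled lines; they must occupy $\gtrapprox_\eps\delta^{-1/2+\alpha}$ essentially distinct regulus strips. Packaging the corresponding coarse--scale data as a $\delta^{1/2}$--separated set $G\subset[1,2]$, one should get $|G|\approx\delta^{-\sigma/2}$ with $\sigma$ bounded below by a constant multiple of $\alpha$ (from the Heisenberg hypothesis) and above by $1$ minus such a multiple (from the Wolff axioms), while the compatibility relations of the previous step force both $G+G$ and $G\cdot G$ to be covered by $\delta^{-\sigma/2+O(\eps)}$ intervals of length $\delta^{1/2}$ — i.e. $G$ is a $\delta^{1/2}$--discretized approximate subring of $\RR$. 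The discretized sum--product / ring theorem then rules this out unless the error exponent $O(\eps)$ is comparable to $\sigma$, hence to $\alpha$, and this forces $\eps>c\alpha$.

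The decisive and hardest step is the last one: converting the purely incidence--geometric output of the earlier steps (reguli, virtual tubes, regulus strips, the decoherence estimate) into an honest approximate--subring statement to which Bourgain's theorem applies, with the effective ``dimension'' $\sigma$ faithfully tracked by the Heisenberg parameter $\alpha$, and carrying this out uniformly over the whole range $\delta^{1/2}\le\theta\le\delta^{C_1\eps}$ — the borderline case $\theta\approx_\eps\delta^{1/2}$, nearest to the genuine Heisenberg group, being the most delicate. A secondary obstacle is the ``avoids reguli'' branch of the reduction, where one must quantitatively improve Wolff's hairbrush bound using only the qualitative absence of regulus concentration.
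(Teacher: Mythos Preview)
Your high--level instincts are right: both your approach and the paper's use the regulus--map machinery to organise the hairbrush structure, then leverage the Heisenberg hypothesis (no dense regulus strips) to force a quantitative non--concentration, and finally appeal to a discretized Bourgain--type theorem. But there are two genuine gaps.

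\textbf{First, your ``avoids reguli'' branch is mishandled.} You propose that if $(\tubes,Y)$ avoids reguli with parameters $C_1\eps,\ C_2\eps$, one should be able to directly improve Wolff's hairbrush volume bound by $\delta^{-C_1\eps}$, giving an immediate contradiction. There is no such argument available: avoiding reguli is a \emph{negative} structural statement that does not by itself strengthen the hairbrush estimate. The paper handles Cases~(A) and~(B) \emph{uniformly}: in both cases it constructs tubes $\tube_1,\tube_2$, then $\tube_A,\tube_B$, and in Case~(B) the condition~\eqref{avoidsReguli} is used only at the selection step to guarantee that $\gtrapprox_\eps\delta^{-3/2}$ tubes $\tube'$ through a typical $\tube_3$ are transverse to $R_{\tube_1,\tube_2,\tube_3}$---exactly what Lemma~\ref{decoherenceOfPlaneMap} delivered in Case~(A). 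After that the two cases merge and the proof proceeds identically. You acknowledge this as ``a secondary obstacle,'' but it is not secondary; without it the dichotomy collapses.

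\textbf{Second, the extraction of an approximate subring is not justified, and the scale is wrong.} You propose to read off a $\delta^{1/2}$--discretized set $G\subset[1,2]$ with both $G+G$ and $G\cdot G$ small; but the incidence data produced by the regulus--map lemmas does not directly yield multiplicative structure. The paper instead runs a two--ends reduction (Lemma~\ref{twoEndsLem}) on each $\tube_3$ to find a scale $s$, and the Heisenberg hypothesis is exactly what forces $s\gtrapprox_\eps\delta^{1/2-\alpha}$; one then sets $\tilde\delta=\delta/s^2\le\delta^\alpha$ and builds an explicit planar point--line arrangement $(\pts,\lines)$ at scale $\tilde\delta$ via coordinate maps $\phi,\psi$ coming from the four reference tubes $\tube_1,\tube_2,\tube_A,\tube_B$ (Lemma~\ref{findingPtsAndLinesLem}). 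The non--concentration of $\pts$ and $\lines$ is then bootstrapped through an iterated refinement (Lemmas~\ref{oneScaleRefinement}--\ref{multiScaleRefinement}), Balog--Szemer\'edi--Gowers and Pl\"unnecke--Ruzsa are used to find sets with small \emph{additive} doubling, and the contradiction comes from Bourgain's discretized \emph{projection} theorem (Theorem~\ref{bourgainThm}) rather than a sum--product statement. Your route---jumping straight from virtual tubes and decoherence to ``$G+G$ and $G\cdot G$ small''---elides precisely the most substantial part of the argument.
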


Our proof will loosely follow the strategy used in \cite{BKT}. Namely, we will show that if an $\eps$--extremal Besicovitch set of Heisenberg type exists, then it is possible to construct an arrangement of points and lines in the plane that determine many point-line incidences. We will then use a variant of Bourgain's discretized sum-product theorem to show that such an arrangement is impossible.

\subsection{Finding the points and lines}
Recall that a line in $\RR^3$ is described by four parameters. If we fix two lines $L_1,L_2$, then the set of lines incident to both $L_1$ and $L_2$ can be described by two parameters; at least heuristically, a line incident to $L_1$ and $L_2$ can be identified with a corresponding point in the plane. In \cite{BKT}, Bourgain, Katz, and Tao make this heuristic precise---working over $\FP$ rather than $\RR$, they exhibited a transformation that sends lines incident to $L_1$ and $L_2$ to points in the plane, and they then constructed an incidence arrangement using these points.

In general, we would expect a similar phenomena to hold if the lines $L_1$ and $L_2$ are replaced by tubes $\tube_1$ and $\tube_2$. If the tubes come from an $\eps$-extremal Besicovitch set that obeys the regulus map at scale $\theta$, however, then Corollary \ref{hairbrushTwoTubesInFatRegulusOne} asserts that at scale $\theta$, the set of tubes incident to both $\tube_1$ and $\tube_2$ lie in the $\theta$ neighborhood of a regulus, and thus are described by \emph{one} parameter (i.e.~the location of the tube along the ruling of the regulus). Thus if one were to map tubes incident $\tube_1$ and $\tube_2$ to points in the plane, the resulting arrangement of points would lie close to a low degree algebraic curve. Naively, one would expect that this situation is easier to analyze, but it actually closely resembles the ``$SL_2$ example'' discussed in the introduction, which is an almost-counter-example to the Kakeya conjecture.

In this section, we will show that if the set of tubes is of Heisenberg type, then at scales smaller than $\theta^2$, the tubes incident to both $\tube_1$ and $\tube_2$ are ``truly described'' by two parameters, in the sense that they cannot be closely approximated by a simple one-dimensional parameterization\footnote{Of course this statement is only interesting if $\theta^2$ is much larger than $\delta$; otherwise we obtain an incidence arrangement with only one point and one line}. A precise version of this statement is needed in order to apply Bourgain's discretized sum-product theorem.

In the following lemma, we will begin the process of constructing a planar point-line arrangement with many incidences. 

\begin{lem}\label{findingPtsAndLinesLem}
Let $(\tubes,Y)$ be a $\eps$--extremal set of tubes of Heisenberg type with parameter $\alpha$ and $C\eps$. Then if $C$ is sufficiently large, there is a number $\tilde\delta\leq\delta^{\alpha}$, a set $\pts\subset B(0,1)\subset \RR^2$ of points, and a set $\lines$ of lines in $\RR^2$ with the following properties:
\begin{itemize}
\item $|\pts|\approx_{\eps/\alpha}\tilde\delta^{-1}$, $|\lines|\approx_{\eps/\alpha}\tilde\delta^{-1}$.
\item For each ball $B(x,r)\subset \RR^2$,
\begin{equation}\label{nonConcentrationPropertyPts}
|\pts\cap B(x,r)|\lessapprox_{\eps/\alpha} r\tilde\delta^{-1}.
\end{equation}
\item For each $p_1,p_2\in\pts$ and each $1\leq N\leq \tilde\delta^{-1}$,
\begin{equation}\label{nonConcentrationPropertyLines}
|\{L\in\lines\colon \dist(L,p_i)\leq N\tilde\delta,\ i=1,2\}| \lessapprox_{\eps/\alpha} N/\dist(p_1,p_2).
\end{equation}
\item There is a constant $C_0$ so that for each $L\in\lines,$
\begin{equation}\label{manyIncidences}
|\{ (p_1,p_2)\in\pts^2\colon \dist(p_1,p_2)\geq \delta^{C_0\eps},\ \dist(L,p_i)\leq\tilde\delta,\ i=1,2\}|\gtrapprox_{\eps/\alpha}\tilde\delta^{-1}.
\end{equation}
\end{itemize}
\end{lem}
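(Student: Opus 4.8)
The plan is to extract the point-line arrangement from the incidence structure produced by Lemma \ref{PlaneMapTransverseToHairbrushOfTwoTubes}, using the Heisenberg hypothesis to rule out concentration of the resulting points/lines onto low-degree curves. First, fix a pair $\tube_1,\tube_2\in\tubes$ satisfying the conclusions of Lemma \ref{PlaneMapTransverseToHairbrushOfTwoTubes}: $\tube_1,\tube_2$ are $\gtrapprox_\eps 1$ separated and skew, the hairbrush of $\tube_1$ covers $\tube_2$ with error $\lessapprox_\eps 1$, and \eqref{manyTransverseTubes} holds. By Corollary \ref{hairbrushTwoTubesInFatRegulusOne}, the refined hairbrush $H^\prime(\tube_1,\tube_2)$ lives in $N_{\lessapprox_\eps\theta}(R(\tube_1,\tube_2))$ for a non-degenerate regulus $R=R(\tube_1,\tube_2)$, where the tubes obey the regulus map at some scale $\theta$ (if $(\tubes,Y)$ does not obey any regulus map at a scale $\theta < \delta^{C_0\eps}$, then it avoids reguli and a separate — easier — branch of the argument applies; I would handle that case first or cite the dichotomy of Section \ref{dichotomySection}). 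Set $\tilde\delta = \delta/\theta^2$ — this is the scale at which, after rescaling $R$ to look like a standard hyperboloid, a $\delta$-tube lying in $N_\theta(R)$ and making angle $\gtrapprox_\eps\theta$ with $R$ projects to a $\tilde\delta$-ball or $\tilde\delta$-neighborhood of a line in a suitable chart of the space of rulings. Because $R$ is non-degenerate, Lemma \ref{GaussCurvatureOfRegulus} gives $|K_p|\approx_\eps 1$ on $R\cap B(0,1)$, so $R$ can be put in a normal form by an affine map with $\approx_\eps 1$ distortion (as in Lemma \ref{regulusThreeLinesFarFromCommonPlane}); this is what makes the chart well-behaved.

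Next I would set up the correspondence. Parametrize one ruling of $R$ by an interval, identified with $[0,1]$; a tube $\tube\in H^\prime(\tube_1,\tube_2)$ corresponds (up to $\approx_\eps\theta$) to a point $s(\tube)\in[0,1]$, the location along that ruling, so after rescaling by $\theta^{-1}$ the $\theta^{-1}$ many $\theta$-tubes from Lemma \ref{thetaScaleHairbrush} become $\tilde\delta^{-1}$ points $\pts\subset[0,1]$ (a one-dimensional set — this is the "$SL_2$-like" degeneracy at scale $\theta$). A tube $\tube^\prime\in H(\tube)$ that intersects $\tube$ and satisfies the transversality condition \eqref{largeIntersectionAngleT3RT1T2}, i.e.~$\angle(v(\tube^\prime),T_pR)\geq\delta^{C\eps}$ on $\tube^\prime\cap R$, crosses $N_\theta(R)$ transversely and thus meets a short arc of rulings; by Lemma \ref{decoherenceOfPlaneMap} the tangency data it forces along $\tube$ varies at unit speed, so $\tube^\prime$ selects, to precision $\tilde\delta$, an affine-linear relation among the ruling-coordinates $s$ — that is, $\tube^\prime$ defines a line $L(\tube^\prime)\in\lines\subset\RR^2$ in the $(s,\text{direction})$-chart, and $\dist(L(\tube^\prime),p)\leq\tilde\delta$ encodes "$\tube^\prime$ meets the $\tube$ corresponding to $p$". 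Under this dictionary, \eqref{manyTransverseTubes} says each $\tube^\prime$ is incident (to error $\tilde\delta$) to $\gtrapprox_\eps\delta^{-1/2}=\tilde\delta^{-1}\cdot(\theta^{-2}\delta^{1/2})^{-1}$... — here I would pigeonhole in $\theta$ and in the number of tubes $\tube$ realizing each incidence to clean the count down to \eqref{manyIncidences}, with $\delta^{C_0\eps}$-separation of $p_1,p_2$ coming from requiring $\tube^\prime$ to be $\geq\delta^{C_0\eps}$ separated and skew from $\tube$ (available after another refinement by Lemma \ref{mostOfHairbrushIsSepAndSkew}). The $\approx_{\eps/\alpha}$ bounds on $|\pts|,|\lines|$ come because the Heisenberg hypothesis caps $|\tubes(S)|\leq\delta^{-1/2+\alpha}$ for every regulus strip $S$, forcing $\theta\gtrapprox\delta^{\alpha}$-ish and hence $\tilde\delta\leq\delta^\alpha$, and because all the refinements lose only $\delta^{-O(\eps)}=\tilde\delta^{-O(\eps/\alpha)}$ factors.

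The two non-concentration estimates are where the Heisenberg hypothesis does the real work. For \eqref{nonConcentrationPropertyPts}: a ball $B(x,r)$ in the ruling-parameter, pulled back, corresponds to a regulus strip-like region of $R$ of "width" $\sim r$ in the ruling direction; the $\delta$-tubes contributing points in $\pts\cap B(x,r)$ all lie in $N_\delta(R)$ within this region, which is contained in a $\delta^{C\eps}$-regulus strip (thickened by $\lessapprox_\eps\theta$ — here one uses that $N_\theta(R)$ restricted to a short ruling-arc is comparable to a genuine regulus strip, as in Definition \ref{regulusStripDefn} and Lemma \ref{regStripsAreTangent}). The Heisenberg bound then gives $\lessapprox_\eps\delta^{-1/2+\alpha}$ tubes there, which after the $\tilde\delta$-normalization is $\lessapprox_{\eps/\alpha} r\tilde\delta^{-1}$. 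For \eqref{nonConcentrationPropertyLines}: fix $p_1,p_2$, i.e.~two tubes $\tube^{(1)},\tube^{(2)}\in H^\prime(\tube_1,\tube_2)$ at ruling-distance $\dist(p_1,p_2)$; a line $L\in\lines$ with $\dist(L,p_i)\leq N\tilde\delta$ corresponds to a tube $\tube^\prime$ passing within $\lessapprox_\eps N\delta/\theta$ of both $\tube^{(1)}$ and $\tube^{(2)}$, hence $\tube^\prime$ is essentially pinned (to $N\delta$) by two points at genuine spatial separation $\gtrapprox_\eps\theta\dist(p_1,p_2)$, so the number of such $\tube^\prime$ is $\lessapprox_\eps N/\dist(p_1,p_2)$ by the Wolff axioms (a $2\times N\delta\times (N\delta/(\theta\dist(p_1,p_2)))$-prism bound, or directly Corollary \ref{fewTubesHitALine}-type reasoning) — again transported through the normalization. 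The main obstacle, and the step I would spend the most care on, is making the chart/normalization precise enough that "lies in $N_\theta(R)$ near a short ruling-arc, transverse to $R$" genuinely becomes "$\tilde\delta$-neighborhood of a line in $\RR^2$" with uniform $\approx_\eps 1$ distortion constants — i.e.~controlling how the regulus map of $\tube^\prime$ (three parameters, Remark \ref{threeParametersRegulusMap}) interacts with the two-parameter family of rulings of $R(\tube_1,\tube_2)$, which is exactly the content of Lemma \ref{decoherenceOfPlaneMap} and must be used quantitatively rather than qualitatively. Everything else is pigeonholing and bookkeeping of the $\delta^{-O(\eps)}$ losses.
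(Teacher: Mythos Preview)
Your proposal has the right skeleton --- points from $H'(\tube_1,\tube_2)$, lines from transverse tubes $\tube'$, incidences from intersections --- but two load-bearing pieces are missing.

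First, the scale. You set $\tilde\delta=\delta/\theta^2$ and claim the Heisenberg hypothesis forces $\theta\gtrapprox\delta^{\alpha}$, but this is not right: in the regulus-map case $\theta$ can be anywhere in $[\delta^{1/2},\delta^{C_1\eps}]$, and nothing in the Heisenberg hypothesis bounds it below. (Your count is also off: you identify $\pts$ with the $\theta^{-1}$ coarse $\theta$-tubes but want $|\pts|\approx\tilde\delta^{-1}=\theta^2/\delta$; these only agree when $\theta=\delta^{1/3}$.) The paper introduces a \emph{different} scale $s$ via a two-ends reduction (Lemma~\ref{twoEndsLem}) applied to each $Y'(\tube_3)$: $s$ is the diameter along $\tube_3$ over which $H'(\tube_1,\tube_2)\cap H_{Y'}(\tube_3)$ is genuinely spread out (property P4). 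It is \emph{this} $s$ that the Heisenberg hypothesis bounds: those $\gtrapprox_\eps\delta^{-1/2}$ tubes lie in the $s$-neighborhood of a line, hence in $\lessapprox_\eps s\delta^{-1/2}$ regulus strips, each holding $\leq\delta^{-1/2+\alpha}$ tubes, so $s\gtrapprox_\eps\delta^{1/2-\alpha}$ and $\tilde\delta:=\delta/s^2\leq\delta^{\alpha}$.

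Second, the two-dimensional chart. Your $\pts\subset[0,1]$ is only the coarse ruling parameter; the lemma needs points genuinely in $\RR^2$. The paper gets the second coordinate by pigeonholing a fixed $\tube_A\in H'(\tube_1,\tube_2)$ and then a $\tube_B$ satisfying the transversality \eqref{angleRequirement}; after sending $\tube_1,\tube_2,\tube_A$ to coordinate axes, a tube $\tube\in H'(\tube_1,\tube_2)$ near $\tube_A$ gets the explicit coordinates $\phi(\tube)=(s/x,s/y)$ from its intersection heights $x,y$ with $\tube_1,\tube_2$, and each $\tube_3\in H'(\tube_A,\tube_B)$ becomes a line via an explicit dual map $\psi$. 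The line non-concentration \eqref{nonConcentrationPropertyLines} then comes from a Lagrange-interpolation argument on the degree-two polynomial $Q$ vanishing on $\tube_1,\tube_2,\tube_3$ (Lemma~\ref{regulusThreeLinesFarFromCommonPlane}), combined with the transversality of $\tube_B$ to $R_{\tube_1,\tube_2,\tube_3}$ --- this is where Lemma~\ref{decoherenceOfPlaneMap} (Case~A) or the avoids-reguli property (Case~B) is actually cashed in, not in the chart construction. A bare Wolff-axiom prism count as you suggest does not produce the correct $N/\dist(p_1,p_2)$ dependence without this extra rigidity.
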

\begin{rem}
When proving point-line incidence theorems, we can use the Cauchy-Schwarz inequality and the observation that at most one line passes through each pair of points to show that $N$ points and $N$ lines determine at most $O(N^{3/2})$ incidences.

If the points and lines satisfy \eqref{nonConcentrationPropertyPts} and \eqref{nonConcentrationPropertyLines}, then a similar argument can be used to bound the number of $\tilde\delta$--close pairs of points and lines. Thus the lower bound from \eqref{manyIncidences} matches the upper bound given by Cauchy-Schwarz. As we will see later, a stronger upper bound actually holds, which leads to a contradiction.
\end{rem}
\begin{proof}
The proof of Lemma \ref{findingPtsAndLinesLem} differs based on whether or not $(\tubes,Y)$ obeys a regulus map at some scale $\theta$ that is much smaller than one. We will deal with these two possibilities by considering two cases, which are discussed below. Here $C_1$ and $C_2$ are absolute constants, which will be determined later in the proof. The reader should think of $C_1$ as being much larger than $C_2$. 

\begin{itemize}
\item Case (A): $(\tubes,Y)$ fails to avoid reguli with parameters $\gamma=C_1\eps$ and $\beta = C_2\eps$.
\item Case (B): $(\tubes,Y)$ avoids reguli with parameters $\gamma=C_1\eps$ and $\beta = C_2\eps$.
\end{itemize} 

First, we will show that there is a pair of tubes $\tube_1,\tube_2\in\tubes$ so that the lines coaxial with $\tube_1$ and $\tube_2$ are $\gtrapprox_{\eps}1$ separated and skew; the hairbrush of $\tube_1$ covers $\tube_2$ with accuracy $\gtrapprox_{\eps}1$; there exists a set $\tubes^\prime\subset\tubes$; a shading $Y^\prime$ of the tubes in $\tubes^\prime$; and a number $\delta^{1/2-\alpha}\lessapprox_{\eps} s\lessapprox_{\eps}1$ so that for each $\tube_3\in\tubes^\prime,$ the following properties hold:

\begin{itemize}
\item[(P1):] $\tube_1,\tube_2,\tube_3$ are pairwise $\approx_\eps 1$ separated and skew, and $\tube_3$ makes an angle $\approx_\eps 1$ with the plane spanned by the directions of $\tube_1$ and $\tube_2$.
\item[(P2):]  \itemizeEqnVSpacing
\begin{equation}\label{modifiedHairbrush1}
|H^\prime(\tube_1,\tube_2)\cap H_{Y^\prime}(\tube_3)|\approx_\eps\delta^{-5/2}|\tubes^\prime|^{-1}.
\end{equation}
\item[(P3):] All of the tubes in the set from \eqref{modifiedHairbrush1} are contained in the $s$--neighborhood of a line (the line depends on the choice of $\tube_3$).
\item[(P4):] The tubes in the set from \eqref{modifiedHairbrush1} cannot be contained in a smaller neighborhood of a line; more precisely, for each $0<r<1$, at most a $r^{\eps}$ fraction of the tubes are contained in the $rs$--neighborhood of a line.
\end{itemize}

We begin the process of selecting the tubes $\tube_1$ and $\tube_2$; the set $\tubes^\prime$, and the shading $Y^\prime$. 

If we are in Case (A), then apply Lemma \ref{regulusMapLem} to $(\tubes,Y)$; we obtain a set of tubes $(\tubes_1,Y_1)$ which obey the regulus map at some scale $\theta \leq\delta^{C_1\eps}$ and which satisfy
$$
\sum_{\tube\in\tubes_1}|Y_1(\tube)|\gtrapprox_{C_2\eps}1.
$$

Use Lemma \ref{lotsOfCoveringPairs} to select two tubes $\tube_1,\tube_2\in\tubes$ whose coaxial lines are $\geq\delta^{C_{(A)}\eps}$ separated and skew, so that the hairbrush of $\tube_1$ covers $\tube_2$ with accuracy $\delta^{C_{(A)}\eps}$ and there are $\geq \delta^{C_{(A)}\eps-5/2}$ pairs
\begin{equation}\label{tubeTube3Pairs}
\begin{split}
\{(\tube,\tube_3)\colon &\tube\in H^\prime(\tube_1,\tube_2),\ \tube_3\in H(\tube)\}.
\end{split}
\end{equation}
Here $C_{(A)}$ is a constant of the form $C_{(A)} = C^\prime C_2$, where $C^\prime$ is an absolute constant. 

Refining the above set slightly and increasing $C^\prime$ (and thus $C_{(A)}$) if necessary, we can also ensure that $\tube_1,\tube_2$ and $\tube_3$ are $\geq \delta^{C_{(A)}\eps}$ separated and skew; that $v(\tube_3)$ makes an angle $\geq\delta^{C_{(A)}\eps}$ with the plane spanned by $v(\tube_1)$ and $v(\tube_2)$ (i.e. the determinant of $v(\tube_1),\ v(\tube_2),$ and $v(\tube_3)$ has magnitude $\geq\delta^{C_{(A)}\eps}$); and that the hairbrush of $\tube_3$ covers each of $\tube_1$ and $\tube_2$ with accuracy $\geq\delta^{C_{(A)}\eps}$. In particular, this means that $R_{\tube_1,\tube_2,\tube_3}$ is a $\delta^{C_{(A)}\eps}$ non-degenerate regulus.

After pigeonholing, we can select a set $\tubes^{\prime}\subset\tubes_1$ so that each tube $\tube_3\in\tubes^{\prime}$ is present in $\gtrapprox_{C_{(A)}\eps}\delta^{-5/2}|\tubes^\prime|^{-1}$ pairs $(\tube,\tube_3)$ from \eqref{tubeTube3Pairs}. For each $\tube_3\in\tubes^{\prime}$, define the shading
$$
Y^\prime(\tube_3)=Y_1(\tube_3)\cap \bigcup_{\tube\colon (\tube,\tube_3)\in\eqref{tubeTube3Pairs} }Y_1(\tube).
$$
At this point, we have selected the tubes $\tube_1$ and $\tube_2$, the set $\tubes^\prime$, and the shading $Y^\prime$ if we are in Case (A).

Now suppose that we are in Case (B). Let $(\tubes_1,Y_1)$ be a set of tubes (and their associated shading) so that
\begin{equation}\label{caseBMostOfTubesRemain}
\sum_{\tube\in\tubes_1}|Y_1(\tube)|\gtrapprox_{\eps}1,
\end{equation}
and for each $\tube\in\tubes_1$ and each regulus $R$ containing the line coaxial with $\tube$, we have
\begin{equation}\label{avoidsReguli}
|\{\tube^\prime \in H_{Y_1}(\tube)\colon \angle(\tube^\prime, T_pR)\leq\delta^{C_1\eps}\ \textrm{for some}\ p\in \tube^\prime\cap R\}|\lessapprox_{\eps} \delta^{C_2\eps}|H_{Y_1}(\tube)|.
\end{equation}
Observe that the implicit constant in \eqref{caseBMostOfTubesRemain} is independent of $C_2$. This fact will prove crucial in later arguments.

Such a set $(\tubes_1,Y_1)$ is easy to find. For example, let $\tubes_1=\{\tube\in\tubes\colon |Y(\tube)|\gtrapprox_{\eps}1 \}$ and let $Y_1(\tube)=Y(\tube)$ for all $\tube\in\tubes_1$.

We will use Lemma \ref{lotsOfCoveringPairs} to select two tubes $\tube_1,\tube_2$, and then select the set $\tubes^\prime$ as in Case (A). However, by \eqref{caseBMostOfTubesRemain}, we can replace the constant $C_{(A)}$ by $C_{(B)}$, which is an absolute constant, independent of $C_2$. In particular, for each $\tube_3\in\tubes^\prime$, we have
$$
Y^\prime(\tube_3)\gtrapprox_{\eps}1.
$$
This means that \eqref{avoidsReguli} still holds for the shading $Y^\prime$ (with a larger implicit constant, but (crucially), one that is independent of $C_2$). At this point, we have selected the tubes $\tube_1$ and $\tube_2$, the set $\tubes^\prime$, and the shading $Y^\prime$ if we are in Case (B).

The next step of the proof will be the same regardless of whether we are in Case (A) or Case (B). We will apply a two-ends reduction, as illustrated in Figure \ref{twoEndsFig}. The following lemma was first used by Wolff in \cite{wolff}. A proof of the lemma as it appears below can also be found in \cite[Lemma 6]{T2}.
\begin{lem}\label{twoEndsLem}
Let $T$ be a tube and let $Y(T)\subset T$ be a shading with $|Y(T)|\geq\delta$. Let $0<\rho<1$. Then there is a ball $B(p_0,s)$ with $\delta\leq s\leq 1$ and 
$$
|Y(T\cap B(p_0,s))|\geq\delta^{\rho}|Y(T)|
$$ so that for all $p\in \RR^3$ and all $\delta\leq r\leq 1$ we have
\begin{equation}\label{consequenceOfTwoEnds}
|Y(B(p,r)\cap B(p_0,s))|\leq (r/s)^{\rho}|Y(B(p_0,s))|.
\end{equation}
\end{lem}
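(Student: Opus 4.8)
The plan is to run the standard iterative ``pruning'' argument. First I would set $s_0 = 1$ and $B_0 = B(p_0^{(0)}, s_0)$ with $p_0^{(0)}$ the midpoint of $T$, so that $T \cap B_0 = T$ and $|Y(T \cap B_0)| = |Y(T)|$. Given $B_j = B(p_0^{(j)}, s_j)$, I would check whether the conclusion \eqref{consequenceOfTwoEnds} already holds for this ball: if there exist $p$ and $\delta \le r \le 1$ with $|Y(B(p,r) \cap B_j)| > (r/s_j)^\rho |Y(B_j)|$, then I set $B_{j+1} = B(p, r) \cap B_j$ (or rather the ball $B(p,r)$, intersected with $T$), so $s_{j+1} = r < s_j$ and $|Y(T \cap B_{j+1})| > (s_{j+1}/s_j)^\rho |Y(T \cap B_j)|$. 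If no such $p, r$ exist, then $B_j$ is the desired ball and I halt.

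Next I would verify the two things that make the iteration work. First, termination: since $|Y(T)| \ge \delta$ and the shading has measure at most $|T| \lesssim \delta^2 \le 1$, and since each step strictly decreases $s_j$ and we may always take $r$ to be a power of $2$ times $\delta$ (so $s_j$ ranges over a finite dyadic set bounded below by $\delta$), the process must stop after finitely many steps — say at step $J$ — because once $s_J = \delta$ the shading inside $B_J$ is a single $\delta$-ball's worth and \eqref{consequenceOfTwoEnds} holds trivially (the right side is $\ge |Y(B_J)|$ for all $r \ge \delta$). Second, the lower bound on $|Y(T \cap B_J)|$: telescoping the inequalities $|Y(T \cap B_{j+1})| > (s_{j+1}/s_j)^\rho |Y(T \cap B_j)|$ from $j = 0$ to $J-1$ gives
\begin{equation*}
|Y(T \cap B_J)| \;>\; \prod_{j=0}^{J-1} (s_{j+1}/s_j)^\rho \,|Y(T)| \;=\; (s_J/s_0)^\rho |Y(T)| \;=\; s_J^\rho |Y(T)| \;\ge\; \delta^\rho |Y(T)|,
\end{equation*}
using $s_0 = 1$ and $s_J \ge \delta$. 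Setting $s = s_J$ and $p_0 = p_0^{(J)}$ then gives both asserted properties: $|Y(T \cap B(p_0,s))| \ge \delta^\rho |Y(T)|$, and \eqref{consequenceOfTwoEnds} holds because the process only halted when no violating pair $(p,r)$ remained.

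I do not expect a serious obstacle here; the only point requiring a little care is making the termination argument clean, which is handled by restricting all radii to the finite dyadic scale set $\{\delta, 2\delta, 4\delta, \ldots, 1\}$ (up to rounding, which costs only absolute constants and can be absorbed since $\rho$ is fixed and $\delta^\rho \le \delta^{C\rho'}$-type losses are harmless in this context — alternatively one simply notes the argument is scale-invariant and the strict decrease of a quantity taking finitely many values must terminate). One should also note the intersection $B(p,r) \cap B(p_0^{(j)}, s_j)$ is contained in a ball of radius $\le r$, so replacing it by that ball only strengthens \eqref{consequenceOfTwoEnds} at the next stage; this keeps the objects honest balls throughout. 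A reference for this exact formulation is \cite[Lemma 6]{T2}, and the argument originates in Wolff \cite{wolff}.
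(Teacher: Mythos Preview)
The paper does not give its own proof of this lemma; it simply cites Wolff \cite{wolff} and \cite[Lemma 6]{T2}. Your iterative pruning argument is exactly the standard proof found in those references, and it is correct as written (including your remarks on termination via dyadic radii and on replacing $B(p,r)\cap B_j$ by the ball $B(p,r)$ itself).
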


\begin{figure}[h!]
 \centering
\begin{overpic}[width=0.7\textwidth]{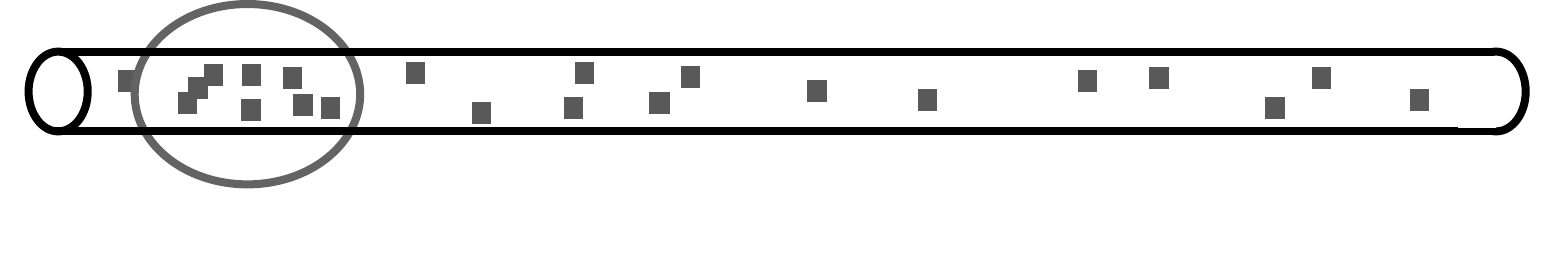}
 \put (10,1) {$B(p_0,s)$}
\end{overpic}
 \caption{The ball captures a maximal portion of the shading relative to its size}\label{twoEndsFig}
\end{figure}

Apply Lemma \ref{twoEndsLem} to each tube $\tube_3\in\tubes^{\prime}$ with the shading $Y^\prime(\tube_3)$ and exponent $\rho=\eps$, and let $B(p_{\tube_3},s_{\tube_3})$ be the resulting ball.
After pigeonholing (which induces a refinement of $\tubes^{\prime}$), we can assume that there is a number $s$ so that $s_{\tube_3} \sim s$ for all $\tube_3\in\tubes^\prime.$ 

Define the set of ``good pairs,''
\begin{equation}\label{defnOfGP}
GP=\{(\tube,\tube_3)\colon Y(\tube)\cap Y^\prime(\tube_3)\cap B(p_{\tube_3},s)\neq\emptyset\},
\end{equation}
where $B(p_{\tube_3},s)$ is the ball arising from the two-ends reduction. We have $|GP|\gtrapprox_{C_{(A|B)}\eps}\delta^{-5/2}$, where $C_{(A|B)}=C_{(A)}$ if we are in Case (A) and $C_{(A|B)}=C_{(B)}$ if we are in Case (B).

Since $(\tubes,Y)$ is of Heisenberg type with parameter $\alpha$ and $C\eps$, if we choose the constant $C$ from the statement of Proposition \ref{killingHeisenbergProp} to be larger than $\max(C_{(A)},C_{(B)})$, then at most $\delta^{-1/2+\alpha}$ tubes from $\tubes$ can be contained in any $\delta^{C(A|B)\eps}$ regulus strip. Note that $\gtrapprox_{C(A|B)\eps}\delta^{-1/2}$ tubes intersect each of $\tube_1,\tube_2$ and $\tube_3$ and are contained in the $s$ neighborhood of a line. We can cover this set of tubes by a set of $\delta^{C(A|B)\eps}$ non-degenerate regulus strips of cardinality $\lessapprox_{C(A|B)\eps}\delta^{-1/2}s$, so that each of these tubes is contained in at least one strip. Since at most $\delta^{-1/2+\alpha}$ tubes can be contained in any $\delta^{C(A|B)\eps}$ non-degenerate regulus strip, we conclude that $s\gtrapprox_{C(A|B)\eps}\delta^{1/2-\alpha}$. Once we have fixed the constant $C_2$, this means that $s\gtrapprox_{\eps}\delta^{1/2-\alpha}$. We have now selected the tubes $\tube_1,\tube_2$, the set $\tubes^\prime$, the shading $Y^{\prime}$, and the number $s$ that satisfies properties (P1) to (P4).


Since $|H^\prime(\tube_1,\tube_2)|\lessapprox_{\eps}\delta^{-1}$, we can use pigeonholing to select a tube $\tube_A\in H^\prime(\tube_1,\tube_2)$ with
$$
|\{\tube_3\in\tubes^\prime\colon \tube_A\in H_{Y^\prime}(\tube_3)\}|\gtrapprox_{\eps} \delta^{-3/2}.
$$

Observe that if $\tube_3\in\tubes^\prime,$ if $\tube\in H^\prime(\tube_1,\tube_2)$, and if $\tube_A,\tube\in H_{Y^\prime}(\tube_3)$, then both $\tube_A$ and $\tube$ are incident to the three tubes $\tube_1,\tube_2,\tube_3$, which are pairwise $\geq\delta^{C(A|B)\eps}$ separated and skew. By Lemma \ref{ThreeSkewLinesTwoTransversals}, $\tube_A$ and $\tube$ are uniformly separated (actually $s^\prime$ separated for some $s^\prime\leq s$) with error $\lessapprox_{C(A|B)\eps}1$ and $\approx_{C(A|B)\eps}1$ skew.

Let
\begin{equation}\label{defnTubes12}
\tubes_{12}=\{\tube\in H^\prime(\tube_1,\tube_2)\colon \tube\ \textrm{and}\ \tube_A\ \textrm{are}\ \geq\delta^{C_3\eps}\ \textrm{skew and}\ s\ \textrm{separated with error}\ \delta^{C_3\eps}\}.
\end{equation}

Since the tubes in $\tubes_{12}$ obey properties (P3) and (P4) from above, we have that that if $C_3$ is selected sufficiently large (depending on $C(A|B)$) then for each $\tube_3\in\tubes^\prime\cap H(\tube_A)$, all of the tubes in $H^\prime(\tube_1,\tube_2)\cap H_{Y^\prime}(\tube_3)$ are contained in $B_{\delta^{-C_3\eps}s}(\tube_A)$, and at most half of the tubes in $H^\prime(\tube_1,\tube_2)\cap H_{Y^\prime}(\tube_3)$ are contained in $B_{\delta^{C_3\eps}s}(\tube_A)$.

Thus if $C_3$ is selected sufficiently large in \eqref{defnTubes12}, then for each $\tube_3\in H(\tube_A)\cap \tubes^\prime$ we have
$$
|H_{Y^\prime}(\tube_3)\cap\tubes_{12}|\gtrapprox_{C(A|B)\eps} \delta^{-5/2}|\tubes^\prime|^{-1}\gtrapprox_{C(A|B)\eps}\delta^{-1/2}.
$$

We claim that for each $\tube_3\in  H(\tube_A)\cap \tubes^\prime$, there is a set of $\gtrapprox_{C_4\eps} \delta^{-3/2}$ tubes  $\tube^\prime\in H_{Y^\prime}(\tube_3)$ with $\angle(v(\tube^\prime),\ T_pR_{\tube_1,\tube_2,\tube_3})\geq \delta^{C_1\eps}$ at every point $p\in \tube^\prime\cap \tube_3$. Here the constant $C_4$ depends on $C_1,C_2,C(A|B),$ and $C_3$. 

If we are in Case (A), then this follows from Lemma \ref{decoherenceOfPlaneMap}, provided we select $C_1$ sufficiently large, depending on $C_2$ (here we use the fact that the hairbrush of $\tube_3$ covers each of $\tube_1$ and $\tube_2$ with accuracy $\geq \delta^{C_A\eps}$. On the other hand, if we are in Case (B) then this follows from \eqref{avoidsReguli}, provided we select the constant $C_2$ sufficiently large (independent of all other parameters), so that the cardinality of the set in the LHS of \eqref{avoidsReguli} (with the shading $Y_1$ replaced by $Y^\prime$) is at most half the cardinality of the set on the RHS. 

The proof from this point on no longer distinguishes between Cases (A) and (B). We will assume that the constant $C_1,C_2, C_{(A)}, C_{(B)}$, and $C_4$ have been determined, and when we write $A\lessapprox_{\eps}B$, the implicit constant in the $\lessapprox_{\eps}$ notation may depend on $C_1,C_2, C_{(A)}, C_{(B)}$, and $C_4$, since these are (fixed) absolute constants.

By pigeonholing, we can select a tube $\tube_B\in\tubes$ so that there are $\gtrapprox_\eps\delta^{-1}$ tubes $\tube_3\in H^\prime(\tube_A,\tube_B)\cap\tubes^\prime$ satisfying
\begin{equation}\label{angleRequirement}
\angle\big( v(\tube_B),\ T_pR_{\tube_1,\tube_2,\tube_3}\big)\gtrapprox_\eps 1\quad\textrm{for all}\ p\in \tube_3\cap \tube_B.
\end{equation}
Call this set of tubes $\tubes^{\prime\prime}.$

\begin{figure}[h!]
 \centering
\begin{overpic}[width=0.5\textwidth]{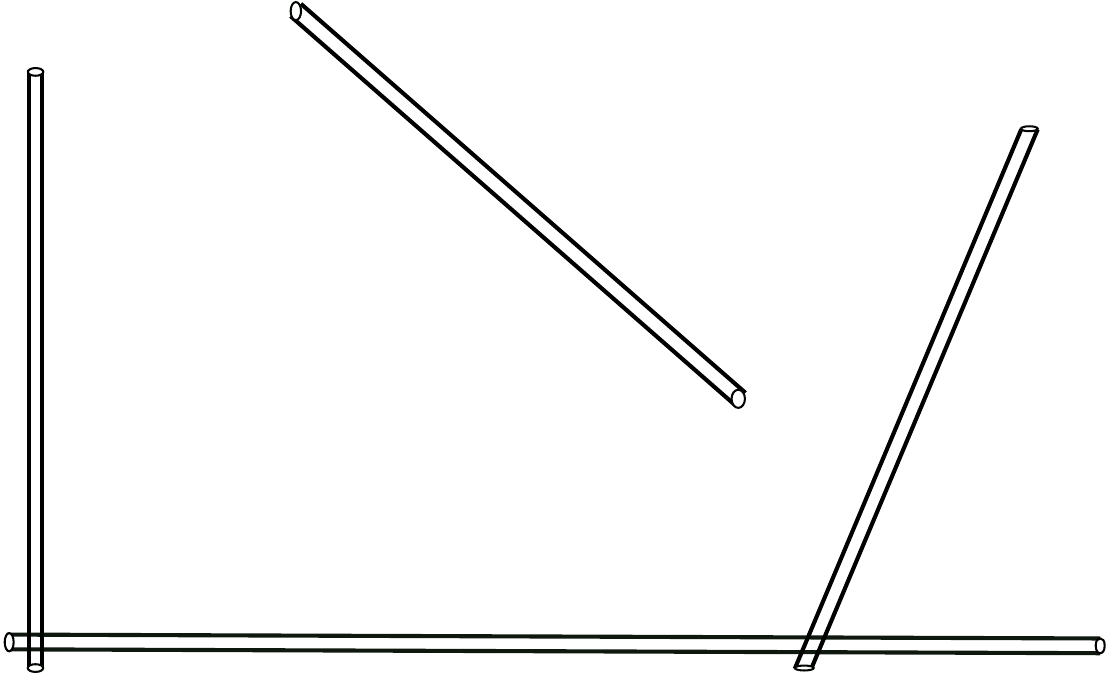}
 \put (5,26) {$\tube_1$}
 \put (84,22) {$\tube_2$}
 \put (42,6) {$\tube_A$}
 \put (37,41) {$\tube_B$}
\end{overpic}
 \caption{The relationship between $\tube_1,\tube_2,\tube_A$, and $\tube_B$}\label{tube12AB}
\end{figure}

Let $1\leq N\leq \delta^{-1}s$ and let $\tube,\tube^\prime\in\tubes_{12}$. Suppose there exists at least one tube $\tube_3\in\tubes^{\prime\prime}$ that intersects the $N\delta$-neighborhood of $\tube$ and $\tube^\prime$. Suppose that these points of intersection are $\geq\delta^{1-C\eps}N$ separated, where $C$ is a constant that depends only on the separation and skewness of $\tube_1,\tube_2$ and $\tube_3$. In particular, this implies that $\tube$ and $\tube^\prime$ are $\approx_{\eps}1$ skew and $\approx_{\eps}q$ separated with error $\lessapprox_{\eps}1$ for some $q$ satisfying $N\delta\leq q\leq s$.

Since $\tube_1,\tube_2,$ and $\tube_3$ are $\approx_{\eps}1$ separated and skew, and $v(\tube_3)$ makes an angle $\gtrapprox_{\eps}1$ with the plane spanned by $v(\tube_1)$ and $v(\tube_2)$, we can apply Lemma \ref{regulusThreeLinesFarFromCommonPlane}. Let $Q$ be the monic degree-two polynomial that vanishes on the three lines coxial with $\tube_1,\tube_2,$ and $\tube_3$. Since there are three $\approx_{\eps}1$ separated points on $\tube_A$ that are $\delta$--close to $Z(Q)$, we have that the restriction of $Q$ to the $\delta$ neighborhood of $\tube$ has magnitude $\lessapprox_{\eps}\delta$. Similarly, the restriction of $Q$ to the $N\delta$ neighborhood of $\tube$ and $\tube^\prime$ has magnitude $\lessapprox_{\eps}N\delta$. Now let $\tube_3^{\prime}$ be a tube that intersects $\tube_A$ and also intersects the $N\delta$ neighborhoods of $\tube$ and $\tube^\prime$. Then if we restrict $Q$ to the line coaxial with $\tube_3^{\prime}$, this univariate polynomial has magnitude $\lessapprox_\eps N\delta$ at three points, which have separation $q$ and $s$. Thus by Lagrange interpolation, the restriction of $Q$ to the line concentric with $\tube_3^{\prime}$ has magnitude $\lessapprox_\eps N\delta/sq$. Since $|\nabla Q|\gtrapprox_{\eps}1$ on $B(0,1)\cap Z(Q)$, we conclude that $\tube_3^{\prime}$ is contained in the $\lessapprox_\eps N\delta/sq$--neighborhood of $Z(Q)$.

Now consider the set
\begin{equation}\label{tubesMeetTubeTubePrime}
\{\tube_3 \in \tubes^{\prime\prime}\colon \tube_3\cap N_{N\delta}(\tube)\neq\emptyset,\ \tube_3\cap N_{N\delta}(\tube^\prime)\neq\emptyset \}.
\end{equation}
Since the tubes in \eqref{tubesMeetTubeTubePrime} are contained in $H^\prime(\tube_A,\tube_B)$, we have that the intersections of these tubes with $\tube_B$ must be $\lesssim 1$ overlapping. The tubes in \eqref{tubesMeetTubeTubePrime} are also contained in the $\approx_{\eps}N\delta/sq$ neighborhood of the regulus $R_{\tube_1,\tube_2,\tube_3^{\prime}}$, where $\tube_{3}^\prime$ is any tube in $\tubes^{\prime\prime}$ that intersects $N_{N\delta}(\tube_A),\ N_{N\delta}(\tube),$ and $N_{N\delta}(\tube^\prime)$ (if no such tube exists then \eqref{tubesMeetTubeTubePrime} is empty, which is good for us). Finally, the tubes in this set must intersect $\tube_B$, and $\tube_B$ makes an angle $\gtrapprox_{\eps}1$ with the tangent plane of $R_{\tube_1,\tube_2,\tube_3^\prime}$ at every point of intersection. At most $\lessapprox_\eps N/sq$ tubes from $\tubes^\prime$ can satisfy these three properties, so
$$
|\eqref{tubesMeetTubeTubePrime}| \lessapprox_\eps N/sq.
$$

Let $\tubes_{AB}$ be a random $\sim s^2$ refinement of the set of tubes from $\tubes^{\prime\prime}$; this refinement can be obtained by randomly selecting each tube with probability $s^2$. Then with high probability, $|\tubes_{AB}|\lessapprox_\eps\delta^{-1}s^2$ and for each $\tube, \tube^\prime\in\tubes_{12}$ that are $q$ separated with error $\lessapprox_{\eps}1$,  we have
$$
|\{\tube_3\in \tubes_{AB}\colon \dist(\tube, \tube_3)\leq N\delta,\ \dist(\tube^\prime,\tube_3)\leq N\delta \}|\lessapprox_\eps Ns/q.
$$

After applying a linear transformation that distorts angles by a factor of $\lessapprox_{\eps}1$, we can assume that the line coaxial with $\tube_1$ is the line $(0,0,0)+\RR(1,0,0),$ the line coaxial with $\tube_2$ is the line $(0,0,1)+\RR(0,1,0)$, and the line coaxial with $\tube_A$ is the line $(0,0,0)+\RR(0,0,1)$. After applying this transformation, the original tubes from $\tubes$ might be slightly distorted and thus might no longer be `tubes'' according to the definition given in Section \ref{introSection}, but nonetheless each $\tube\in\tubes$ will contain the $\approx_{\eps}\delta$ neighborhood of a line segment of length $\approx_{\eps}1$ and will be contained in the $\approx_{\eps}\delta$ neighborhood of a line segment of length $\approx_\eps 1$. We will abuse notation slightly and refer to these objects as tubes, since they will still obey all of the estimates discussed above.

We will now construct the set $\pts$. Recall  $\tubes_{12}\subset H^\prime(\tube_1,\tube_2)$, so in particular the sets $\{\tube\cap\tube_2\colon\tube\in\tubes_{12}\}$ are $\lesssim 1$ overlapping.  Let $\tubes_{12}^\prime$ be a subset of $\tubes_{12}$ of size $\approx_{\eps}s|\tubes_{12}|\approx_{\eps}\delta^{-1}s^2$ so that the sets $\{\tube\cap\tube_2\colon\tube\in\tubes_{12}^\prime\}$ are $\geq\delta/s$ separated. For each $\tube\in\tubes_{12}^\prime$, $\tube\cap \tube_1$ is contained in a ball of radius $\approx_{\eps}\delta$ centered at a point of the form $(x,0,0)$ and $\tube\cap \tube_2$ is contained in a ball of radius $\approx_{\eps}\delta$ centered at a point of the form $(0,y,1)$. By \eqref{defnTubes12}, $|x|,|y|\approx_{\eps}s$. Define
$$
\phi(\tube)=(s/x, s/y).
$$
Let $\tilde\delta = \delta/s^2$, so in particular $\tilde\delta\leq\delta^\alpha$.
\begin{rem}
If $A \leq \delta^{-C\eps} B$, then $A\leq \tilde\delta^{-C\eps/\alpha}B$. Thus we will frequently see the expression $A\lessapprox_{\eps/\alpha}B$.
\end{rem}

Define $\pts=\phi(\tubes_{12}^\prime)$. This is a set of $\approx_{\eps/\alpha}\tilde\delta^{-1}$ points that are $\tilde\delta$--separated and contained in a ball centered at the origin of radius $\lessapprox_{\eps}1$. Since the sets $\{\tube\cap\tube_2\colon\tube\in\tubes_{12}^\prime\}$ are $\geq\delta/s$ separated, we have
\begin{equation}\label{ptsInBallNonConcentration}
|B(x,r)\cap\pts|\lessapprox_{\eps/\alpha} r|\pts|.
\end{equation}

Next, we will construct the set $\lines$. If $\tube\in \tubes_{AB}$, then $\tube\cap \tube_A$ is contained in a ball of radius $\lessapprox_\eps\delta$ centered at the point $(0,0,z)$, and the line coaxial with $\tube$ has the form $(-az, -bz, 0)+\RR(a,b,1)$, with $|a|,\ |b|\lessapprox_{\eps} 1$. Let
$$
\psi(\tube)=\Big(\frac{bz-b}{s},\ \frac{az}{s}\Big),
$$
and let $\check\psi(\tube)$ be the line dual to $\psi(\tube)$. Define $\mathcal{L}=\check\psi(\tubes_{AB})$.

Now, suppose that $\tube\in\tubes_{12}$ and $\tube^\prime\in\tubes_{AB}$, i.e.
\begin{equation*}
\begin{split}
\tube&=\{ (1-t)x, ty, t),\ t\in\RR\}+O(\delta),\\
\tube^\prime &= \{(a(t-z),b(t-z), t)\colon t\in\RR\}+O(\delta).
\end{split}
\end{equation*}
And suppose furthermore that $\dist( \tube, \tube^\prime) \sim N\delta$ for some $1\leq N\leq\delta^{-1}$. Then we must have
$$
(1-t)x - a(t-z) =\Theta(N\delta),\quad \textrm{and}\quad ty-b(t-z) = \Theta(N\delta).
$$
Re-arranging, we conclude
$$
t=\frac{bz + \Theta(N\delta)}{b-y},
$$
and thus
$$
a\big( bz+\Theta(\delta)-z(b-y)\big)=\big( (b-y)-bz+\Theta(N\delta)\big)x,
$$
i.e.
\begin{equation*}
\begin{split}
&a\big( bz-z(b-y)\big)-\big( (b-y)-bz\big)x=\Theta(N\delta),\quad\textrm{and}\quad
xy+(bz-b)x +az y =\Theta(\delta).
\end{split}
\end{equation*}
Since $x,y\approx_{\eps}1$, we can divide by $zy$ and obtain
$$
\Big|1+\Big(\frac{bz-b}{s}\Big)\Big(\frac{s}{y}\Big) +\Big(\frac{az}{s}\Big)\Big( \frac{s}{x}\Big)\Big|\approx_{\eps}N\delta s^{-2}=N\tilde\delta.
$$
This is exactly the statement that $\dist(\psi(\tube), \check\phi(\tube^\prime))\approx_{\eps} N\tilde\delta.$ This implies \eqref{nonConcentrationPropertyLines} and \eqref{manyIncidences}. After dilating $\RR^2$ by a factor of $\lessapprox_{\eps/\alpha}1$, we can assume that $\pts\subset B(0,1)$.

We conclude: $\pts\subset B(0,1)$ is a set of $\approx_{\eps/\alpha}\tilde\delta^{-1}$ points that satisfies \eqref{nonConcentrationPropertyPts}, and $\lines$ is a set of $\approx_{\eps/\alpha}\tilde\delta^{-1}$ lines that satisfies \eqref{nonConcentrationPropertyLines}. Furthermore, if the constant $C$ is selected sufficiently large, then for each $L\in\lines$, there are $\gtrapprox_{\eps/\alpha}\tilde\delta^{-1}$ pairs $(p_1,p_2)\in\pts^2$ with $\dist(p_1,p_2)\geq \delta^{C\eps/\alpha}$, $\dist(p_1, L)\leq\tilde\delta$, and $\dist(p_2, L)\leq\tilde\delta.$
\end{proof}
\subsection{Establishing non-concentration of the points}
Recall that the set of points from the previous section satisfies the non-concentration estimate
\begin{equation}\label{nonConcentrationEstimateFluff}
|\pts\cap B(x,r)|\lessapprox_{\eps/\alpha}r|\pts|.
\end{equation}
Using standard arguments, will find a projective transformation $T$ to so that $T(\pts)$ has large intersection with a Cartesian product $S_1\times S_2$, where $S_1$ and $S_2$ each have cardinality roughly $\tilde\delta^{-1/2}$. This will be done in Lemma \ref{FingingProductStructure}.

Even though the set $S_1\times S_2$ still satisfies \eqref{nonConcentrationEstimateFluff}, this does not imply that the sets $S_1$ or $S_2$ individually satisfy any non-concentration estimates. For example, we might have $S_1 = [0,\tilde\delta^{1/2}]\cap\tilde\delta\ZZ$ and $S_2 = [0,1]\cap\tilde\delta^{1/2}\ZZ$.

In this section, we will show that because the lines also satisfy a non-concentration estimate, this forces the sets $S_1$ and $S_2$ to satisfy a non-concentration estimate as well.
\begin{lem}\label{FingingProductStructure}
Let $\pts,\lines,$ and $\tilde\delta$ satisfy the conclusion of Lemma \ref{findingPtsAndLinesLem}. Then there exist sets $A,B\subset[0,1]$, a set $\pts^\prime\subset A\times B$, and a set $G\subset B\times B$ so that
\begin{itemize}
\item $|A|,|B|\approx_{\eps/\alpha}\tilde\delta^{-1/2}$.
\item $|\pts^\prime|\approx_{\eps/\alpha}\tilde\delta^{-1}$.
\item For each ball $B(x,r)\subset \RR^2$,
\begin{equation}\label{nonConcentrationPropertyPtsRd2}
|\pts^\prime\cap B(x,r)|\lessapprox_{\eps/\alpha} r\tilde\delta^{-1}.
\end{equation}
\item $|G|\gtrapprox_{\eps/\alpha}\tilde\delta^{-1}$.
\item For every $b_1\in B$, $|(\{b_1\}\times B)\cap G|\gtrapprox_{\eps/\alpha} \tilde\delta^{-1/2}$.
\item For every $p\in\pts,$ there exist $(b_1,b_2)\in G$ so that the three points $p$, $(0,b_1)$, and $(1,b_2)$ are contained in the $\tilde\delta$--neighborhood of a line (this line need not be an element of $\lines$).
\item For every two intervals $I_1,I_2\subset[0,1]$,
\begin{equation}\label{nonConcentrationOnRectInLem}
|G\cap (I_1\times I_2)|\lessapprox_{\eps/\alpha} \max(|I_1|,|I_2|)\tilde\delta^{-1}.
\end{equation}
\item For each $(b_1,b_2)\in G$, if $L$ is the line connecting $(0,b_1)$ to $(1,b_2)$, then
$$
|\{(p_1,p_2)\in(\pts^\prime)^2\colon\dist(p_1,p_2)\geq\tilde\delta^{C\eps},\ \dist(p_i,L)\leq\tilde\delta,\ i=1,2, \}|\gtrapprox_{\eps}\tilde\delta^{-1}.
$$
\end{itemize}
\end{lem}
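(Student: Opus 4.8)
The plan is to put coordinates on $\RR^2$ in which each line of $\lines$ is recorded by the pair of heights at which it crosses two fixed vertical lines, to extract a product (``grid'') structure from $\pts$ by the standard argument referred to just before the lemma, and then to read $G$ off of $\lines$ and verify each listed property by pigeonholing, invoking \eqref{nonConcentrationPropertyPts} and \eqref{nonConcentrationPropertyLines} exactly where non-concentration of the output is needed. First I would uniformize the incidence structure: by \eqref{manyIncidences} each line of $\lines$ is $\tilde\delta$--incident to $\gtrapprox_{\eps/\alpha}\tilde\delta^{-1}$ pairs of points of $\pts$ that are $\geq\delta^{C_0\eps}$ apart, hence to $\gtrapprox_{\eps/\alpha}\tilde\delta^{-1/2}$ points, and a Cauchy--Schwarz incidence count using \eqref{nonConcentrationPropertyPts} and \eqref{nonConcentrationPropertyLines} supplies the matching upper bound $\lessapprox_{\eps/\alpha}\tilde\delta^{-3/2}$ on the total number of incidences, so after dyadic refinement every line of $\lines$ is $\tilde\delta$--incident to $\approx_{\eps/\alpha}\tilde\delta^{-1/2}$ points of $\pts$ and every point of $\pts$ to $\approx_{\eps/\alpha}\tilde\delta^{-1/2}$ lines. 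Next I would carry out the standard ``finding a grid'' argument: using only \eqref{nonConcentrationPropertyPts}, one produces, after a projective transformation of bounded distortion on the relevant disk (which I absorb into the notation), a subset $\pts'\subset\pts$ with $|\pts'|\approx_{\eps/\alpha}\tilde\delta^{-1}$ contained in a product set $A\times B$ with $|A|,|B|\approx_{\eps/\alpha}\tilde\delta^{-1/2}$, still satisfying \eqref{nonConcentrationPropertyPtsRd2} and retaining $\gtrapprox_{\eps/\alpha}\tilde\delta^{-3/2}$ incidences with a refinement of $\lines$; pigeonholing over the choice of two columns of the grid, I would moreover arrange that two of the grid columns are the lines $\{x=0\}$ and $\{x=1\}$ and that a $\gtrapprox_{\eps/\alpha}1$ fraction of the remaining lines are $\tilde\delta$--incident to a point of $\pts'$ on each of these two columns.

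Now define $G$. Discarding the other lines, each surviving line $L$ meets $\{x=0\}$ at a height $b_1(L)\in B$ (up to $\tilde\delta$) and $\{x=1\}$ at a height $b_2(L)\in B$, and I set $G=\{(b_1(L),b_2(L)):L\}$, so $G\subset B\times B$ and $|G|\gtrapprox_{\eps/\alpha}\tilde\delta^{-1}$. The collinearity property is then immediate: given $p\in\pts$, any surviving line $L$ through $p$ witnesses $(b_1(L),b_2(L))\in G$ with $p$, $(0,b_1(L))$ and $(1,b_2(L))$ all lying on $L$; and the last bullet (the incidence count for the line through $(0,b_1),(1,b_2)$) is just \eqref{manyIncidences} for the corresponding $L$. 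For the fiber property, a popularity argument over the occupied heights on the column $\{x=0\}$ --- which carries $\gtrapprox_{\eps/\alpha}\tilde\delta^{-1}$ incidences spread over $\approx_{\eps/\alpha}\tilde\delta^{-1/2}$ points of $\pts'$ --- produces $B'\subset B$ with $|B'|\approx_{\eps/\alpha}\tilde\delta^{-1/2}$ such that each $b_1\in B'$ has $\gtrapprox_{\eps/\alpha}\tilde\delta^{-1/2}$ surviving lines passing within $\tilde\delta$ of $(0,b_1)$; these lines meet $\{x=1\}$ at $\gtrapprox_{\eps/\alpha}\tilde\delta^{-1/2}$ distinct heights of $B$, since two distinct lines through a common point meet $\{x=1\}$ at distinct points (and at most $\lessapprox_{\eps/\alpha}1$ lines can pass $\tilde\delta$--near both $(0,b_1)$ and a given point of $\{x=1\}$). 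Relabelling $B'$ as $B$ gives the fiber bound, after a final dyadic refinement to restore the earlier estimates with this smaller $B$.

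The remaining property, and the one I expect to be the crux, is the rectangle non-concentration estimate \eqref{nonConcentrationOnRectInLem}. Given $I_1,I_2\subset[0,1]$, put $m=\max(|I_1|,|I_2|)$ and let $c_i$ be the centre of $I_i$; if $(b_1,b_2)\in G\cap(I_1\times I_2)$ comes from a surviving line $L$, then $L$ passes within $|I_1|\le m$ of $(0,c_1)$ and within $|I_2|\le m$ of $(1,c_2)$, so $G\cap(I_1\times I_2)$ injects into $\{L'\in\lines:\dist(L',(0,c_1))\le m,\ \dist(L',(1,c_2))\le m\}$, and since $\dist((0,c_1),(1,c_2))\approx_{\eps/\alpha}1$ the line non-concentration estimate \eqref{nonConcentrationPropertyLines} with $N=m\tilde\delta^{-1}$ bounds this by $\lessapprox_{\eps/\alpha}m\tilde\delta^{-1}$. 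The subtlety is that \eqref{nonConcentrationPropertyLines} is stated only for pairs of points \emph{of $\pts$}, while $(0,c_1),(1,c_2)$ need not lie in $\pts$; I would resolve this either by checking that the bound \eqref{nonConcentrationPropertyLines} proved in Lemma \ref{findingPtsAndLinesLem} in fact holds for all pairs of points of $B(0,1)$ (its proof reduces to the Wolff-axiom count of tubes in a slab, which makes no reference to $\pts$), or else by discarding the few rectangles $I_1\times I_2$ that contain no point of $\pts'$ on $\{0\}\times I_1$ or on $\{1\}\times I_2$ and, for the rest, replacing $(0,c_1),(1,c_2)$ by nearby points of $\pts'$ at the cost of a constant factor in $m$. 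Beyond this, the only difficulty is bookkeeping: the nested refinements must be done in the right order (the grid, then the two-column pigeonholing, then the passage to $B'$) so that a single set $B$ plays all three of its roles while every $\gtrapprox$ lower bound survives, which is routine.
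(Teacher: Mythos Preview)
Your proposal is correct and follows essentially the same route as the paper: both arguments trap most of $\pts$ in the bush of two well-separated popular points, apply a projective transformation sending the two pencils to vertical and horizontal lines to obtain $\pts'\subset A\times B$, pigeonhole to place two rich columns at $x=0$ and $x=1$, and then read off $G\subset B\times B$ from the surviving lines. Your treatment of the rectangle bound \eqref{nonConcentrationOnRectInLem} is in fact more careful than the paper's, which simply asserts it is ``a re-statement of \eqref{nonConcentrationPropertyLines}''; your second proposed resolution (replace the interval centres by a nearby pair $(0,b_1),(1,b_2)\in\pts'$ coming from some element of $G\cap(I_1\times I_2)$) is exactly what makes that assertion honest.
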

\begin{proof}

The basic idea is that we will trap most of the points from $\pts$ in the ``bush'' of two points $p_1$ and $p_2$. We will then apply a linear transformation sending lines through $p_1$ to vertical lines, and lines through $p_2$ to horizontal lines. Under this transformation, $\pts$ is mapped to a Cartesian product.

For each $p\in\pts$, define
$$
H(p)=\{p^\prime \in\pts\colon \dist(p,p^\prime)\geq\tilde\delta^{C\eps/\alpha},\ \textrm{there exists}\ L\in\lines\ \textrm{with}\ \dist(p,L)\leq\tilde\delta,\ \dist(p^\prime,L)\leq\tilde\delta\}.
$$
Discard all lines $L\in\lines$ that have fewer than  $\tilde \delta^{-1+C \eps/\alpha}$ pairs of incident points, where $C$ is a large constant. This guarantees
us that for each $p$, there are at most $\tilde \delta^{-{1 \over 2} - C \eps / \alpha}$ lines accounting for all but a small fraction of the points of any $H(p)$.
If the constant $C$ is chosen sufficiently large, then
\begin{equation*}
\begin{split}
\sum_{p\in \pts}|H(p)|&=\sum_{L\in\lines}|\{(p_1,p_2)\in\pts^2\colon \dist(p_1,p_2) \geq\tilde\delta^{C\eps/\alpha},\ \dist(p_1,L)\leq\tilde\delta,\ \dist(p_2,L)\leq\tilde\delta\}|\\
&\gtrapprox_{\eps/\alpha}\tilde\delta^{-2}\\
&\approx_{\eps/\alpha}|\pts|^{2}.
\end{split}
\end{equation*}

Thus by pigeonholing, there exist points $p_1,p_2$ so that $\dist(p_1,p_2)\geq\tilde\delta^{C\eps/\alpha}$ and $|\pts\cap H(p_1)\cap H(p_2)|\gtrapprox_{\eps/\alpha}\tilde\delta^{-1}$. If necessary, prune the set of lines incident to $p_1$ and $p_2$ so that there are $\leq\tilde\delta^{-1/2}$ lines incident to each of $p_1$ and $p_2$; we still have $|\pts\cap H(p_1)\cap H(p_2)|\gtrapprox_{\eps/\alpha}\tilde\delta^{-1}$.

Select a ball $B$ with $\dist(B,p_1)\geq\tilde\delta^{C_1\eps/\alpha},\ \dist(B,p_2)\geq\tilde\delta^{C_1\eps/\alpha}$, and $|\pts\cap H(p_1)\cap H(p_2)\cap B|\gtrapprox_{\eps/\alpha}\tilde\delta^{-1}.$ If $C_1$ is chosen sufficiently large then such a ball must exist. Define $\pts_1=\pts\cap H(p_1)\cap H(p_2)\cap B.$ Let $\varphi$ be a projective transformation sending points through $p_1$ to vertical lines and points through $p_2$ to horizontal lines. Define $\pts_2=\varphi(\pts_1)$; this set is contained in a Cartesian product $A\times B$, with $|A|\approx_{\eps/\alpha}\tilde\delta^{-1/2},\ |B|\approx_{\eps/\alpha}\tilde\delta^{-1/2}$.

For each ball $B(x,r)\subset B$ we have that $\varphi(B(x,r))$ is contained in a ball of radius $\lessapprox_{\eps/\alpha}r$ and contains a ball of radius $\gtrapprox_{\eps/\alpha}r$. Thus for each ball $B(x,r)$ we have
$$
|B(x,r)\cap \pts_2|\lessapprox_{\eps/\alpha} r|\pts_2|.
$$
Define
$$
\lines_1=\{\varphi(L)\colon L\in\lines\}.
$$
Inequality \eqref{nonConcentrationPropertyLines} still holds.

Without loss of generality, we can assume that
\begin{itemize}
\item $0,1\in A$.
\item $|\pts_2\cap(\{0\}\times B)|\approx_{\eps/\alpha}\tilde\delta^{-1/2}$.
\item $|\pts_2\cap(\{1\}\times B)|\approx_{\eps/\alpha}\tilde\delta^{-1/2}$.
\item For every $p_1=(0,b_1)$, there are $\gtrapprox_{\eps/\alpha}\tilde\delta^{-1/2}$ points of the form $p_2=(1,b_2)\in\pts_2$ so that there is a line $L\in\lines_1$ that is $\tilde\delta$--incident to $p_1$ and $p_2$.
\end{itemize}
Indeed, the second, third, and fourth properties above hold for $\gtrapprox_{\eps/\alpha}|A|^2$ pairs $(x_1,x_2)\in A^2$ that satisfy $\dist(x_1,x_2)\gtrapprox_{\eps/\alpha}1$. Select one of these pairs $(x_1,x_2)$ and apply an affine transformation sending the line $\{x_1\}\times\RR$ to $\{0\}\times\RR$ and the line $\{x_2\}\times\RR$ to $\{1\}\times\RR$.

We can refine $\lines_1$ so that every $L\in\lines_1$ is incident to a point of the form $(0,b_1)$ and a point $(1,b_2)$. Thus we can identify $\lines_1$ with a subset $G\subset B\times B$. We have $|G|\gtrapprox_{\eps/\alpha}|B\times B|\approx_{\eps/\alpha}\tilde\delta^{-1}$. Finally discard all points from $\pts_2$ that do not satisfy $\dist(p,L)\leq\tilde\delta$ for some $L$ in (the refined version of) $\lines_1.$

The final thing to verify is \eqref{nonConcentrationOnRectInLem}, but this is just a re-statement of \eqref{nonConcentrationPropertyLines}.
\end{proof}


The set $\pts^\prime\subset A\times B$ from Lemma \ref{FingingProductStructure} satisfies the non-concentration property $|\pts^\prime\cap B(x,r)|\lessapprox_{\eps/\alpha}r\tilde\delta^{-1}$. Next, we will show that the set $B$ satisfies a similar type of non-concentration property. In Lemma \ref{oneScaleRefinement} we will establish non-concentration at a single scale, and we will use this result in Lemma \ref{multiScaleRefinement} to establish non-concentration at every scale.

\begin{lem}\label{oneScaleRefinement}
Let $B\subset[0,1]$ be a set of $\tilde\delta$--separated points, with $\tilde\delta^{-1/2+\eps}\leq|B|\leq\tilde\delta^{-1/2}$. Let $G\subset B\times B$ with $|G|\geq\tilde\delta^{-1+\eps}$. Suppose that for all intervals $I_1,I_2\subset[0,1]$, we have
$$
|G\cap (I_1\times I_2)|\leq K \max(|I_1|,|I_2|)\tilde\delta^{-1}
$$
for some $K>0$. Suppose as well that for all intervals $I\subset[0,1]$ of length $\leq r_0$, we have
$$
|B\cap I|\leq |I|^{1/100}\tilde\delta^{-1/2}.
$$

Then there exists a set $B^\prime\subset B$ with $|B^\prime|\geq\tilde\delta^{3\eps}|B|$ so that for all intervals $J\subset[0,1]$ of length $|J|\leq \min(\tilde\delta^{9\eps}, r_0^{1/50})$,
\begin{equation}\label{JCapBNonConcentrated}
|J\cap B^\prime|\leq K|J|^{1/100}\tilde\delta^{-1/2}.
\end{equation}
\end{lem}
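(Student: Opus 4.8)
The goal is to pass from the Cartesian-product non-concentration of $G$ (which controls how much of $G$ sits in a thin vertical or horizontal strip) to a single-scale non-concentration estimate for $B$ itself. The key observation is that a fibre $\{b_1\}\times B$ of $G$ has cardinality $\gtrsim \tilde\delta^{-1/2+\eps}$ for \emph{many} $b_1\in B$; if too many points of $B$ clustered in a short interval $J$, then for a typical such $b_1$ the fibre $\{b_1\}\times B$ would already be forced to put a large proportion of its mass into $\{b_1\}\times J$, and iterating over $b_1$ we would trap a $\gtrsim \tilde\delta^{-\eps}|G|$ proportion of $G$ inside a horizontal strip $[0,1]\times J$ of width $|J|$. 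But the hypothesis $|G\cap(I_1\times I_2)|\le K\max(|I_1|,|I_2|)\tilde\delta^{-1}$ with $I_1=[0,1]$ and $I_2=J$ says $G$ can put at most $K|J|\tilde\delta^{-1}$ mass there, and since $|J|\le\tilde\delta^{\eps}$ this is already smaller than $\tilde\delta^{-\eps}|G|\ge\tilde\delta^{-1+2\eps}$ — contradiction, unless the cluster was small. So the heuristic already shows $|B\cap J|\lesssim K|J|\tilde\delta^{-1/2}$ with no exponent loss; the refinement to the set $B'$ and the exponent $1/100$ on $|J|$ is what one must actually work for.

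**Main steps.** First I would record the good-fibre set: let $B_0=\{b_1\in B: |(\{b_1\}\times B)\cap G|\ge c|B|\tilde\delta^{\eps}\}$ for a suitable constant $c$; a Fubini/counting argument using $|G|\ge\tilde\delta^{-1+\eps}$ and $|B|\le\tilde\delta^{-1/2}$ shows $|B_0|\gtrsim\tilde\delta^{\eps}|B|$. Second, for each dyadic scale $\tilde\delta\le\sigma\le\min(\tilde\delta^{\eps},r_0^{1/50})$, say an interval $J$ of length $\sigma$ is \emph{heavy} if $|B\cap J|> K\sigma^{1/100}\tilde\delta^{-1/2}$, and cover $[0,1]$ by $\sim\sigma^{-1}$ such dyadic intervals. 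Third — the crux — I want to show the total mass of $B$ inside heavy intervals of a fixed scale $\sigma$ is small. Here I use the input hypothesis $|B\cap I|\le|I|^{1/100}\tilde\delta^{-1/2}$ to control how heavy any single interval can be (so a heavy interval still has $\le\sigma^{1/100}\tilde\delta^{-1/2}$ points — wait, that contradicts heaviness, so actually the hypothesis forces the number of heavy intervals at scale $\sigma$ to be controlled): more carefully, the single-scale input bound caps $|B\cap J|\le\sigma^{1/100}\tilde\delta^{-1/2}$ for $\sigma\le r_0$, and I only need to rule out the intermediate regime $r_0<\sigma\le\min(\tilde\delta^{\eps},r_0^{1/50})$ — no, re-examining, $r_0^{1/50}\ge r_0$, so $J$ of length $\le r_0$ is handled by hypothesis with room to spare, and for $r_0<|J|\le r_0^{1/50}$ I run the strip argument: cover $J$ by $\sim|J|/r_0$ intervals of length $r_0$, each carrying $\le r_0^{1/100}\tilde\delta^{-1/2}$ points, so $|B\cap J|\le (|J|/r_0)r_0^{1/100}\tilde\delta^{-1/2}$, and I must check $(|J|/r_0)r_0^{1/100}\le K|J|^{1/100}$, i.e. $r_0^{-99/100}\le K|J|^{1/100-1}=K|J|^{-99/100}$, i.e. $|J|^{99/100}\le Kr_0^{99/100}$, i.e. $|J|\le K^{100/99}r_0$ — that only covers a short range. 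So the naive covering is not enough and I really do need the $G$-strip argument here.

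**The actual argument for the bad range, and the obstacle.** For $\sigma$ in the range where the covering bound fails, I argue by contradiction at the level of $G$: suppose the union $U_\sigma$ of heavy scale-$\sigma$ intervals satisfies $|B\cap U_\sigma|\ge\tilde\delta^{3\eps}|B|$. For each $b_1\in B_0$, the fibre $\{b_1\}\times B$ has $\gtrsim\tilde\delta^{\eps}|B|$ points, and inside any single heavy $J$ it has (trivially) at most $|B\cap J|$ points; summing the constraint $|G\cap([0,1]\times J)|\le K|J|\tilde\delta^{-1}$ over the $\le\sigma^{-1}$ heavy intervals $J\subset U_\sigma$ gives $|G\cap([0,1]\times U_\sigma)|\le K|U_\sigma|\tilde\delta^{-1}$; but on the other hand $|G\cap([0,1]\times U_\sigma)|\ge\sum_{b_1\in B}|(\{b_1\}\times(B\cap U_\sigma))\cap G|$, and one shows this is $\gtrsim\tilde\delta^{3\eps}\tilde\delta^{-1}$ by a pigeonholing that says the $G$-fibres over $B\cap U_\sigma$ cannot all be anaemic without violating $|G|\ge\tilde\delta^{-1+\eps}$ together with the product structure. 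Comparing forces $|U_\sigma|\gtrsim\tilde\delta^{3\eps}$, but $U_\sigma$ is a union of $\le\sigma^{-1}$ heavy intervals each of length $\sigma$ and — this is where the definition of ``heavy'' re-enters — heaviness plus the single-scale input bound forces the \emph{number} of heavy intervals to be small: at most $(\sigma^{1/100}\tilde\delta^{-1/2})/(K\sigma^{1/100}\tilde\delta^{-1/2})$... this ratio is $K^{-1}$, not obviously $\ll\sigma^{-1}$, so the counting must instead be done by summing $|B\cap J|$ over heavy $J$ and using $|B|\le\tilde\delta^{-1/2}$, giving (number of heavy intervals)$\cdot K\sigma^{1/100}\tilde\delta^{-1/2}\le\tilde\delta^{-1/2}$, hence $|U_\sigma|\le K^{-1}\sigma^{99/100}$, and since $\sigma\le\tilde\delta^{\eps}$ this is $\le K^{-1}\tilde\delta^{99\eps/100}\ll\tilde\delta^{3\eps}$, the desired contradiction. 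Finally I take $B'=B\setminus\bigcup_\sigma U_\sigma$, union over the $O(\log(1/\tilde\delta))$ dyadic scales; the $\tilde\delta^{3\eps}$ in the hypothesis absorbs this logarithmic loss since $\tilde\delta^\eps\le c|\log\tilde\delta|^{-1}$. \textbf{The main obstacle} is the contradiction step: getting a lower bound $|G\cap([0,1]\times U_\sigma)|\gtrsim\tilde\delta^{3\eps}\tilde\delta^{-1}$ that is \emph{robust} — one has to be careful that discarding heavy intervals from $B$ might destroy too much of $G$, so the order of operations (fix the good fibre set $B_0$ first, then play off $B\cap U_\sigma$ against the per-fibre lower bound, then invoke the strip bound) has to be arranged so the $\eps$-budget balances; matching the exponent $1/100$ and the multiplicative $K$ exactly as stated is the delicate bookkeeping.
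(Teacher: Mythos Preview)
Your approach has a genuine and fatal gap: you repeatedly invoke the bound $|G\cap([0,1]\times J)|\le K|J|\tilde\delta^{-1}$, but the hypothesis only says $|G\cap(I_1\times I_2)|\le K\max(|I_1|,|I_2|)\tilde\delta^{-1}$. With $I_1=[0,1]$ this gives $K\cdot 1\cdot\tilde\delta^{-1}$, which is trivial. So the ``strip argument'' in your plan and in your contradiction step never gets off the ground: the horizontal strip $[0,1]\times U_\sigma$ is allowed to contain all of $G$. (There is also a secondary arithmetic slip: even granting your false upper bound, $\tilde\delta^{99\eps/100}\ll\tilde\delta^{3\eps}$ is the wrong way round, since $99/100<3$.)

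The paper's proof supplies exactly the missing idea. Rather than playing $J$ against the full interval $[0,1]$, it first locates a \emph{short} interval $I$ (with $r_0\le|I|\le r_0^{1/50}$) on which the rich-fibre set $B_1$ is already concentrated, i.e.\ $|B_1\cap I|\ge|I|^{1/100}\tilde\delta^{-1/2}$. It then \emph{defines} $B'$ not by deleting heavy intervals from $B$, but as the set of $b\in B$ whose fibre meets $B_1\cap I$ often: $B'=\{b:|((B_1\cap I)\times\{b\})\cap G|\ge\tilde\delta^{3\eps}|B_1\cap I|\}$. Now if $J$ is any interval with $|J|\ge|I|$, the rectangle $I\times J$ has $\max(|I|,|J|)=|J|$, so the hypothesis genuinely gives $|G\cap(I\times J)|\le K|J|\tilde\delta^{-1}$; comparing with the built-in lower bound $\gtrsim\tilde\delta^{3\eps}|B_1\cap I|\cdot|B'\cap J|$ yields the estimate on $|B'\cap J|$. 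The pivot through a short concentrated interval $I$ is what makes the $\max$ work in your favour, and it is precisely this that your ``remove heavy intervals at each scale'' strategy lacks.
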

\begin{proof}
First, select $B_1\subset B$ so that $|B_1|\geq\tilde\delta^{\eps}|B|$ and
$$
|(\{b\}\times[0,1])\cap G|\geq\tilde\delta^{-1/2-2\eps}
$$
for all $b\in B_1$. Let $I\subset[0,1]$ be the shortest interval for which $|I\cap B_1|\geq|I|^{1/100}\tilde\delta^{-1/2}$. We know that $|I|\geq r_0$. If $|I|\geq \min(\tilde\delta^{9\eps}, r_0^{1/50})$ or if no such interval exists, then we let $B^\prime=B_1$ and we are done.

Henceforth we can assume that $|I|\leq r_0^{1/50}$. Let $B^\prime\subset B$ be the set of all $b\in B$ with
$$| ((B_1\cap I)\times\{b\})\cap G|\geq\tilde \delta^{3\eps}|B_1\cap I|.
$$
We have $|B^\prime|\geq\tilde\delta^{\eps}|B|\geq\tilde\delta^{-1/2+2\eps}.$

Let $J\subset[0,1]$ be any interval satisfying $|I|\leq |J|\leq \min(\tilde\delta^{9\eps}, r_0^{1/50})$. We have
\begin{equation}\label{upperBdOnGIJ}
|G\cap (I\times J)|\geq \big(\tilde\delta^{3\eps}|B_1\cap I|\big)\big(|B^\prime\cap J|\big)\geq r_0^{1/100}\tilde\delta^{-1/2+3\eps}|B^\prime\cap J|,
\end{equation}
and
\begin{equation}\label{lowerBdOnGIJ}
|G\cap (I\times J)|\leq K\max(|I|,|J|)\tilde\delta^{-1}=K|J|\tilde\delta^{-1}.
\end{equation}
Combing \eqref{upperBdOnGIJ} and \eqref{lowerBdOnGIJ},  we obtain
\begin{equation*}
\begin{split}
|J\cap B^\prime|&\leq K|J|r_0^{-1/100}\tilde\delta^{-1/2-3\eps}\\
&\leq K|J|^{1/100}\tilde\delta^{-1/2} (|J|^{99/100}\tilde\delta^{-3\eps}r_0^{-1/100}) \\
&\leq K|J|^{1/100}\tilde\delta^{-1/2}(|J|^{1/3}\tilde\delta^{-3\eps})(|J|^{197/300}r_0^{-1/100})\\
&\leq K|J|^{1/100}\tilde\delta^{-1/2}\big( (\tilde\delta^{9\eps})^{1/3}\tilde\delta^{-3\eps}\big)\big(r_0^{1/50})^{197/300}r_0^{-1/100}\big)\\
&\leq K|J|^{1/100}\tilde\delta^{-1/2}. \qedhere
\end{split}
\end{equation*}
\end{proof}
We will now use the preceding lemma to show that the set $B$ is non-concentrated at every scale.

\begin{lem}\label{multiScaleRefinement}
For every $\tau>0$, there exists a constant $C$ so that the following holds:

Let $B\subset[0,1]$ be a set of $\tilde\delta$--separated points, with $\tilde\delta^{-1/2+\eps}\leq|B|\leq\tilde\delta^{-1/2}$. Let $G\subset B\times B$ with $|G|\geq\tilde\delta^{-1+\eps}$. Suppose that for all intervals $I_1,I_2\subset[0,1]$, we have
$$
|G\cap (I_1\times I_2)|\leq K \max(|I_1|,|I_2|)\tilde\delta^{-1}.
$$

Then there is a set $B^\prime\subset B$ with $|B^\prime|\geq\tilde\delta^{-1/2+C\eps}$ so that for all intervals $I\subset[0,1]$ with $|I|\leq\min(\tilde\delta^{C\eps},\tilde\delta^{\tau})$, we have
\begin{equation}\label{nonConcentrationOnIntervalAllScales}
|B\cap I|\leq K|I|^{1/100}\tilde\delta^{-1/2}.
\end{equation}
\end{lem}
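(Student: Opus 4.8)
The plan is to bootstrap the single-scale non-concentration estimate from Lemma \ref{oneScaleRefinement} up to all scales via an iteration. The point is that Lemma \ref{oneScaleRefinement} takes a set satisfying non-concentration down to scale $r_0$ and, after a bounded loss in cardinality ($\tilde\delta^{3\eps}$), extends non-concentration down to scale $r_0^{1/50}$. Since $r_0 \mapsto r_0^{1/50}$ drives any fixed scale down to $\tilde\delta^\tau$ in $O_\tau(1)$ many steps (because $(1/50)^m \leq \tau/(\text{const})$ in terms of the exponent of $\tilde\delta$ once $m \gtrsim \log(1/\tau)$), we only pay the factor $\tilde\delta^{3\eps}$ a bounded number of times. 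This is the standard ``multiscale self-improvement'' trick, and the bookkeeping of constants is exactly the $\lessapprox$-chaining described in the notation section.

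First I would set up the iteration. Start with $B^{(0)} = B$ and $r_0^{(0)} = 1$ (trivially $|B \cap I| \leq |B| \leq \tilde\delta^{-1/2} \leq |I|^{1/100}\tilde\delta^{-1/2}$ when $|I| = 1$ is not quite what we want, so more carefully: trivially $B$ is non-concentrated at scale $\tilde\delta$ itself since points are $\tilde\delta$-separated, giving $|B \cap I| \leq |I|\tilde\delta^{-1} \leq |I|^{1/100}\tilde\delta^{-1/2}$ once $|I| \geq \tilde\delta$; the subtlety is only at intermediate scales). Choose $m = m(\tau)$ minimal so that $(1/50)^m < \tau$ — more precisely so that after $m$ steps the guaranteed scale $\min(\tilde\delta^\eps,\ldots)^{(1/50)^{m}}$-type quantity is $\leq \tilde\delta^\tau$. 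Then apply Lemma \ref{oneScaleRefinement} successively: at step $j$, feed in $B^{(j-1)}$, which satisfies $|I \cap B^{(j-1)}| \leq K|I|^{1/100}\tilde\delta^{-1/2}$ for $|I| \leq r_{j-1}$ (with $r_{j-1}$ decreasing), together with the \emph{same} $G$ (whose $\max(|I_1|,|I_2|)\tilde\delta^{-1}$ non-concentration hypothesis is unchanged by shrinking $B$), and obtain $B^{(j)} \subset B^{(j-1)}$ with $|B^{(j)}| \geq \tilde\delta^{3\eps}|B^{(j-1)}|$ satisfying non-concentration down to scale $r_j = \min(\tilde\delta^\eps, r_{j-1}^{1/50})$. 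After $m$ steps, $|B^{(m)}| \geq \tilde\delta^{3m\eps}|B| \geq \tilde\delta^{-1/2 + (3m+1)\eps}$, and $B^{(m)}$ is non-concentrated at scale $\tilde\delta^{C\eps}$ (coming from the $\tilde\delta^\eps$ cap that survives all iterations) down to scale $\tilde\delta^\tau$. Setting $C = 3m + 1$ (absorbing the initial $\eps$ and any loss from the constant $K$ appearing in $r_0^{1/50}$-type estimates) gives the claim.

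One technical point to check carefully: Lemma \ref{oneScaleRefinement} requires the input set to have cardinality at least $\tilde\delta^{-1/2 + \eps}$, so along the iteration I must verify $|B^{(j)}| \geq \tilde\delta^{-1/2 + \eps}$ still holds at each step; since $|B^{(j)}| \geq \tilde\delta^{-1/2 + (3j+1)\eps}$ and $j \leq m = O_\tau(1)$, this is fine as long as $\eps$ is small relative to $1/(3m)$ — which is consistent with the paper's convention that $\eps$ is always larger than a fixed $\eps_0$ but we get to choose $\eps_0$ small depending on absolute constants. The hypothesis on $G$ is scale-free and $B$-monotone, so it passes through unchanged. I also need the base case: $B = B^{(0)}$ trivially satisfies $|B \cap I| \leq K|I|^{1/100}\tilde\delta^{-1/2}$ for $|I| \leq r_0$ with, say, $r_0 = \tilde\delta^{100\eps}$, because for such $I$ the $\tilde\delta$-separation gives $|B\cap I| \leq |I|\tilde\delta^{-1} + 1 \leq |I|^{1/100}\tilde\delta^{-1/2}$ provided $|I| \geq \tilde\delta^{1/2}$... here one must be slightly more careful and perhaps first apply Lemma \ref{oneScaleRefinement} once with $r_0$ being $\tilde\delta^{1/2}$-ish to seed the process; this is routine.

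The main obstacle — really the only nontrivial point — is making sure the iteration terminates in $O_\tau(1)$ steps with a \emph{bounded} cardinality loss, i.e.\ verifying that the scale improvement $r \mapsto r^{1/50}$ (in the exponent: $\eps \mapsto \eps/50$ once we're below the $\tilde\delta^\eps$ threshold) genuinely compounds and reaches $\tilde\delta^\tau$ after a number of steps depending only on $\tau$, not on $\delta$ or $\eps$. This is elementary but is the conceptual heart: it is why we can afford to lose the factor $\tilde\delta^{3\eps}$ repeatedly without the cardinality dropping below $\tilde\delta^{-1/2}$-ish, since the number of losses is absolute. Everything else is the constant-chasing already sanctioned by the $\lessapprox_\eps$ formalism.
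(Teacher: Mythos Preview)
Your proposal is correct and takes essentially the same approach as the paper: iterate Lemma~\ref{oneScaleRefinement} a number $N = O_\tau(1)$ of times (the paper takes $N = \lceil |\log_{50}\tau|\rceil$), with the scale improving as $r_0 \mapsto r_0^{1/50}$ at each step. The only discrepancy is in the bookkeeping of the constant: the paper sets $C = 3^N$ rather than your $3m+1$, because at step $k$ the lemma must be applied with the parameter $\eps$ replaced by (roughly) $3^{k-1}\eps$ to accommodate the shrunken set $B_{k-1}$ and the restricted $G$ --- but since the statement only asserts existence of some $C = C(\tau)$, this does not affect correctness.
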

\begin{proof}
Let $N=\lceil|\log_{50}\tau|\rceil$, and let $C = 9\cdot 3^N$. Apply Lemma \ref{oneScaleRefinement} $N$ times, and let $B^\prime$ be the resulting set. Observe that after the $k$--th application of Lemma \ref{oneScaleRefinement}, we obtain a set $B_k$ with $\tilde\delta^{-1/2+3^k\eps}\leq|B_k|\leq \tilde\delta^{-1/2}$, and
$$
|B_k\cap J|\leq K|J|^{1/100}|B_k|
$$
for all intervals $J$ of length $|J|\leq\min(\tilde\delta^{\eps 9\cdot 3^k},\ \tilde\delta^{1/50^k})$. Thus after $N$ applications of Lemma \ref{oneScaleRefinement}, we obtain a set $B_N$ with $\tilde\delta^{-1/2+C\eps}\leq |B_N|\leq \tilde\delta^{-1/2}$, and
$$
|B_N\cap J|\leq K|J|^{1/100}|B_k|
$$
for all intervals $J$ of length $|J|\leq\min(\tilde\delta^{C\eps}, \tilde\delta^{\tau})$. Let $B^\prime=B_N$.
\end{proof}
Applying Lemma \ref{multiScaleRefinement} to the output from Lemma \ref{FingingProductStructure} with $K\lessapprox_{\eps/\alpha}1$, we obtain the following:
\begin{lem}\label{CartesianProductStructureNonConcentrated}
There is a constant $C$ so that the following holds. Let $(\tubes,Y)$ be an $\eps$--extremal set of tubes of Heisenberg type (with parameter $\alpha$ and $C\eps$). Then there is a number $\tilde\delta\geq\delta^{\alpha}$ so that: for all $\tau>0$, there exists a constant $C_\tau$, sets of points $B^\prime\subset[0,1]$, $\pts\subset B(0,1)\subset\RR^2$, and a set of lines $\lines$ with the following properties.
\begin{itemize}
\item $|\pts|\approx_{C_{\tau}\eps/\alpha}\tilde\delta^{-1}$.
\item For each ball $B(x,r)\subset \RR^2$,
\begin{equation}\label{nonConcentrationPropertyPtsRd2}
|\pts\cap B(x,r)|\lessapprox_{C_{\tau}\eps/\alpha} r\tilde\delta^{-1}.
\end{equation}
\item $|B^\prime|\approx_{C_{\tau}\eps/\alpha}\tilde\delta^{-1/2}$.
\item For every interval $I$ of length $\tilde\delta\leq|I|\leq \tilde\delta^{\tau}$,
\begin{equation}\label{nonConcentrationOfBCapI}
|B^\prime\cap I|\lessapprox_{C_{\tau}\eps/\alpha}|I|^{1/100}\tilde\delta^{-1/2}.
\end{equation}

\item Each $L\in\lines$ is incident to a point $(b_0,0)$ and $(b_1,1)$ with $b_0,b_1\in B^\prime$. For each line $L\in\lines$, we have
$$
|\{p\in\pts\colon \dist(L,p)\leq\tilde\delta\}|\gtrapprox_{C_\tau\eps/\alpha}\tilde\delta^{-1/2}.
$$
\end{itemize}
\end{lem}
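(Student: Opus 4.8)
The plan is to obtain the statement by chaining together the three preceding lemmas, with the only real work being the propagation of the final refinement of $B$ back through the incidence data. First I would apply Lemma \ref{findingPtsAndLinesLem} to $(\tubes,Y)$; this is legitimate provided the constant $C$ in the present statement is at least the one required there. This produces a scale $\tilde\delta\le\delta^\alpha$, a point set $\pts_0\subset B(0,1)\subset\RR^2$ with $|\pts_0|\approx_{\eps/\alpha}\tilde\delta^{-1}$ satisfying \eqref{nonConcentrationPropertyPts}, and a line set $\lines_0$ with $|\lines_0|\approx_{\eps/\alpha}\tilde\delta^{-1}$ satisfying \eqref{nonConcentrationPropertyLines} and \eqref{manyIncidences}. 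Feeding $(\pts_0,\lines_0,\tilde\delta)$ into Lemma \ref{FingingProductStructure} yields $A,B\subset[0,1]$ with $|A|,|B|\approx_{\eps/\alpha}\tilde\delta^{-1/2}$; a point set $\pts'\subset A\times B$ with $|\pts'|\approx_{\eps/\alpha}\tilde\delta^{-1}$ obeying its ball non-concentration estimate; and a set $G\subset B\times B$ with $|G|\gtrapprox_{\eps/\alpha}\tilde\delta^{-1}$ that is row-robust ($|(\{b_1\}\times B)\cap G|\gtrapprox_{\eps/\alpha}\tilde\delta^{-1/2}$ for all $b_1\in B$), satisfies the rectangle bound \eqref{nonConcentrationOnRectInLem}, realizes every $p\in\pts_0$ near the line through some $(0,b_1),(1,b_2)$ with $(b_1,b_2)\in G$, and carries the per-line incidence count in the last bullet of Lemma \ref{FingingProductStructure}.

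Second, I would apply Lemma \ref{multiScaleRefinement} to the pair $(B,G)$, with the parameter there called ``$\eps$'' replaced by $C_0\eps/\alpha$ for a suitable absolute constant $C_0$ — chosen so that the hypotheses $\tilde\delta^{-1/2+C_0\eps/\alpha}\le|B|\le\tilde\delta^{-1/2}$, $|G|\ge\tilde\delta^{-1+C_0\eps/\alpha}$, and the rectangle bound with $K=\tilde\delta^{-C_0\eps/\alpha}\lessapprox_{\eps/\alpha}1$ all hold — and with the given $\tau>0$. This produces the $\tau$-dependent constant $C_\tau$ (of the shape $3^{\lceil|\log_{50}\tau|\rceil}$) and a refined set $B'\subset B$ with $|B'|\gtrsim\tilde\delta^{C_\tau C_0\eps/\alpha}|B|\approx_{C_\tau\eps/\alpha}\tilde\delta^{-1/2}$ for which $|B'\cap I|\lessapprox_{C_\tau\eps/\alpha}|I|^{1/100}\tilde\delta^{-1/2}$ whenever $|I|\le\min(\tilde\delta^{C_\tau C_0\eps/\alpha},\tilde\delta^\tau)$; this is exactly \eqref{nonConcentrationOfBCapI}. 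All the $\eps\mapsto\eps/\alpha$ rescaling here is handled exactly as in the remark following Lemma \ref{findingPtsAndLinesLem}.

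Third, I would propagate the refinement $B\to B'$ through the incidence data. Restricting the first coordinate of $G$ to $B'$ already retains $\gtrsim|B'|\cdot\tilde\delta^{-1/2}\gtrapprox_{C_\tau\eps/\alpha}\tilde\delta^{-1}$ pairs by row-robustness; pigeonholing the second coordinate against $|B|\approx_{\eps/\alpha}\tilde\delta^{-1/2}$ then gives a further subset $B''\subset B'$, still of size $\approx_{C_\tau\eps/\alpha}\tilde\delta^{-1/2}$ and still obeying \eqref{nonConcentrationOfBCapI}, with $G''=G\cap(B''\times B'')$ of size $\gtrapprox_{C_\tau\eps/\alpha}\tilde\delta^{-1}$ (alternatively one simply runs Lemma \ref{multiScaleRefinement} once for each coordinate). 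I would then keep only the lines $L_{b_1,b_2}$ with $(b_1,b_2)\in G''$, discard from $\pts'$ every point not $\tilde\delta$-close to a surviving line, and relabel $B''$ as $B'$, the surviving lines as $\lines$, and the surviving points as $\pts$. The ball non-concentration of $\pts$ is inherited from $\pts'$, and the per-line count $|\{p\in\pts:\dist(L,p)\le\tilde\delta\}|\gtrapprox_{C_\tau\eps/\alpha}\tilde\delta^{-1/2}$ follows because the pair-count in Lemma \ref{FingingProductStructure} is a statement about each fixed line $L$ and the fixed set $\pts'$ — unaffected by restricting $G$ — and $m$ points near $L$ produce at most $m^2$ incident pairs, forcing $m\gtrapprox_{C_\tau\eps/\alpha}\tilde\delta^{-1/2}$.

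The only step that requires genuine thought is the third one: ensuring that confining \emph{both} endpoint coordinates of the lines into the non-concentrated set $B'$ does not destroy a noticeable fraction of the incidences. This is where the row-robustness of $G$ (and a symmetrizing pigeonhole or a second pass of Lemma \ref{multiScaleRefinement}) does the essential work; everything else is routine substitution and tracking of the $\lessapprox_{\eps/\alpha}$ constants.
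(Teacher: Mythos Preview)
Your proposal is correct and follows essentially the same route as the paper: apply Lemma~\ref{findingPtsAndLinesLem}, feed the output into Lemma~\ref{FingingProductStructure}, then apply Lemma~\ref{multiScaleRefinement} to the pair $(B,G)$ with the given $\tau$, and finally take $\lines=G\cap(B'\times B')$ and $\pts=\pts'$. Your third step---the symmetrizing pigeonhole to force both endpoint coordinates into a non-concentrated set while keeping $|G''|\gtrapprox_{C_\tau\eps/\alpha}\tilde\delta^{-1}$---is more careful than what the paper actually writes (the paper simply sets $\lines=G\cap(B'\times B')$ without further comment), but note that the lemma as stated imposes no lower bound on $|\lines|$, so this extra care, while useful for the downstream application in Section~\ref{reductionToBourgainSec}, is not strictly required here.
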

\begin{proof}
Let $\pts^\prime\subset A\times B$ and $G\subset B\times B$ be the output obtained by applying Lemma \ref{FingingProductStructure} to the collection $(\tubes,Y)$. Apply Lemma \ref{multiScaleRefinement} to $B$ and $G$ with the value of $\tau$ specified in the statement of Lemma \ref{CartesianProductStructureNonConcentrated}, and let $B^\prime$ be the resulting subset of $B$. Let $\lines = G\cap (B^\prime\times B^\prime)$, where we identify the pair $(b_1,b_2)$ with the line passing though the points $(0,b_1)$ and $(1,b_2)$.

All of the required properties of the sets $B^\prime,\ \pts,$ and $\lines$ follow from the conclusions of Lemmas \ref{FingingProductStructure} and \ref{multiScaleRefinement}.
\end{proof}

\subsection{Reduction to Bourgain's discretized projection theorem}\label{reductionToBourgainSec}
We will now perform a reduction that transforms the output of Lemma \ref{CartesianProductStructureNonConcentrated} into the input for Bourgain's discretized projection theorem.

Apply a projective transformation $\phi$ that sends the line $\{0\}\times \RR$ to the line at infinity (we will assume that for at least half the incidences $\{(p,L)\in\pts\times\lines,\ d(p,L)\leq\tilde\delta\}$, the $x$--coordinate of $p$ is $\geq 1/2$. If not, we can apply a projective transformation that sends the line $\{1\}\times\RR$ to the line at infinity instead). Let $\Theta\subset S^1$ be the image of $B$ under this transformation. We have that $|\Theta|\gtrapprox_{C_\tau\eps/\alpha}\tilde\delta^{-1/2}$, so in particular $|\Theta|\geq\tilde\delta^{-1/3}$. Furthermore, for each interval $I$ of length $\tilde\delta\leq r\leq \delta^{\tau}$ we have
\begin{equation}\label{ThetaWellDistributed}
|\Theta\cap I|\lessapprox_{\eps_1}|I|^{1/100}|\Theta|,
\end{equation}
where
\begin{equation}\label{defnEps1}
\eps_1 = C_{\tau}\eps/\alpha.
\end{equation}

Let $\pts_1=\phi\big(\pts\cap ([1/2, 1]\times[0,1])\big)$, and let $\lines_1=\{\phi(L)\colon L\in\lines\}$. Observe that the direction of each line in $\lines_1$ is equal to one of the directions in the set $\Theta$. For each $\theta\in\Theta$, let
$$
\lines(\theta) = \{L\in\mathcal{L}_1\colon L\ \textrm{point in direction}\ \theta\}.
$$

For each $p\in\pts_1$, define
$$
\lines(p)=\{L\in \lines_1\colon \dist(p,L)\leq\tilde\delta\}.
$$
For each $L\in\lines_1$, define
$$
\pts(L)=\{p\in\pts_1\colon \dist(p,L)\leq\tilde\delta\}.
$$
For each $\theta\in\Theta$, define
$$
\pts(\theta)=\bigcup_{L\in\lines(\theta)}\pts(L).
$$

Define
$$
I_{\tilde\delta}(\pts_1,\lines_1)=\{(p,L)\in\pts_1\times\lines_1\colon\dist(p,L)\leq\tilde\delta\}.
$$

Select $\theta_1<\theta_2<\theta_3\in\Theta$ that are pairwise $\gtrapprox_{\eps_1}1$-separated so that
$$
\sum_{p\in \pts(\theta_1)\cap \pts(\theta_2)\cap\pts(\theta_3)}|\lines(p)|\gtrapprox_{\eps_1}\tilde\delta^{-3/2}.
$$
(We can use \eqref{ThetaWellDistributed} and a pigeonholing argument to guarantee that such a choice of $\theta_1,\theta_2,$ and $\theta_3$ exist). Let $\pts_2$ be the image of $\pts(\theta_1)\cap \pts(\theta_2)\cap\pts(\theta_3)$ under the linear transformation sending $\theta_1$ to the vertical direction and $\theta_2$ to the horizontal direction. Let $\lines_2$ be the image of $\lines_1$ under this transformation and let $\Theta_1$ be the image of $\Theta$ under this transformation (the transformation has determinant $\approx_{\eps}1$ and distorts angles by a factor of $\lessapprox_{\eps}1$, so \eqref{ThetaWellDistributed} still holds for $\Theta_1$).

Then $\pts_2\subset A\times B$, where $|A|\lessapprox_{\eps_1}\tilde\delta^{-1/2}$,  $|B|\lessapprox_{\eps_1}\tilde\delta^{-1/2}$, and the sets $A$ and $B$ are $\tilde\delta$-separated. Let $\lambda$ be the angle associated to (the image of) $\theta_2$, so $\tilde\delta^{C_2\eps_1}\leq\lambda\leq \tilde\delta^{-C_2\eps_1}$ for some absolute constant $C_2$.

We will now refine the sets $A$ and $B$ slightly to increase the separation between the points. Let $C_2$ be a constant to be determined later. We can find a refinement $A_1\subset A,\ B_1\subset B$ and a set $\pts_3\subset \pts_2\cap (A_1\times B_1)$ so that the points in $A_1$ and $A_2$ are $\tilde\delta^{1-C_2\eps_1}$ separated; $|\pts_3|\gtrapprox_{\eps_1}\tilde\delta^{-1}$; for each $p\in\pts_3,\ |\lines(p)|\gtrapprox_{\eps_1}\tilde\delta^{-1/2}$; and
\begin{equation}\label{lotsOfIncidencesWithTheta2}
|I_{\tilde\delta}(\pts_3, \lines(\theta_2))|\gtrapprox_{\eps_1} \tilde\delta^{-1}.
\end{equation}

We can identify $\tilde\delta^{-1}A_1$ and $\tilde\delta^{-1}\lambda B_1$ with subsets of $\ZZ$ by rounding each element of $\tilde\delta^{-1}A_1$ and $\tilde\delta^{-1}\lambda B_1$ to the closest integer---call these sets $\hat A_1$ and $\hat B_1$. Since $A_1$ is $\geq\tilde\delta$ separated, the map from $A_1$ to $\hat A_1$ is injective (or to be pedantic, it's at most two-to-one), and since $B_1$ is $\tilde\delta^{1-C_2\eps_1}$--separated and $\lambda\geq\tilde\delta^{C_2\eps_1}$, the map from $B_1$ to $\hat B_1$ is injective (again, at most two-to-one).

With this identification, we have that $|\hat A_1|\lessapprox_{\eps_1}\tilde\delta^{-1/2}$ and $|\hat B_1|\lessapprox_{\eps_1}\tilde\delta^{-1/2}$. By \eqref{lotsOfIncidencesWithTheta2}, Cauchy-Schwarz, and the fact that $|\lines(\theta_2)|\lessapprox_{\eps_1}\tilde\delta^{-1/2}$, there are $\gtrapprox_{\eps_1}\tilde\delta^{-3/2}$ solutions to the equation
\begin{equation}
\{a_1+b_1 = a_2+ b_2\colon a_1,a_2\in\hat A_1,\ b_1,b_2\in\hat B_1\}.
\end{equation}
Indeed, there are $\approx_{\eps_1}\tilde\delta^{-1/2}$ lines in $\lines(\theta_2)$ that are incident to $\approx_{\eps_1}\tilde\delta^{-1/2}$ points from $\pts_3$, and if $(x_1,y_1),\ (x_2,y_2)\in\pts_3$ are incident to the same line, then $|x_1+\lambda y_1 - (x_2+\lambda y_2)|\leq\tilde\delta$. Note that $\tilde\delta^{C_2\eps_1}\leq\lambda\leq\tilde\delta^{-C_3 \eps_1}$, and thus if $a_1,b_1,a_2,b_2\in \ZZ$ are the points associated to $x_1,y_1,x_2,$ and $y_2$ respectively, then $a_1+b_1 = a_2 + b_2$ (this is a slight lie, we actually have $|a_1+b_1-a_2-b_2|\leq 4$, but by pigeonholing this only decreases the number of quadruples by a constant factor). Thus there are $\gtrapprox_{\eps_1}\tilde\delta^{-3/2}$ such quadruples.

The next result allows us to find subsets of $\hat A_1$ and $\hat B_1$ that have small sum-set.

\begin{thm}[Balog-Szemer\'edi-Gowers theorem; \cite{TV}, Theorem 2.31]\label{BSGThm}
Let $A,B\subset\ZZ$. Suppose that
$$
|\{(a_1,a_2,b_1,b_2)\in A\times A\times B\times B\colon a_1+b_1=a_2+b_2\}|\geq K^{-1}|A|^{3/2}|B|^{3/2}.
$$
Then there exist sets $A^\prime\subset A,\ B^\prime\subset B$ with $|A^\prime|\geq K^{-C_1}|A|,\ |B^\prime|\geq K^{-C_1}|B|$, and
$$
d(A^\prime,B^\prime)\leq C_1\log K,
$$
where $C_1$ is an absolute constant and $d(\cdot,\cdot)$ denotes the Ruzsa distance.
\end{thm}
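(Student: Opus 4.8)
The statement is the classical Balog--Szemer\'edi--Gowers theorem, so the plan is simply to recall the standard proof, which has three stages: (i) convert the additive--energy hypothesis into the statement that a certain bipartite graph is dense; (ii) prove a purely graph-theoretic lemma producing large subsets between which every pair is joined by many short paths; and (iii) translate those paths, via a counting argument and the Ruzsa calculus, into the claimed bound on the Ruzsa distance.

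\textbf{Stage (i).} Write $r(x)=|\{(a,b)\in A\times B\colon a+b=x\}|$, so the hypothesis reads $\sum_x r(x)^2\geq K^{-1}|A|^{3/2}|B|^{3/2}$. Let $S=\{x\colon r(x)\geq \tfrac12 K^{-1}|A|^{1/2}|B|^{1/2}\}$ be the set of popular sums. Discarding the contribution of the unpopular values shows $\sum_{x\in S}r(x)^2\geq \tfrac12\sum_x r(x)^2$, from which one reads off both $|S|\lesssim K|A|^{1/2}|B|^{1/2}$ and, for the bipartite graph $G\subseteq A\times B$ defined by $(a,b)\in G\iff a+b\in S$, the edge bound $|G|\gtrsim |A||B|/K$.

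\textbf{Stage (ii).} The technical core is the assertion that any bipartite graph $G\subseteq A\times B$ with $|G|\geq |A||B|/K$ contains subsets $A'\subseteq A$, $B'\subseteq B$ with $|A'|\geq K^{-O(1)}|A|$ and $|B'|\geq K^{-O(1)}|B|$ such that \emph{every} pair $(a,b)\in A'\times B'$ is joined by at least $K^{-O(1)}|A||B|$ paths $a\sim b_1\sim a_1\sim b$ of length three in $G$. I would establish this by a dependent-random-choice argument: select a random vertex pair, pass to its second neighborhood (which is typically large), and prune the few vertices that lack many length-three connections, using a second-moment estimate to control the density loss at each step. This is the main obstacle; the delicacy lies in organizing the pruning so that no essential density is lost and in tracking the polynomial dependence on $K$.

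\textbf{Stage (iii).} For $(a,b)\in A'\times B'$, choosing a length-three path $a\sim b_1\sim a_1\sim b$ gives the telescoping identity $a+b=(a+b_1)-(a_1+b_1)+(a_1+b)$, a signed sum of three elements of $S$. Since such a path exists for \emph{every} cross pair, a counting argument (for each value $v$, comparing the number of length-three paths whose endpoints sum to $v$ against the number of representations $v=s_1-s_2+s_3$ with $s_i\in S$, then summing) controls $|A'+B'|$; the Ruzsa triangle inequality together with the Pl\"unnecke--Ruzsa inequality then converts this into $|A'-B'|\leq K^{O(1)}|A'|^{1/2}|B'|^{1/2}$, which by definition is exactly $d(A',B')\leq C_1\log K$. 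The detailed bookkeeping of the exponents is carried out in \cite{TV}.
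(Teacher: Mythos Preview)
The paper does not prove this theorem; it simply quotes it from \cite{TV} and uses it as a black box. Your sketch is the standard three-stage proof (popular sums $\to$ dense bipartite graph $\to$ BSG graph lemma $\to$ Ruzsa calculus), which is exactly the argument found in the cited reference, so there is nothing to compare and your outline is correct.
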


We will not define the  Ruzsa distance here. Instead, we will note that it is the input to the next theorem. For the interested reader, a definition of Ruzsa distance can be found in \cite{TV}, Chapter 2.

\begin{thm}[Pl\"unnecke-Ruzsa inequality; \cite{TV}, Proposition 2.27]\label{PRThm}
Let $A^\prime, B^\prime\subset\ZZ$, and suppose that $d(A^\prime, B^\prime)\leq C_1\log K$. Then there is a constant $C_2$ depending only on $C_1$ so that
\begin{equation}
\begin{split}
&|A^\prime+A^\prime|\leq K^{C_2}|A|,\\
&|B^\prime+B^\prime|\leq K^{C_2}|B|.
\end{split}
\end{equation}
\end{thm}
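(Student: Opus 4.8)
The plan is to treat Theorem~\ref{PRThm} as a standard consequence of Ruzsa's calculus of difference sets: it is Proposition~2.27 of \cite{TV}, and the short self-contained derivation runs through the Ruzsa triangle inequality together with Pl\"unnecke's inequality, with a bit of bookkeeping of exponents.

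First I would unpack the hypothesis. By definition the Ruzsa distance of finite sets $X,Y$ is $d(X,Y)=\log\big(|X-Y|\,/\,(|X|^{1/2}|Y|^{1/2})\big)$, so $d(A^\prime,B^\prime)\le C_1\log K$ is exactly the statement $|A^\prime-B^\prime|\le K^{C_1}|A^\prime|^{1/2}|B^\prime|^{1/2}$. The workhorse is the Ruzsa triangle inequality $|B|\,|A-C|\le|A-B|\,|B-C|$, valid for all finite subsets $A,B,C$ of an abelian group; it follows from the injection $B\times(A-C)\to(A-B)\times(B-C)$ sending $(b,d)$ to $(a_d-b,\ b-c_d)$, where $a_d,c_d$ is a fixed pair with $d=a_d-c_d$ (injectivity: the two output coordinates sum to $d$, which pins down $a_d,c_d$ and hence $b$). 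Taking $(A,B,C)=(A^\prime,B^\prime,A^\prime)$ and using $|B^\prime-A^\prime|=|A^\prime-B^\prime|$ gives $|B^\prime|\,|A^\prime-A^\prime|\le|A^\prime-B^\prime|^2\le K^{2C_1}|A^\prime|\,|B^\prime|$, hence $|A^\prime-A^\prime|\le K^{2C_1}|A^\prime|$, and symmetrically $|B^\prime-B^\prime|\le K^{2C_1}|B^\prime|$.

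Next I would pass from difference sets to sumsets, using the standard fact that doubling and difference constants are polynomially comparable. Concretely, apply Pl\"unnecke's inequality (in Petridis's form) to the pair $(A^\prime,\,-A^\prime)$: since $|A^\prime+(-A^\prime)|=|A^\prime-A^\prime|\le K^{2C_1}|A^\prime|$, one gets $|m(-A^\prime)-n(-A^\prime)|\le K^{2C_1(m+n)}|A^\prime|$ for all $m,n\ge 0$; taking $m=2$, $n=0$ and noting $(-A^\prime)+(-A^\prime)=-(A^\prime+A^\prime)$ yields $|A^\prime+A^\prime|\le K^{4C_1}|A^\prime|$. Because $A^\prime\subseteq A$ we conclude $|A^\prime+A^\prime|\le K^{4C_1}|A|$, and identically $|B^\prime+B^\prime|\le K^{4C_1}|B|$, so $C_2=4C_1$ works. (If one prefers to avoid Pl\"unnecke, the elementary estimate $|A^\prime+A^\prime|\le|A^\prime-A^\prime|^3/|A^\prime|^2$ gives the same conclusion with a worse constant.)

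The only real content buried here is Pl\"unnecke's inequality itself, whose proof rests on a graph-magnification argument; since this is entirely classical I would just cite \cite{TV} rather than reproduce it. The one thing to be careful about is the exponent bookkeeping --- it doubles at the triangle step and doubles again at the Pl\"unnecke step --- but the theorem only asks that $C_2$ be an absolute multiple of $C_1$, which is all the subsequent application (feeding $A^\prime,B^\prime$ into Bourgain's discretized projection theorem) needs.
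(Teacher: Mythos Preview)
Your proof is correct and follows the standard Ruzsa-calculus route. Note, however, that the paper does not actually prove Theorem~\ref{PRThm}: it is simply quoted as Proposition~2.27 of \cite{TV} and used as a black box, so there is no in-paper argument to compare against. Your derivation via the Ruzsa triangle inequality followed by Pl\"unnecke is exactly the standard one from \cite{TV}, and the exponent bookkeeping ($C_2=4C_1$) is right; the only cosmetic point is that the appearance of $|A|,|B|$ rather than $|A'|,|B'|$ on the right-hand side is an artifact of the surrounding context (where $A'\subseteq A$, $B'\subseteq B$ come from Theorem~\ref{BSGThm}), which you handled correctly.
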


Applying Theorem \ref{BSGThm} and then \ref{PRThm} with $K\lessapprox_{\eps_1}1$, we obtain sets $\hat A_2\subset\hat A_1$ and $\hat B_2\subset\hat B_1$ with
\begin{itemize}
\item $|\hat A_2|\gtrapprox_{\eps_1}\tilde\delta^{-1/2}$,
\item $|\hat B_2|\gtrapprox_{\eps_1}\tilde\delta^{-1/2}$,
\item $|\hat A_2 + \hat A_2|\lessapprox_{\eps_1}\tilde\delta^{-1/2}$,
\item $|\hat B_2 + \hat B_2|\lessapprox_{\eps_1}\tilde\delta^{-1/2}$.
\end{itemize}
Let $A_2$ and $B_2$ be the subsets of $[0,1]$ associated to $\hat A_2$ and $\hat B_2$ under the maps $A_1\to \hat A_1$ and $B_1\to \hat B_1$ described above, and let $\pts_4 = \pts_3\cap (A_2\times B_2)$. Observe that for each $p\in\pts_4,\ |\lines(p)|\gtrapprox_{\eps_1}\tilde\delta^{-1/2}$, so in particular

\begin{equation}\label{mostIncidencesPreserved}
|I_{\tilde\delta}(\pts_4,\lines_1)|\gtrapprox_{\eps_1}\tilde\delta^{-3/2}.
\end{equation}

Let
$$
\Theta_2=\{\theta\in\Theta_1\colon I_{\tilde\delta}(\lines(\theta),\pts_4)\geq \tilde\delta^{-1+C_3\eps_1}\}.
$$
If we select $C_3$ sufficiently large then by \eqref{mostIncidencesPreserved}, $|\Theta_2|\gtrapprox_{\eps_1}|\Theta|$, so $\Theta_2$ still obeys \eqref{ThetaWellDistributed}.

Note that if $\theta\in\Theta_2$, then $|\pts(\theta)|\geq\tilde\delta^{C_3\eps_1}|\pts|$, and
$$
\mathcal{E}_{\tilde\delta}\big(\pi_\theta(\pts(\theta))\big)\leq |\lines(\theta)|\lessapprox_{\eps_1}\tilde\delta^{-1/2},
$$
where $\pi_\theta\colon\RR^2\to\RR$ is the orthogonal projection in the direction $\theta$, and $\mathcal{E}_{\tilde\delta}(\cdot)$ is the $\tilde\delta$-covering number.

\subsubsection{Bourgain's discretized projection theorem}
The following result was proved by Bourgain \cite{Bourgain}:
\begin{thm}\label{bourgainThm}
There exist absolute constants $s>0, \tau>0$, and $\rho_0>0$ so that the following holds for all $\rho\leq\rho_0$. Let $A,B\subset[0,1]$ be sets satisfying the following properties
\begin{itemize}
\item $|A|,|B|\leq\rho^{-1/2-s}$.
\item $\mathcal{E}_\rho(A+A)\leq\rho^{-1/2-s}$.
\item $\mathcal{E}_\rho(B+B)\leq\rho^{-1/2-s}$.
\end{itemize}
Let $\pts\subset A\times B$ with $|\pts|\geq\rho^{-1+s}$ and suppose that for all balls $B$ of radius $r\geq\rho$ we have
$$
|B\cap \pts|\leq r \rho^{-1 -s}.
$$

Let $\Theta\subset S^1$ be a set of points with $|\Theta|\geq\rho^{-1/4}.$ Suppose that for all arcs $I\subset S^1$ of length $\rho\leq r\leq \rho^{\tau}$, we have
$$
|\Theta\cap I|\leq \rho^{-s}|I|^{1/100}|\Theta|.
$$

Then there exists $\theta\in\Theta$ with the following property: if $\pts^\prime\subset\pts$ with $|\pts^\prime|\geq\rho^{s}|\pts|$, then
$$
\mathcal{E}_{\rho}(\pi_{\theta}(\pts^\prime))\geq\rho^{-1/2-s}.
$$
\end{thm}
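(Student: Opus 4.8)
\emph{Proof sketch.} The plan is to argue by contradiction and reduce the statement to Bourgain's discretized sum-product theorem, which asserts that a non-concentrated set of reals with bounded additive doubling must expand under multiplication. Suppose the conclusion fails: for every $\theta\in\Theta$ there is $\pts_\theta\subset\pts$ with $|\pts_\theta|\geq\rho^s|\pts|$ and $\mathcal{E}_\rho(\pi_\theta(\pts_\theta))<\rho^{-1/2-s}$. Subdividing $\Theta$ into $O(1)$ arcs and applying an affine normalization, we identify each direction $\theta$ in a fixed arc with a slope $\lambda=\lambda(\theta)$ in a compact subinterval of $(0,\infty)$ so that $\pi_\theta(a,b)$ equals, up to a bi-Lipschitz factor of size $\approx 1$, the linear form $a+\lambda b$. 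Under this identification the hypothesis on $\Theta$ says that the set $\Lambda$ of slopes has $|\Lambda|\geq\rho^{-1/4}$ and is non-concentrated on intervals of length $\leq\rho^{\tau}$.

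Next I would extract Cartesian product structure and convert small projections into additive energy. Since $\pts\subset A\times B$ with $|A|,|B|\leq\rho^{-1/2-s}$, $|\pts|\geq\rho^{-1+s}$, and $\pts$ obeys the ball non-concentration bound, the set $\pts$ is, up to $\rho^{O(s)}$ losses, a Katz-Tao Cartesian product: a positive proportion of its rows and columns are heavy, so for each $\theta$ we may find $A_\theta\subset A$, $B_\theta\subset B$ with $|A_\theta|,|B_\theta|\gtrapprox\rho^{-1/2}$ and $|(A_\theta\times B_\theta)\cap\pts_\theta|\gtrapprox\rho^{-1}$. Because $\pi_\theta$ restricted to this set has image of size $\lessapprox\rho^{-1/2}$, a typical fiber is large, which after unwinding says that $|A_\theta\cap(c-\lambda B_\theta)|$ is large for many $c$; summing, the additive energy of $A_\theta$ and $\lambda B_\theta$ at scale $\rho$ is $\gtrapprox\rho^{-3/2}$. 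Thus for \emph{every} slope $\lambda$ in the large non-concentrated set $\Lambda$, the dilate $\lambda B$ has large additive energy with a large subset of $A$ at scale $\rho$.

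Now apply the Balog-Szemer\'edi-Gowers theorem (Theorem \ref{BSGThm}) and the Pl\"unnecke-Ruzsa inequality (Theorem \ref{PRThm}) at each such scale: using the hypotheses $\mathcal{E}_\rho(A+A),\mathcal{E}_\rho(B+B)\leq\rho^{-1/2-s}$ to keep everything inside a bounded-doubling universe, pass from large energy to subsets $A'_\theta,B'_\theta$ of comparable size with small \emph{individual} doubling and with $\mathcal{E}_\rho(A'_\theta+\lambda B'_\theta)$ small. Pigeonholing in $\Lambda$, extract a single pair $A^*,B^*$ of large bounded-doubling subsets together with a large, still non-concentrated collection of slopes $\lambda$ for which $\mathcal{E}_\rho(A^*+\lambda B^*)$ is small. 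Comparing two slopes $\lambda_1,\lambda_2$ via the Ruzsa calculus shows $A^*$ (or $B^*$) is approximately compressed under multiplication by $\lambda_1/\lambda_2$; ranging over the rich set of available ratios and iterating $\approx\log(1/\rho)$ times while keeping all doubling constants $\rho^{O(s)}$ produces a set that is simultaneously approximately additively closed and approximately closed under multiplication by a non-concentrated set of scales, i.e.\ an approximate subring at scale $\rho$. Bourgain's discretized sum-product theorem forbids such an object unless it is concentrated, and the non-concentration of $\Lambda$ rules out concentration; this contradiction completes the proof, with $s$ and $\tau$ determined by the sum-product exponent.

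The main obstacle is the last step: turning the assertion that $A+\lambda B$ is small for a non-concentrated, positive-dimensional set of slopes $\lambda$ into a genuine multiplicative-closure statement and closing the loop with the sum-product theorem. This is the technical heart of the argument; the difficulty lies in controlling the iterated Ruzsa-distance estimates so that the doubling never blows up over the logarithmically many steps, and in arranging the ratios $\lambda_i/\lambda_j$ to be themselves well distributed so the sum-product theorem applies to them. The remaining ingredients --- the affine normalization, the extraction of Cartesian product structure from the ball non-concentration of $\pts$, the energy conversion, and the BSG/Pl\"unnecke applications --- are standard.
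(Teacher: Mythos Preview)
The paper does not prove this theorem at all: it is quoted from Bourgain \cite{Bourgain}, with the remark that ``Theorem \ref{bourgainThm} is a variant of Theorem 3 from \cite{Bourgain}. It is stated in the above form as Remarks (i) and (iii) in \cite[Section 7, p221]{Bourgain}.'' So there is no ``paper's own proof'' to compare against; the result is used as a black box.

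Your sketch is a reasonable outline of how Bourgain actually proves results of this type, and you have correctly identified the load-bearing step. A few comments. First, the BSG/Pl\"unnecke step you describe is partly redundant with the hypotheses: the small-doubling conditions $\mathcal{E}_\rho(A+A),\mathcal{E}_\rho(B+B)\leq\rho^{-1/2-s}$ are already assumed, precisely because the paper applies BSG/Pl\"unnecke \emph{before} invoking this theorem (see the end of Section \ref{reductionToBourgainSec}). Inside Bourgain's argument what one really needs is not another BSG pass but rather the more delicate statement that if $A$ has small doubling and $A+\lambda B$ has small $\rho$-covering number for a non-concentrated set of $\lambda$, then $A$ (after passing to a subset) is close to a generalized arithmetic progression whose step set is approximately closed under a rich set of dilations. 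Second, your ``iterate $\approx\log(1/\rho)$ times'' plan is exactly where the difficulty lies, and Bourgain does not handle it by naive iteration of Ruzsa calculus (the doubling constants would indeed blow up); instead he uses a multiscale decomposition together with an induction on scales that exploits the non-concentration of $\Theta$ at \emph{every} intermediate scale up to $\rho^\tau$. That is why the parameter $\tau$ appears in the hypothesis. Your sketch gestures at this but does not supply a mechanism, and without one the argument does not close. If you want to fill this in, the relevant input is the discretized ring lemma in \cite{Bourgain} (or its earlier incarnation in \cite{KT}), not the bare sum-product theorem.
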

Theorem \ref{bourgainThm} is a variant of Theorem 3 from \cite{Bourgain}. It is stated in the above form as Remarks (i) and (iii) in \cite[Section 7, p221]{Bourgain}. The following table will aid the reader when translating between the notation of the present paper and \cite{Bourgain}.
\begin{center}
\begin{tabular}{l|l}
\cite{Bourgain} & Theorem \ref{bourgainThm}\\
\hline
$\alpha$ & $1+O(s)$\\
$\kappa$ & 1/100\\
$\eta$ & $1/2+s$\\
$\tau$ & $\tau$\\
$\mathcal{B}$ & $\pts$\\
$\mathcal{A}^\prime$ & $\pts^\prime$
\end{tabular}
\end{center}


\subsection{Concluding the proof}
We are now ready to prove Proposition \ref{killingHeisenbergProp}. For the reader's convenience, we re-state it here
\begin{killingHeisenbergPropEnv}
There exist absolute constants $C$ (large) and $c>0$ (small) so that the following holds. For every $\alpha>0$, there is a $\delta_0>0$ so that if $0<\delta\leq\delta_0$, and if $(\tubes,Y)$ is an $\eps$--extremal collection of $\delta$ tubes that is of Heisenberg type with parameter $\alpha$ and $C\eps$, then $\eps>c\alpha$.
\end{killingHeisenbergPropEnv}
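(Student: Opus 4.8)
The plan is to feed the point--line incidence machinery developed in this section into Bourgain's discretized projection theorem (Theorem \ref{bourgainThm}) and to show that the resulting planar configuration violates that theorem's conclusion unless $\eps$ fails to be small relative to $\alpha$. All of the difficult geometry has already been carried out in the preceding lemmas, so the proof of the proposition is an assembly of those lemmas together with careful tracking of the implicit constants.

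First I would apply Lemma \ref{CartesianProductStructureNonConcentrated} to $(\tubes,Y)$, having fixed $\tau$ to be the absolute constant appearing in Theorem \ref{bourgainThm}. This produces a scale $\tilde\delta\geq\delta^\alpha$, a planar point set $\pts$ with $|\pts|\approx_{\eps_1}\tilde\delta^{-1}$ obeying $|\pts\cap B(x,r)|\lessapprox_{\eps_1}r\tilde\delta^{-1}$, a one-dimensional set $B'\subset[0,1]$ with $|B'|\approx_{\eps_1}\tilde\delta^{-1/2}$ that is non-concentrated at every scale between $\tilde\delta$ and $\tilde\delta^\tau$, and a line set $\lines$ each member of which is $\tilde\delta$-incident to $\gtrapprox_{\eps_1}\tilde\delta^{-1/2}$ points of $\pts$; here $\eps_1=C_\tau\eps/\alpha$, with $C_\tau$ the constant from Lemma \ref{CartesianProductStructureNonConcentrated} for this choice of $\tau$. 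Next I would run the reduction of Section \ref{reductionToBourgainSec}: a projective transformation sending a coordinate line to infinity to produce a direction set $\Theta$, a pigeonholing of three pairwise $\gtrapprox_{\eps_1}1$-separated directions $\theta_1,\theta_2,\theta_3$ accounting for $\gtrapprox_{\eps_1}\tilde\delta^{-3/2}$ incidences, a linear map putting $\theta_1,\theta_2$ along the axes so that (a large subset of) $\pts$ becomes a Cartesian product $A\times B$, a refinement increasing the separation, and an identification with integer sets $\hat A_1,\hat B_1$ in which the $\approx_{\eps_1}\tilde\delta^{-1/2}$ lines in direction $\theta_2$ yield $\gtrapprox_{\eps_1}\tilde\delta^{-3/2}$ additive quadruples $a_1+b_1=a_2+b_2$. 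Applying Theorem \ref{BSGThm} and then Theorem \ref{PRThm} gives subsets $\hat A_2,\hat B_2$ with $|\hat A_2|,|\hat B_2|\gtrapprox_{\eps_1}\tilde\delta^{-1/2}$ and $|\hat A_2+\hat A_2|,|\hat B_2+\hat B_2|\lessapprox_{\eps_1}\tilde\delta^{-1/2}$; pulling back to $A_2,B_2$, restricting the points to $\pts_4=\pts_3\cap(A_2\times B_2)$, and refining the directions to $\Theta_2$, I obtain a configuration in which $|\pts_4|\gtrapprox_{\eps_1}\tilde\delta^{-1}$ remains ball-non-concentrated, $|\Theta_2|\gtrapprox_{\eps_1}\tilde\delta^{-1/2}$ remains angularly non-concentrated, and every $\theta\in\Theta_2$ satisfies $|\pts(\theta)|\geq\tilde\delta^{C\eps_1}|\pts_4|$ while $\mathcal{E}_{\tilde\delta}(\pi_\theta(\pts(\theta)))\leq|\lines(\theta)|\lessapprox_{\eps_1}\tilde\delta^{-1/2}$.

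To finish, I would choose $c>0$ so small that $\eps/\alpha\leq c$ forces $\eps_1=C_\tau\eps/\alpha$, and every constant multiple of it appearing above, to lie below the absolute constant $s$ of Theorem \ref{bourgainThm}. With this choice the data $A_2,B_2,\pts_4,\Theta_2$ meet all the hypotheses of Theorem \ref{bourgainThm} with $\rho=\tilde\delta$: the cardinality, sumset, ball-non-concentration, and angular-distribution hypotheses each follow directly from the bounds of the previous paragraph once $\eps_1<s$. The theorem then yields a direction $\theta\in\Theta_2$ such that every $\pts'\subset\pts_4$ with $|\pts'|\geq\tilde\delta^s|\pts_4|$ has $\mathcal{E}_{\tilde\delta}(\pi_\theta(\pts'))\geq\tilde\delta^{-1/2-s}$. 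Applying this with $\pts'=\pts(\theta)$---which is admissible since $\tilde\delta^{C\eps_1}\geq\tilde\delta^s$---gives
$$
\tilde\delta^{-1/2-s}\leq\mathcal{E}_{\tilde\delta}\big(\pi_\theta(\pts(\theta))\big)\lessapprox_{\eps_1}\tilde\delta^{-1/2}\leq\tilde\delta^{-1/2-C'\eps_1},
$$
with $C'\eps_1<s$, which is impossible once $\delta\leq\delta_0$ for a suitable $\delta_0=\delta_0(\alpha)$. This contradiction proves $\eps>c\alpha$; the constant $C$ in the statement is taken large enough for Lemmas \ref{findingPtsAndLinesLem}--\ref{CartesianProductStructureNonConcentrated} and the reduction of Section \ref{reductionToBourgainSec} to be valid.

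The step I expect to be the main (and essentially the only) obstacle is the epsilon bookkeeping. The constants must be fixed in the correct order: $\tau$ is absolute and comes from Theorem \ref{bourgainThm}; then $C_\tau$ is determined by Lemma \ref{CartesianProductStructureNonConcentrated} with that $\tau$; only then may $c$ be chosen so that $\eps/\alpha\leq c$ keeps $\eps_1=C_\tau\eps/\alpha$ below $s$ with enough slack to absorb every $\lessapprox_{\eps_1}$ loss, including the final comparison of covering numbers. Verifying that this chain of choices is non-circular, and that the Heisenberg-type hypothesis at level $C\eps$ (with $C$ chosen large only at the end) is precisely what powers the two-ends reduction and the regulus dichotomy inside Lemma \ref{findingPtsAndLinesLem}, is where the care is needed.
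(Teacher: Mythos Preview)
Your proposal is correct and follows essentially the same approach as the paper: assemble the output of Lemma~\ref{CartesianProductStructureNonConcentrated} with the reduction of Section~\ref{reductionToBourgainSec} (projective transformation, Balog--Szemer\'edi--Gowers, Pl\"unnecke--Ruzsa) to produce data that violate Theorem~\ref{bourgainThm} once $\eps_1=C_\tau\eps/\alpha$ is below the absolute constant $s$, and then fix $c$ accordingly. The paper's own proof is simply a terser packaging of the same chain, and your discussion of the order in which the constants must be chosen matches the paper's logic.
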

\begin{proof}
Let $s$, $\tau$, and $\rho_0$ be the constants from Theorem \ref{bourgainThm}. Suppose that Proposition \ref{killingHeisenbergProp} was false. Then by Lemma \ref{CartesianProductStructureNonConcentrated}, there is a number $\tilde\delta\leq\delta^{1-2\alpha}$, a constant $C_1$ and sets $A,B\subset[0,1]$, $\pts,\Theta,\lines$, that satisfy
\begin{itemize}
\item $|A|,|B|\leq\tilde\delta^{-1/2-\eps_2}$.
\item $\mathcal{E}_{\tilde\delta}(A+A)\leq\tilde\delta^{-1/2-\eps_2}$.
\item $\mathcal{E}_{\tilde\delta}(B+B)\leq\tilde\delta^{-1/2-\eps_2}$.
\item $\pts\subset A\times B,$ $|\pts|\geq\tilde\delta^{-1-\eps_2}$.
\item $|\Theta|\geq\tilde\delta^{-1/4}$, and if $I\subset S^1$ is an arc of length $\tilde\delta\leq r\leq\tilde\delta^\tau$, then
$$
|I\cap\Theta|\leq \tilde\delta^{-\eps_2}|I|^{1/100}|\Theta|.
$$
\item For each $\theta\in\Theta,$ there is a set $\pts(\theta)\subset\pts$ with $|\pts(\theta)|\geq \tilde\delta^{\eps_2}|\pts|$ such that
$$
\mathcal{E}_{\tilde\delta}(\pi_\theta(\pts(\theta)))\leq\tilde\delta^{-1/2-\eps_2}.
$$
\end{itemize}
In the items above, $\eps_2 = C_{\tau}\eps/\alpha,$ where $C_{\tau}$ is a constant that depends on $\tau$.

Let $c = s\alpha/C_\tau$. Then if $\eps< c\alpha$, the sets $\pts,\Theta$, and $\lines$ given above violate Theorem \ref{bourgainThm}, provided $\tilde\delta\leq\rho_0$, i.e. $\delta\leq\delta_0=\rho_0^{1/\alpha}$. This concludes the proof of Proposition \ref{killingHeisenbergProp}.
\end{proof}
\section{Killing the $SL_2$ example}\label{killingSl2ExampleSec}
In this section we will show that an $\eps$--extremal Besicovitch set of $SL_2$ type cannot exist. More precisely, we have the following.
\begin{prop}\label{killingSl2Prop}
For each constant $C$, there are positive constants $\alpha, \eps_0,\delta_0$ so that if $0<\eps\leq\eps_0$ and $0<\delta\leq\delta_0$, then there cannot exist an $\eps$--extremal  collection of $\delta$ tubes of $SL_2$ type with parameters $\alpha$ and $C\eps$.
\end{prop}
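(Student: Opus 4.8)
## Proof proposal for Proposition \ref{killingSl2Prop}

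The plan is to show that an $\eps$--extremal collection of $SL_2$ type carries so much geometric structure that, after passing to the coarse scale $\delta^{1/2}$, it violates a quantitative property of reguli in $\RR^3$ that has no analogue over the ring $R=\FP[t]/(t^2)$. Recall that $SL_2$ type means $\tubes=\bigsqcup_{S\in\mathcal S}\tubes(S)$ where each regulus strip $S$ contains $\geq\delta^{-1/2+\alpha}$ of the $\delta$--tubes. First I would use the $\delta^{-1/2}$ extra tubes inside a single strip $S$, together with the structure of a regulus strip as $\sim\delta^{-1/2}$ rectangular prisms of dimensions $\delta^{1/2}\times\delta^{1/2}\times\delta$ with $\gtrapprox_\eps\delta^{1/2}$--separated normal directions (Definition \ref{regulusStripDefn}), to replace each strip by a single coarse $\delta^{1/2}$--tube $\tube_{\delta^{1/2}}(S)$ containing it (Lemma \ref{regStripsAreTangent}). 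Pigeonholing the shadings, one extracts a $\delta^{1/2}$--scale Besicovitch-type configuration $(\mathbb T_{\delta^{1/2}},\widetilde Y)$ of $\delta^{1/2}$--tubes whose union has volume $\lessapprox_\eps\delta^{1/2-\eps}/\delta^{1/2}=\delta^{-\eps}$ --- i.e. essentially full --- while still satisfying the Wolff axioms at scale $\delta^{1/2}$. Morally this is the statement that the $SL_2$ example has full coarse-scale Minkowski dimension, exactly the feature that makes it an almost-counterexample only at the Hausdorff (not Minkowski) level.

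The next step is to feed this coarse configuration back into the machinery of Sections \ref{reguliiSection}--\ref{dichotomySection}. Since the strips $S$ were $\delta^\eps$ non-degenerate reguli, the associated coarse tubes come bundled with regulus data: the fine-scale tubes in $\tubes(S)$ literally trace out a ruling of a non-degenerate regulus, so $(\tubes,Y)$ fails to avoid reguli with parameters $C_1\eps, C_2\eps$ in a very strong way --- indeed it obeys the regulus map at scale $\theta\approx_\eps\delta^{1/2}$, the \emph{smallest} possible scale. Applying Lemma \ref{regulusMapLem}, then Corollary \ref{hairbrushTwoTubesInFatRegulusOne} and Lemma \ref{PlaneMapTransverseToHairbrushOfTwoTubes}, I would show that for a positive-density set of pairs $\tube_1,\tube_2$ the hairbrush $H(\tube_1,\tube_2)$ lies in the $\delta^{1/2}$--neighborhood of a fixed non-degenerate regulus $R(\tube_1,\tube_2)$, yet by \eqref{manyTransverseTubes} a full $\gtrapprox_\eps\delta^{-5/2}$ further tubes cross that regulus transversally. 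Because $\theta\approx_\eps\delta^{1/2}$, the $\theta$--neighborhood of $R(\tube_1,\tube_2)$ is genuinely two-dimensional at scale $\delta^{1/2}$: it is exactly one of the coarse regulus strips, and the coarse tubes $\tube_{\delta^{1/2}}(S)$ packed into it number $\approx_\eps\delta^{-1/2}\cdot\delta^{-1/2}$, far more than a $\delta^{1/2}$--tube ($\delta^{-1/2}$ of them) should contain.

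The contradiction is then extracted by a curvature/transversality count. Using Lemma \ref{GaussCurvatureOfRegulus} (the Gauss curvature of $R(\tube_1,\tube_2)$ is $\approx_\eps 1$), Lemma \ref{decoherenceOfPlaneMap} forces the regulus maps of the transverse tubes $\tube_3$ to decohere from $R_{\tube_1,\tube_2,\tube_3}$ away from two $\theta$--length intervals; combined with the geometric Lemma \ref{geomLem} this bounds the number of coarse tubes that can simultaneously be tangent to $R(\tube_1,\tube_2)$ and lie in the bush of the coarse line $L$ by $\lessapprox_\eps(\delta^{1/2})^{-7/4}$ rather than $(\delta^{1/2})^{-2}$. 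Integrating this saving against \eqref{manyTransverseTubes} over the $\approx_\eps\delta^{-4}$ good quadruples from Remark \ref{lotsOfProperlyIntersectingQuadruples} yields a bound on $|\bigcup Y(\tube)|$ of the form $\gtrapprox_\eps\delta^{1/2+c}$ for some absolute $c>0$, contradicting \eqref{smallVolume} once $\eps<\eps_0$. The essential point --- and the main obstacle --- is to carry out the passage to the coarse scale so that the $\delta^{1/2}$--configuration genuinely inherits the Wolff axioms and the regulus structure simultaneously; this is where the absence of a nilpotent ideal in $\RR$ (as opposed to $R$) is used, since over $R$ the transverse tubes would all collapse into the nilpotent direction $t\FP$ and no curvature saving would be available. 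I expect the bookkeeping of how the shading survives the two pigeonholing steps (first selecting one tube per strip, then re-refining so \eqref{KakeyaSetIsHomogeneous} holds at the coarse scale) to be the most delicate part, since one must keep $\sum\widetilde Y\gtrapprox_\eps 1$ while the per-tube shading can only drop by $\delta^{O(\eps)}$.
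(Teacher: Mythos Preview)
Your proposal has a genuine gap: the curvature/transversality tools you invoke (Lemmas \ref{geomLem}, \ref{decoherenceOfPlaneMap}, \ref{PlaneMapTransverseToHairbrushOfTwoTubes}, Remark \ref{lotsOfProperlyIntersectingQuadruples}) do not by themselves produce a volume contradiction. In the paper these lemmas establish that many tubes cross $R(\tube_1,\tube_2)$ \emph{transversally}, but that conclusion is fully compatible with $\eps$--extremality; it is structural information, not a lower bound on $|\bigcup Y(\tube)|$. Your final sentence claims a bound $\gtrapprox_\eps\delta^{1/2+c}$ --- even corrected to $\delta^{1/2-c}$, there is no mechanism in your sketch that converts the $\rho^{-7/4}$ covering-number saving of Lemma \ref{geomLem} into such a volume bound. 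Also, the assertion that ``$(\tubes,Y)$ obeys the regulus map at scale $\theta\approx_\eps\delta^{1/2}$'' does not follow directly from the $SL_2$-type hypothesis; the regulus map concerns the planiness map $\Pi_p$, whereas $SL_2$ type only says the tubes cluster into regulus strips.

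The paper's proof is substantially different and has several stages you have not anticipated. After showing (via the quadruple count you do mention) that each fat $\delta^{1/2}$--tube contains essentially one regulus strip, the argument moves to the four-dimensional parameter space of lines. It shows that the $\delta^{1/2}$--balls $B_S$ corresponding to the strips must lie in the $\lessapprox_\eps\delta^{1/2}$--neighborhood of a degree-two hypersurface $Z(P)\subset\RR^4$ (this is the ``fourteen well-chosen points'' argument), then proves that after a linear change of coordinates in $\RR^3$ one may take $P=ad-bc-1$, i.e.\ $Z(P)=SL_2$. With this identified, the strips themselves are shown to be transverse to $SL_2$ in parameter space, so one may select a single representative $\delta$--tube $\tube_S$ per strip whose parameter point lies on $SL_2$. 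The $SL_2$ algebra then gives a strong rigidity (Lemma \ref{tripleHairbrush}): triples of these tubes in a common hairbrush behave like genuine lines, with $\lessapprox_\eps 1$ common transversals. This is used to build a planar point--curve incidence configuration with $\lessapprox_\eps\delta^{-1}$ points, $\lessapprox_\eps\delta^{-1}$ degree-two curves, two degrees of freedom, and $\gtrapprox_\eps\delta^{-3/2}$ incidences, which is finally ruled out by a polynomial-partitioning Szemer\'edi--Trotter bound (Lemma \ref{discreteST}). None of the parameter-space, quadric-identification, or incidence-theoretic ingredients appear in your sketch, and they are where the real work lies.
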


First, we will record a minor observation that will help simplify notation. If $(\tubes,Y)$ is an $\eps$-extremal set of tubes of  $SL_2$ type with parameters  $\alpha$ and $C\eps$, then $(\tubes,Y)$ is also a $\max(\eps,C\eps,\alpha)$-extremal set of tubes of  $SL_2$ type with parameters $\max(\eps,C\eps,\alpha)$ and $\max(\eps,C\eps,\alpha)$. Thus we can reduce Proposition \ref{killingSl2Prop} to the special case where $C=1$ and $\eps=\alpha$. We will call such sets ``$\eps$--extremal collections of tubes of $SL_2$ type.''

\subsection{Regulus strips are well-separated}\label{incidenceGeomRegStripsSec}
In this section we will show that after a refinement of $\mathcal{S}$, if $S_1,S_2\in\mathcal{S}$ are two regulus strips then the $\delta^{1/2}$ neighborhood of $S_1$ does not contain $S_2$, and vice-versa. This is a somewhat surprising statement, since the $\delta^{1/2}$ neighborhood of $S_1$ could contain as many as $\delta^{-1}$ tubes from $\tubes$ while still satisfying the Wolff axioms, so it could in theory contain as many as $\delta^{-1/2-\eps}$ regulus strips. Indeed, if the tubes from $\tubes$ were ``sticky'' in the sense of \cite{KLT}, then we would expect the $\delta^{1/2}$ neighborhood of $S_1$ to contain about $\delta^{-1}$ tubes, and thus to contain about $\delta^{-1/2}$ regulus strips. Thus a collection of tubes of $SL_2$ type is emphatically \emph{not} sticky.

\begin{figure}[h!]
 \centering
\begin{overpic}[width=0.4\textwidth]{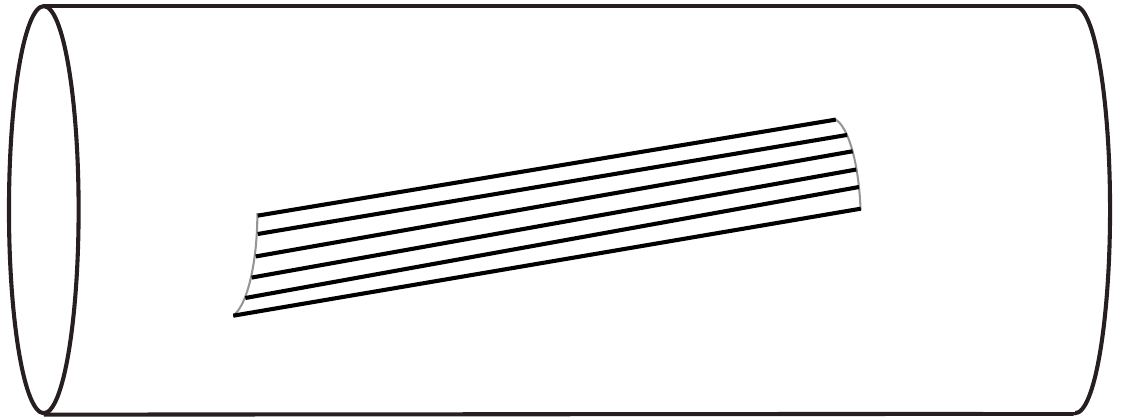}
\end{overpic}
 \caption{Each fat tube contains (at most) one regulus strip}\label{stripAlonePic}
\end{figure}


\begin{defn}
If $S$ is a regulus strip, we define the direction of $S$ to be the direction of the $\delta^{1/2}$-tube containing $S$. We denote this direction by $v(S)$. The vector $v(S)$ is defined up to uncertainty $O(\delta^{1/2})$. 
\end{defn}

Let $S_1$ and $S_2$ be regulus strips with $\angle(v(S_1),v(S_2))\geq\delta^{C\eps}$. If $S_1\cap S_2\neq\emptyset$, then the angle $\angle(T_pS_1,T_pS_2)$ is approximately constant, independent of the choice of $p\in S_1\cap S_2$. More precisely, the angle can vary by $\lessapprox_\eps\delta^{1/2}$.
\begin{defn}
Let $(\tubes=\bigsqcup_{S\in\mathcal{S}}\tubes(S),Y)$ be a set of tubes of $SL_2$ type. We say that the regulus strips in $(\tubes,Y)$ have tangential intersection (at angle $t$) if for all pairs $S_1,S_2\in\mathcal{S}$ satisfying
$$
\angle(v(S_1), v(S_2))\geq t,
$$
we have
\begin{equation}\label{largeAngle}
\angle\big(T_pS_1,\ T_pS_2\big)\lessapprox_{\eps}\delta^{1/2}\quad\textrm{for all}\ p\in \bigcup_{\tube\in\tubes(S_1)}Y(\tube)\cap\bigcup_{\tube\in\tubes(S_2)}Y(\tube).
\end{equation}
Note that this condition is only interesting if $\bigcup_{\tube\in\tubes(S_1)}Y(\tube)\cap\bigcup_{\tube\in\tubes(S_2)}Y(\tube)\neq\emptyset$. In practice, we will have $t=\delta^{C\eps}$ for some absolute constant $C$.
\end{defn}

\begin{lem}[Regulus strips are tangent whenever they intersect]\label{regStripsAreTangent}
Let $(\tubes=\bigsqcup_{S\in\mathcal{S}}\tubes(S),Y)$ be an $\eps$--extremal set of tubes of $SL_2$ type. Then there is a constant $C$ and a refinement $(\tubes,Y^\prime)$ so that the regulus strips in $(\tubes,Y^\prime)$ have tangential intersection at angle $\delta^{C\eps}$.
\end{lem}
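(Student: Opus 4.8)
The plan is to establish Lemma~\ref{regStripsAreTangent} by a ``most pairs behave'' argument combined with the incidence-geometric information from Remark~\ref{lotsOfProperlyIntersectingQuadruples}. First I would set up the relevant quantities. For each ordered pair of regulus strips $(S_1,S_2)\in\mathcal{S}^2$ with $\angle(v(S_1),v(S_2))\geq\delta^{C\eps}$ whose shadings overlap, consider the ``crossing angle'' $\angle(T_pS_1,T_pS_2)$. Because each $S_i$ is a twisted $1\times\delta^{1/2}\times\delta$ slab sitting in the $\delta$-neighborhood of a $\delta^{\eps}$ non-degenerate regulus, the tangent plane $T_pS_i$ varies by at most $\lessapprox_\eps\delta^{1/2}$ as $p$ ranges over $S_i$ (this is the content of the remark preceding the Lemma), so the crossing angle is well-defined up to $\lessapprox_\eps\delta^{1/2}$; call this value $t(S_1,S_2)$. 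The goal is to pass to a refinement in which every surviving overlapping pair (with $\geq\delta^{C\eps}$ direction separation) has $t(S_1,S_2)\lessapprox_\eps\delta^{1/2}$, i.e.\ the two slabs are tangent along their common line segment.

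The key step is to show that the number of quadruples $(\tube,\tube_1,\tube_2,\tube_3)$ with $\tube_1,\tube_2,\tube_3$ pairwise $\gtrapprox_\eps1$ separated and skew, $\tube\in H(\tube_1,\tube_2,\tube_3)$, and $\tube_3$ meeting $R(\tube_1,\tube_2)$ transversally (at angle $\geq\delta^{C\eps}$) is large --- this is exactly $\gtrapprox_\eps\delta^{-13/2}$ by Remark~\ref{lotsOfProperlyIntersectingQuadruples}. I would then translate this into a statement about regulus strips: the tube $\tube$ lying in $H(\tube_1,\tube_2,\tube_3)$ forces $\tube$ (and hence a regulus strip $S(\tube)$ containing it in the $SL_2$ decomposition) to be contained in the $\lessapprox_\eps\theta$-neighborhood of $R(\tube_1,\tube_2)$, while $\tube_3$ transverse to $R(\tube_1,\tube_2)$ means any regulus strip through $\tube_3$ is transverse to $S(\tube)$ at the crossing. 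The point is that a transverse crossing of two regulus strips is very costly: near a transverse intersection point, the tubes of $S_2$ that can lie in $S_1$ (or in $N_{\delta^{1/2}}(S_1)$) are confined to a thin set, which via the Wolff axioms caps the number of such incident tubes well below $\delta^{-1/2}$. Quantitatively, if $t(S_1,S_2)\gtrapprox_\eps\delta^{1/2}$ then the portion of $N_{\delta^{1/2}}(S_1)$ that $S_2$ can occupy is a prism of dimensions roughly $1\times\delta^{1/2}\times(\delta^{1/2}/t)$, which by the Wolff axioms contains $\lessapprox_\eps \delta^{-1}/t\cdot\delta^{1/2}=\delta^{-1/2}/t$ tubes --- far fewer than the $\geq\delta^{-1/2+\eps}$ required of a regulus strip when $t\geq\delta^{\eps/2}$, say. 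This lets me discard, pair by pair, every overlapping $(S_1,S_2)$ with $t(S_1,S_2)$ too large: I would pigeonhole on the crossing-angle scale $t$ (dyadically, $O(|\log\delta|)$ values), and for each bad scale argue that the total shading supported on strips involved in bad crossings at that scale is $\leq\delta^{\eps'}\sum_{\tube}|Y(\tube)|$, so that removing all such strips still leaves an $\eps$-extremal (up to adjusting the constant in $\eps$) collection. Defining $Y^\prime$ to be $Y$ restricted to the surviving strips completes the refinement.

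The main obstacle I anticipate is the bookkeeping that makes ``bad crossings are rare'' precise: I need to count, weighted by shading, how much of $\bigcup_{\tube}|Y(\tube)|$ can be accounted for by strips $S$ that participate in some transverse crossing, and show this is a small ($\delta^{O(\eps)}$) fraction. The natural route is to fix $S_1$ and bound the number of strips $S_2$ that cross it transversally at scale $\sim t$: such an $S_2$ must have its defining line-triple constrained by $S_1$, and the incidence count from Remark~\ref{lotsOfProperlyIntersectingQuadruples} (read contrapositively, or rather: the \emph{non}-transverse quadruples dominate) should show that transverse quadruples, while numerous in absolute terms, involve only a $\delta^{O(\eps)}$-fraction of the shading once one accounts for the $\delta^{-1}$ tubes per strip and the $\leq\delta^{-1/2-\eps}$ strips per fat tube. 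I would also need to handle the degenerate possibility that $\angle(v(S_1),v(S_2))<\delta^{C\eps}$ separately --- but there the Wolff axioms directly bound the number of nearly-parallel strips through a common fat tube, so those pairs are excluded from the hypothesis and cause no trouble. A secondary technical point is confirming that the tangent-plane variation along a single regulus strip really is $\lessapprox_\eps\delta^{1/2}$; this follows from Lemma~\ref{GaussCurvatureOfRegulus} (bounded curvature) together with the fact that the strip has width $\delta^{1/2}$ in the transverse direction, so I would cite that rather than re-derive it.
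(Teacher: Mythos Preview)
Your proposal has a genuine gap, and the argument as sketched does not close. Two issues:

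\textbf{(1) The use of Remark~\ref{lotsOfProperlyIntersectingQuadruples} is backwards.} That remark asserts there are $\gtrapprox_\eps\delta^{-13/2}$ quadruples with $\tube_3$ \emph{transverse} to $R(\tube_1,\tube_2)$; since the total number of quadruples $(\tube,\tube_1,\tube_2,\tube_3)$ with $\tube\in H(\tube_1,\tube_2,\tube_3)$ is $\lessapprox_\eps\delta^{-13/2}$, transverse quadruples form a $\gtrapprox_\eps 1$ fraction, not a negligible one. So you cannot read the remark ``contrapositively'' to conclude that non-transverse quadruples dominate --- the remark says precisely the opposite. More fundamentally, the remark requires the tubes to obey the regulus map at some scale $\theta<\delta^{C_0\eps}$, and for an $SL_2$-type set that structure (the coarse regulus map, Lemma~\ref{tanPlanesParallelLem}) is only established \emph{after} Lemma~\ref{regStripsAreTangent}, so invoking it here is circular.

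\textbf{(2) The Wolff-axiom step does not give what you need.} Your prism estimate bounds how many tubes of $\tubes$ can sit in a thin region near $S_1\cap S_2$, but that does not bound the number of strips $S_2$ that cross $S_1$ transversally, nor the total shading supported on such strips. A strip $S_2$ with $t(S_1,S_2)\sim t$ is a perfectly legitimate regulus strip with $\geq\delta^{-1/2+\alpha}$ tubes; only a small piece of it lies near $S_1$, and nothing in your argument prevents there being $\approx_\eps\delta^{-1}$ such $S_2$'s.

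The paper's argument is entirely different and does not try to show bad crossings are rare. Instead it pigeonholes on the crossing angle to find a dominant scale $t_0$ at which $\sum_{(S_1,S_2)\in A_{t_0}}|Y_2(S_1)\cap Y_2(S_2)|\approx_\eps\delta^{-1/2}$. Since a transverse intersection $S_1\cap S_2$ has volume $\lessapprox_\eps\delta^{5/2}t_0^{-1}$, each $S_1$ must be crossed by $\gtrapprox_\eps\delta^{-3/2}t_0$ strips at angle $\sim t_0$, and the pieces of these strips fill up the $t_0\delta^{1/2}$-neighborhood of $S_1$. Hence for a refinement $\tubes_1$, each $N_{t_0\delta^{1/2}}(\tube)$ is $\gtrapprox_\eps 1$-full of the Besicovitch set, and Theorem~\ref{WolffBd} applied at scale $t_0\delta^{1/2}$ gives $|\bigcup Y(\tube)|\gtrapprox_\eps(t_0\delta^{1/2})^{1/2}$. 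This contradicts $\eps$-extremality unless $t_0\lessapprox_\eps\delta^{1/2}$. The mechanism is a volume blow-up at an intermediate scale, not a counting of bad pairs.
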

\begin{proof}
Apply Lemma \ref{robustTransLem} to $(\tubes,Y)$ and let $(\tubes,Y_1)$ be the resulting refinement. For each $S\in\mathcal{S}$, define the shading
$$
Y_1(S)=\bigcup_{\tube\in\tubes(S)}Y_1(\tube).
$$
Note that $\sum_{S\in\mathcal{S}}|Y_1(S)|\gtrapprox_{\eps}1$, and that for all $p\in\RR^3$,
\begin{equation}\label{boundedOverlap}
\sum_{S\in\mathcal{S}}\chi_{Y_1(S)}(p)\lessapprox_\eps\delta^{-1/2}.
\end{equation}

Cover each regulus strip $S\in\mathcal{S}$ by $\approx_\eps\delta^{-1/2}$ rectangular prisms of dimensions $\delta^{1/2}\times\delta^{1/2}\times\delta$. If $W$ is one of these prisms, then $W$ is comparable to the intersection of a ball of radius $\delta^{1/2}$ with the $\delta$--neighborhood of a plane $\Pi(W)$. If $p\in S$ and if $W=W(p,S)$ is the prism containing $p$, then $\angle(T_pS, \Pi(W))\lesssim\delta^{1/2}$.

Define the shading
\begin{equation}\label{defnY2}
Y_2(S)=\{p\in Y(S)\colon |Y_1(S)\cap W(p,S)|\geq\delta^{C_1\eps+2}\}.
\end{equation}
If $C_1$ is selected sufficiently large then $\sum_{S\in\mathcal{S}}|Y_2(S)|\gtrapprox_{\eps}1.$

For each $\delta^{1/2}\leq t\leq 1$, define
$$
A_t=\{(S_1,S_2)\in\mathcal{S}^2\colon Y_2(S_1)\cap Y_2(S_2)\neq\emptyset,\ \angle(S_1,S_2)\geq\delta^{C\eps},\ t\leq \angle(T_pS_1,T_pS_2)< 2t\},
$$
where $C$ is a constant to be determined later. In the above definition, the point $p$ is chosen in $S_1\cap S_2$. Choosing a different $p$ could change $\angle(T_pS_1,T_pS_2)$ by $\lessapprox_{\eps}\delta^{1/2}$. For values of $t$ close to $\delta^{1/2}$ this might change which of the above sets the pair $(S_1,S_2)$ is assigned to, but this will not matter for our proof.

Since the tubes in $\tubes$ satisfy the conclusions of Lemma \ref{robustTransLem}, we have that at least half the pairs of intersecting tubes make an angle $\gtrapprox_{\eps}1$. Thus if the constant $C$ is chosen sufficiently large, we have
 $$
\sum_{\substack{t\ \textrm{dyadic}\\ \delta^{1/2}\leq t\leq 1}}\sum_{(S_1,S_2)\in A_t}|Y_2(S_1)\cap Y_2(S_2)|\approx_{\eps}\sum_{(S_1,S_2)\in\mathcal{S}^2}|Y_2(S_1)\cap Y_2(S_2)|\approx_{\eps}\delta^{-1/2}.
 $$
After pigeonholing, there is a number $t_0$ so that
\begin{equation}\label{valueOfBeta}
\sum_{(S_1,S_2)\in A_{t_0}}|Y_2(S_1)\cap Y_2(S_2)|\approx_{\eps}\delta^{-1/2}.
\end{equation}

We will show that $t_0\lessapprox_{\eps}\delta^{1/2}$. We will do this by proving that the $\delta^{1/2}t_0$--neighborhood of  most tubes in $\tubes$ has large intersection with $\bigcup_{\tube\in\tubes}Y(\tube)$. This will mean that
$$
\Big|\bigcup_{\tube\in\tubes}Y(\tube)\Big|\gtrapprox_{\eps}(\delta^{1/2}t_0)^{1/2},
$$
and this contradicts the fact that $(\tubes,Y)$ is $\eps$--extremal, unless $t_0\lessapprox_{\eps}\delta^{1/2}$.

Observe that if $\angle(S_1,\ S_2)\geq\delta^{C\eps}$ and $\angle(T_pS_1,\ T_pS_2)\approx_{\eps}t_0$ for some point $p\in S_1\cap S_2$, then $S_1\cap S_2$ is contained in a rectangular prism of dimensions $\approx_{\eps} \delta^{1/2}\times\delta t_0^{-1}\times\delta$, so $|S_1\cap S_2|\lessapprox_{\eps}\delta^{5/2}t_0^{-1}$.

Let $\mathcal{S}_1\subset\mathcal{S}$ be the set of strips $S_1\in\mathcal{S}$ satisfying
$$
\sum_{\substack{S_2\in\mathcal{S}\\ (S_1,S_2)\in A_{t_0}}}|Y_2(S_1)\cap Y_2(S_2)|\geq \delta^{C_2\eps-1/2}|\mathcal{S}|^{-1}.
$$
If the constant $C_2$ is selected sufficiently large, then $|\mathcal{S}_1|\gtrapprox_{\eps}\delta^{-3/2}$. Since each regulus strip $S_2\in\mathcal{S}$ can contribute $\lessapprox_{\eps}\delta^{5/2}t_0^{-1}$ to the above sum, we have that each regulus strip $S_1\in\mathcal{S}_1$ intersects at least $\gtrapprox_{\eps} \delta/(\delta^{5/2}t_0^{-1})=\delta^{-3/2}t_0$ other strips $S_2$ that satisfy $(S_1,S_2)\in A_{t_0}$. If $Y_2(S_1)\cap Y_2(S_2)\neq\emptyset$, and if $p_{S_1,S_2}\in S_1\cap S_2$ is a point of intersection, then by \eqref{defnY2},
$$
|B(p_{S_1,S_2},\delta^{1/2})\cap Y_2(S_2)|\gtrapprox_{\eps}\delta^2.
$$
Thus for each $S_1\in\mathcal{S}_1$, we have
$$
\sum_{\substack{S_2\in\mathcal{S}\\ (S_1,S_2)\in A_{t_0}}}|Y_2(S_2)\cap B(p_{S_1,S_2},\delta^{1/2})|\gtrapprox_{\eps} \delta^2 (\delta^{-3/2}t_0^{-1})=\delta^{1/2}t_0.
$$

By \eqref{boundedOverlap}, we have
$$
\Big|\bigcup_{\substack{S_2\in\mathcal{S}\\ (S_1,S_2)\in A_{t_0}}}Y_2(S_2)\cap B(p_{S_1,S_2},\delta^{1/2})\Big|\gtrapprox_{\eps} \delta t_0.
$$
But observe that the above set is contained in $N_{t_0\delta^{1/2}}(S_1)$, and $|N_{t_0\delta^{1/2}}(S_1)|\approx_{\eps}\delta t_0$.

We conclude that
$$
\Big|N_{t_0 \delta^{1/2}}(S_1)\cap\bigcup_{S\in\mathcal{S}}Y_2(S)\Big|\gtrapprox_{\eps} |N_{t_0 \delta^{1/2}}(S_1)|.
$$
In particular, this means we can find a refinement $\tubes_1\subset\tubes$ so that for each $\tube\in\tubes_1$,
$$
\Big|N_{t_0\delta^{1/2}}(\tube)\cap \bigcup_{\tube\in\tubes}Y(\tube)\Big|\gtrapprox_{\eps} |N_{t_0 \delta^{1/2}}(\tube)|.
$$
By Theorem \ref{WolffBd}, this in turn implies that
$$
\Big|\bigcup_{\tube\in\tubes}Y(\tube)\Big|\gtrapprox_{\eps} (t_0\delta^{1/2})^{1/2} = t_0^{1/2}\delta^{1/4}.
$$
Since $(\tubes,Y)$ was $\eps$--extremal, this is a contradiction unless $t_0\lessapprox_{\eps}\delta^{1/2}$.

Finally, we will refine the shadings $\{Y_2(S)\}$. By \eqref{valueOfBeta} and pigeonholing, we can select a set $A\subset\bigcup_{S\in\mathcal{S}}Y_2(S)$ with $|A|\gtrapprox_{\eps}\delta^{1/2}$ so that for each $p\in A$ there is an element $S_p\in\mathcal{S}$ so that
$$
|\{S\in\mathcal{S}\colon p\in Y_2(S),\ (S_p, S)\in A_{t_0}\}|\gtrapprox_{\eps}\delta^{-1/2}.
$$
For each $S\in\mathcal{S}$, define the shading
$$
Y(S)=\{p\in Y_2(S)\cap A \colon (S_p, S)\in A_{t_0}\}.
$$
By construction, for every pair $S_1,S_2\in\mathcal{S}$ and every $p\in Y(S_1)\cap Y(S_2)$, we have
$$
\angle( T_pS_1,\ T_pS_2)\lessapprox_{\eps} t_0\lessapprox_{\eps}\delta^{1/2}.
$$
Finally, for each $S\in\mathcal{S}$ and each $\tube\in\tubes(S),$ define $Y^\prime(\tube)=Y_1(\tube)\cap Y(S)$.
\end{proof}
In the following arguments, we will need to consider the Besicovitch set at scale $\delta$ and also at scale $\delta^{1/2}$. Let $\mathcal{S}$ be a set of regulus strips. For each $S\in\mathcal{S}$, define $\tube_{\delta^{1/2}}(S)$ to be a $\delta^{1/2}$ tube that contains $S$ (the exact choice of $\tube_{\delta^{1/2}}(S)$ is not important; for any two choices, one will be contained in the 100-fold dilate of the other). In what follows, we will call $\delta^{1/2}$--tubes ``fat tubes,'' and we will call $\delta$--tubes ``thin tubes.''

We can select the tubes $\{\tube_{\delta^{1/2}}(S)\colon S\in\mathcal{S}\}$ so that after refining $\mathcal{S}$ by a factor of $O(1)$, for every pair $S_1,S_2\in\mathcal{S}$ we either have that $\tube_{\delta^{1/2}}(S_1)=\tube_{\delta^{1/2}}(S_2)$ or the 100-fold dilate of $\tube_{\delta^{1/2}}(S_1)$ does not contain $\tube_{\delta^{1/2}}(S_2)$ and vice-versa. After performing this refinement, define
$$
\tubes_{\mathrm{fat}}=\{\tube_{\delta^{1/2}}(S)\colon S\in\mathcal{S}\}.
$$
This is the set of fat tubes associated to $\mathcal{S}$. In practice, the set $\mathcal{S}$ will be apparent from context.

For each fat tube $\tube_{\delta^{1/2}}\in \tubes_{\mathrm{fat}}$, define
$$
\mathcal{S}(\tube_{\delta^{1/2}})=\{S\in\mathcal{S}\colon \tube_{\delta^{1/2}}(S) = \tube_{\delta^{1/2}}\}.
$$
This is the set of regulus strips contained in $\tube_{\delta^{1/2}}$. Define
$$
\tubes(\tube_{\delta^{1/2}})=\bigcup_{S\in\mathcal{S}(\tube_{\delta^{1/2}})}\tubes(S).
$$
This is the set of thin tubes contained in $\tube_{\delta^{1/2}}$.

The following lemma says that if $\tube_{\delta^{1/2}}\in\tubes_{\mathrm{fat}}$, and if $S_1,S_2\in\mathcal{S}(\tube_{\delta^{1/2}})$, then the tangent planes of $S_1$ and $S_2$ at nearby points are nearly parallel.
\begin{lem}\label{tanPlanesParallelLem}
Let $(\tubes=\bigsqcup_{S\in\mathcal{S}}\tubes(S),Y)$ be an $\eps$--extremal set of tubes of $SL_2$ type. Suppose that the regulus strips in $(\tubes,Y)$ have tangential intersection at angle $\delta^{C\eps}$. Let $\tubes_{\mathrm{fat}}$ be the set of fat tubes associated to $\mathcal{S}$.

Then there is a refinement $(\tubes,Y^\prime)$ of $(\tubes,Y)$ so that the following holds. For each $\tube_{\delta^{1/2}}\in\tubes_{\mathrm{fat}}$ and for each $S_1,S_2\in\mathcal{S}(\tube_{\delta^{1/2}})$, if $p\in \bigcup_{\tube\in\tubes(S_1)}Y^\prime(\tube)$ and $q \in \bigcup_{\tube\in\tubes(S_2)}Y^\prime(\tube)$ with $\dist(p,q)\leq\delta^{1/2}$, then
\begin{equation}\label{stripsInsideFatTubeAreParallel}
\angle(T_pS_1, T_qS_2)\lessapprox_{\eps}\delta^{1/2}.
\end{equation}
\end{lem}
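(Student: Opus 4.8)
The obstruction is that $S_1$ and $S_2$ lie inside the common fat tube $\tube_{\delta^{1/2}}$, so $\angle\big(v(S_1),v(S_2)\big)\lessapprox_\eps\delta^{1/2}$, which is far below the threshold $\delta^{C\eps}$ at which we are allowed to invoke tangential intersection; Lemma \ref{regStripsAreTangent} does not apply to the pair $(S_1,S_2)$ directly. The plan is to bridge the comparison of $T_pS_1$ and $T_qS_2$ through an auxiliary regulus strip $S_3$ that is genuinely transverse to $\tube_{\delta^{1/2}}$ (i.e.\ $\angle(v(S_3),v(\tube_{\delta^{1/2}}))\gtrapprox_\eps 1$), and to which tangential intersection \emph{can} be applied. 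First I would pass to a common refinement $(\tubes,Y')$ furnished by Lemmas \ref{robustTransLem} and \ref{planinessProp}, so that $\sum_\tube\chi_{Y'(\tube)}\sim\mu\chi_A$ with $\mu\approx_\eps\delta^{-1/2}$, every $p\in A$ obeys the robust transversality estimate \eqref{tubesNotTooFocussed}, and $p\in Y'(\tube)$ forces $\angle\big(v(\tube),\Pi_p\big)\lessapprox_\eps\delta^{1/2}$; tangential intersection at angle $\delta^{C\eps}$ is inherited since the shadings only shrink. Applying \eqref{tubesNotTooFocussed} with $v=v(\tube_{\delta^{1/2}})$ and $\theta=\delta^{C'\eps}$ shows that at every $p\in A$ all but a $\lessapprox_\eps\delta^{C'\eps/10}$ fraction of the thin tubes through $p$ make an angle $\ge\delta^{C'\eps}$ with $v(\tube_{\delta^{1/2}})$; each such tube lies in a strip $S_3$ with $\angle(v(S_3),v(\tube_{\delta^{1/2}}))\gtrapprox_\eps\delta^{C'\eps}\ge\delta^{C\eps}$ (the direction of a strip agrees up to $\lessapprox_\eps\delta^{1/2}$ with that of each of its thin tubes). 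Thus every shading point has many transverse strips through it.

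\textbf{Ingredients.} I would assemble three elementary facts. (i) For any $S\in\mathcal S(\tube_{\delta^{1/2}})$ and any $p\in Y'(S)$, the plane $T_pS$ contains $v(\tube_{\delta^{1/2}})$ up to angle $\lessapprox_\eps\delta^{1/2}$: indeed $S$ is the $\delta^{1/2}$‑neighbourhood of a ruling line $\ell$ of a non‑degenerate regulus $R$, the tangent plane of $R$ contains $\mathrm{dir}(\ell)=v(\tube_{\delta^{1/2}})$ along $\ell$, and by Lemma \ref{GaussCurvatureOfRegulus} the Gauss map of $R$ varies at speed $\lessapprox_\eps 1$ on $B(0,1)$, so over $\dist(p,\ell)\le\delta^{1/2}$ it moves by $\lessapprox_\eps\delta^{1/2}$. (ii) By the same curvature bound, for a non‑degenerate regulus $R$ and $x,y\in B(0,1)$ with $\dist(x,y)\le\delta^{1/2}$ one has $\angle(T_xR,T_yR)\lessapprox_\eps\delta^{1/2}$. (iii) For $S\in\mathcal S(\tube_{\delta^{1/2}})$ and $p\in Y'(S)$ one has $\angle(T_pS,\Pi_p)\lessapprox_\eps\delta^{1/2}$: pick a transverse $S_3$ with $p\in Y'(S_3)$ (possible by the previous paragraph), so tangential intersection applied to $(S,S_3)$ gives $\angle(T_pS,T_pS_3)\lessapprox_\eps\delta^{1/2}$; the thin tubes $\tube\subset S$ and $\tube'\subset S_3$ through $p$ have $\gtrapprox_\eps1$‑separated directions lying (up to $\lessapprox_\eps\delta^{1/2}$) both in $\Pi_p$ (planiness) and in $T_pS$, and two planes that $\delta^{1/2}$‑nearly contain two $\gtrapprox_\eps1$‑separated directions are $\lessapprox_\eps\delta^{1/2}$‑close.

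\textbf{Finishing, and the main obstacle.} By (iii) it suffices to prove $\angle(\Pi_p,\Pi_q)\lessapprox_\eps\delta^{1/2}$ whenever $p\in Y'(S_1)$, $q\in Y'(S_2)$, $\dist(p,q)\le\delta^{1/2}$. By (i) and (iii), both $\Pi_p$ and $\Pi_q$ contain $v(\tube_{\delta^{1/2}})$ up to $\lessapprox_\eps\delta^{1/2}$, so only one further common direction is needed; this is produced by a single transverse strip $S_3$ whose shading $Y'(S_3)$ meets $Y'(S_1)$ within $O(\delta^{1/2})$ of $p$ and meets $Y'(S_2)$ within $O(\delta^{1/2})$ of $q$: two applications of tangential intersection together with (ii) (the tangent plane of $S_3$ is essentially constant over $B(p,O(\delta^{1/2}))$) then give $\angle\big(\Pi_p,\,T_{\cdot}S_3\big)\lessapprox_\eps\delta^{1/2}$ and $\angle\big(\Pi_q,\,T_{\cdot}S_3\big)\lessapprox_\eps\delta^{1/2}$, hence $\angle(\Pi_p,\Pi_q)\lessapprox_\eps\delta^{1/2}$. (If $S_1$ and $S_2$ happen to lie in a common regulus, (ii) alone finishes immediately.) I expect the existence of this bridging strip to be the technical heart of the argument: after a further pigeonholing that discards only a $\lessapprox_\eps 1$ fraction of the shading, one must arrange that for most $p$ the $\gtrapprox_\eps\mu$ transverse strips through $p$ already exhaust, up to scale $\delta^{1/2}$, the local shading inside $\tube_{\delta^{1/2}}$ — so that a transverse strip realizing $q\in Y'(S_2)\cap Y'(S_3)$ is forced to also meet $Y'(S_1)$ near $p$. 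This step uses crucially the $SL_2$‑type structure, namely that all of $\tubes(\tube_{\delta^{1/2}})$ is packed into one $\delta^{1/2}$‑tube, so the transverse strips are the only source of two‑dimensional spread there, combined with the robust transversality estimate \eqref{tubesNotTooFocussed}.
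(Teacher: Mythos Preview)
Your strategy is on the right track and your ingredients (i)--(iii) are all correct and useful; the paper's argument has exactly the same shape (bridge $T_pS_1$ to $T_qS_2$ through a transverse strip and invoke tangential intersection). But the place you flag as ``the technical heart'' is a genuine gap, and your proposed resolution --- pigeonhole so that the transverse strips through $p$ already saturate the local shading in $\tube_{\delta^{1/2}}$ and hence must hit $S_2$ near $q$ --- does not obviously work. There is no a priori reason why, among the $\gtrapprox_\eps\delta^{-1/2}$ transverse strips through $p$ and the $\gtrapprox_\eps\delta^{-1/2}$ through $q$, any single one should serve both roles; the shadings $Y'(S_1)$ and $Y'(S_2)$ live on different $\delta$-thin sheets inside the fat tube, and a transverse strip meeting one near $p$ has no mechanism forcing it to meet the other near $q$.

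The paper sidesteps this entirely by \emph{not} looking for a common bridging strip. Instead it fixes one global anchor strip $S_0$ (chosen with large hairbrush), and refines the shading so that every surviving point lies in $\bigcup_{S'\in H(S_0)}Y_1(S')$. Now at $p$ there is some $S'\in H(S_0)$ through $p$, and at $q$ some (possibly different) $S''\in H(S_0)$ through $q$. The key geometric observation is that the \emph{direction} $v(S')$ of any strip in $H(S_0)$ through $p$ is determined up to $\lessapprox_\eps\delta^{1/2}$ by $p$ alone: tangential intersection at $S'\cap S_0$ forces $v(S')$ to lie in the tangent plane of $S_0$ there, and that tangent plane must also pass (approximately) through $p$, which pins down the intersection point on $S_0$ and hence $v(S')$. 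Since this ``hairbrush direction'' is a function of $p$ varying at speed $\lessapprox_\eps 1$, moving from $p$ to $q$ changes it by $\lessapprox_\eps\delta^{1/2}$; thus $\angle(v(S'),v(S''))\lessapprox_\eps\delta^{1/2}$ even though $S'\neq S''$. Then $T_pS_1$ and $T_qS_2$ are both spanned (up to $\lessapprox_\eps\delta^{1/2}$) by $v(\tube_{\delta^{1/2}})$ and this common hairbrush direction, and you are done. The fix, in short, is to replace ``find one strip meeting both'' by ``constrain all bridging strips to lie in the hairbrush of a fixed $S_0$, where their directions are synchronized by the regulus geometry of $S_0$.''
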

\begin{proof}
For each $S\in\mathcal{S}$, define $Y(S)=\bigcup_{\tube\in\tubes(S)}Y(\tube)$. Let $Y_1(S)\subset Y(S)$ be a refinement of the shadings with $\sum_{S\in\mathcal{S}}\chi_{Y_1(S)} \sim\mu\chi_A$ for some number $\mu\approx_{\eps}\delta^{-1/2}$ and some set $A$ with $|A|\approx_{\eps}\delta^{1/2}$.

For each $S_0\in\mathcal{S}$, define the hairbrush
$$
H_{Y_1}(S_0)=\{S\in\mathcal{S}\colon Y_1(S_0)\cap Y_1(S)\neq\emptyset\}.
$$

Select a regulus strip $S_0\in\mathcal{S}$ satisfying
$$
\sum_{S\in H(S_0)}|Y_1(S)|\geq \delta^{C_1\eps + 1/2}.
$$
If the constant $C_1$ is chosen sufficiently large then at least one such regulus strip must exist.

For each $S\in \mathcal{S}\backslash H_{Y_1}(S_0)$ that is $\geq\delta^{C_2\eps}$ separated and skew from $S_0$, define
$$
Y_2(S)=Y_1(S)\cap \bigcup_{S^\prime\in H(S_0)}Y_1(S^\prime).
$$
If $S$ is not $\geq\delta^{C_2\eps}$ separated and skew from $S_0$ then define $Y_2=\emptyset$. If the constant $C_2$ is chosen sufficiently large then
$$
\sum_{S\in\mathcal{S}}|Y_2(S)|\gtrapprox_{\eps}1.
$$

Next, let $\tube_{\delta^{1/2}}\in\tubes_{\mathrm{fat}}$. Let $S_1,S_2\in\mathcal{S}(\tube_{\delta^{1/2}})$. Let $p\in Y_2(S_1),$ and let $q \in Y_2(S_2)$ be points with $\dist(p,\ q)\leq\delta^{1/2}$. Then there at least one regulus strip from $H(S_0)$ that intersects $S_1$ at the point $p$. This regulus strip must be contained in some tube from $\tubes_{\mathrm{fat}}$; call this tube $\tube_{\delta^{1/2}}^\prime$. Though there may be many regulus strips from $H(S_0)$ that intersect $S_1$ at $p$, the corresponding fat tubes containing these regulus strips will all point in the same direction up to error $\lessapprox_{\eps}\delta^{1/2}$. Thus up to error $\lessapprox_{\eps}\delta^{1/2}$, we have that $T_p(S_1)$ is the span of the vectors $v(\tube_{\delta^{1/2}})$ and $v(\tube_{\delta^{1/2}}^\prime)$.

Since $\dist(p, q)\leq\delta^{1/2}$, and since $\tube_{\delta^{1/2}}$ is $\geq\delta^{C_2\eps}$ separated and skew to the tube containing $S_0$, the regulus strip in $H(S_0)$ that intersects $S_2$ at the point $q$ is contained in some fat tube $\tube_{\delta^{1/2}}^{\prime\prime}$ with $\angle(v(\tube_{\delta^{1/2}}^\prime),\ \tube_{\delta^{1/2}}^{\prime\prime})\lessapprox_{\eps}\delta^{1/2}$.  Thus up to error $\lessapprox_{\eps}\delta^{1/2}$, we have that $T_p(S_1)$ is the span of the vectors $v(\tube_{\delta^{1/2}})$ and $v(\tube_{\delta^{1/2}}^\prime)$. In particular, this means that $\angle(T_p(S_1),\ T_q(S_2))\lessapprox_{\eps}\delta^{1/2}$.

Finally, for each $S\in\mathcal{S}$ and each $\tube\in\tubes(S)$, define $Y^\prime(\tube)=Y(\tube)\cap Y_2(S)$.
\end{proof}
\begin{defn}
Let $(\tubes=\bigsqcup_{S\in\mathcal{S}}\tubes(S),Y)$ be an $\eps$--extremal set of tubes of $SL_2$ type. We say that $(\tubes,Y)$ obeys the regulus map at coarse resolution if $(\tubes,Y)$ satisfies the conclusions of Lemma \ref{tanPlanesParallelLem}.
\end{defn}
\begin{rem}
The property of obeying the regulus map at coarse resolution is similar to the ``consistency of planes and squares'' property discussed in \cite[Section 9]{KLT}. However, the conditions under which these two properties hold are rather different.
\end{rem}
The main consequence of Lemma \ref{tanPlanesParallelLem} is that for each fat tube $\tube_{\delta^{1/2}}\in \tubes_{\mathrm{fat}}$, there is a well-defined regulus map $p\mapsto T_pR(\tube_{\delta^{1/2}})$: to compute this regulus map, select any regulus strip $S\in\mathcal{S}(\tube_{\delta^{1/2}})$. Then the regulus map $T_pR(\tube_{\delta^{1/2}})$ is simply the tangent plane $T_qS$, where $q\in S$ is a point with $\dist(p,q)\lessapprox_{\eps}\delta^{1/2}$. This regulus map is defined up to error $\lessapprox_{\eps}\delta^{1/2}$. In particular, if $S\in\mathcal{S}(\tube_{\delta^{1/2}})$, then
\begin{equation}\label{regulusMapFatTubes}
\angle\big(T_p(S),\ T_p(R(T_{\delta^{1/2}}))\big)\lessapprox_{\eps}\delta^{1/2}\quad\textrm{for all}\ p\in \bigcup_{\tube\in\tubes(S)}Y(\tube).
\end{equation}
This gives us the following analogue of Corollary \ref{hairbrushTwoTubesInFatRegulusOne}, which has an identical proof; in the corollary below, condition \eqref{thinTubeIntersectsFatTubes} is the analogue of the statement that $\tube\in H(\tube_1,\tube_2)$.
\begin{lem}\label{fatRegulusOfFatTubes}
Let $(\tubes=\bigsqcup_{S\in\mathcal{S}}\tubes(S),Y)$ be an $\eps$--extremal set of tubes of $SL_2$ type. Suppose that the regulus strips in $(\tubes,Y)$ have tangential intersection at angle $\delta^{C_1\eps}$, and that $(\tubes,Y)$ obeys the regulus map at coarse resolution. Let $\tube_{\delta^{1/2}},\tube_{\delta^{1/2}}^\prime\in\tubes_{\mathrm{fat}}$ be $\delta^{1/2}$ tubes that are $\geq\delta^{C_2\eps}$ separated and skew.

Then there is a regulus $R=R(\tube_{\delta^{1/2}},\tube_{\delta^{1/2}}^\prime)$ so that every tube $\tube\in\tubes$ satisfying
\begin{equation}\label{thinTubeIntersectsFatTubes}
Y(\tube)\ \ \ \cap\bigcup_{\tube^\prime\in\tubes(\tube_{\delta^{1/2}})}Y(\tube^\prime)\neq\emptyset,\quad \ \ \textrm{and}\quad \ \ Y(\tube)\ \ \ \cap\bigcup_{\tube^\prime\in\tubes(\tube_{\delta^{1/2}}^\prime)}Y(\tube^\prime)\neq\emptyset
\end{equation}
is contained in the $\lessapprox_{\eps}\delta^{1/2}$--neighborhood of $R$.

Furthermore, the fat tubes from $\tubes_{\mathrm{fat}}$ containing the tubes satisfying \eqref{thinTubeIntersectsFatTubes} intersect with multiplicity $\lessapprox_{\eps}1$---they are essentially the $\delta^{1/2}$ neighborhood of lines in the ruling of $R(\tube_{\delta^{1/2}},\tube_{\delta^{1/2}}^\prime)$.
\end{lem}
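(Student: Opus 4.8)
The plan is to transpose the proof of Lemma~\ref{thetaScaleHairbrush} (and its consequence Corollary~\ref{hairbrushTwoTubesInFatRegulusOne}) up one level of scale, with the regulus map scale $\theta$ replaced by $\delta^{1/2}$ and the tube $\tube_1$ replaced by the fat tube $\tube_{\delta^{1/2}}$. Write $\ell_1,\ell_2$ for the core lines of $\tube_{\delta^{1/2}},\tube_{\delta^{1/2}}^\prime$; by hypothesis they are $\geq\delta^{C_2\eps}$ separated and skew. Since $\tube_{\delta^{1/2}}\in\tubes_{\mathrm{fat}}$ carries a coarse regulus map $p\mapsto T_pR(\tube_{\delta^{1/2}})$ whose underlying regulus is non-degenerate, Lemma~\ref{GaussCurvatureOfRegulus} shows this plane rotates at speed $\approx_{\eps}1$ along $\ell_1$. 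First I would cover $\tube_{\delta^{1/2}}$ by $\approx_{\eps}\delta^{-1/2}$ disjoint segments $W$ of length $\delta^{1/2}$; over each $W$ the plane $T_pR(\tube_{\delta^{1/2}})$ varies by $\lessapprox_{\eps}\delta^{1/2}$, so I attach to $W$ the ``slab'' $\sigma_W=N_{\lessapprox_{\eps}\delta^{1/2}}\big(T_{p}R(\tube_{\delta^{1/2}})\big)$ for any $p\in W$: this is the $\delta^{1/2}$--neighborhood of a plane containing $\ell_1$ to within error $\lessapprox_{\eps}\delta^{1/2}$.

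The crucial step is to show that every thin tube $\tube$ satisfying \eqref{thinTubeIntersectsFatTubes} is contained in $\sigma_W$ for the segment $W$ at which $Y(\tube)$ meets $\bigcup_{\tube^\prime\in\tubes(\tube_{\delta^{1/2}})}Y(\tube^\prime)$; equivalently, if $p\in W$ is a point of this intersection then $\angle\big(v(\tube),\ T_pR(\tube_{\delta^{1/2}})\big)\lessapprox_{\eps}\delta^{1/2}$. To prove this I would pick a strip $S_0\in\mathcal{S}(\tube_{\delta^{1/2}})$ with $p\in\bigcup_{\tube^\prime\in\tubes(S_0)}Y(\tube^\prime)$ and the strip $S(\tube)\in\mathcal{S}$ with $\tube\in\tubes(S(\tube))$, so that $p\in S_0\cap S(\tube)$. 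A thin tube lying in a regulus strip has its direction tangent to that strip's (non-degenerate) regulus up to error $\lessapprox_{\eps}\delta^{1/2}$, so $\angle\big(v(\tube),\ T_pS(\tube)\big)\lessapprox_{\eps}\delta^{1/2}$. If $\angle\big(v(S(\tube)),\ v(\tube_{\delta^{1/2}})\big)\geq\delta^{C_1\eps}$ then the tangential--intersection hypothesis (Lemma~\ref{regStripsAreTangent}) gives $\angle\big(T_pS(\tube),\ T_pS_0\big)\lessapprox_{\eps}\delta^{1/2}$, while \eqref{regulusMapFatTubes} gives $\angle\big(T_pS_0,\ T_pR(\tube_{\delta^{1/2}})\big)\lessapprox_{\eps}\delta^{1/2}$; chaining the three bounds gives the claim. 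In the opposite case $v(\tube)$ is already $\lessapprox_{\eps}\delta^{1/2}$--close to $v(\tube_{\delta^{1/2}})=v(\ell_1)\in T_pR(\tube_{\delta^{1/2}})$, so the claim is trivial.

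Granting this, the argument finishes just as in Lemma~\ref{thetaScaleHairbrush}. Because $\ell_1,\ell_2$ are $\geq\delta^{C_2\eps}$ separated and skew, $v(\ell_2)$ makes angle $\gtrapprox_{\eps}1$ with every plane through $\ell_1$, so each slab $\sigma_W$ meets $\tube_{\delta^{1/2}}^\prime$ in a segment of length $\approx_{\eps}\delta^{1/2}$ and, as $W$ varies, these segments are $\lessapprox_{\eps}1$ overlapping. Hence a thin tube satisfying \eqref{thinTubeIntersectsFatTubes} passes through a fixed segment $W$ of $\tube_{\delta^{1/2}}$ and through the short segment of $\tube_{\delta^{1/2}}^\prime$ that $\sigma_W$ picks out, so it is confined to one of $\lessapprox_{\eps}\delta^{-1/2}$ distinct, $\lessapprox_{\eps}1$ overlapping $\approx_{\eps}\delta^{1/2}$--tubes. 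Fixing a ruling line $\ell_1^\prime$ of the underlying regulus of $\tube_{\delta^{1/2}}$ in the same ruling as $\ell_1$ that is $\approx_{\eps}1$ separated and skew from $\ell_1$ (which exists by Lemma~\ref{ThreeSkewLinesTwoTransversals}, exactly as in Remark~\ref{virtualTube}), the same computation as in Lemma~\ref{thetaScaleHairbrush} shows each $\sigma_W$ is comparable to the $\delta^{1/2}$--neighborhood of a plane spanned by $\ell_1$ and a point of $\ell_1^\prime$, so each of the $\approx_{\eps}\delta^{1/2}$--tubes above contains a line meeting $\ell_1$, $\ell_2$, and $\ell_1^\prime$. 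After discarding a negligible family of degenerate configurations (so that $\ell_1,\ell_2,\ell_1^\prime$ are pairwise separated and skew and far from a common plane, cf.\ Lemma~\ref{regulusThreeLinesFarFromCommonPlane}), these three lines generate a $\delta^{C\eps}$ non-degenerate regulus $R=R(\tube_{\delta^{1/2}},\tube_{\delta^{1/2}}^\prime)$, and every line meeting all three of them lies on $R$; therefore each of the $\approx_{\eps}\delta^{1/2}$--tubes, and hence each thin tube satisfying \eqref{thinTubeIntersectsFatTubes}, lies in $N_{\lessapprox_{\eps}\delta^{1/2}}(R)$. Finally, the fat tube from $\tubes_{\mathrm{fat}}$ containing such a thin tube is comparable to one of these $\approx_{\eps}\delta^{1/2}$--tubes, so these fat tubes are $\lessapprox_{\eps}1$ overlapping and are, up to error $\lessapprox_{\eps}\delta^{1/2}$, the $\delta^{1/2}$--neighborhoods of lines in the ruling of $R$.

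I expect the main obstacle to be the claim of the second paragraph: one must bound the direction of an arbitrary thin tube $\tube$ satisfying \eqref{thinTubeIntersectsFatTubes} by the coarse regulus map of $\tube_{\delta^{1/2}}$ even though $\tube$ need not belong to $\tubes(\tube_{\delta^{1/2}})$, and this forces one to play the tangential--intersection property of regulus strips (Lemma~\ref{regStripsAreTangent}) off against the coarse regulus map \eqref{regulusMapFatTubes}. Once this is in place, the remaining content is a faithful scale-$\delta^{1/2}$ rewriting of the proof of Lemma~\ref{thetaScaleHairbrush}, which is why the lemma can be said to have ``an identical proof.''
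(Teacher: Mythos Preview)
Your proposal is correct in its overall structure and matches the paper's approach: the paper offers no proof beyond the remark that the lemma ``has an identical proof'' to Corollary~\ref{hairbrushTwoTubesInFatRegulusOne}, and your transposition of Lemma~\ref{thetaScaleHairbrush} to scale $\delta^{1/2}$ is exactly the intended argument.

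There is one genuine gap, in precisely the place you flagged as the main obstacle. In the small-angle case $\angle\big(v(S(\tube)),v(\tube_{\delta^{1/2}})\big)<\delta^{C_1\eps}$, your assertion that $v(\tube)$ is $\lessapprox_{\eps}\delta^{1/2}$--close to $v(\ell_1)$ is false: chaining $\angle\big(v(\tube),v(S(\tube))\big)\lessapprox_{\eps}\delta^{1/2}$ with the small-angle hypothesis only yields $\angle\big(v(\tube),v(\ell_1)\big)\lessapprox\delta^{C_1\eps}$, which is far weaker than $\delta^{1/2}$ and does not place $\tube$ in the slab. The tangential-intersection hypothesis simply gives no information in this regime.

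A more faithful transposition of the original proof avoids the case split by using planiness directly, exactly as Lemma~\ref{thetaScaleHairbrush} does. At $p\in Y(\tube)\cap Y(\tube')$ with $\tube'\in\tubes(S_0)$, $S_0\in\mathcal{S}(\tube_{\delta^{1/2}})$, planiness (Lemma~\ref{planinessProp}) gives $\angle\big(v(\tube),\Pi_p\big)\lessapprox_{\eps}\delta^{1/2}$, so it suffices to show $\angle\big(\Pi_p,T_pR(\tube_{\delta^{1/2}})\big)\lessapprox_{\eps}\delta^{1/2}$. By robust transversality (Lemma~\ref{robustTransLem}) there are two tubes through $p$ making angle $\gtrapprox_{\eps}1$; each lies in some regulus strip whose tangent plane at $p$ agrees with $T_pS_0$ to within $\lessapprox_{\eps}\delta^{1/2}$ by the tangential-intersection hypothesis, so these two transverse directions lie simultaneously in $\Pi_p$ and in $T_pS_0$, forcing $\angle(\Pi_p,T_pS_0)\lessapprox_{\eps}\delta^{1/2}$. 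Then \eqref{regulusMapFatTubes} gives $T_pS_0\approx T_pR(\tube_{\delta^{1/2}})$. This handles every $\tube$ satisfying \eqref{thinTubeIntersectsFatTubes} uniformly, with no angular dichotomy, and the remainder of your argument then goes through unchanged.
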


We are now ready to prove the main result of this section.
\begin{lem}[Regulus strips are $\delta^{1/2}$--separated]\label{regStripsSeparatedLem}
Let $(\tubes=\bigsqcup_{S\in\mathcal{S}}\tubes(S),Y)$ be an $\eps$--extremal set of tubes of $SL_2$ type. Suppose that the regulus strips in $(\tubes,Y)$ have tangential intersection at angle $\delta^{C\eps}$, and that $(\tubes,Y)$ obeys the regulus map at coarse resolution. Let $\tubes_{\mathrm{fat}}$ be the associated set of fat tubes.

Then there is a set $\mathcal{S}^\prime\subset\mathcal{S}$ with
$$
\sum_{S\in\mathcal{S}^\prime}\sum_{\tube\in\tubes(S)}|Y(\tube)|\gtrapprox_{\eps} 1
$$
so that each fat tube $\tube_{\delta^{1/2}}\in \tubes_{\mathrm{fat}}$ contains at most one regulus strip from $\mathcal{S}^\prime$.
\end{lem}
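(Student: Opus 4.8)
The plan is to show that if a fat tube $\tube_{\delta^{1/2}}$ contained two well-separated regulus strips $S_1,S_2$, then the $\delta^{1/2}$--scale geometry of the $SL_2$ example would force a Wolff-type volume bound that contradicts $\eps$--extremality, so after discarding the few strips that \emph{do} share a fat tube we retain most of the shading. More precisely, I would first pigeonhole to a homogeneous refinement: pass to $Y_1$ with $\sum_{S}\chi_{Y_1(S)}\sim\mu\chi_A$, $\mu\approx_\eps\delta^{-1/2}$, $|A|\approx_\eps\delta^{1/2}$, where $Y_1(S)=\bigcup_{\tube\in\tubes(S)}Y_1(\tube)$. The key point is that all strips living in one fat tube $\tube_{\delta^{1/2}}$ have \emph{parallel} tangent planes (Lemma \ref{tanPlanesParallelLem}, i.e.\ $(\tubes,Y)$ obeys the regulus map at coarse resolution): so if $S_1,S_2$ are distinct strips in the same fat tube, they both lie in the $\delta$--neighborhood of \emph{the same} non-degenerate regulus $R(\tube_{\delta^{1/2}})$, but are translated along its ruling. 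Two such strips, being translates of one another inside $R(\tube_{\delta^{1/2}})$ at curvature $\approx_\eps1$, intersect in a set of measure $\lessapprox_\eps\delta^{5/2}/(\text{their separation})$ — the same kind of prism estimate used in Lemma \ref{regStripsAreTangent}.

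Next I would run the counting argument. Suppose, for contradiction, that there is no refinement $\mathcal S'$ with most of the shading in which each fat tube contains $\le 1$ strip from $\mathcal S'$. Then after dyadic pigeonholing there is a scale $\sigma$ with $\delta^{1/2}\le\sigma\le1$ and a subfamily carrying $\gtrapprox_\eps1$ of the shading such that for each $S_1$ in the subfamily there are $\gtrsim\sigma^{-1}$ other strips $S_2$ in the same fat tube (hence in $R(\tube_{\delta^{1/2}}(S_1))$) that are $\approx_\eps\sigma$ separated from $S_1$ along the ruling and with $Y(S_1)\cap Y(S_2)\ne\emptyset$. Using the homogeneity $\sum_S\chi_{Y_1(S)}\sim\mu\chi_A$ exactly as in the proof of Lemma \ref{regStripsAreTangent}: each such $S_2$ contributes $\gtrapprox_\eps\delta^2$ to $|B(p_{S_1,S_2},\delta^{1/2})\cap Y_1(S_2)|$, summing over the $\gtrsim\sigma^{-1}$ values of $S_2$ gives $\gtrapprox_\eps\delta^2\sigma^{-1}$ shading inside $N_{\delta^{1/2}\sigma'}(\tube_{\delta^{1/2}}(S_1))$ for an appropriate $\sigma'\approx_\eps\sigma$ capturing the slab thickness, and by the bounded overlap $\sum_S\chi_{Y_1(S)}\lessapprox_\eps\delta^{-1/2}$ one deduces
$$
\Big|N_{\delta^{1/2}\sigma'}(\tube_{\delta^{1/2}}(S_1))\cap\bigcup_{\tube\in\tubes}Y(\tube)\Big|\gtrapprox_\eps\big|N_{\delta^{1/2}\sigma'}(\tube_{\delta^{1/2}}(S_1))\big|
$$
for a refinement of the fat tubes carrying most of the mass. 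Since this holds for $\gtrapprox_\eps\delta^{-1}$ fat tubes, Theorem \ref{WolffBd} (at scale $\delta^{1/2}\sigma'$, applied to the fat tubes, which satisfy the Wolff axioms) yields $\big|\bigcup_{\tube\in\tubes}Y(\tube)\big|\gtrapprox_\eps(\delta^{1/2}\sigma')^{1/2}\gtrsim\delta^{1/4}\sigma^{1/2}$, contradicting \eqref{smallVolume} unless $\sigma\lessapprox_\eps\delta^{1/2}$, i.e.\ no genuine splitting at any scale $\gg\delta^{1/2}$.

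Finally, given that conclusion, I would build $\mathcal S'$ by a greedy/pigeonholing step: for each fat tube discard all but the single strip that carries the largest share of the shading among strips contained in that fat tube; since at scale $\sigma\approx\delta^{1/2}$ the strips in a common fat tube essentially coincide (up to $O(1)$ overlap of their shadings), this loses only a $\delta^{O(\eps)}$ factor of $\sum_{S}\sum_{\tube\in\tubes(S)}|Y(\tube)|$, leaving $\sum_{S\in\mathcal S'}\sum_{\tube\in\tubes(S)}|Y(\tube)|\gtrapprox_\eps1$ while ensuring each fat tube contains at most one strip from $\mathcal S'$. The main obstacle, and the step deserving the most care, is the second one: correctly identifying the slab thickness $\sigma'$ so that the intersection sets $Y(S_1)\cap Y(S_2)$ genuinely fill a $\delta^{1/2}\times\delta\sigma^{-1}\times\delta$ prism inside $N_{\delta^{1/2}\sigma'}(\tube_{\delta^{1/2}}(S_1))$ and verifying that the multiplicity bound from \eqref{boundedOverlap} is not lost when passing to this neighborhood — this is exactly where the hypothesis that $(\tubes,Y)$ obeys the regulus map at coarse resolution (Lemma \ref{tanPlanesParallelLem}), hence that all these strips sit in one regulus with $|K_p|\approx_\eps1$, is essential, and where the proof of Lemma \ref{regStripsAreTangent} should be imitated almost verbatim.
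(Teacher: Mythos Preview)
Your approach has a genuine gap in the geometry of step two. You assert that two distinct regulus strips $S_1,S_2$ contained in the same fat tube $\tube_{\delta^{1/2}}$ are ``translates of one another along the ruling'' and hence intersect in a prism of measure $\lessapprox_\eps\delta^{5/2}/\sigma$. But under the hypothesis that $(\tubes,Y)$ obeys the regulus map at coarse resolution (Lemma~\ref{tanPlanesParallelLem}), the tangent planes of $S_1$ and $S_2$ agree up to $\lessapprox_\eps\delta^{1/2}$ at every $\delta^{1/2}$--ball along the fat tube. This means that at each such ball, $S_1$ and $S_2$ are \emph{parallel} $\delta^{1/2}\times\delta^{1/2}\times\delta$ slabs, possibly offset in the normal direction by some amount $\sigma\in[\delta,\delta^{1/2}]$. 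If $\sigma\gg\delta$, these slabs are \emph{disjoint}, so $Y(S_1)\cap Y(S_2)=\emptyset$, and your pigeonholing to pairs with $Y(S_1)\cap Y(S_2)\neq\emptyset$ produces nothing. There is no analogue of the angle parameter $t_0$ from Lemma~\ref{regStripsAreTangent} here: that lemma concerns strips in \emph{different} fat tubes meeting at a genuine angle, whereas here the strips are essentially parallel ribbons stacked inside a single fat tube. The ``separation along the ruling'' picture is simply not the correct geometry.

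The paper's proof takes a completely different route. It introduces the multiplicity $\mu_{\mathrm{fine}}$ with which thin tubes overlap inside each fat tube, and shows via Wolff's hairbrush (applied to tubes inside a single fat tube, intersecting a dual line of the regulus) that $|\mathcal{S}(\tube_{\delta^{1/2}})|\gtrapprox_\eps\mu_{\mathrm{fine}}^2$, hence $|\tubes_{\mathrm{fat}}|\lessapprox_\eps\delta^{-3/2}\mu_{\mathrm{fine}}^{-2}$. The heart of the argument is then an incidence count for \emph{quadruples} of fat tubes $(\tube_{\delta^{1/2}},\tube_{\delta^{1/2}}',\tube_{\delta^{1/2}}'',\tube_{\delta^{1/2}}''')$ with $\tube_{\delta^{1/2}}',\tube_{\delta^{1/2}}'',\tube_{\delta^{1/2}}'''\in H(\tube_{\delta^{1/2}})$ pairwise skew and $\tube_{\delta^{1/2}}'''$ transverse to $R(\tube_{\delta^{1/2}}',\tube_{\delta^{1/2}}'')$: on one hand such a quadruple determines $\tube_{\delta^{1/2}}$ up to $\lessapprox_\eps 1$ choices (giving $|\mathcal{Q}|\lessapprox_\eps|\tubes_{\mathrm{fat}}|^3$), while on the other hand the abundance of \emph{thin}--tube quadruples from Remark~\ref{lotsOfProperlyIntersectingQuadruples} forces $|\mathcal{Q}|$ to be large. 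Playing these bounds against the inequality $|\tubes_{\mathrm{fat}}|\lessapprox_\eps\delta^{-3/2}\mu_{\mathrm{fine}}^{-2}$ yields $\mu_{\mathrm{fine}}\lessapprox_\eps 1$, which is exactly the statement that each fat tube contains $\lessapprox_\eps 1$ regulus strips. No volume contradiction of the Lemma~\ref{regStripsAreTangent} type is used.
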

\begin{proof}

For each fat tube $\tube_{\delta^{1/2}}\in \tubes_{\mathrm{fat}}$, let $Y^\prime$ be a refinement of the shadings of the tubes in $\tubes(\tube_{\delta^{1/2}})$ so that
$$
\sum_{\tube\in \tubes(\tube_{\delta^{1/2}})}\chi_{Y^\prime(\tube)}\sim \mu_{\tube_{\delta^{1/2}}}\chi_{A_{\tube_{\delta^{1/2}}}}
$$
for some number $\mu_{\tube_{\delta^{1/2}}}$ and some set $A_{\tube_{\delta^{1/2}}}$. After dyadic pigeonholing and refining the set $\tubes_{\mathrm{fat}}$ slightly, we can assume that there is number $\mu_{\operatorname{fine}}$ so that $\mu_{\tube_{\delta^{1/2}}}\sim \mu_{\operatorname{fine}}$ for each $\tube_{\delta^{1/2}}\in \tubes_{\mathrm{fat}}$.

Let $\tube_{\delta^{1/2}}\in \tubes_{\mathrm{fat}}$. Let $S\in\mathcal{S}(\tube_{\delta^{1/2}})$. Let $R$ be the regulus associated to $S$, and let $L$ be a line in the ruling of $R$ dual to the ruling containing the tubes of $S$. Thus $L$ makes an angle $\gtrapprox_{\eps}1$ with every tube in $\tubes(S)$, and $L$ intersects the $\approx_{\eps}\delta$ neighborhood of every tube in $\tubes(S)$). Furthermore, we can choose $L$ so that
\begin{equation}\label{LCapAIsBig}
|L\cap A_{\tube_{\delta^{1/2}}}|\approx_{\eps}\delta^{1/2}
\end{equation}
(recall that the line $L$ intersects $S$ in a line-segment of length roughly $\delta^{1/2}$, so \eqref{LCapAIsBig} asserts that most of this intersection is contained in $A_{\tube_{\delta^{1/2}}}$).

There are $\gtrapprox_{\eps}\mu_{\operatorname{fine}}\delta^{-1/2}$ tubes from $\tubes(\tube_{\delta^{1/2}})$ that intersect $L$. Since this set of tubes obeys the Wolff axioms, and each of these tubes makes an angle $\gtrapprox_{\eps}1$ with $L$, we can apply Lemma \ref{WolffHairbrushBd} to conclude that the union of these tubes has volume $\gtrapprox_{\eps}\mu_{\operatorname{fine}}\delta^{3/2}$.

For a $\gtrapprox_{\eps}1$ fraction of these tubes, we have $|Y^\prime(\tube)\cap A_{\tube_{\delta^{1/2}}}|\gtrapprox_{\eps}|\tube|\sim\delta^2$, so
$$
|A_{\tube_{\delta^{1/2}}}|\gtrapprox_{\eps}\delta^{3/2}\mu_{\operatorname{fine}}.
$$
We conclude that
\begin{equation}\label{sumOverTubesInFatTubeMass}
\sum_{\tube\in\tubes(\tube_{\delta^{1/2}})}|Y^\prime(\tube)|\approx_{\eps}\mu_{\operatorname{fine}}|A_{\tube_{\delta^{1/2}}}|\gtrapprox_{\eps}\mu_{\operatorname{fine}}^2\delta^{3/2}.
\end{equation}
 Since the tubes in each regulus strip $S\in\mathcal{S}(\tube_{\delta^{1/2}})$ contribute $\lessapprox_{\eps}\delta^{3/2}$ to the above sum, we conclude that
 \begin{equation}\label{lowerBoundNumberOfRegulusStrips}
 |\mathcal{S}(\tube_{\delta^{1/2}})|\gtrapprox_{\eps}\mu_{\operatorname{fine}}^2.
 \end{equation}
Inequality \eqref{lowerBoundNumberOfRegulusStrips} holds for each $S\in\mathcal{S}$. Since $|\mathcal{S}|\lessapprox_{\eps}\delta^{-3/2}$ and the sets $\{\mathcal{S}(\tube_{\delta^{1/2}})\colon \tube_{\delta^{1/2}}\in\tubes_{\operatorname{fat}}\}$ are disjoint, we have that
\begin{equation}\label{boundOnNumberFatTubes}
|\tubes_{\mathrm{fat}}|\lessapprox_{\eps}\delta^{-3/2}\mu_{\operatorname{fine}}^{-2}.
\end{equation}

For each $\tube_{\delta^{1/2}}\in\tubes_{\mathrm{fat}}$, define the hairbrush
$$
H(\tube_{\delta^{1/2}})=\Big\{\tube_{\delta^{1/2}}^\prime\in \tubes_{\mathrm{fat}} \colon \bigcup_{\tube\in\tubes(\tube_{\delta^{1/2}}^\prime)}Y^\prime(\tube)\ \ \cap  \bigcup_{\tube\in\tubes(\tube_{\delta^{1/2}})}Y^\prime(\tube) \neq\emptyset \Big\}.
$$

Next, we will count quadruples $(\tube_{\delta^{1/2}},\tube_{\delta^{1/2}}^\prime,\tube_{\delta^{1/2}}^{\prime\prime},\tube_{\delta^{1/2}}^{\prime\prime\prime})\in(\tubes_{\mathrm{fat}})^4$ where $\tube_{\delta^{1/2}}^\prime,\tube_{\delta^{1/2}}^{\prime\prime},\tube_{\delta^{1/2}}^{\prime\prime\prime}$ are in the hairbrush of $\tube_{\delta^{1/2}}$, and where the hairbrush of $\tube_{\delta^{1/2}}^\prime,\tube_{\delta^{1/2}}^{\prime\prime},\tube_{\delta^{1/2}}^{\prime\prime\prime}$ contains few fat tubes. Define

\begin{equation}\label{defnOfQ}
\begin{split}
\mathcal{Q} = \big\{(\tube_{\delta^{1/2}},&\tube_{\delta^{1/2}}^\prime,\tube_{\delta^{1/2}}^{\prime\prime},\tube_{\delta^{1/2}}^{\prime\prime\prime})\in (\tubes_{\mathrm{fat}})^4\colon \tube_{\delta^{1/2}}^\prime,\tube_{\delta^{1/2}}^{\prime\prime},\tube_{\delta^{1/2}}^{\prime\prime\prime}\in H(\tube_{\delta^{1/2}});\\
& \tube_{\delta^{1/2}}^\prime,\tube_{\delta^{1/2}}^{\prime\prime},\tube_{\delta^{1/2}}^{\prime\prime\prime}\ \textrm{are}\ \geq \delta^{C_1\eps}\ \textrm{separated and skew};\\& \angle\big(v(\tube_{\delta^{1/2}}^{\prime\prime\prime}),\ T_p(R(\tube_{\delta^{1/2}}^\prime,\tube_{\delta^{1/2}}^{\prime\prime})) \big)\geq \delta^{C_1\eps}\ \textrm{for all}\ p\in \tube_{\delta^{1/2}}^{\prime\prime\prime}\cap R(\tube_{\delta^{1/2}}^\prime,\tube_{\delta^{1/2}}^{\prime\prime}) \big\}.
\end{split}
\end{equation}
The constant $C_1$ will be chosen below.

First, if $(\tube_{\delta^{1/2}},\tube_{\delta^{1/2}}^\prime,\tube_{\delta^{1/2}}^{\prime\prime},\tube_{\delta^{1/2}}^{\prime\prime\prime})\in\mathcal{Q}$, then there are $\lessapprox_{\eps}1$ possible tubes $\tube_{\delta^{1/2}}^*\in\tubes_{\mathrm{fat}}$ for which $(\tube_{\delta^{1/2}}^*,\tube_{\delta^{1/2}}^\prime,\tube_{\delta^{1/2}}^{\prime\prime},\tube_{\delta^{1/2}}^{\prime\prime\prime})\in\mathcal{Q}$ (the implicit constant in the $\lessapprox_{\eps}$ depends on $C_1$). This is because the set of fat tubes that are incident to $\tube_{\delta^{1/2}}^\prime$ and $\tube_{\delta^{1/2}}^{\prime\prime}$ intersect with multiplicity $\lessapprox_{\eps}1$, and the tube $\tube_{\delta^{1/2}}^*$ must also intersect the set $\tube_{\delta^{1/2}}^{\prime\prime\prime}\cap N_{\delta^{1/2}}\big( R(\tube_{\delta^{1/2}}^\prime,\tube_{\delta^{1/2}}^{\prime\prime})\big)$. The latter set is contained in a ball of radius $\lessapprox_{\eps}\delta^{1/2}$.

Thus we have
\begin{equation}\label{QSmall}
|\mathcal{Q}| \leq \delta^{-\eps}|\tubes_{\mathrm{fat}}|^3.
\end{equation}

Next we will compute a lower bound for $|\mathcal{Q}|$. By Remark \ref{lotsOfProperlyIntersectingQuadruples} there are $\gtrapprox_{\eps}\delta^{-13/2}$ quadruples $(\tube,\tube_1,\tube_2,\tube_3)\in\tubes^4$ with $\tube\in H(\tube_1,\tube_2,\tube_3)$ and $\angle(v(\tube_3), T_pR(\tube_1,\tube_2))\geq\delta^{C_1\eps}$ for all points $p\in\tube_3\cap R(\tube_1,\tube_2)$. We will show that this large set of quadruples of $\delta$-tubes forces the existence of a large set of fat tubes with the same properties.

Observe that if $ (\tube_{\delta^{1/2}},\tube_{\delta^{1/2}}^\prime,\tube_{\delta^{1/2}}^{\prime\prime},\tube_{\delta^{1/2}}^{\prime\prime\prime})\in\mathcal{Q}$, then $\lessapprox_{\eps}\delta^{-7/2}\mu_{\operatorname{fine}}^3|\tubes_{\mathrm{fat}}|$ of the above quadruples of thin tubes $(\tube,\tube_1,\tube_2,\tube_3)$ can satisfy
\begin{equation*}
\tube\in\tubes(\tube_{\delta^{1/2}}),\quad \tube_1\in\tubes(\tube_{\delta^{1/2}}^\prime),\quad \tube_2\in \tubes(\tube_{\delta^{1/2}}^{\prime\prime}),\quad \textrm{and}\quad\tube_3\in \tubes(\tube_{\delta^{1/2}}^{\prime\prime\prime}).
\end{equation*}
To see this, note that $|\tubes(\tube_{\delta^{1/2}})|\lessapprox_{\eps}\delta^{-2}|\tubes_{\mathrm{fat}}|$. For each pair $(\tube,\tube_1)$ with $\tube_1\in H(\tube)$, let $\tube_{\delta^{1/2}}^\prime$ be the fat tube containing $\tube_1$. There are $\lessapprox_{\eps}\mu_{\operatorname{fine}}\delta^{-1/2}$ tubes $\tube_1^\prime\in\tubes(\tube_{\delta^{1/2}}^\prime)$ that satisfy $\tube_1^\prime\in H(\tube)$. Similarly for $\tube_2$ and $\tube_3$.

Thus
\begin{equation}\label{QBig}
|\mathcal{Q}|\gtrapprox_{\eps}\delta^{-13/2}\Big(\delta^{-7/2}\mu_{\operatorname{fine}}^3|\tubes_{\mathrm{fat}}|\Big)^{-1}=\delta^{-3}\mu_{\operatorname{fine}}^3|\tubes_{\mathrm{fat}}|.
\end{equation}

Combining \eqref{QSmall} and \eqref{QBig} we obtain
\begin{equation*}
\delta^{-3}\mu_{\operatorname{fine}}^{-3}|\tubes_{\mathrm{fat}}| \lessapprox_\eps |\tubes_{\mathrm{fat}}|^3,
\end{equation*}
so by \eqref{boundOnNumberFatTubes},
$$
\delta^{-3}\mu_{\operatorname{fine}}^{-3}\lessapprox_{\eps}|\tubes_{\mathrm{fat}}|^2\lessapprox_{\eps}\delta^{-3}\mu_{\operatorname{fine}}^{-4}
$$
and thus
\begin{equation}
\mu_{\operatorname{fine}}\lessapprox_{\eps}1,\quad |\tubes_{\mathrm{fat}}|\gtrapprox_{\eps}\delta^{-3/2}.
\end{equation}
This implies that each tube in $|\tubes_{\mathrm{fat}}|$ contains $\lessapprox_{\eps}1$ regulus strips from $\mathcal{S}$. Select one regulus strip from each fat tube and denote the resulting set of regulus strips by $\mathcal{S}^\prime$.
\end{proof}
\subsection{Discretization at scale $\delta^{1/2}$}\label{discetizationScaleDelta12Sec}
\begin{defn}
We define a ``grain'' to be a set $G$ of the form $Q\cap N_{\delta^{1-C\eps}}(\Pi)$, where $Q=Q(G)$ is a cube of side-length $\delta^{1/2}$ that is aligned with the grid $(\delta^{1/2}\ZZ)^3$, $\Pi=\Pi(G)$ is a plane, and $C$ is an absolute constant.
\end{defn}

We will choose the constant $C$ in the above definition sufficiently large so that the following holds. Let $(\tubes=\bigsqcup_{S\in\mathcal{S}}\tubes(S),Y)$ be an $\eps$--extremal set of tubes of $SL_2$ type, and suppose that the regulus strips in $(\tubes,Y)$ have tangential intersection at angle $\delta^{C_1\eps}$. If $S_1,S_2\in\mathcal{S}$ with $\angle(v(S_1), v(S_2))\geq\delta^{C_1\eps}$ and if $Q$ is a $\delta^{1/2}$ cube with
$$
Q\cap \bigcup_{\tube\in\tubes(S_1)}Y(\tube)\cap \bigcup_{\tube\in\tubes(S_2)}Y(\tube)\neq\emptyset,
$$
then $Q\cap (S_1\cup S_2)$ is contained in a grain.

\begin{defn}
If $\mathcal{S}$ is a set of regulus strips and if $G$ is a grain, define
$$
\mathcal{S}(G)=\{S\in\mathcal{S}\colon S\cap Q(G)\subset G\}.
$$
\end{defn}

\begin{lem}\label{refinementToSquares}
Let $(\tubes=\bigsqcup_{S\in\mathcal{S}}\tubes(S),Y)$ be an $\eps$--extremal set of tubes of $SL_2$ type, and let $\tubes_{\mathrm{fat}}$ be the associated set of fat tubes. Suppose that the regulus strips in $(\tubes,Y)$ have tangential intersection at angle $\delta^{C_1\eps}$, and that at most one regulus strip $S\in\mathcal{S}$ is contained in each fat tube.

Then there is a set $\mathcal{G}$ of grains with the following properties.

\begin{itemize}
\item $|\mathcal{G}|\gtrapprox_{\eps}\delta^{-3/2}$.
\item Each $\delta^{1/2}$ cube (aligned with the $\delta^{1/2}$ grid) contains at most one grain.
\item For each $G\in\mathcal{G}$, we have $|\mathcal{S}(G)|\gtrapprox_{\eps}\delta^{-1/2}$.
\end{itemize}
\end{lem}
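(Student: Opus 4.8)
The plan is to show that an $\eps$--extremal set of $SL_2$ type is ``grainy'' at scale $\delta^{1/2}$: inside a typical $\delta^{1/2}$ cube $Q$, all of the regulus strips meeting $Q$ should lie in one common $\delta^{1-C\eps}$--slab, and the set $\mathcal{G}$ will consist of one such slab (fattened into a grain) for each sufficiently rich cube. First I would reduce to a convenient situation. By Lemma \ref{regStripsSeparatedLem} (and the bounds in its proof) we may pass to a refinement with $|\mathcal{S}|\approx_\eps\delta^{-3/2}$, in which each strip $S\in\mathcal{S}$ lies in a distinct fat tube $\tube_{\delta^{1/2}}(S)$, the shading $Y(S):=\bigcup_{\tube\in\tubes(S)}Y(\tube)$ satisfies $|Y(S)|\approx_\eps\delta^{3/2}$ and is boundedly overlapping inside $\tube_{\delta^{1/2}}(S)$ (the assertion $\mu_{\operatorname{fine}}\lessapprox_\eps 1$), and, after applying Lemma \ref{robustTransLem}, at each point $p$ of the relevant shading a $\lessapprox_\eps 1$ fraction of the tubes through $p$ point within $\delta^{C\eps}$ of any fixed direction. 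I would keep the standing hypotheses: the strips have tangential intersection at angle $\delta^{C_1\eps}$, and the choice of $C$ in the definition of a grain guarantees (the display preceding the statement of the lemma) that whenever two strips $S_1,S_2$ with $\angle(v(S_1),v(S_2))\geq\delta^{C_1\eps}$ have shadings meeting in a $\delta^{1/2}$ cube $Q$, then $Q\cap(S_1\cup S_2)$ lies in a grain.

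Next I would set up the combinatorics. Partition the unit ball into the $\sim\delta^{-3/2}$ cubes of the grid $(\delta^{1/2}\ZZ)^3$. Since $S$ lies in a $\delta^{1/2}$ tube and $|S\cap Q|=\delta^2$ for each cube it meets, $S$ meets $\lesssim\delta^{-1/2}$ cubes, and because $|Y(S)|\approx_\eps\delta^{3/2}$ a dyadic pigeonhole lets me discard the ``sliver'' intersections and attach to each $S$ a family $\mathcal{Q}(S)$ of $\approx_\eps\delta^{-1/2}$ cubes with $|Y(S)\cap Q|\approx_\eps\delta^2$ for $Q\in\mathcal{Q}(S)$; hence $\#\{(S,Q):Q\in\mathcal{Q}(S)\}\approx_\eps\delta^{-2}$. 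Call $Q$ \emph{rich} if $\gtrapprox_\eps\delta^{-1/2}$ strips have $Q\in\mathcal{Q}(S)$; a popularity argument shows the rich cubes carry a $\gtrapprox_\eps 1$ fraction of these incidences. For a fixed rich cube $Q$, robust transversality together with pigeonholing produces, after discarding a $\lessapprox_\eps 1$ fraction of the strips through $Q$, a strip $S_1$ whose shading in $Q$ meets that of $\gtrapprox_\eps\delta^{-1/2}$ strips $S_2$ with $\angle(v(S_1),v(S_2))\geq\delta^{C_1\eps}$; for each such $S_2$ the grain property gives a grain containing $Q\cap(S_1\cup S_2)$, and since each of these grains contains the full $\delta^{1/2}\times\delta^{1/2}\times\delta$ slab $S_1\cap Q$ — which pins its plane down to angle $\lessapprox_\eps\delta^{1/2}$ and translation $\lessapprox_\eps\delta$ — all of them lie in a single grain $G_Q$ (obtained by enlarging the absolute constant $C$). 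Thus $S\cap Q\subset G_Q$ for $\gtrapprox_\eps\delta^{-1/2}$ strips $S$ with $Q\in\mathcal{Q}(S)$, while conversely $\sum_S\chi_{Y(S)}\lessapprox_\eps\delta^{-1/2}$ and $|Y(S)\cap Q|\approx_\eps\delta^2$ force $\#\{S:Q\in\mathcal{Q}(S),\,S\cap Q\subset G_Q\}\lessapprox_\eps\delta^{-1/2}$.

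I would then put $G_Q$ into $\mathcal{G}$ for every rich cube $Q$: by construction each $\delta^{1/2}$ cube carries at most one grain, each $G\in\mathcal{G}$ has $|\mathcal{S}(G)|\gtrapprox_\eps\delta^{-1/2}$, and
\begin{equation*}
|\mathcal{G}|\ \gtrapprox_\eps\ \frac{\#\{(S,Q):Q\in\mathcal{Q}(S),\ Q\text{ rich}\}}{\max_Q\#\{S:Q\in\mathcal{Q}(S),\,S\cap Q\subset G_Q\}}\ \gtrapprox_\eps\ \frac{\delta^{-2}}{\delta^{-1/2}}\ =\ \delta^{-3/2},
\end{equation*}
which gives all three conclusions.

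The hard part will be making this last count legitimate: a priori the strip--cube incidences could concentrate in $\sim\delta^{-1}$ overcrowded cubes, each met by $\sim\delta^{-1}$ strips, which would produce only $\sim\delta^{-1}$ grains. I would rule this out by showing such concentration forces $\bigl|\bigcup_\tube Y(\tube)\bigr|$ to be too large: an overcrowded cube would stack $\sim\delta^{-1}$ transverse regulus strips, and feeding the corresponding fat tubes into the Wolff bound (Theorem \ref{WolffBd}) at scale $\delta^{1/2}$ — exactly as in the proof of Lemma \ref{regStripsAreTangent} — would produce a union of volume $\gg\delta^{1/2-\eps}$, contradicting \eqref{smallVolume}. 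Once concentration is excluded the incidences are spread over $\gtrapprox_\eps\delta^{-3/2}$ cubes and the displayed estimate holds. The remaining steps — finding $S_1$ inside a rich cube and checking that the grains $Q\cap(S_1\cup S_2)$ all coincide — are routine given the tangential intersection property and Lemma \ref{robustTransLem}.
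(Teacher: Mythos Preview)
Your incidence-counting argument has a genuine gap at the displayed inequality
\[
|\mathcal{G}|\ \gtrapprox_\eps\ \frac{\#\{(S,Q):Q\in\mathcal{Q}(S),\ Q\text{ rich}\}}{\max_Q\#\{S:Q\in\mathcal{Q}(S),\,S\cap Q\subset G_Q\}}.
\]
The numerator counts \emph{all} strip--cube incidences over rich cubes, while the denominator bounds only those strips that actually fit inside the grain $G_Q$. These are different quantities: you have shown that each rich cube has $\gtrapprox_\eps\delta^{-1/2}$ strips in $G_Q$ and $\lessapprox_\eps\delta^{-1/2}$ strips in $G_Q$, but a rich cube could still have $\sim\delta^{-1}$ strips through it, most of them \emph{not} in $G_Q$. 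In that case the numerator is $\sim\delta^{-2}$ while the number of rich cubes is only $\sim\delta^{-1}$, and your inequality fails. You correctly flag this as the ``hard part,'' but your proposed fix --- arguing that $\delta^{-1}$ fat tubes through a single cube forces $|\bigcup Y(\tube)|\gg\delta^{1/2-\eps}$ via Theorem~\ref{WolffBd} at scale $\delta^{1/2}$ --- does not work: the Wolff bound on $\delta^{-1}$ fat tubes through a point gives a lower bound on the \emph{fat} tube union (which is $\approx 1$ anyway in the $SL_2$ picture), not on the thin tube union constrained by \eqref{smallVolume}. The argument in Lemma~\ref{regStripsAreTangent} you cite is not analogous; it exploits a specific non-tangential angle $t_0$ to fill out $t_0\delta^{1/2}$-neighborhoods of strips, and there is no such mechanism here.

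The paper sidesteps the whole issue by a hairbrush argument rather than an incidence count. After refining so that $\sum_{S}\chi_{Y_2(S)}\sim\mu\chi_A$ with $\mu\approx_\eps\delta^{-1/2}$, one selects a single strip $S_0$ with $|H_{Y_2}(S_0)|\gtrapprox_\eps\delta^{-1}$. Since the strips in $H_{Y_2}(S_0)$ lie in distinct fat tubes and (by tangential intersection) their $\delta^{1/2}$-neighborhoods have overlap $\lessapprox_\eps 1$ outside a thin cylinder around $S_0$, Lemma~\ref{WolffHairbrushBd} applied at scale $\delta^{1/2}$ gives
\[
\Big|\bigcup_{S\in H_{Y_2}(S_0)}N_{\delta^{1/2}}(S)\Big|\ \gtrapprox_\eps\ 1,
\]
and hence $\gtrapprox_\eps\delta^{-3/2}$ grid cubes are hit. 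For each such cube one takes the grain determined by the hairbrush strip passing through it; the multiplicity $\mu$ at a point of $Y_2$ in that cube then supplies the required $\gtrapprox_\eps\delta^{-1/2}$ strips in the grain. This produces the cube count directly from a volume bound and never needs an upper bound on strips per cube.
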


\begin{proof}
For each $S\in\mathcal{S}$, define $Y_1(S) = \bigcup_{\tube\in\tubes(S)}Y(\tube)$. Let $Y_2(S)\subset Y_1(S)$ be a shading with $\sum_{S\in\mathcal{S}}\chi_{Y_2(S)}\sim \mu\chi_A$ for some $\mu\approx_{\eps}\delta^{-1/2}$ and some set $A\subset\RR^3$ with $|A|\approx_{\eps}\delta^{1/2}$ .

Select a strip $S_0\in\mathcal{S}$ with
$$
\sum_{S\in H_{Y_2}(S_0)}|Y_2(S)| \geq\delta^{C\eps-1}.
$$
If the constant $C$ is selected sufficiently large then such a regulus strip $S_0$ must exist. By Lemma \ref{WolffHairbrushBd},
\begin{equation}\label{bigUnionDelta12Nbhd}
\Big|\bigcup_{S\in H_{Y_2}(S_0)}N_{\delta^{1/2}}(Y_2(S))\Big|\gtrapprox_{\eps}1.
\end{equation}
Since each regulus strip in the above union lies in a distinct fat tube, and since $\angle(T_pS, T_p S_0)\lessapprox_{\eps}\delta^{1/2}$ at each point $p\in S\cap S_0$, the sets in the above union have overlap $\lessapprox_{\eps}1$ outside a cylinder of radius $\leq\delta^{C\eps}$ concentric with $S_0$; we will choose $C$ sufficiently large so that for each $S\in H(S_0)$ the intersection of $Y_2(S)$ with the compliment of this cylinder still has volume $\gtrapprox_{\eps}\delta^{3/2}$.

Cover the unit ball by cubes $Q$ of side-length $\delta^{1/2}$ aligned with the $\delta^{1/2}$ grid. For each cube $Q$ that intersects some set of the form $Y_2(S)\backslash N_{\delta^{C\eps}}(S_0)$ for some $S\in H_{Y_2}(S_0)$, let $G$ be a grain containing $S\cap Q$. Let $\mathcal{G}$ be the collection of all grains of this type. By \eqref{bigUnionDelta12Nbhd}, we have that $|\mathcal{G}|\gtrapprox_{\eps}\delta^{-3/2}$. If $G\in\mathcal{G}$ then $|\mathcal{S}(G)|\geq\mu\gtrapprox_{\eps}\delta^{-1/2}$.
%
\end{proof}
\subsection{Tubes and strips in parameter space}\label{tubesStripsSection}
In this section, we will identify the line $(a,b,0)+\RR(c,d,1)\subset\RR^3$ with the point $(a,b,c,d)\in\RR^4.$

\begin{defn}
Let $S$ be a regulus strip. Define $B_S\subset\RR^4$ to be the ball of radius $\delta^{1/2}$ centered at the point $(a,b,c,d)$ corresponding to the line coaxial with the $\delta^{1/2}$ tube containing $S$.
\end{defn}

Let $G$ be a grain. Define
$$
L_G=\{\ell\ \textrm{a line in}\ \RR^3\colon \ell\cap G\neq\emptyset,\ \angle(v(\ell),\Pi(G))\leq\delta^{1/2-C\eps}\}\cap B(0,1).
$$
We will think of $L_G$ as a subset of $\RR^4$ using the identification described above. The set $L_G$ is comparable to a rectangular prism of dimensions $\approx_{\eps}1\times\delta^{1/2}\times\delta^{1/2}\times\delta$. Note that if $G$ is a grain and if $S$ is a regulus strip with $S\cap Q(G)\subset G$, then $B_S\cap L_G\neq\emptyset$.

\begin{lem}
Let $S$ be a regulus strip. Then
$$
\bigcup_{G\colon S\cap Q(G)\subset G}\{\ell\in L_G\colon \angle(v(\ell), v(S))\geq\delta^{\eps}\}
$$
is contained in the $\approx_{\eps}\delta^{1/2}$ neighborhood of a (degree 2) cone $C_S\subset\RR^4$ whose vertex is contained in $B_S$. This cone is contained in the $\approx_{\eps}\delta^{1/2}$--neighborhood of a hyperplane $\Sigma_S\subset \RR^4$.
\end{lem}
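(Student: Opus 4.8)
The plan is to pass to the parameter space $\RR^4$ of lines and to identify $C_S$ explicitly, using the ruling structure of the quadric surface underlying $S$.

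\emph{Step 1 (reduction to pencils in tangent planes).} Let $R$ be the $\delta^\eps$ non-degenerate regulus used to define the regulus strip $S$, and let $\ell_0$ be the ruling line of $R$ whose $\delta^{1/2}$-neighborhood is the fat tube $\tube_{\delta^{1/2}}(S)$; thus $\ell_0\subset R$ and the point $[\ell_0]\in\RR^4$ is the centre of $B_S$. By Lemma \ref{GaussCurvatureOfRegulus} the Gauss curvature of $R$ is $\approx_\eps 1$ on $R\cap B(0,1)$, so $p\mapsto T_pR$ is Lipschitz with constant $\lessapprox_\eps 1$. If $G$ is a grain with $S\cap Q(G)\subset G$ and $p_G$ is the point of $\ell_0$ lying in $Q(G)$, then (using $S\subset N_\delta(R)\cap N_{\delta^{1/2}}(\ell_0)$ and the choice of the absolute constant in the definition of a grain) the plane $\Pi(G)$ lies within $\lessapprox_\eps\delta^{1/2}$ of $T_{p_G}R$, and since $\ell_0\subset R$ the line $\ell_0$ lies in $T_{p_G}R$. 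Hence every $\ell\in L_G$ with $\angle(v(\ell),v(S))\ge\delta^\eps$ passes within $\lessapprox_\eps\delta^{1/2}$ of $p_G$ and has direction within $\lessapprox_\eps\delta^{1/2}$ of the plane $T_{p_G}R$.

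\emph{Step 2 (each grain gives an affine line).} Let $\Lambda_G\subset\RR^4$ be the set of lines through $p_G$ lying in the plane $T_{p_G}R$. This is an affine line: such a line has direction $(c,d,1)$ with $(c,d)$ ranging over an affine line in $\RR^2$ (the slice of $T_{p_G}R$ by the chart of directions with last coordinate $1$), and the condition that it pass through $p_G$ forces $(a,b)$ to be an affine function of $(c,d)$. Moreover $[\ell_0]\in\Lambda_G$, as $\ell_0$ is one of these lines. Combining Step 1 with the remark preceding the statement that $L_G$ is comparable to a $1\times\delta^{1/2}\times\delta^{1/2}\times\delta$ prism whose long axis is $\Lambda_G$ — the restriction $\angle(v(\ell),v(S))\ge\delta^\eps$ serving to keep $\ell$ a definite angle away from $\ell_0$, so that this identification stays non-degenerate — we get
\[
\{\ell\in L_G\colon \angle(v(\ell),v(S))\ge\delta^\eps\}\subset N_{\lessapprox_\eps\delta^{1/2}}(\Lambda_G).
\]

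\emph{Step 3 (the cone and the hyperplane).} Let $C_S$ be the Zariski closure of $\bigcup_{p\in\ell_0\cap B(0,2)}\Lambda_p$, where $\Lambda_p$ is the pencil of lines through $p$ lying in $T_pR$. Each $\Lambda_p$ contains $[\ell_0]$, so $C_S$ is a cone with vertex $[\ell_0]$, which is the centre of $B_S$; together with Step 2 this shows the union in the statement is contained in $N_{\lessapprox_\eps\delta^{1/2}}(C_S)$. It remains to show $C_S$ is a degree-two variety lying in an affine hyperplane. For $p$ on $\ell_0$ we have $T_pR=\langle\ell_0,m_p\rangle$, where $m_p$ is the line of the second ruling of $R$ through $p$; the pencil $\Lambda_p$ of lines through $p$ in this plane is, in the Grassmannian $\operatorname{Gr}(2,4)$, the line joining $[\ell_0]$ to $[m_p]$. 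Thus $C_S$ is the cone over the curve $\{[m_p]\colon p\in\ell_0\}$ with vertex $[\ell_0]$. By the classical description of the two rulings of a smooth quadric surface in the Plücker embedding, this curve is a conic spanning a projective $2$-plane $P$, and $[\ell_0]$ — lying in the plane spanned by the other ruling, which is disjoint from $P$ — is not in $P$. Hence $C_S$ spans only a $3$-dimensional affine subspace $\Sigma_S\subset\RR^4$ and, inside $\Sigma_S$, is a quadric cone with vertex $[\ell_0]$. Therefore $C_S$ has degree two, its vertex lies in $B_S$, and $C_S\subset\Sigma_S\subset N_{\delta^{1/2}}(\Sigma_S)$, as required. (Alternatively, Step 3 can be done by an explicit computation after normalizing $\ell_0$ and $R$ to a standard position by an affine change of coordinates in $\RR^3$, parallel to the computation in the proof of Lemma \ref{CurvatureAtAPoint}.)

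\emph{Main obstacle.} Everything here is routine except the fact invoked in Step 3: that as $p$ runs over a ruling line of a quadric, the pencils $\{$lines through $p$ inside $T_pR\}$ sweep out only a hyperplane's worth of line space, rather than a generic two-parameter family occupying a three-dimensional piece of $\RR^4$. This is the classical statement that a ruling of a smooth quadric is a conic in Plücker space spanning a plane disjoint from the plane of the other ruling. The one point requiring care is checking that the dictionary between the $(a,b,c,d)$ parametrization and the Plücker coordinates (equivalently, the affine normalization of $\RR^3$) sends affine hyperplanes and quadrics to affine hyperplanes and quadrics, which it does, being (the restriction of) a projectivity.
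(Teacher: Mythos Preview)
Your proof is correct and takes a genuinely different route from the paper's.

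The paper picks two $\delta$-tubes $\tube,\tube'\subset S$ that are $\ge\delta^{1/2+\eps}$-separated, argues that every line $\ell$ in the union must hit the $\lessapprox_\eps\delta$-neighborhoods of both, and then writes down the two incidence equations explicitly after normalizing so that $L=(1,0,0,1)$ and $L'=(1+a_0,b_0,c_0,1+d_0)$ with $|a_0|,\dots,|d_0|\approx_\eps\delta^{1/2}$. The quadric $(1-a)(1-d)-bc=0$ is the 3-dimensional cone of lines incident to $L$ (with vertex $[L]\in B_S$), and the difference of the two incidence quadrics gives a linear equation whose normal is $\approx_\eps\delta^{-1/2}(d_0,-c_0,-b_0,a_0)$; the 2-dimensional cone $C_S$ is the intersection. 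So the paper's argument is a short direct computation.

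Your argument instead reads off the cone from the doubly-ruled structure of $R$: the grain-pencils $\Lambda_p$ are exactly the lines in $(a,b,c,d)$-space joining $[\ell_0]$ to points $[m_p]$ on the opposite ruling, and then you invoke that this opposite ruling is a plane conic with $[\ell_0]$ outside its plane. This is more conceptual and explains \emph{why} one gets a quadric cone in a hyperplane rather than just verifying it. One small correction: the map $(a,b,c,d)\mapsto[ad-bc:a:-c:b:-d:-1]$ into $\mathbb P^5$ is quadratic, not a projectivity, so your last sentence needs a different justification. The cleanest fix is intrinsic to $\RR^4$: writing $Q_i(a,b,c,d)=(a-a_i)(d-d_i)-(b-b_i)(c-c_i)$ for three lines $L_1,L_2,L_3$ of the ruling containing $\ell_0$, the opposite ruling is $\{Q_1=Q_2=Q_3=0\}$, and since $Q_i-Q_j$ is linear this is a conic in a 2-plane; that $[\ell_0]$ lies off this 2-plane follows because the $Q_i([\ell_0])$ are the (nonzero, distinct) skewness values of $\ell_0$ with the $L_i$. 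Your parenthetical alternative of normalizing and computing is essentially the paper's proof, so you have both in hand.
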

\begin{proof}
Let $\tube$ and $\tube^\prime$ be two tubes (not necessarily from $\tubes$) that are contained in $\mathcal{S}$ and that are $\geq\delta^{\eps+1/2}$ separated. Then for each grain $G$ with $S\cap Q(G)\subset G$ and for each line $\ell\in L_G$ with $\angle(v(\ell), v(S))\geq\delta^{\eps}$, we have that $\ell$ intersects the $\lessapprox_{\eps}\delta$ neighborhoods of $\tube$ and $\tube^\prime$.

Let $L$ be the line coaxial with $\tube$ and let $L^\prime$ be the line coaxial with $\tube^\prime$. Applying a rigid transformation, we can assume that $L$ corresponds to $(1,0,0,1)$, and $L^\prime$ corresponds to $(1+a_0, b_0, c_0, 1+d_0)$, where $a_0,b_0,c_0,d_0$ have magnitude $\lessapprox_{\eps}\delta^{1/2}$, and at least one of $a_0,b_0,c_0,d_0$  has magnitude $\gtrapprox_{\eps}\delta^{1/2}$.

Then the set of lines incident to both $\tube$ and $\tube^\prime$ correspond to points $(a,b,c,d)$ satisfying
$$
\left\{
\begin{array}{ll}
|(1-a)(1-d)-bc|\lessapprox_{\eps}\delta,\\
|((1+a_0)-a)((1+d_0)-d)-(b_0-b)(c_0-c)|\lessapprox_{\eps}\delta,
\end{array}
\right.
$$
which can be re-arranged as
$$
\left\{
\begin{array}{ll}
|-d-a+ad-bc|\lessapprox_{\eps}\delta,\\
|-(1+a_0)d-(1+d_0)a +b_0c + c_0 b + ad-bc|\lessapprox_{\eps}\delta,
\end{array}
\right.
$$
and thus
$$
\left\{
\begin{array}{ll}
|(\delta^{-1/2}d_0,\ -\delta^{-1/2}c_0,\ -\delta^{-1/2}b_0, \delta^{-1/2}a_0)\cdot(a,b,c,d)|\lessapprox_{\eps}\delta^{1/2},\\
|(1-a)(1-d)-bc|\lessapprox_{\eps}\delta.
\end{array}
\right.
$$

This set is contained in the $\lessapprox_{\eps}\delta^{1/2}$ neighborhood of a (degree two) cone with vertex $(1,0,0,1)$ that is contained in the $\lessapprox_{\eps}\delta^{1/2}$ neighborhood of a plane. Undoing the linear transformation, we see that the vertex of the cone is contained in $B_S$.
\end{proof}

\subsection{Fourteen well-chosen points: fat tubes lie close to a quadric hypersurface}
\begin{lem}\label{closetoZQ}
Let $\mathcal{S}$ be a set of $\leq\delta^{-3/2-\eps}$ regulus strips, no two of which are contained in a common fat tube, and let $\mathcal{G}$ be a set of $\geq\delta^{-3/2+\eps}$ grains. Suppose that each grain is contained in a distinct $\delta^{1/2}$ cube, and that $|\mathcal{S}(G)|\geq\delta^{-1/2+\eps}$ for each grain $G\in\mathcal{G}$.

Then there is a degree-two polynomial $P$ in four variables and a subset $\mathcal{S}^\prime\subset S$ of size $|\mathcal{S}^\prime|\gtrapprox_{\eps}\delta^{-3/2}$ so that for each $S\in\mathcal{S}^\prime$, there is a point $q\in\CC^4$ with $P(q)=0$ and $\operatorname{dist}(q, B_S)\lessapprox_{\eps}\delta^{1/2}$.
\end{lem}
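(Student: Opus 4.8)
The plan is to work in the parameter space $\RR^4$, combining the cone--and--hyperplane structure of the preceding lemma with an incidence count, and to pin down the quadric using fourteen well--chosen parameter points: a degree--two polynomial on $\RR^4$ has fifteen coefficients, so fourteen points in sufficiently general position determine it up to scaling. Throughout, write $b_S$ for the centre of $B_S$, and for a strip $S$ let $Q_S$ be a degree--two polynomial with $C_S\subset Z(Q_S)$ (such a $Q_S$ comes from the explicit normal form in the previous lemma, and one checks $|\nabla Q_S|\gtrapprox_\eps 1$ on $B(0,1)$ away from the vertex of $C_S$). First I would pass to popular subsets: after dyadic pigeonholing there are $\mathcal{S}_1\subset\mathcal{S}$ and $\mathcal{G}_1\subset\mathcal{G}$ with $|\mathcal{S}_1|\gtrapprox_\eps\delta^{-3/2}$, $|\mathcal{G}_1|\gtrapprox_\eps\delta^{-3/2}$, so that each $S\in\mathcal{S}_1$ has $S\cap Q(G)\subset G$ for $\gtrapprox_\eps\delta^{-1/2}$ grains $G\in\mathcal{G}_1$, and each $G\in\mathcal{G}_1$ has $\gtrapprox_\eps\delta^{-1/2}$ such strips (possible since the total number of incidences is $\approx_\eps\delta^{-2}$); using robust transversality on the underlying thin tubes I may also assume the strips through a typical common grain are pairwise $\geq\delta^{\eps}$ skew. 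Set $\mathcal{H}(S)=\{S'\in\mathcal{S}_1:\exists\,G\in\mathcal{G}_1,\ S\cap Q(G)\subset G,\ S'\cap Q(G)\subset G,\ \angle(v(S),v(S'))\geq\delta^{\eps}\}$. Then a typical $S$ has $|\mathcal{H}(S)|\gtrapprox_\eps\delta^{-1}$; by the previous lemma $B_{S'}\subset N_{\lessapprox_\eps\delta^{1/2}}(C_S)\subset N_{\lessapprox_\eps\delta^{1/2}}(Z(Q_S))$ for every $S'\in\mathcal{H}(S)$; and the points $\{b_{S'}:S'\in\mathcal{H}(S)\}$ are spread over $C_S$, lying on $\gtrapprox_\eps\delta^{-1/2}$ of the pairwise--transverse prisms $L_G$, with $\gtrapprox_\eps\delta^{-1/2}$ on each.

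The second step fixes the quadric. Quantitatively, if fourteen points $b_1,\dots,b_{14}\in B(0,1)$ are in $\delta^{\eps}$--general position then there is a degree--two $P$, unique up to scalars and with $|\nabla P|\approx_\eps 1$ on $B(0,1)$, with $|P(b_i)|\lessapprox_\eps\delta^{1/2}$; moreover any degree--two polynomial with $|\nabla|\gtrapprox_\eps 1$ that is $\lessapprox_\eps\delta^{1/2}$--small at all the $b_i$ agrees with a scalar multiple of $P$ up to $\lessapprox_\eps\delta^{1/2}$ on $B(0,1)$. I would choose fourteen \emph{mutually visible} strips $S_1,\dots,S_{14}$ --- each pair $\geq\delta^{\eps}$ skew and sharing a grain of $\mathcal{G}_1$ --- whose parameter points $b_i=b_{S_i}$ are in $\delta^{\eps}$--general position in $\RR^4$ and at distance $\geq\delta^{\eps}$ from the vertices of all the $C_{S_j}$; the nearly biregular structure above, together with a pigeonhole over which grains occur, should produce such a choice. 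Since $b_j\in N_{\delta^{1/2}}(C_{S_i})\subset N_{\delta^{1/2}}(Z(Q_{S_i}))$ for $i\neq j$ and the vertex of $C_{S_i}$ lies on $Z(Q_{S_i})$, each $Q_{S_i}$ is $\lessapprox_\eps\delta^{1/2}$--small at all fourteen $b_k$, so $Q_{S_1},\dots,Q_{S_{14}}$ all coincide (up to scalars and $\lessapprox_\eps\delta^{1/2}$ on $B(0,1)$) with a single robust quadric $P$. Declare $Z(P)$ to be the desired hypersurface.

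The third step propagates. Call $S'$ \emph{good} if $Q_{S'}$ agrees with $P$ up to scalars and $\lessapprox_\eps\delta^{1/2}$ on $B(0,1)$; then for a good $S'$ and any $S''\in\mathcal{H}(S')$ one gets $B_{S''}\subset N_{\lessapprox_\eps\delta^{1/2}}(C_{S'})\subset N_{\lessapprox_\eps\delta^{1/2}}(Z(P))$. I would bootstrap the good set: a strip $S'$ whose parameter point already lies near $Z(P)$ and which shares grains with enough good strips can be certified good by applying the Step~2 mechanism to $C_{S'}$ with fourteen general--position points of $\mathcal{H}(S')$ that also lie near $Z(P)$ (taken from joint hairbrushes with good strips), whereupon all of $\mathcal{H}(S')$ is certified near $Z(P)$. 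Because the strip--grain graph is essentially biregular with both degrees $\approx_\eps\delta^{-1/2}$, the two--step grain neighbourhood of any strip is essentially all of $\mathcal{G}_1$, hence the two--step strip neighbourhood is essentially all of $\mathcal{S}_1$, so boundedly many rounds of this bootstrap produce $\mathcal{S}'$ with $|\mathcal{S}'|\gtrapprox_\eps\delta^{-3/2}$ and $B_S\subset N_{\lessapprox_\eps\delta^{1/2}}(Z(P))$ for all $S\in\mathcal{S}'$.

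The main obstacle is Step~3, and within it the requirement that at each bootstrap round the strips common to $\mathcal{H}(S')$ and the current good set contain fourteen parameter points in quantitative general position in $\RR^4$: a priori these common points lie near the intersection of two low--dimensional cones $C_{S'}$ and $C_{S_0}$ --- a curve, once both are known to lie on $Z(P)$ --- and could collapse onto it, so the spread information of Step~1 (the pairwise transversality of the many prisms $L_G$ through a strip, and the distribution of the good set across them) has to be used carefully to keep the points general. A secondary technical point, flagged above, is the behaviour near the single vertex of $P$, where $P$ is not robust; that locus meets only few strips and may be discarded from $\mathcal{S}'$.
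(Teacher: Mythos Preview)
Your Step~2 contains a genuine contradiction, not merely a gap. The cone--quadric $Q_S$ attached to a strip $S$ has the explicit form (in the normalization of the previous lemma) $Q_S(a,b,c,d)=(a-a_S)(d-d_S)-(b-b_S)(c-c_S)$, with vertex at $b_S$. For two strips in distinct fat tubes, $Q_{S_i}-Q_{S_j}$ is an affine--linear form whose coefficients have size $|b_{S_i}-b_{S_j}|$. If your fourteen points $b_1,\dots,b_{14}$ are in $\delta^{\eps}$--general position they are in particular $\delta^{\eps}$--separated, so $Q_{S_i}-Q_{S_j}$ has coefficients of size $\gtrsim\delta^{\eps}\gg\delta^{1/2}$; since the quadratic parts of the $Q_{S_i}$ are identical, no choice of scalar can make $Q_{S_i}-\lambda Q_{S_j}$ small to order $\delta^{1/2}$ on $B(0,1)$. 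In other words, the very rigidity of the family $\{Q_S\}$ forces any fourteen mutually visible strip--centres to \emph{fail} general position, so Step~2 cannot go through as written. Your Step~3 bootstrap inherits the same obstruction: certifying a new strip $S'$ ``good'' again asks for $Q_{S'}$ to coincide with $P$, which only happens when $b_{S'}$ is the vertex of $P$.

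The paper sidesteps this by never trying to match the quadrics $Q_S$ to one another. It fixes just two strips $S_1,S_2$ and one grain $G_0$ (chosen by a hairbrush argument so that the triple hairbrush $H(S_1)\cap H(S_2)\cap\mathcal{S}(G_0)$ is large), and places the fourteen points not at strip centres but at geometric points on the three objects $C_{S_1}$, $C_{S_2}'$, $L_{G_0}$: five on the conic $C_{S_1}\cap C_{S_2}'$, three on $L_{G_0}$, the two cone vertices, and two more on each cone. A Bezout argument (using that a line through a cone vertex meeting the cone at a second point lies in the cone) then shows that any degree--two $P$ through the fourteen points contains $C_{S_1}\cup C_{S_2}'\cup L_{G_0}$ \emph{entirely}. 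Propagation is then a single step, with no bootstrap: for each grain $G$ in the triple hairbrush, the segment $L_G$ passes $\lessapprox_\eps\delta^{1/2}$--close to $Z(P)$ at three $\gtrapprox_\eps1$--separated points (one each from $C_{S_1}$, $C_{S_2}$, and the bush of $G_0$), so Lagrange interpolation forces all but a $\delta^{C\eps}$--fraction of $L_G$ into $N_{\lessapprox_\eps\delta^{1/2}}(Z(P))$, capturing $\gtrapprox_\eps\delta^{-3/2}$ strips at once.
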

\begin{proof}
For each $S\in\mathcal{S},$ define the shading
$$
Y(S)=\bigcup_{G\in\mathcal{G}\colon S\cap Q(G)\subset G}G.
$$
Note that $Y(S)$ is not contained in $S$. However, $Y(S)$ is contained in the $\lessapprox_{\eps}\delta$ neighborhood of $S$. Furthermore, $\sum_{S\in\mathcal{S}}|Y(S)|\gtrapprox_{\eps}1$.

Let $S_1,S_2\in\mathcal{S}$ be regulus strips that are $\geq\delta^{C\eps}$ separated and skew, and let $G_0\in\mathcal{G}$ be a grain that is $\geq\delta^{C\eps}$ separated from $S_1$ and $S_2$, and with $\angle(\Pi(G),S_i)\geq\delta^{C\eps},\ i=1,2$, so that $S_1,S_2$, and $G_0$ satisfy
\begin{equation}\label{hairbrushS1S2}
|H_Y(S_1,S_2)|\geq\delta^{C\eps-1/2},
\end{equation}
\begin{equation}\label{hairbrushS1S2LargeVolume}
 \Big|\bigcup_{S\in H_Y(S_1)}Y(S)\ \cap\ \bigcup_{S\in H_Y(S_2)}Y(S)\Big|\geq\delta^{C\eps+1/2},
\end{equation}
and
\begin{equation}\label{manyPointsInJointHairbrushBush}
\Big|\bigcup_{S\in\mathcal{S}(G_0)}Y(S)\ \cap\ \bigcup_{S\in H_Y(S_1)}Y(S)\ \cap\ \bigcup_{S\in H_Y(S_2)}Y(S)\Big|\geq\delta^{C\eps+1}.
\end{equation}

We will show that if $C$ is selected sufficiently large then such a choice of $S_1,S_2,$ and $G_0$ must exist. First, let $Y_1$ be a refinement of the shadings $Y$ so that $\sum_{S\in\mathcal{S}}\chi_{Y_1}(S)\sim\mu\chi_A$ for some $\mu\approx_{\eps}\delta^{-1/2}$ and some set $A\subset\RR^3$ with $|A|\approx_{\eps}\delta^{1/2}$ (the set $A$ will be a union of grains, but we won't need this fact). Select $S_1\in\mathcal{S}$ with $\big|\bigcup_{S\in H_{Y_1}(S_1)}Y_1(S)\big|\gtrapprox_{\eps}\delta^{1/2}$ and for each $S\in\mathcal{S}$, define $Y_2(S)=Y_1(S)\cap\bigcup_{S^\prime\in H_{Y_1}(S)}Y_1(S^\prime)$.

Now select $S_2\in\mathcal{S}$ that is $\gtrapprox_{\eps}1$ skew to $S_1$, with $|Y_2(S_2)|\gtrapprox_{\eps}\delta^{3/2}$ and $|\bigcup_{S\in H_{Y_2}(S_2)}Y_2(S)|\gtrapprox_{\eps}\delta^{1/2}$. The pair $S_1,S_2$ satisfy \eqref{hairbrushS1S2} and \eqref{hairbrushS1S2LargeVolume}. Finally, select $G_0\in\mathcal{G}$ that is $\gtrapprox_{\eps}1$ separated from $S_1$ and $S_2$, with $\angle(\Pi(G),S_i)\gtrapprox_{\eps}1$ for $i=1,2$, and that satisfies \eqref{manyPointsInJointHairbrushBush}.

Next, we will consider the intersection of the (two-dimensional) cones $C_{S_1}$ and $C_{S_2}$. Since these cones are contained in $\RR^4$, in general one might suppose that they would intersect transversely. However, \eqref{hairbrushS1S2} asserts that if the constant $C_1$ is chosen sufficiently large then

$$
|N_{\delta^{1/2-C_1\eps}}(C_{S_1})\cap N_{\delta^{1/2-C_1\eps}}(C_{S_2})\cap B(0,1)|\gtrapprox_{\eps}\delta^{3/2}.
$$
Recall that the above set is a subset of $\RR^4$, so $|\cdot|$ denotes four-dimensional Lebesgue measure; thus $\delta^{3/2}$ is precisely the measure of the $\delta^{1/2}$--neighborhood of a curve in $\RR^4$. In particular this means that the three sets

\begin{align*}
&N_{\delta^{1/2-C_1\eps}}(C_{S_1})\cap N_{\delta^{1/2-C_1\eps}}(C_{S_2})\cap B(0,1),\\
&N_{\delta^{1/2-C_1\eps}}(C_{S_1})\cap N_{\delta^{1/2-C_1\eps}}(\Sigma_{S_2})\cap B(0,1),\\
&N_{\delta^{1/2-C_1\eps}}(C_{S_2})\cap N_{\delta^{1/2-C_1\eps}}(\Sigma_{S_1})\cap B(0,1)
\end{align*}
are all essentially the same, in the sense that the $\lessapprox_{\eps}\delta^{1/2}$ dilate of any of these three sets contains the other two.

We shall now abuse notation slightly and replace the cone $C_{S_2}$ by a slightly different cone $C^\prime_{S_2}$ satisfying $C^\prime_{S_2}\cap B(0,1)\subset N_{\delta^{1/2-C_1\eps}}(C_{S_2})$ and $C_{S_2}\cap B(0,1)\subset N_{\delta^{1/2-C_1\eps}}(C_{S_2}^\prime)$ so that $C_{S_1}\cap C^\prime_{S_2}\subset\RR^4$ is a one-dimensional degree-two curve.

The cone $C^\prime_{S_2}$ is determined as follows: Let $L_1$ and $L_1^\prime$ be lines that are $\delta^{1/2}$ separated (with error $\approx_{\eps}1$) that are coaxial with tubes contained in $S_1$. Let $L_2$  be a line that is coaxial with a tube contained in $S_2$. Then the regulus strips in the hairbrush of $S_1$ and $S_2$ are contained in the $\lessapprox_{\eps}\delta^{1/2}$ neighborhood of the regulus $R$ generated by $L_1,L^\prime,$ and $L_2$. Let $C^\prime_{S_2}$ be the set of lines that are incident to $L_2$ and are tangent to $R$.

Similarly, we will replace the line $L_{G_0}$ by a line $L_{G_0}^\prime$ satisfying $L^\prime_{G_0}\cap B(0,1)\subset N_{\delta^{1/2-C_1\eps}}(L_{G_0})$ and $L_{G_0}\cap B(0,1)\subset N_{\delta^{1/2-C_1\eps}}(L_{G_0}^\prime)$. To do this, select points $q_1\in C_{S_1}\cap N_{\delta^{1/2}}(L_{G_0})$ and $q_2\in C^\prime_{S_2}\cap N_{\delta^{1/2}}(L_{G_0})$. We can select points $q_1,q_2$ with $|q_i|\lessapprox_{\eps}1$, since $\angle(\Pi(G),S_i)\gtrapprox_{\eps}1$. Since $S_1$ and $S_2$ are $\gtrapprox_{\eps}1$ separated and skew, we have that $q_1$ and $q_2$ are $\gtrapprox_{\eps}1$ separated. Let $L_{G_0}^\prime$ be the line passing through $q_1$ and $q_2$.  

We are now ready to select our 14 points.
\begin{itemize}
\item Let $p_1,...,p_5$ be five points on the conic curve $C_{S_1} \cap C^\prime_{S_2}$.
\item Let $p_6$ and $p_7$ be the points of intersection of $L_{G_0}^\prime$ with $C_{S_1}, C^\prime_{S_2}$, respectively.
\item Let $p_8$ be another point on $L_{G_0}^\prime$.
\item Let $p_9$ and $p_{10}$ be the vertices of $C_{S_1}$ and $C^\prime_{S_2}$, respectively.
\item Let $p_{11}$ and $p_{12}$, be two points on $C_{S_1}$.
\item Let $p_{13}$ and $p_{14}$ be two points on $C^\prime_{S_2}.$
\end{itemize}

Let $P$ be a polynomial of degree $\leq 2$ in $\RR[a,b,c,d]$ that vanishes on the 14 points $p_1,...,p_{14}$. Such a polynomial must exist since the vector space of polynomials in four variables of degree $\leq 2$ has dimension 14. Note that there might be more than one such polynomial.

We will show that $C_{S_1}, C^\prime_{S_2},$ and $L_{G_0}^\prime$ are contained in $Z(P)$. Note that $|L_{G_0}^\prime \cap Z(P) | \geq 3$, so $L_{G_0}^\prime \subset Z(P)$. To see that $C_{S_1} \subset Z(P)$, it suffices to show that $C_{S_1} \cap Z(P)$ contains a (possibly reducible) curve of degree at least 5. First, $p_1,..,p_5$ guarantee that the curve $C_{S_1} \cap C^\prime_{S_2} \subset Z(P)$, since $C_{S_1} \cap C^\prime_{S_2}$ is a curve of degree two. Now, since $p_9 \subset Z(P)$, observe that if $p \in C_{S_1}\cap Z(P)$ is a point distinct from $p_9$ and not contained in the curve $C_{S_1} \cap C^\prime_{S_2}$, then the entire line $L(p)$ connecting $p_9$ to $p$ must lie in $Z(P)$. This is because if $\ell_p$ and $\ell_{p_9}$ are the lines in $\RR^3$ corresponding to the points $p$ and $p_9$, respectively, then $L(p)$ corresponds to the set of lines in $\RR^3$ that pass through the point $\ell_p\cap\ell_{p_9}$ and that lie in the plane spanned by these two lines. There exists a line in this set that intersects $\ell_{p_{10}}$ (the line corresponding to the vertex of $C^\prime_{S_2})$. This means that $L(p)$ must intersect $C_{S_1} \cap C^\prime_{S_2}$, and thus $P$ vanishes at the three distinct points $p_9, p,$ and $L(p) \cap C_{S_1} \cap C^\prime_{S_2}$.

Therefore, since $P$ vanishes at the points $C_{S_1}\cap L_{G_0}^\prime,$ $p_{11}$, and $p_{12}$, we have that the (reducible) degree-five curve
$$
\big( C_{S_1} \cap C^\prime_{S_2} \big)\ \cup\ \big(L(C_{S_1} \cap L_{G_0}^\prime) \big)\ \cup\ L(p_{11})\ \cup\ L(p_{12})
$$
is contained in $Z(P)$. This in turn implies that $C_{S_1} \subset Z(P)$. An identical argument shows that $C^\prime_{S_2} \subset Z(P).$

Recall that our goal is to show that most regulus strips from $\mathcal S$ are contained in the $\lessapprox_{\eps}\delta^{1/2}$--neighborhood of $Z(P)$. Since $C_{S_1}, C^\prime_{S_2},$ and $L_{G_0}^\prime$ are contained in $Z(P)$, we know that the strips in $H_Y(S_1), H_Y(S_2)$, and $\mathcal{S}(G_0)$ are contained in the $\lessapprox_{\eps}\delta^{1/2}$--neighborhood of $Z(P)$.

Before continuing, we will need a lemma that controls the behavior of degree-two polynomials in the plane
\begin{lem}[conic sections approximating lines]\label{mostlyCloseToVariety}
Let $P\in\RR[x,y]$ be an irreducible degree-two polynomial and let $w,s,t>0$. Suppose that there are three $s$--separated points $q_1,q_2,q_3\in Z(P)\cap B(0,1)$ so that $q_2$ has distance $\leq w$ from the line $L$ joining $q_1$ and $q_2$. Then there is an interval $I\subset L$ of length at most $t$ so that for all $p\in B(0,1)\cap (L\backslash I)$, there is a point $q\in \CC^2$ with $P(q)=0$ and 
$$
\operatorname{dist}(p,q) \lesssim w/(ts)^{O(1)}.
$$
\end{lem}

\begin{proof}
After applying a rigid transformation and re-labeling the points if necessary, we can assume that $q_1$ and $q_3$ lie on the $x$ axis, $q_2$ lies above (or on) the $x$ axis, and the projection of $q_2$ to the $x$ axis lies between $q_1$ and $q_3$. Apply the linear transformation $(x,y)\mapsto (x/|q_3|, y)$. Let $q_2^\prime$ be the image of $q_2$ under this transformation. Let $f$ be polynomial obtained by pre-composing $P$ with this transformation. Without loss of generality, we can assume that each coefficient of $f$ has magnitude at most one, and that one coefficient of $f$ has magnitude one. It suffices to prove Lemma \ref{mostlyCloseToVariety} for $f$ in place of $P$. Since $f$ vanishes at $(0,0)$ and $(1,0)$, we can write $f(x,y) = ax^2 + bxy + cy^2 - ax + ey$.

Note that $Z(f)$ is smooth, and contains at most two connected components. By reflecting around the line $x=1/2$ if necessary, we can assume that there is a closed curve $\gamma$ (homeomorphic to the closed interval $[0,1])$ with endpoints $(0,0)$ and $q_2^\prime=(x,y)$, with $s\leq x \leq 1-s$ and $0\leq y \leq w$. Since $Z(f)$ can intersect the line $y=w$ at most twice, we conclude that the interior of the curve $\gamma$ lies between the lines $y=0$ and $y=w$.

Implicitly differentiating the equation $f(x,y)=0$, we conclude that 
$$
\frac{dy}{dx} = \frac{2a(x-1)+by}{bx + 2cy + e}.
$$
Thus there is an interval $J\subset\RR$ of length $\gtrsim s$ so that $|\frac{dy}{dx}|\lesssim w/s$ for all $(x,y)\in \gamma$ with $x\in J$. For each such point $(x,y)$, we have
$$
\Big| \frac{2a(x-1)+by}{bx + 2cy + e} \Big|\lesssim w/s,
$$
and since $0<y<w$, we have
\begin{equation}\label{partialBoundOnA}
|a|\ |x-1| \lesssim (w/s)|bx + 2cy + e| + |by| \lesssim (w/s)\big(|b| + |e|\big) + (w^2/s)|c|.
\end{equation}
Since \eqref{partialBoundOnA} holds for all $x\in J$, there exists at least one value of $x$ with $|x-1|\gtrsim s$. We conclude that
$$
|a| \lesssim (w/s^2)\big(|b| + |e|\big) + (w^2/s^2)|c|.
$$
Re-arranging the equation $0=f(x,y) = c y^2 + (bx + e)y + (ax^2 - ax)$ and solving for $y$, we have that one of the solutions is given by
$$
y = \frac{-(bx+e) \pm \sqrt{(bx+e)^2 - 4acx(x-1)}}{2c},
$$
where the sign is chosen so that $-(bx+e)\pm |bx+e|=0$. Note that the discriminant might be negative, so $y$ need not be real. Nonetheless, we will show that $y$ usually has small magnitude. We will consider two cases

Case 1: $|b|+|e|< 100w|c|/t$. Then for all $x\in [-1,1]$, we have
$$
|y| = \Big|\frac{-(bx+e) \pm \sqrt{(bx+e)^2 - 4acx(x-1)}}{2c}\Big|\lesssim \frac{|b|+|e|}{|c|} + \frac{\sqrt{|ac|}}{|c|}\lesssim w/t + \sqrt{|a/c|}\lesssim w/(ts^2).
$$

Case 2:  $|b|+|e|\geq 100w|c|/t$. Then there is an interval $I$ of length $\lesssim t$, so that for all $x\in [-1,1]\backslash I$, we have $|bx + e| > t(|b|+|e|)\geq 100w|c|$. For all such $x$, we have
 $$
|y| = \Big|\frac{-(bx+e) \pm \sqrt{(bx+e)^2 - 4acx(x-1)}}{2c}\Big| \lesssim \Big|\frac{-(bx+e)\pm|bx+e|+ |ac|}{|c|}\Big|\lesssim |a|\lesssim w/s.
$$
\end{proof}

We are now ready to show that most of the remaining regulus strips in $\mathcal{S}$ lie close to $Z(P)$. Let $G\in\mathcal{G}$ be a grain contained in
\begin{equation}\label{setG}
B(0,1)\ \cap\ \bigcup_{S\in H(S_1)}Y(S)\ \cap\ \bigcup_{S\in H(S_2)}Y(S)\ \cap\ \bigcup_{S\in \mathcal{S}(G_0)}Y(S).
\end{equation}
The above set is a union of $\gtrapprox_{\eps}\delta^{-1}$ grains and is contained in the $\lessapprox_{\eps}\delta^{1/2}$ neighborhood of a plane.

Let $L$ be a unit line segment contained in $L_G$. Then $L$ passes $\lessapprox_{\eps}\delta^{1/2}$--close to $Z(P)$ at three $\gtrapprox_{\eps}1$--separated points; call these points $q_1,q_2,q_3$. Let $q_1^\prime,q_2^\prime, q_3^\prime\in Z(P)$ with $\operatorname{dist}(q_i,q_i^\prime)\lessapprox_{\eps}\delta^{1/2}$ for $i=1,2,3.$. Let $L^\prime$ be the line passing through $q_1^\prime$ and $q_3^\prime$, and let $\Pi$ be the plane spanned by $q_1^\prime,q_2^\prime,$ and $q_3^\prime$. Then the restriction of $P$ to $\Pi$ satisfies the hypotheses of Lemma \ref{mostlyCloseToVariety} (with the points $q_1^\prime,q_2^\prime,q_3^\prime$; $w\lessapprox_{\eps}\delta^{1/2}$, $s\gtrapprox_\eps1$, and $t\gtrapprox_{\eps}1$ to be chosen appropriately below). Thus there exists a line segment $L_{\operatorname{bad}}\subset L$ of length $\delta^{C_1\eps}$ so that $(L\backslash L_{\operatorname{bad}})\subset N_{\delta^{1/2-C_2\eps}}(Z(P))$. Here $C_1$ is an absolute constant that we will determine later, while $C_2$ depends on $C_1$. If $C_1$ is chosen sufficiently large, then at most half the regulus strips $S\in\mathcal{S}(G)$ can correspond to balls $B_S\subset \RR^4$ that intersect $L_{\operatorname{bad}}$. Call a strip $S\in\mathcal{S}(G)$ ``good'' if $B_S\subset N_{\delta^{1/2-C_2\eps}}(Z(P)).$ In particular, if $B_S$ intersects $L\backslash L_{\operatorname{bad}}$ then $S$ is good.

Let
$$
\mathcal{S}^\prime=\bigcup_{G\subset \eqref{setG}}\{S\in \mathcal{S}(G)\colon S\ \textrm{is good}\}.
$$

We have $|\mathcal{S}^\prime|\gtrapprox_{\eps}\delta^{-3/2},$ and $B_S\subset N_{\delta^{1/2-C_2\eps}}(Z(P))$ for each $S\in\mathcal{S}^\prime.$

\end{proof}

\subsection{$Z(P)$ is $SL_2(\RR)$}
In this section we will show that after applying a suitable linear transformation to $\RR^3$, the degree-two surface $Z(P)$ from Lemma \ref{closetoZQ} is actually $SL_2 = \{(a,b,c,d)\in\RR^4\colon ad-bc=1\}.$

\begin{lem}\label{formOfP}
Let $(\tubes=\bigsqcup_{S\in\mathcal{S}}\tubes(S),Y)$ be an $\eps$--extremal set of tubes of $SL_2$ type. Let $\mathcal{G}$ be a set of $\geq\delta^{C\eps-3/2}$ grains, each in a distinct $\delta^{1/2}$ cube, and suppose that for each grain $G\in\mathcal{G}$, we have $|\mathcal{S}(G)|\geq \delta^{C\eps-1/2}$.

Suppose furthermore that there exists a monic degree-two polynomial $P$ in four variables so that for each $S\in\mathcal{S}$, there is a point $q\in\CC^4$ with $P(q)=0$ and $\operatorname{dist}(q, B_S)\lessapprox_{\eps}\delta^{1/2}$. Then the polynomial $P$ is of the form
\begin{equation}\label{FormOfP}
P(a,b,c,d)=Aa + Bb + Cc+Dd+ E+ F(ad-bc)+H(a,b,c,d),
\end{equation}
where all of the coefficients of $H$ have magnitude $\lessapprox_{\eps}\delta^{1/2}$.
\end{lem}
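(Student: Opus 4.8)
The plan is to show that a degree-two polynomial $P$ vanishing (up to error $\delta^{1/2-C\eps}$) on all the balls $B_S$ must, after suitable normalization, agree with the $SL_2$ form $ad-bc$ modulo a linear part and lower-order terms of size $\lessapprox_\eps\delta^{1/2}$. The key structural input is that a general regulus strip $S$ supplies, via the hairbrush of $S$ and the associated cone $C_S\subset\RR^4$, a large family of lines in $\RR^4$ lying $\lessapprox_\eps\delta^{1/2}$-close to $Z(P)$; combined with Lemma \ref{mostlyCloseToVariety}, such lines are \emph{honestly} close to $Z(P)$ over their whole length, not just at the sampled points. So $Z(P)$ must (approximately) contain a two-parameter family of lines in $\RR^4$ through each point of a large subset.

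First I would recall that each line $\ell\subset\RR^3$ corresponds to $(a,b,c,d)\in\RR^4$ via $(a,b,0)+\RR(c,d,1)$, and that the condition ``$\ell$ and $\ell'$ intersect'' is exactly the vanishing of the bilinear/quadratic expression that, written out as in the cone computation earlier in the paper, is governed by the quadratic form $ad-bc$ (indeed $X_{ij}=\det\big|\begin{smallmatrix}a_i-a_j & b_i-b_j\\ c_i-c_j & d_i-d_j\end{smallmatrix}\big|$ appeared repeatedly). So the ``incidence variety'' of all lines meeting a fixed line $L_0$ is an honest hyperplane-section-type quadric, and $SL_2=\{ad-bc=1\}$ arises as the locus of lines meeting a suitable $\delta^{1/2}$-family with a normalization. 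Concretely, I would: (i) pick a regulus strip $S\in\mathcal{S}$ in ``general position'' (using the pigeonholing from Lemma \ref{closetoZQ} and the fact that $\mathcal S$ is $SL_2$-type, so $v(S)$ ranges over a large set of directions); (ii) apply a linear transformation of $\RR^3$ — which induces a linear transformation of $\RR^4$ of determinant $\approx_\eps 1$ distorting everything by $\lessapprox_\eps 1$ — so that the chosen $S$ lies in the fat tube coaxial with the $x$-axis and its regulus map is standardized, exactly as in the normalizations used in Lemmas \ref{regulusThreeLinesFarFromCommonPlane} and \ref{findingPtsAndLinesLem}; (iii) use the cone $C_S$ (and $\Sigma_S$) attached to $S$ to produce, through $\gtrapprox_\eps\delta^{-1}$-many points of $Z(P)$, lines of $\RR^4$ lying $\lessapprox_\eps\delta^{1/2}$-close to $Z(P)$; by Lemma \ref{mostlyCloseToVariety} these lines are then contained in $N_{\lessapprox_\eps\delta^{1/2}}(Z(P))$.

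With this in hand, I would argue that $Z(P)$ must be (up to the allowed error) a quadric ruled by a two-parameter family of lines near a Zariski-dense subset, hence its leading-order part is, up to $GL$-equivalence over the transformation group we have available, either the $SL_2$ quadric $ad-bc$ or a degenerate quadric (a product of hyperplanes, or a rank-$\le 2$ form). The degenerate cases are excluded using the structural facts already proved: if $P$ were (essentially) a product of two linear forms, then all $B_S$ would lie in $N_{\lessapprox_\eps\delta^{1/2}}$ of a \emph{hyperplane} in $\RR^4$, i.e.\ the lines coaxial with all fat tubes would lie in a $2$-parameter family all meeting a common line (or all parallel to a common plane); feeding this back through Lemma \ref{fatRegulusOfFatTubes} / Corollary \ref{hairbrushTwoTubesInFatRegulusOne} and Lemma \ref{allTubesCoplanar} (tubes parallel to a common plane have large union) contradicts $\eps$-extremality once $\eps$ is small. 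Likewise lower-rank quadratic parts collapse the line families too much and contradict the lower bound $|\mathcal S|\gtrapprox_\eps\delta^{-3/2}$ with $|\mathcal S(G)|\gtrapprox_\eps\delta^{-1/2}$. Having pinned down the leading term as $F(ad-bc)$ with $|F|\approx_\eps 1$, I divide by $F$ (absorbing it) so $P=F(ad-bc)+(\text{linear})+(\text{const})+H$ with $\|H\|_\infty\lessapprox_\eps\delta^{1/2}$, which is \eqref{FormOfP}; the split of the remaining degree-$\le 1$ part into $Aa+Bb+Cc+Dd+E$ is automatic, and any genuinely quadratic excess beyond $F(ad-bc)$ must have coefficients $\lessapprox_\eps\delta^{1/2}$ because otherwise the set $N_{\delta^{1/2-C\eps}}(Z(P))$ would not contain the required two-parameter line families to the stated accuracy (again by Lemma \ref{mostlyCloseToVariety}, run in the other direction).

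The main obstacle I expect is step (iii) together with the normalization bookkeeping: producing \emph{enough} lines in $\RR^4$ genuinely close to $Z(P)$, in enough directions and through enough points, to force the full quadric structure — as opposed to merely forcing $Z(P)$ to contain one curve or one ruling — and doing so while keeping track of the $\lessapprox_\eps$ losses so that the final error in $H$ is $\lessapprox_\eps\delta^{1/2}$ rather than some larger power. This is exactly where the hypotheses $|\mathcal G|\geq\delta^{C\eps-3/2}$, each grain in a distinct $\delta^{1/2}$-cube, and $|\mathcal S(G)|\geq\delta^{C\eps-1/2}$ are used: they guarantee that after the generic choice of $S_1$ the cone $C_{S_1}$ meets $N(Z(P))$ along a full $3$-dimensional set (not just a curve), so varying $S_1$ over the $\gtrapprox_\eps\delta^{-3/2}$ strips sweeps out the whole of $N_{\lessapprox_\eps\delta^{1/2}}(Z(P))$, and the line families we extract are two-parameter families through a positive-density subset — enough to apply a quantitative Bezout/Lagrange-interpolation argument (in the spirit of the 14-point argument in the proof of Lemma \ref{closetoZQ}, and of Lemma \ref{mostlyCloseToVariety}) identifying the quadratic part.
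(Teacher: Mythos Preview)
Your approach has a genuine gap at the central step, and it is also doing more work than the lemma requires. The conclusion of this lemma is only that the quadratic part of $P$, modulo a multiple of $ad-bc$, has coefficients $\lessapprox_\eps\delta^{1/2}$; it does \emph{not} assert that $F\approx_\eps 1$ or that degenerate cases are excluded --- that is the content of the two lemmas that follow. So your steps involving ``excluding degenerate quadrics via extremality'' and ``divide by $F$'' are aiming at the wrong target. More seriously, your key inference --- that $Z(P)$ is ruled by a two-parameter family of lines and therefore its quadratic part is $GL$-equivalent to $ad-bc$ --- does not work as stated. Every smooth quadric hypersurface in $\RR^4$ of suitable signature is ruled by lines; ruling alone does not single out the form $ad-bc$, and the transformations of $\RR^4$ induced by linear maps of $\RR^3$ form a proper subgroup of $GL_4$, so even establishing the correct $GL_4$-equivalence class would not give the specific form \eqref{FormOfP} in the original coordinates.

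What you are missing is the concrete algebraic fact that the paper exploits directly. For each grain $G$ in a cube $Q$ centered at $(x,y,z)$, the set of lines through $Q$ is the $\delta^{1/2}$-neighborhood of the $2$-plane $\Pi_Q=\{a=x+zc,\ b=y+zd\}\subset\RR^4$, and the line $L_G$ lies in $\Pi_Q$. Restricting $P$ to $\Pi_Q$ via the substitution $(a,b)\mapsto(x+zc,\,y+zd)$ gives a bivariate polynomial in $(c,d)$ which is $\lessapprox_\eps\delta^{1/2}$ along $L_G$. The decisive observation is that under this substitution the monomial $ad-bc$ becomes $(x+zc)d-(y+zd)c=xd-yc$, which is \emph{linear} in $(c,d)$; every other quadratic monomial in $a,b,c,d$ produces a genuine quadratic term in $(c,d)$ whose coefficient is a nonzero polynomial in $z$ (for instance $a^2\to z^2c^2+\cdots$, $ac\to zc^2+\cdots$). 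Since the restriction $P(x+zc,y+zd,c,d)$ vanishes (up to $\lessapprox_\eps\delta^{1/2}$) on a line for $\gtrapprox_\eps\delta^{-3/2}$ many $\delta^{1/2}$-separated triples $(x,y,z)$, the quadratic-in-$(c,d)$ part must be $\lessapprox_\eps\delta^{1/2}$ for a dense set of $z$, forcing every quadratic coefficient of $P$ other than that of $ad-bc$ to be $\lessapprox_\eps\delta^{1/2}$. This is a short direct computation; no normalization, cone geometry, or case analysis is needed.
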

\begin{proof}
Let $Q$ be a $\delta^{1/2}$ cube and let $(x,y,z)\in Q$. Then the set of all lines $\{\ell\in\RR^3\colon Q\cap\ell\neq\emptyset\}$ is comparable to the set
$$
\{(a,b,c,d)\in\RR^4\colon a = x+zc+O(\delta^{1/2}),\ b=y+zd+O(\delta^{1/2})\}.
$$

The above set is the $\delta^{1/2}$--neighborhood of a plane $\Pi_{Q}\subset\RR^4$. For each grain $G\in\mathcal{G}$, the line segment $L_G$ is contained in the $\delta^{1/2}$--neighborhood of $\Pi_{Q(G)}$, and is also contained in the $\delta^{C\eps+1/2}$--neighborhood of $Z(P)$.

In particular, the function
$$
f_{G}(c,d)=P(x+zc, y+zd, c, d)
$$
has magnitude $\lessapprox_{\eps}\delta^{1/2}$ on the $\delta^{1/2}$--neighborhood of some line in the $(c,d)$--plane. Since this holds for every grain $G\in\mathcal{G}$, $|\mathcal{G}|\gtrapprox_{\eps}\delta^{-3/2}$, and the grains in $\mathcal{G}$ lie in distinct $\delta^{1/2}$ cubes, we conclude that the function
$$
(c,d)\mapsto P(x+zc, y+zd, c, d)
$$
has magnitude $\lessapprox_{\eps}\delta^{1/2}$ on the $\delta^{1/2}$--neighborhood of some line in the $(c,d)$--plane for $\gtrapprox_{\eps}\delta^{-3/2}$ triples $(x,y,z)\in\RR^3$ that are $\geq\delta^{1/2}$ separated. This implies \eqref{FormOfP}.
\end{proof}
Since the coefficients of $H(a,b,c,d)$ have magnitude $\lessapprox_{\eps}\delta^{1/2}$ and at least one coefficient of $P$ has magnitude one, we have that $|\nabla P|\gtrsim 1$ on $B(0,1)\cap Z(P)$. Thus we can replace the polynomial $Aa + Bb + Cc+Dd+ E+ F(ad-bc)+H(a,b,c,d)$ by the polynomial $P(a,b,c,d)=Aa + Bb + Cc+Dd+ E+ F(ad-bc)$; the $\approx_{\eps}\delta^{1/2}$ neighborhoods of the zero-sets of these two polynomials are comparable. Furthermore, for each $S\in\mathcal{S}$, $B_S\cap N_{\lessapprox_{\eps}\delta^{1/2}}(Z(P))\neq\emptyset$, i.e. for each $S\in\mathcal{S}$, there is a real point $q\in Z(P)$ that lies close to $B_S$.

Each linear transformation $T\colon\RR^3\to\RR^3$ sends lines to lines. Thus each such transformation induces a (not necessarily linear) map on $\RR^4$, which we have identified with the parameter space of lines. The next lemmas show that after applying a suitable transformation to $\RR^3$, the polynomial $P$ from Lemma \ref{formOfP} has a particularly simple form.

\begin{lem}
Let $(\tubes=\bigsqcup_{S\in\mathcal{S}}\tubes(S),Y)$ be an $\eps$--extremal set of tubes of $SL_2$ type. Suppose that there exists a degree-two hypersurface $Z(P)\subset\RR^4$ of the form $P(a,b,c,d)=Aa + Bb + Cc+Dd+ E+ F(ad-bc)$ so that $B_S\subset N_{\delta^{1/2-C\eps}}Z(P)$ for each $S\in\mathcal{S}$.

Then there exists a linear transformation $T\colon\RR^3\to\RR^3$ that distorts angles by $O(1)$ so that after applying this transformation, the image of $Z(P)$ is transformed to $Z(P^\prime)$, where $P^\prime(a,b,c,d)=A^\prime a + B^\prime b + C^\prime c+D^\prime d+ E^\prime+ F^\prime(ad-bc)$, and  $F^\prime\gtrapprox_{\eps}1$.
\end{lem}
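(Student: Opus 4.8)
The plan is to produce $T$ among the elementary linear maps $T_{\gamma,\delta}\colon(x,y,z)\mapsto(x,y,z+\gamma x+\delta y)$, which have condition number $O(1)$ when $|\gamma|,|\delta|\le 1$ and hence distort angles by $O(1)$. First I would record how $T_{\gamma,\delta}$ acts on $P$. Writing a line as $(a,b,0)+t(c,d,1)$, applying $T_{\gamma,\delta}$, and reparametrizing so that the third coordinate is again the parameter, one finds that the induced map on the parameter space $\RR^4$ of non-horizontal lines sends $Z(P)$ to $Z(P')$ with
\begin{equation*}
P'(a,b,c,d)=Aa+Bb+(C+E\gamma)c+(D+E\delta)d+E+(F-A\delta+B\gamma)(ad-bc);
\end{equation*}
in particular the $ad-bc$–coefficient transforms by $F\mapsto F-A\delta+B\gamma$, while $A,B,E$ are unchanged, and $P'$ is again of the required special form. (Although the induced map on $\RR^4$ is a priori rational, the equation of the image variety comes out as an honest degree-two polynomial of this form; this is the same kind of computation already used for Lemma \ref{regulusThreeLinesFarFromCommonPlane}, and it is where the monic $ad-bc$ structure of $P$ is used.)

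With this in hand I would fix large absolute constants $C_2>C_1>1$ and split into three cases. If $|F|\ge\delta^{C_2\eps}$, take $T=\mathrm{id}$: since $P$ is normalized so that $\max(|A|,\dots,|F|)\approx_\eps 1$, already $F'=F\gtrapprox_\eps 1$. If $|F|<\delta^{C_2\eps}$ but $\max(|A|,|B|)\ge\delta^{C_1\eps}$, say $|A|\ge\delta^{C_1\eps}$, I would apply $T=T_{0,\delta_0}$ with $\delta_0=\pm1$ chosen so that $|F-A\delta_0|\ge|A|\ge\delta^{C_1\eps}$; the remaining coefficients of $P'$ change only by $O(1)$ factors, so after renormalizing $P'$ by an $\approx_\eps 1$ constant we again have $\max(|A'|,\dots,|F'|)\approx_\eps 1$ and $F'\gtrapprox_\eps 1$ (the case $|B|\ge\delta^{C_1\eps}$ is symmetric, using $T_{\gamma_0,0}$). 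The content of these two cases is just that a bounded shear can always feed a non-negligible $a$– or $b$–coefficient into the $ad-bc$–coefficient.

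The remaining case is $|F|<\delta^{C_2\eps}$ and $\max(|A|,|B|)<\delta^{C_1\eps}$, which I would rule out entirely. Since $\max(|A|,\dots,|F|)\approx_\eps 1$ while $|A|,|B|,|F|$ are all $\lessapprox_\eps\delta^{C_1\eps}$, we have $\max(|C|,|D|,|E|)\approx_\eps 1$. If $\sqrt{C^2+D^2}\lessapprox_\eps\delta^{C_1\eps}$ then $|E|\approx_\eps 1$, so $|P|\gtrapprox_\eps 1$ on $B(0,1)$ and $N_{\delta^{1/2-C\eps}}(Z(P))$ misses $B(0,1)$ — impossible, since every $B_S$ lies in this neighborhood and in $B(0,O(1))$. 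Otherwise $\sqrt{C^2+D^2}\gtrapprox_\eps 1$, and on $Z(P)\cap B(0,1)$ we have $|Cc+Dd+E|=|Aa+Bb+F(ad-bc)|\lessapprox_\eps\delta^{C_1\eps}$, so $Z(P)\cap B(0,1)$ lies in the $\lessapprox_\eps\delta^{C_1\eps}$–neighborhood of the hyperplane $\{Cc+Dd+E=0\}$. Hence every fat-tube direction $v(S)=(c,d,1)/|(c,d,1)|$, and therefore (up to $O(\delta^{1/2})$) the direction $v(\tube)$ of every thin tube $\tube\in\tubes$, satisfies $|v(\tube)\cdot v_0|\lessapprox_\eps\delta^{C_1\eps}$ with $v_0=(C,D,E)/|(C,D,E)|$. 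Applying Lemma \ref{allTubesCoplanar} with $\rho\approx\delta^{C_1\eps}$ then gives $\big|\bigcup_{\tube\in\tubes}Y(\tube)\big|\gtrapprox_\eps\delta^{1/2-C_1\eps/4}$, contradicting $\eps$–extremality once $C_1$ exceeds a suitable absolute constant. I expect the main obstacle to be the first step — pinning down the exact transformation law for $P$ under $T_{\gamma,\delta}$ and checking that all the $\approx_\eps$ normalizations survive the shear — rather than the case analysis, which is short once that law is available.
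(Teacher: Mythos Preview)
Your proposal is correct and follows essentially the same route as the paper. Both use the shear family $(x,y,z)\mapsto(x,y,z+\gamma x+\delta y)$ (the paper with $\gamma,\delta\in\{0,1\}$), compute that this sends the $ad-bc$--coefficient to $F\pm A$ or $F\pm B$, and split into the same three cases: $|F|$ already large; $|F|$ small but $|A|$ or $|B|$ large (apply a shear); all of $|A|,|B|,|F|$ small (contradict $\eps$--extremality via Lemma~\ref{allTubesCoplanar}). Your treatment is slightly more explicit in two places---you separate out the degenerate subcase $|C|,|D|\lessapprox_\eps\delta^{C_1\eps}$, $|E|\approx_\eps 1$ (where $Z(P)$ misses the ball entirely), and you state the full transformation law for all coefficients rather than just the special cases---but these are cosmetic. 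The sign in your formula $F\mapsto F-A\delta+B\gamma$ differs from the paper's $F\mapsto F+A$ for the map $(x,y,z)\mapsto(x,y,z+y)$, but this is harmless since you allow $\delta_0=\pm1$.
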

\begin{proof}
First, observe that at least one of $A, B, C, D, F$ must have magnitude $\geq \delta^{C\eps}$ for some absolute constant $C$. We will consider several cases.
\medskip

\noindent {\bf Case 1}: $|F|\leq\delta^{C\eps}$; $|A|, |B|\leq2\delta^{C\eps}$. If this occurs, then let $\Pi$ be the span of the set of vectors
$$
\{v = (c,d,1)\colon Cc + Dd + E = 0\}.
$$
If $\tube\in\tubes$, then $v(\tube)$ lies in the $\delta^{C\eps}$ neighborhood of $\Pi$. However, since $(\tubes,Y)$ is $\eps$--extremal, if the constant $C$ is chosen sufficiently large then this contradicts Lemma \ref{allTubesCoplanar}. Thus Case 1 cannot occur.

\medskip

\noindent {\bf Case 2}: $F\leq \delta^{C\eps},\ A\geq 2\delta^{C\eps}$. Apply the linear transformation
$$
(x,y,z)\mapsto (x,y,z+y).
$$
This sends the line $(a,b,0)+\RR(c,d,1)$ to the line
$$
\Big(a-\frac{bc}{1+d},\ b-\frac{bd}{1+d},\ 0\Big) +\RR \Big(\frac{c}{1+d},\ \frac{d}{1+d},\ 1\Big).
$$
Thus the variety $Z\big(Aa + Bb + Cc + Dd + E + F(ad-bc)\big)$ gets mapped to
\begin{equation*}
\begin{split}
&Z\Big(A(a-\frac{bc}{1+d}) + B(b-\frac{bd}{1+d}) + C(\frac{c}{1+d}) + D(\frac{d}{1+d}) + E \\
&\quad+ F\big(\big[a-\frac{bc}{1+d}\big]\big[\frac{d}{1+d}\big]-\big[b-\frac{bd}{1+d}\big]\big[\frac{c}{1+d}\big] \Big)\\
&=Z\Big(\frac{1}{1+d}\big( Aa + Bb + Cc + (D+E)d + E + (A+F)(ad-bc)\big)\Big)\\
&=Z\Big(Aa + Bb + Cc + (D+E)d + E + (A+F)(ad-bc)\Big).
\end{split}
\end{equation*}
We are now in the situation where the coefficient of $ad-bc$ (namely $A+F$) has magnitude $\gtrapprox_{\eps} 1$.

\medskip

\noindent {\bf Case 3}: $F\leq \delta^{C\eps},\ B\geq 2\delta^{C\eps}$. The argument is similar to that in Case 2, except we apply the linear transformation
$$
(x,y,z)\mapsto (x,y,z+x).\qedhere
$$

\end{proof}




\begin{lem}\label{tubesNearStandardSL2}
Let $(\tubes=\bigsqcup_{S\in\mathcal{S}}\tubes(S),Y)$ be an $\eps$--extremal set of tubes of $SL_2$ type. Suppose that there exists a degree-two hypersurface $Z(P)\subset\RR^4$ of the form $P(a,b,c,d)=Aa + Bb + Cc+Dd+ E+ F(ad-bc)$, and $F\gtrapprox_{\eps}1$.

Then there exists a linear transformation $T\colon\RR^3\to\RR^3$ that distorts angles by $\lessapprox_{\eps}1$ so that after applying this transformation, the image of $Z(P)$ is transformed to $Z(ad-bc-1)$.
\end{lem}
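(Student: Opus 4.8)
The plan is to build $T$ as a composition of maps coming from the symmetry group of the family of lines $\{(a,b,0)+\RR(c,d,1)\}$: a shear of $\RR^3$, a translation, an isotropic dilation of the $xy$--plane, and, if needed, the reflection $(x,y,z)\mapsto(y,x,z)$. Since $F\gtrapprox_{\eps}1$, after dividing $P$ by $F$ (which does not change $Z(P)$) and normalizing $P$ by a scalar so that its coefficients have size $\lessapprox_{\eps}1$, we may assume
\[
P(a,b,c,d)=ad-bc+Aa+Bb+Cc+Dd+E,\qquad |A|,|B|,|C|,|D|,|E|\lessapprox_{\eps}1 .
\]
Completing the square yields the identity
\[
P(a,b,c,d)=(a+D)(d+A)-(b-C)(c-B)+E',\qquad E':=E-DA+CB,
\]
so $|E'|\lessapprox_{\eps}1$. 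The substitution $a'=a+D,\ b'=b-C,\ c'=c-B,\ d'=d+A$ is precisely the map induced on the parameter space $\RR^4$ by the affine transformation $T_1(x,y,z)=(x-Bz+D,\ y+Az-C,\ z)$ of $\RR^3$: a direct check shows $T_1$ carries $(a,b,0)+\RR(c,d,1)$ to $(a+D,\,b-C,\,0)+\RR(c-B,\,d+A,\,1)$. The linear part of $T_1$ is a shear with entries $\lessapprox_{\eps}1$, so $T_1$ distorts angles and lengths by $\lessapprox_{\eps}1$.

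Because $P\big(a'-D,\,b'+C,\,c'+B,\,d'-A\big)=a'd'-b'c'+E'$, the image $T_1(Z(P))$ is the quadric $\{a'd'-b'c'=-E'\}$ in the parameter space of the transformed lines, and it remains to convert this into $\{ad-bc=1\}$ using a dilation and, possibly, the reflection. Granting $|E'|\approx_{\eps}1$, proceed as follows. If $E'<0$, apply the dilation $(x,y,z)\mapsto(|E'|^{-1/2}x,\,|E'|^{-1/2}y,\,z)$; it induces $(a,b,c,d)\mapsto|E'|^{-1/2}(a,b,c,d)$, which scales $ad-bc$ by $|E'|^{-1}$ and so carries $\{ad-bc=|E'|\}$ to $\{ad-bc=1\}$. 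If $E'>0$, first apply $(x,y,z)\mapsto(y,x,z)$, which induces $(a,b,c,d)\mapsto(b,a,d,c)$ and carries $\{ad-bc=-E'\}$ to $\{ad-bc=E'\}$, and then dilate by $(E')^{-1/2}$ as above. The composite $T$ has linear part built from factors whose parameters are $\approx_{\eps}1$, hence distorts angles by $\lessapprox_{\eps}1$, and satisfies $T(Z(P))=Z(ad-bc-1)$.

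The step I expect to be the crux is the claim $|E'|\gtrapprox_{\eps}1$ --- i.e.\ ruling out the degenerate case in which $Z(P)$ hugs the cone $\{ad-bc=0\}$ (the parameters of lines meeting the $z$--axis), in which case one cannot reach $Z(ad-bc-1)$ by a bounded transformation. I would establish this by contradiction with $\eps$--extremality, transporting $(\tubes,Y)$ by $T_1$ first (which, up to constants, sends an $\eps$--extremal $SL_2$--type collection to one of the same kind, and regulus strips to regulus strips). Suppose $|E'|\le\delta^{C'\eps}$ for a large absolute constant $C'$. Then inside the unit ball $\{ad-bc=-E'\}$ lies in the $\lessapprox_{\eps}\delta^{C'\eps/3}$--neighborhood of $\{ad-bc=0\}$, so the ball $B_S$ of each regulus strip $S$ lies that close to the parameter of a line meeting the $z$--axis; hence the fat tube $\tube_{\delta^{1/2}}(S)$, and therefore every $\tube\in\tubes$, passes within $\lessapprox_{\eps}\delta^{C'\eps/3}$ of the $z$--axis inside the unit ball. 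After the robust transversality reduction (Lemma \ref{robustTransLem}), a $\gtrsim\delta^{\eps}$ fraction of the shading lies on tubes making angle $\geq\delta^{\eps}$ with the $z$--axis; applying the fat hairbrush estimate (Lemma \ref{concentratedTubesHaveLargeVolume}) with $L$ the $z$--axis and $\rho=\delta^{C'\eps/3}$ then gives $\big|\bigcup_{\tube\in\tubes}Y(\tube)\big|\gtrapprox_{\eps}\delta^{1/2}\rho^{-1/4}(\delta^{2}|\tubes|)\gtrsim\delta^{1/2+\eps-C'\eps/12}$, which exceeds $\delta^{1/2-\eps}$ once $C'$ is large enough, contradicting \eqref{smallVolume}. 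The complementary possibility, that almost all of the shading sits on tubes making angle $<\delta^{\eps}$ with the $z$--axis, is excluded in the same spirit: such tubes are confined to a narrow bush about the $z$--axis, whose union has volume $\gg\delta^{1/2-\eps}$, again violating $\eps$--extremality. (Alternatively, one expects $|E'|\gtrapprox_{\eps}1$ to follow from the non-degeneracy built into the construction of $Z(P)$ in Lemma \ref{closetoZQ}, since the cones $C_{S_1}$ and $C^\prime_{S_2}$ there have distinct vertices, whereas every ruling line of $\{ad-bc=0\}$ passes through its single vertex.)
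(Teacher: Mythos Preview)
Your proof is correct and follows essentially the same strategy as the paper: first apply an affine shear-plus-translation $T_1$ of $\RR^3$ to reduce $Z(P)$ to $\{ad-bc=-E'\}$, then use the fat-hairbrush bound (the paper invokes Corollary~\ref{fewTubesHitALine}, which is the same tool) to force $|E'|\approx_\eps 1$, and finally dilate in the $xy$--plane. Your completing-the-square formulation is cleaner, and you handle two points the paper glosses over --- the sign of $E'$ (your reflection $(x,y,z)\mapsto(y,x,z)$) and the angle hypothesis in Lemma~\ref{concentratedTubesHaveLargeVolume} (your appeal to robust transversality) --- but the argument is the same in substance.
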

\begin{proof}
Define the linear transformation
$$
T_1:(x,y,z)\mapsto \Big(x-(\frac{B}{F})z-\frac{D}{F},\ \  y-(\frac{A}{F})z-\frac{C}{F},\ \  z\Big).
$$

$T_1$ sends $Z(P)$ to $Z(P^\prime)$, with $P^\prime(a, b, c, d)= F(ad-bc)+ E^\prime$, with $E^\prime=2\frac{AD-BC}{F}+E.$ Since $|F|\gtrapprox_{\eps}1$, this transformation distorts angles by $\lessapprox_{\eps}1$. Let $\tubes_1$ be the image of the tubes in $\tubes$ under this transformation.

Every tube in $\tubes_1$ intersects the $\lessapprox_{\eps}|E^\prime/F|$--neighborhood of the $z$ axis. By Corollary \ref{fewTubesHitALine}, we have $|E^\prime/F|\approx_{\eps}1$. Next, consider the transformation $(x,y,z)\mapsto (x \sqrt{E/F},\ y\sqrt{E/F}, z)$. This sends the zero-set of $Z(P^\prime)$ to $Z(ad-bc-1)$. Since $E^\prime$ and $F$ have magnitude $\approx_{\eps}1$, this transformation distorts angles by $\lessapprox_{\eps}1$.
 \end{proof}

\begin{lem}\label{tubesTransverseToSL2}
Let $S$ be a $\delta^{\eps}$ regulus strip with $B_S\subset N_{\delta^{1/2-C\eps}}(SL_2),\ SL_2 = Z(ad-bc-1)$. Then the $\delta$-tubes contained in $S$ correspond to $\delta$-balls in $\RR^4$ that are contained in a rectangular prism of dimensions $\delta^{1/2}\times\delta\times\delta\times\delta$ whose major axis is $\gtrapprox_{\eps}1$--transverse to $SL_2$.
\end{lem}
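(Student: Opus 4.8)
The plan is to work entirely in the parameter space $\RR^4 = \{(a,b,c,d)\}$, where the line $(a,b,0)+\RR(c,d,1)$ is identified with the point $(a,b,c,d)$, and to track how a $\delta^\eps$ regulus strip $S$ sits inside the $\delta^{1/2}$-neighborhood of $SL_2 = Z(ad-bc-1)$. First I would recall the geometric picture of a regulus strip from Definition \ref{regulusStripDefn}: $S$ is the $\delta$-neighborhood of a non-degenerate regulus $R$ intersected with the $\delta^{1/2}$-neighborhood of a single line $\ell_0$ in the ruling of $R$. The lines coaxial with the $\delta$-tubes contained in $S$ form (up to uncertainty $\delta$) a one-parameter family: they are the lines in the same ruling of $R$ as $\ell_0$ that pass within $\delta^{1/2}$ of $\ell_0$. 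So in $\RR^4$, the centers of the corresponding $\delta$-balls lie on (a $\delta^{1/2}$-length arc of) a curve $\gamma \subset \RR^4$ — namely the image in parameter space of this ruling of $R$ — and the whole set of $\delta$-balls is contained in the $\delta$-neighborhood of this arc. The key quantitative inputs are: (i) the arc of $\gamma$ has length $\approx_\eps \delta^{1/2}$ because the strip has "width" $\delta^{1/2}$ along the ruling direction; (ii) $\gamma$ is a low-degree algebraic curve (the ruling of a quadric), so on a $\delta^{1/2}$-arc it is contained in the $\lessapprox_\eps\delta$-neighborhood of its tangent line at any point, after noting the curvature of $\gamma$ in $\RR^4$ is $\lessapprox_\eps 1$ (this follows from the non-degeneracy bound $|K_p|\approx_\eps 1$ on $R$ in $\RR^3$ via Lemma \ref{GaussCurvatureOfRegulus}, translated through the standard correspondence between the ruling and its parameterization). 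Combining (i) and (ii), the $\delta$-balls corresponding to tubes in $S$ are all contained in a rectangular prism of dimensions $\delta^{1/2}\times\delta\times\delta\times\delta$, with major axis pointing in the tangent direction $v$ to $\gamma$.

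Next I would show that this major axis $v$ is $\gtrapprox_\eps 1$-transverse to $SL_2 = Z(ad-bc-1)$, i.e.\ $v$ makes an angle $\gtrapprox_\eps 1$ with the tangent hyperplane $T_p(SL_2)$ at the relevant point $p = B_S\cap SL_2$. The mechanism is the hypothesis $B_S\subset N_{\delta^{1/2-C\eps}}(SL_2)$ together with the fact that the strip extends a full $\delta^{1/2}$ along $\gamma$: if $v$ were $\leq\delta^{C'\eps}$-tangent to $SL_2$ at $p$, then a $\delta^{1/2}$-length arc of $\gamma$ through $p$ (which is $\lessapprox_\eps\delta$-close to its tangent line $p+\RR v$) would stay within $\lessapprox_\eps \delta^{1/2}\cdot\delta^{C'\eps} + \delta$ of $SL_2$, which is fine for \emph{consistency} — so that alone isn't a contradiction. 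The sharper point is the other direction: since $B_S$ is a \emph{ball} of radius $\delta^{1/2}$ centered at the coaxial line of the $\delta^{1/2}$-tube $\tube_{\delta^{1/2}}(S)$ containing $S$, and $B_S\subset N_{\delta^{1/2-C\eps}}(SL_2)$, the full $\delta^{1/2}$-ball $B_S$ sits in a $\delta^{1/2-C\eps}$-slab around $SL_2$; this forces $B_S$ to be "flat against" $SL_2$, meaning that in fact it is the tube segment $S$ that records the transverse direction, not $B_S$. I would argue: the curve $\gamma$ through $p$ must leave $SL_2$ at unit speed in the transverse direction because otherwise the regulus $R$ generating $S$ would be tangent to $SL_2$ along a whole ruling, which (undoing the parameter-space dictionary) would force the $\delta^{1/2}$-tube $\tube_{\delta^{1/2}}(S)$ to be contained in the $\lessapprox_\eps\delta^{1/2}$-neighborhood of $SL_2$ to much higher order than $\delta^{1/2}$ allows, contradicting that $S$ is the $\delta^{1/2}$-neighborhood of a line in a genuinely non-degenerate (curvature $\approx_\eps 1$) regulus. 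Concretely, I would compute the tangent line to $\gamma$ in coordinates — parameterizing a ruling of a nondegenerate quadric near a point and differentiating $ad-bc-1$ along it — and check directly that $\frac{d}{dt}(ad-bc)|_{t=0} \approx_\eps 1$ under the normalizations $|K_p|\approx_\eps 1$, which gives the transversality.

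The main obstacle I anticipate is making precise the last transversality claim: one must be careful that "regulus strip $S$ with $B_S\subset N_{\delta^{1/2-C\eps}}(SL_2)$" genuinely forbids the degenerate possibility that the ruling of $R$ runs tangent to $SL_2$. This is exactly the place where the non-degeneracy hypothesis on the regulus (built into the definition of a $\delta^\eps$ regulus strip, Definition \ref{defnOfNonDegenerate}, giving $|K_p|\approx_\eps 1$ via Lemma \ref{GaussCurvatureOfRegulus}) has to be used in an essential way — a degenerate (flat) regulus could lie along $SL_2$ without being transverse, so the curvature bound is what powers the conclusion. I would isolate this as a short sub-computation: fix the point $p\in B_S\cap SL_2$, apply a linear transformation distorting angles by $\lessapprox_\eps 1$ (as in the proof of Lemma \ref{regulusThreeLinesFarFromCommonPlane}) to put three $\gtrapprox_\eps 1$-separated coaxial lines of tubes in $S$ into a normal form, write down the corresponding three points in $\RR^4$, parameterize the ruling curve $\gamma$ of $R$ through them, and verify that $\nabla(ad-bc)|_p \cdot \gamma'(0)\approx_\eps 1$. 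Everything else — the prism dimensions $\delta^{1/2}\times\delta\times\delta\times\delta$, the containment, the bounded overlap — is routine book-keeping with the $\approx_\eps$ calculus already set up in Sections \ref{reguliiSection} and \ref{killingSl2ExampleSec}.
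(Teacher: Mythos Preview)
Your identification of the $\delta^{1/2}\times\delta\times\delta\times\delta$ prism is fine and matches the paper: the tubes in $S$ trace out a $\delta^{1/2}$-arc of the ruling curve $\gamma\subset\RR^4$, which is $\lessapprox_\eps\delta$-close to its tangent line. The gap is entirely in the transversality argument.

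Your claim that non-degeneracy of $R$ (i.e.\ $|K_p|\approx_\eps 1$ from Lemma~\ref{GaussCurvatureOfRegulus}) forces $\nabla(ad-bc)\cdot\gamma'(0)\approx_\eps 1$ is false. The Gauss curvature bound is a statement about how $R$ bends in $\RR^3$; it places no constraint on how the ruling curve $\gamma$ meets the particular hypersurface $SL_2\subset\RR^4$. Concretely: fix $\ell_0$ with parameters $p_0\in SL_2$. The ruling through $\ell_0$ is determined by a choice of three skew lines in the dual ruling, each meeting $\ell_0$; this is a nine-parameter family, and non-degeneracy is an open condition in it. The tangent direction $\gamma'(0)$ varies over a three-parameter family as those lines vary, so one can certainly arrange $\gamma'(0)\in T_{p_0}(SL_2)$ while keeping $R$ $\delta^\eps$-non-degenerate. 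For such an $R$, the strip $S=N_\delta(R)\cap N_{\delta^{1/2}}(\ell_0)$ satisfies $B_S\subset N_{\delta^{1/2-C\eps}}(SL_2)$ (trivially, since $B_S$ has radius $\delta^{1/2}$) yet its prism is \emph{tangent} to $SL_2$. Your proposed ``short sub-computation'' would discover this: differentiating $ad-bc$ along $\gamma$ gives an expression depending on the full choice of $R$, not just on $|K_p|$. (Also, ``three $\gtrapprox_\eps 1$-separated coaxial lines of tubes in $S$'' is a slip --- those lines are all $\delta^{1/2}$-close.)

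What the paper actually uses is an input from the ambient context of Section~\ref{killingSl2ExampleSec} that is not visible in the lemma's hypotheses as stated: the tangent plane $T_pS$ of the strip at each point $p$ coincides with the plane $\Pi_p$ spanned by directions of $SL_2$-lines through $p$. This holds because \emph{all} the regulus strips in $\mathcal{S}$ have their fat-tube centers near $SL_2$ (Lemma~\ref{closetoZQ}), they intersect tangentially (Lemma~\ref{regStripsAreTangent}), and they obey the regulus map at coarse resolution (Lemma~\ref{tanPlanesParallelLem}); together these pin the grain normal at each point to the $SL_2$ normal. The paper then records, at three heights $z=0,\tfrac12,1$ along $\ell_0$, the linear constraint on $(u,v,w,x)$ coming from the grain normal at that height, and checks by a $4\times 4$ determinant that these three constraint covectors together with the $SL_2$ normal $(d,-c,-b,a)$ are independent (the determinant equals $-\tfrac12(ad-bc)^2=-\tfrac12$). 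That independence is exactly the transversality. So the missing idea in your plan is not a sharper curvature estimate for $R$, but rather the recognition that the grain normals along $S$ are externally imposed by the surrounding $SL_2$ structure, and that this --- not the intrinsic geometry of $R$ --- is what rules out tangency.
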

\begin{proof}
Let $(a,b,c,d)\in SL_2$ be a point so that the line $(a,b,0) + \RR (c,d,1)$ is coaxial with the $\delta^{1/2}$ neighborhood of $S$. Recall that the set of tubes contained in $S$ is comparable to a rectangular prism in $\RR^4$ of dimensions $\approx_\eps \delta^{1/2}\times\delta\times\delta\times\delta$.

Let $\tube$ be a $\delta$--tube contained in $S$. Then $\tube$ is coaxial with a line of the form
$(a+u\delta^{1/2},b+v\delta^{1/2},0)  + \RR (c+w\delta^{1/2},d+x\delta^{1/2},1)$, where $u,v,w,x$ have magnitude $\lessapprox_\eps 1$.

Note that
\begin{equation}\label{determinantCalc}
(a+u\delta^{1/2})(d+x\delta^{1/2})-(b+v\delta^{1/2})(c+w\delta^{1/2}) = 1 + \delta^{1/2}(ax + du - bw - cv) + O(\delta).
\end{equation}

The tubes inside the regulus strip $S$ form a one parameter family of points $(u,v,w,x)\in\RR^4$. Our goal is to show that as the choice of tubes varies, the RHS of \eqref{determinantCalc} varies as well.

The one parameter family of points mentioned above has the property that the intersection of the corresponding tubes with balls of radius $\delta^{1/2}$ centered at the three points $(a,b,0), (a+c/2, b+d/2, 1/2)$, and $(a+c, b+d, 1)$ must each be contained inside a rectangular prism of dimensions $\approx_{\eps}\delta^{1/2}\times\delta^{1/2}\times\delta$ (these rectangular prisms are the ``grains'' from Lemma \ref{refinementToSquares}). Furthermore, the normal directions of these rectangular prisms must agree with the direction dictated by the fact that all of the regulus strips lie near $SL_2$.

These three conditions are linear, and give us the constraints
$$
-bu + av  =  e
$$

$$
-(b+\frac{1}{2}d)u + (a + \frac{1}{2}c)v   -   \frac{1}{2}(b+\frac{1}{2}d)w  + \frac{1}{2}(a+ \frac{1}{2}c)x  =  f,
$$
 and
$$
-(b+d)  u  + (a+c)  v -(b+d) w +(a+c) x  = g,
$$
where  $e,f,$ and  $g$ have magnitude $\lessapprox_{\eps}1$. Next we will verify that the  $4\times 4$ matrix
 $$
\left[\begin{array}{rrrr}
-b&    a&    0&   0\\
-(b+ 1/2 d) &    (a +  1/2 c)&  -(b+1/2 d)    &   (a +1/2 c)\\
-(b+d)&  a+c&    -(b+d)&   a+c\\
d&   -c&    -b&   a
\end{array}\right]
$$
is invertible. Indeed, the determinant is $-{1 \over 2} (ad-bc)^2=-{1 \over 2}.$
Thus the function
$$
(u,v,x,w)\mapsto (a + \delta^{1/2}x)(d+\delta^{1/2}w)- (b+\delta^{1/2}v)(c+\delta^{1/2}x)
$$
has derivative $\approx_\eps\delta^{1/2}$ in the direction of the ruling of $S$. This is exactly the statement that the major axis of the $\delta^{1/2}\times\delta\times\delta\times\delta$ prism corresponding to $S$ makes an angle $\gtrapprox_{\eps}1$ with the tangent plane of $SL_2$ at the point of intersection.
\end{proof}

\begin{lem}\label{tubesNearSL2}
Let $(\tubes=\bigsqcup_{S\in\mathcal{S}}\tubes(S),Y)$ be an $\eps$--extremal set of tubes of $SL_2$ type. Suppose that for each $S\in\mathcal{S}$, $B_S\subset N_{\delta^{1/2-C\eps}}(SL_2)$.

Then to each $S\in\mathcal{S}$ we can associate a tube $\tube_S$ (not necessarily from $\tubes(S)$) with the following properties.
\begin{itemize}
\item $\tube_S\subset S$; in particular the tubes $\{T_S\colon S\in\mathcal{S}\}$ are $\delta^{1/2}$--separated in parameter space; i.e.~no tube is contained in the $\delta^{1/2}$ neighborhood of any other tube.
\item Each $\delta$--tube $\tube_S$ corresponds to a $\delta$--ball in $\RR^4$ that intersects $SL_2$.
\item If $S,S^\prime\in\mathcal{S}(G)$ for some grain $G$ and if $\angle(v(S),v(S^\prime))\geq\delta^{C\eps}$, then the $\lessapprox_{\eps}\delta$ neighborhoods of $\tube_S$ and $\tube_{S^\prime}$ intersect. In particular, if we replace each tube $\tube_S$ by its $\lessapprox_{\eps}\delta$ neighborhood, then there are $\gtrapprox_{\eps}\delta^{-5/2}$ tube-tube intersections that make an angle $\gtrapprox_{\eps}1$.
\end{itemize}
\end{lem}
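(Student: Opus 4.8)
The plan is to take $\tube_S$ to be the (essentially unique) $\delta$--tube contained in $S$ whose coaxial line has parameter point lying on $SL_2$; the three conclusions then follow from Lemma \ref{tubesTransverseToSL2} together with the quadric structure of $SL_2$.

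Before defining $\tube_S$ I would record a normalization: the hypothesis $B_S\subset N_{\delta^{1/2-C\eps}}(SL_2)$ does not by itself force the radius--$\delta^{1/2}$ ball $B_S$ to meet $SL_2$, so I would first pigeonhole the strips according to the value of $ad-bc$ at the centre of $B_S$ into windows of width $\delta^{1/2+O(\eps)}$ (narrow enough that transversality will force a crossing), retaining the window carrying the most shading; there are only $\delta^{-O(\eps)}$ windows, so this is harmless in the $\lessapprox_\eps$ sense. After a dilation $(x,y,z)\mapsto(\lambda x,\lambda y,z)$ with $\lambda=1+O(\delta^{1/2-C\eps})$, which recentres the retained window at $1$ and distorts angles by $\lessapprox_\eps 1$, every surviving $B_S$ meets $SL_2$, and by transversality (Lemma \ref{tubesTransverseToSL2}) the $\delta^{1/2}\times\delta\times\delta\times\delta$ prism $P_S\subset B_S$ swept out by the parameter balls of the thin tubes of $S$ crosses $SL_2$ in a single $\delta$--ball; I take $\tube_S$ to be the associated $\delta$--tube. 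One then re-runs the grain construction of Lemma \ref{refinementToSquares} (whose hypotheses survive the passage to a subset of $\mathcal S$) to recover $|\mathcal G|\gtrapprox_\eps\delta^{-3/2}$ grains with $|\mathcal S(G)|\gtrapprox_\eps\delta^{-1/2}$. With $\tube_S$ so defined the first two bullets are immediate: $\tube_S\subset S$ by construction; distinct strips lie in distinct fat tubes by Lemma \ref{regStripsSeparatedLem}, which gives the $\delta^{1/2}$--separation in parameter space; and $B(\tube_S)$ meets $SL_2$ by construction.

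For the third bullet I would use the identity that a line through a point $(x,y,z)$ in direction $(c,d,1)$ has parameter $(a,b,c,d)$ with $ad-bc=xd-yc$. Hence, writing $(c_S,d_S,1)\parallel v(\tube_S)$, the coaxial line $\ell_S$ of $\tube_S$ lies entirely in the plane $P_S:=\{(x,y,z):xd_S-yc_S=1\}$, and likewise $\ell_{S'}\subset P_{S'}$. If $S,S'\in\mathcal S(G)$, then $\ell_S$ and $\ell_{S'}$ both traverse the grain $G$, hence lie within $\lessapprox_\eps\delta$ of its plane $\Pi(G)$ over the cube $Q(G)$; so over $Q(G)$ they agree to within $\lessapprox_\eps\delta$ with the lines $m_S:=P_S\cap\Pi(G)$ and $m_{S'}:=P_{S'}\cap\Pi(G)$. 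Since $\angle(P_S,P_{S'})\approx_\eps\angle(v(S),v(S'))\geq\delta^{C\eps}$, the coplanar lines $m_S,m_{S'}$ meet at a point $P^*$ within $\lessapprox_\eps\delta^{1/2-C\eps}$ of $G$, hence in $B(0,2)$; and because $\ell_S,\ell_{S'}$ are straight and agree with $m_S,m_{S'}$ to $\lessapprox_\eps\delta$ near $G$, they also pass within $\lessapprox_\eps\delta$ of $P^*$, so within $\lessapprox_\eps\delta$ of each other. Thus $N_{\lessapprox_\eps\delta}(\tube_S)\cap N_{\lessapprox_\eps\delta}(\tube_{S'})\neq\emptyset$. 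Summing over the $\gtrapprox_\eps\delta^{-3/2}$ grains and the $\gtrapprox_\eps\delta^{-1}$ pairs in each, and noting that a fixed pair of strips shares $\lessapprox_\eps 1$ grains while robust transversality (Lemma \ref{robustTransLem}, inherited by the strips) makes all but a negligible fraction of these pairs $\gtrapprox_\eps 1$--transverse, produces $\gtrapprox_\eps\delta^{-5/2}$ such intersections.

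The main obstacle I anticipate is the error bookkeeping in the last step: one must verify that $P^*$ really lies in a $\delta^{-O(\eps)}$--dilate of $Q(G)$ (so that it falls inside the unit-length segments defining $\tube_S$ and $\tube_{S'}$), that the deviation of the straight line $\ell_S$ from $m_S$ stays $\lessapprox_\eps\delta$ out to $P^*$ (it is $\lesssim\delta$ only over $Q(G)$ itself, and grows on the larger dilate), and that the $\delta^{1/2}$--scale ambiguity in the strip directions $v(S)$ is absorbed everywhere — all of which should go through because every relevant quantity is pinned down up to the $\lessapprox_\eps$ factor, but the constants must be chosen in the right order. A secondary point is checking that the pigeonhole-and-rescale normalization leaves intact the hypotheses needed to re-run Lemma \ref{refinementToSquares}.
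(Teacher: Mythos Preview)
Your proposal is correct, and the overall architecture matches the paper's: invoke Lemma~\ref{tubesTransverseToSL2} to select $\tube_S$, then verify the three bullets. Your normalization step (pigeonhole on the value of $ad-bc$ and rescale) is a careful handling of the issue that $B_S\subset N_{\delta^{1/2-C\eps}}(SL_2)$ does not literally force the $\delta^{1/2}$--long prism to cross $SL_2$; the paper simply asserts the crossing and absorbs the discrepancy into the $\lessapprox_\eps$ bookkeeping downstream, so your extra care is harmless but not strictly needed.

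The one genuine difference is in the third bullet. You route the argument through the $SL_2$ identity $ad-bc=xd-yc$, placing each $\ell_S$ in a vertical plane $P_S=\{xd_S-yc_S=1\}$ and then intersecting with $\Pi(G)$ to produce the auxiliary lines $m_S,m_{S'}$ whose crossing $P^*$ pins down the intersection of $\ell_S$ and $\ell_{S'}$. The paper instead uses only the grain geometry: \emph{any} tube in $S$ and \emph{any} tube in $S'$ (not just the special $\tube_S,\tube_{S'}$) that share a grain $G$ must come $\lessapprox_\eps\delta$ close. The reason is that both coaxial lines are within $\lessapprox_\eps\delta$ of $\Pi(G)$ over a segment of length $\delta^{1/2}$, hence make angle $\lessapprox_\eps\delta^{1/2}$ with $\Pi(G)$; their projections to $\Pi(G)$ are two lines through the $\delta^{1/2}$--square at angle $\geq\delta^{C\eps}$, meeting within $\lessapprox_\eps\delta^{1/2-C\eps}$ of $Q(G)$, and at that distance each line has drifted only $\lessapprox_\eps\delta^{1/2}\cdot\delta^{1/2-C\eps}\lessapprox_\eps\delta$ from $\Pi(G)$. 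This is exactly the drift computation you flag as your ``main obstacle,'' so you have the right idea---but notice that it goes through without invoking $P_S$ at all, and hence applies to arbitrary tubes in the strips. The paper's version is shorter and shows that the $SL_2$ structure is not actually needed at this particular step; your version works too but carries extra structure that is only cashed in later.
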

\begin{proof}
By Lemma \ref{tubesTransverseToSL2}, for each $S\in\mathcal{S}$ there is a tube contained in $S$ (not necessarily from $\tubes(S)$, however) whose corresponding $\delta$--ball in $\RR^4$ intersects $SL_2$. Call this tube $\tube_S$.

If $S$ and $S^\prime$ make an angle $\geq\delta^{C\eps}$ and share a common grain, Then every tube contained in $S$ must intersect the $\lessapprox_{\eps}\delta$ neighborhood of every tube contained in $S^\prime$ (regardless of whether these tubes are actually present in $\tubes(S)$ and $\tubes(S^\prime)$, respectively).
\end{proof}

\subsection{Using $SL_2$ structure to find an impossible incidence arrangement}
In this section we will use the set of tubes $\{\tube_S\}_{S\in\mathcal{S}}$ from Lemma \ref{tubesNearSL2} to construct an arrangement of points and lines in the plane that determine many incidences.

\begin{lem}\label{tangentPlaneOrth}
Let $p_1$ and $p_2$ be points in $SL_2\cap B(0,1)$. Suppose that $p_1$ and $p_2$ correspond to lines in $\RR^3$ that are $\geq\delta^{\eps}$ separated and skew. Then the line connecting $p_1$ and $p_2$ makes an angle $\gtrapprox_\eps 1$ with the tangent plane of $SL_2$ at the point $p_1$.
\end{lem}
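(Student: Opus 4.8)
The plan is to compute directly in the parameter space $\RR^4$, where $SL_2 = Z(F)$ with $F(a,b,c,d) = ad-bc-1$. The tangent plane to $SL_2$ at a point $p_1=(a_1,b_1,c_1,d_1)$ is the orthogonal complement of $\nabla F(p_1) = (d_1,-c_1,-b_1,a_1)$. So the claim is equivalent to the assertion that the vector $p_2 - p_1$ has a component of size $\gtrapprox_\eps |p_2-p_1|$ along $\nabla F(p_1)$; i.e. that
$$
\big| (p_2-p_1)\cdot \nabla F(p_1)\big| \gtrapprox_\eps |p_2 - p_1|.
$$
First I would expand $(p_2-p_1)\cdot\nabla F(p_1)$. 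Writing $p_1=(a_1,b_1,c_1,d_1)$ and $p_2=(a_2,b_2,c_2,d_2)$, this dot product equals $a_2 d_1 - b_2 c_1 - c_2 b_1 + d_2 a_1 - (a_1 d_1 - b_1 c_1 - c_1 b_1 + d_1 a_1)$. Using $a_1d_1 - b_1 c_1 = 1$ (since $p_1\in SL_2$) and $a_2 d_2 - b_2 c_2 = 1$ (since $p_2 \in SL_2$), a short manipulation shows
$$
(p_2-p_1)\cdot\nabla F(p_1) = -\big[(a_2-a_1)(d_2-d_1) - (b_2-b_1)(c_2-c_1)\big] = -\big(F(p_2) - F(p_1) - (p_2-p_1)\cdot\nabla F(p_1)\big)\cdot(-1)\ ,
$$
the point being that the quadratic correction term is exactly (up to sign) the ``skewness determinant'' of the two lines. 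Concretely, if $p_i$ corresponds to the line $(a_i,b_i,0)+\RR(c_i,d_i,1)$, then the quantity $(a_2-a_1)(d_2-d_1)-(b_2-b_1)(c_2-c_1)$ is precisely the $X_{12}$ determinant appearing in Lemma \ref{CurvatureAtAPoint} (``six times the volume of the tetrahedron spanned by the four natural points on the two lines''), which by \eqref{XijIsOne} has magnitude $\approx_\eps 1$ whenever the two lines are $\geq\delta^\eps$ separated and skew. Hence $\big|(p_2-p_1)\cdot\nabla F(p_1)\big|\approx_\eps 1$.

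It then remains to note that $|p_2-p_1| \lessapprox_\eps 1$: both points lie in $B(0,1)\subset\RR^4$ (or more precisely, correspond to lines meeting the unit ball with coefficients of size $\lessapprox_\eps 1$ after the normalizations in the preceding lemmas), so $|p_2-p_1|\lesssim 1$. Combining, $\big|(p_2-p_1)\cdot\nabla F(p_1)\big| \gtrapprox_\eps |p_2-p_1|$, and since $|\nabla F(p_1)|\approx 1$ on $B(0,1)$, the sine of the angle between $p_2-p_1$ and the normal $\nabla F(p_1)$ is $\gtrapprox_\eps 1$, which is exactly the statement that the secant line makes angle $\gtrapprox_\eps 1$ with $T_{p_1}SL_2$. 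The one point requiring a little care is the bookkeeping of the $\approx_\eps$ constants and checking that ``$\geq\delta^\eps$ separated and skew'' for the two lines in $\RR^3$ translates into the hypothesis of \eqref{XijIsOne} — this is the only real content, and I expect it to be routine given Lemma \ref{CurvatureAtAPoint}. The main (minor) obstacle is simply matching conventions: the lemma as stated refers to ``lines in $\RR^3$ that are $\geq\delta^\eps$ separated and skew'' while the determinant identity lives in parameter space, so I would state the computation in coordinates and invoke the six-times-tetrahedron-volume interpretation of $X_{12}$ to bridge the two pictures.
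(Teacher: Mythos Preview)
Your proposal is correct and follows exactly the paper's argument: compute the dot product of $p_2-p_1$ with the normal $\nabla F(p_1)=(d_1,-c_1,-b_1,a_1)$, identify it (using $F(p_1)=F(p_2)=0$ and that $F$ is degree two) with the determinant $\det\begin{pmatrix}a_1-a_2 & b_1-b_2\\ c_1-c_2 & d_1-d_2\end{pmatrix}$, and invoke \eqref{XijIsOne}. (Your second displayed equality is garbled as written---the trailing ``$\cdot(-1)$'' makes it circular---but your surrounding prose makes the intended Taylor-remainder identity clear.)
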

\begin{proof}
Let $p_i=(a_i,b_i,c_i,d_i),\ i=1,2.$ Observe that $T_{p_1}(SL_2)$ is orthogonal to the vector $(d_1, -c_1, -b_1, a_1)$. Thus we need to show that
$$
|(d_1, -c_1, -b_1, a_1)\cdot(a_1-a_2, b_1-b_2, c_1-c_2, d_1-d_2)|\gtrapprox_\eps 1.
$$
But this is equal to
\begin{equation}\label{vectorDot}
\begin{split}
&|2(a_1d_1 - b_1c_1) - (d_1a_2 -c_1b_2 -b_1c_2 + a_1d_2)|\\
&=2- (d_1a_2 -c_1b_2 -b_1c_2 + a_1d_2)\\
&=\left|\begin{array}{cc}a_1-a_2 & b_1-b_2 \\ c_1-c_2 & d_1-d_2\end{array} \right|.
\end{split}
\end{equation}
As observed in \eqref{XijIsOne}, since $p_1$ and $p_2$ correspond to lines in $\RR^3$ that are $\gtrapprox_{\eps}1$ separated and skew, the above determinant is $\gtrapprox_{\eps}1$.
\end{proof}

\begin{lem}\label{notCollinear}
Let $p_1,p_2,p_3\in SL_2\cap B(0,2)$. Suppose that $p_1$ and $p_2$ correspond to points that are $\geq\delta^\eps$ separated and skew. Let $L_{p_1,p_2}$ be the line connecting $p_1$ and $p_2$. Then
\begin{equation}\label{distLP2}
\dist(L, p_3)\approx_{\eps} \dist(p_1,p_3).
\end{equation}
\end{lem}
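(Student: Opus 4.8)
The plan is to reduce \eqref{distLP2} to the determinant computation already carried out in the proof of Lemma \ref{tangentPlaneOrth}. The key observation is that the distance from $p_3$ to the line $L_{p_1,p_2}$ is controlled by two ingredients: the distance $\dist(p_1,p_3)$, and the angle between the direction of $L_{p_1,p_2}$ and the vector $p_3-p_1$. Since $p_1,p_2,p_3$ all lie in $SL_2\cap B(0,2)$, the vector $p_3-p_1$ lies $O(\delta^{1/2})$-close to the tangent plane $T_{p_1}(SL_2)$ — more precisely, because $SL_2$ is a quadric with $|\nabla(ad-bc)|\gtrsim 1$ on the unit ball and $p_1,p_3$ are both on it, we have $|(d_1,-c_1,-b_1,a_1)\cdot(p_3-p_1)| \lesssim \dist(p_1,p_3)^2 \lesssim \dist(p_1,p_3)$, so the component of $p_3-p_1$ normal to $T_{p_1}(SL_2)$ is small relative to $\dist(p_1,p_3)$.

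First I would observe that $\dist(L_{p_1,p_2},p_3) \le \dist(p_1,p_3)$ trivially, giving the upper bound. For the lower bound, write $p_3-p_1 = \dist(p_1,p_3)\,w$ for a unit vector $w$, and decompose $w$ into its component along the unit direction $u$ of $L_{p_1,p_2}$ and its component in $u^\perp$. Then $\dist(L_{p_1,p_2},p_3) = \dist(p_1,p_3)\,|w - (w\cdot u)u|$, so it suffices to show $|w-(w\cdot u)u| \gtrapprox_\eps 1$, i.e.\ that $w$ is not nearly parallel to $u$. By Lemma \ref{tangentPlaneOrth}, $u$ makes an angle $\gtrapprox_\eps 1$ with $T_{p_1}(SL_2)$, so $|u\cdot n| \gtrapprox_\eps 1$ where $n=(d_1,-c_1,-b_1,a_1)/|(d_1,-c_1,-b_1,a_1)|$ is the unit normal to $T_{p_1}(SL_2)$ at $p_1$ (note $|(d_1,-c_1,-b_1,a_1)| \approx_\eps 1$ on $B(0,2)$ since the entries are $O(1)$ and $|\nabla(ad-bc)|\gtrsim 1$). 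On the other hand, from the quadric relation $a_id_i - b_ic_i = 1$ for $i=1,3$ one computes exactly as in \eqref{vectorDot}–\eqref{determinantCalc} that $(d_1,-c_1,-b_1,a_1)\cdot(p_3-p_1) = -\big|\begin{smallmatrix} a_1-a_3 & b_1-b_3\\ c_1-c_3 & d_1-d_3\end{smallmatrix}\big|$, which has magnitude $\lesssim \dist(p_1,p_3)^2$, hence $|w\cdot n| \lesssim \dist(p_1,p_3) \le 4$. Wait — this last bound is not automatically small; I should instead use that $\dist(p_1,p_3)$ can be assumed small (if $\dist(p_1,p_3)\gtrsim 1$ then \eqref{distLP2} is trivial from $\dist(L,p_3)\le\dist(p_1,p_3)\lesssim 1$ and $\dist(L,p_3)\gtrsim\dist(L,p_3)$, and one handles the genuinely small-distance case by the normal-component estimate). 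Concretely: $|w\cdot n| = \dist(p_1,p_3)^{-1}|(p_3-p_1)\cdot n| \lesssim \dist(p_1,p_3)$, so for $\dist(p_1,p_3)$ smaller than a fixed constant we get $|w\cdot n| \le 1/2$, say.

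Putting these together: $w = (w\cdot u)u + w^\perp$ with $|w^\perp| = |w-(w\cdot u)u|$; and if $|w-(w\cdot u)u|$ were $\ll_\eps 1$ then $w$ would be $\ll_\eps 1$-close to $\pm u$, forcing $|w\cdot n|$ to be $\gtrapprox_\eps 1$ (since $|u\cdot n|\gtrapprox_\eps 1$), contradicting $|w\cdot n|\le 1/2$ once $\dist(p_1,p_3)$ is small. Hence $|w-(w\cdot u)u| \gtrapprox_\eps 1$ and $\dist(L_{p_1,p_2},p_3) \gtrapprox_\eps \dist(p_1,p_3)$, which with the trivial upper bound gives \eqref{distLP2}. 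The main obstacle — and the only real subtlety — is the bookkeeping around the case distinction on whether $\dist(p_1,p_3)$ is comparable to $1$ or genuinely small; the small-distance case is where the quadric relation does real work, and the large-distance case must be disposed of separately (there one just notes both sides of \eqref{distLP2} are $\approx_\eps 1$, using the upper bound $\dist(L,p_3)\le\dist(p_1,p_3)$ together with a lower bound coming from, e.g., the fact that $p_3$ being collinear with $p_1,p_2$ at distance $\gg$ the separation would again contradict the transversality in Lemma \ref{tangentPlaneOrth} applied after translating).
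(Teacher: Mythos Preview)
Your treatment of the small-distance case ($\dist(p_1,p_3)$ below a threshold $\approx_\eps 1$) is correct and makes explicit what the paper leaves implicit: the chord $p_3-p_1$ has normal component at $T_{p_1}SL_2$ of size $O(\dist(p_1,p_3)^2)$ via the determinant identity, while the direction $u$ of $L$ has normal component $\gtrapprox_\eps 1$ by Lemma~\ref{tangentPlaneOrth}, so the two vectors cannot be nearly parallel. This is a clean local linearization of the lemma.

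However, your large-distance case has a real gap. The phrase ``applied after translating'' does not make sense here: $SL_2$ is not translation-invariant, and more fundamentally your whole argument is a first-order tangent-plane estimate at the single point $p_1$. Once $\dist(p_1,p_3)\gtrsim\delta^{C\eps}$, the bound $|w\cdot n|\lesssim\dist(p_1,p_3)$ no longer undercuts $|u\cdot n|\gtrapprox_\eps 1$, and the argument yields nothing. The paper's proof handles both regimes at once by using \emph{both} intersection points: since $L$ is a line and $SL_2$ has degree two, $L\cap SL_2=\{p_1,p_2\}$ exactly; Lemma~\ref{tangentPlaneOrth} gives $\angle(v(L),T_{p_i}SL_2)\gtrapprox_\eps 1$ for $i=1,2$; and then ``compactness'' finishes. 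To unpack that last word: restricting $P=ad-bc-1$ to $L$ gives a univariate quadratic $As(s-s_2)$ in arclength, with $|A|\gtrapprox_\eps 1$ coming from the transversality at $p_1$ (or $p_2$); for the foot $q$ of the perpendicular from $p_3$ to $L$ one has $|P(q)|\lesssim|q-p_3|=\dist(L,p_3)$, hence $\dist(L,p_3)\gtrapprox_\eps|q-p_1|\,|q-p_2|$, which combined with the Pythagorean relation $\dist(p_i,p_3)^2=|q-p_i|^2+\dist(L,p_3)^2$ gives the bound uniformly. The ingredient you are missing is precisely the second intersection point $p_2$; your approach only sees the geometry near $p_1$.
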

\begin{proof}
Since $SL_2$ has degree 2 and $L$ is a line, we have that $L$ intersects $SL_2$ at precisely the points $p_1$ and $p_2$. By Lemma \ref{tangentPlaneOrth}, $\angle(v(L), T_{p_i}(SL_2))\gtrapprox_\eps 1,\ i=1,2.$ \eqref{distLP2} now follows from compactness.
\end{proof}
\begin{lem}\label{tripleHairbrush}
Let $p_i = (a_i,b_i,c_i,d_i),\ i=1,2,3$ be three points in $SL_2\cap B(0,1)\subset \RR^4$. Suppose that the lines corresponding to $p_3$ and each of $p_1$ and $p_2$ are $\geq\delta^\eps$ separated and skew, and that
\begin{align}
\dist\big( p_1,\ p_2\big)&\geq \delta^{1/2}\label{distanceBetweenPoints12}.
\end{align}
Then the set of points $(a,b,c,d)\in\RR^4$ satisfying
\begin{equation}\label{hyperplaneAndSL2Intersection}
\left\{
\begin{array}{l}|(a_i,b_i,c_i,d_i)\cdot (a,b,c,d)- 1|\leq  \delta^{1-C\eps},\ i=1,2,3,\\
|ad-bc - 1|\leq \delta^{1-C\eps}
\end{array}
\right.
\end{equation}
is contained in a union of two balls of radius $\lessapprox_{\eps}\delta^{1/2}$.
\end{lem}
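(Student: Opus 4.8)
The plan is to work in the parameter space $\RR^4$ of lines $(a,b,0)+s(c,d,1)$, on which $Q(a,b,c,d)=ad-bc$ is a nondegenerate quadratic form; write $B$ for its polarization, so $B(p,p)=Q(p)$, and identify the pairing in \eqref{hyperplaneAndSL2Intersection} with $B$. Then \eqref{hyperplaneAndSL2Intersection} says exactly that $x=(a,b,c,d)$ lies within $\lessapprox_{\eps}\delta$ of each of the three affine hyperplanes $H_i=\{x:B(x,p_i)=1\}$ and within $\lessapprox_{\eps}\delta$ of $SL_2=Z(Q-1)$. Each $H_i$ has a normal of size $\approx 1$, since $p_i\in SL_2\cap B(0,1)$ forces $|p_i|\gtrsim 1$ (as $Q(p_i)=1$), and likewise $|\nabla Q(q)|=|q|\gtrsim 1$ for $q\in SL_2$. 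Call the solution set $\Sigma$; we may assume $\Sigma\neq\emptyset$, and it suffices to control $\Sigma\cap B(0,2)$.

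The first step is to show that the hyperplane system is quantitatively nondegenerate, i.e.\ that $p_1,p_2,p_3$ are $\gtrapprox_{\eps}1$ linearly independent, so that $H_1\cap H_2\cap H_3$ is an affine line $\ell$ and $\Sigma\subset N_{\lessapprox_{\eps}\delta}(\ell)$. If instead $p_1,p_2,p_3$ were nearly dependent there would be a unit combination with $|\alpha_1p_1+\alpha_2p_2+\alpha_3p_3|\lessapprox_{\eps}\delta^{c\eps}$; pairing (via $B$) with a point $x\in\Sigma$ and using $B(x,p_i)=1+O(\delta^{1-C\eps})$ gives $|\alpha_1+\alpha_2+\alpha_3|\lessapprox_{\eps}\delta^{c\eps}$, and then the identities $Q(p_3-p_1)=2(1-\lambda-\mu B(p_1,p_2))$ and $Q(p_3-p_2)=2(1-\mu-\lambda B(p_1,p_2))$ (valid when $p_3=\lambda p_1+\mu p_2$, using $Q(p_1)=Q(p_2)=1$) force one of $Q(p_3-p_1),Q(p_3-p_2)$ to be $\lessapprox_{\eps}\delta^{c'\eps}$, contradicting the hypothesis that $p_3$ is $\geq\delta^\eps$ skew to $p_1$ and to $p_2$. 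One handles the sub-case where $p_1,p_2$ are themselves nearly dependent by re-expressing the relation in terms of the genuinely independent pair $p_1,p_3$ (which are $\geq\delta^\eps$ skew).

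Now $\ell$ is a genuine line, so $\ell\cap SL_2$ is at most two points $q_1,q_2$. Parametrize $\ell$ by arclength as $\ell(t)$, $t\in[-2,2]$, and set $g(t)=Q(\ell(t))-1$, a polynomial of degree $\leq 2$ whose roots are the $t$-values where $\ell$ meets $SL_2$. For $x\in\Sigma$ we have $x=\ell(t)+O(\delta^{1-O(\eps)})$ and $|Q(x)-1|\leq\delta^{1-C\eps}$, hence $|g(t)|\lessapprox_{\eps}\delta$, so the $\ell$-footprint of $\Sigma$ lies in $T=\{t\in[-2,2]:|g(t)|\lessapprox_{\eps}\delta\}$. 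If $M:=\max_{[-2,2]}|g|\gtrapprox_{\eps}\delta^{c\eps}$ for an absolute $c$, then applying the univariate sublevel-set estimate (the lemma ``univariate polynomials have small almost-zero sets'') to $g/M$ shows $T$ is contained in at most two intervals of total length $\lessapprox_{\eps}(\delta/M)^{1/2}\lessapprox_{\eps}\delta^{1/2}$; together with the $O(\delta^{1-O(\eps)})$ thickening transverse to $\ell$, this puts $\Sigma$ into two balls of radius $\lessapprox_{\eps}\delta^{1/2}$. (If $T$ is a single interval this gives one ball.) Since the leading coefficient of $g$ is $Q(u)$ with $u$ the direction of $\ell$, and $g'(t)=\nabla Q(\ell(t))\cdot u$ with $|\nabla Q(\ell(t))|=|\ell(t)|\approx 1$ near a root, the cases $|Q(u)|\gtrapprox_{\eps}\delta^{c\eps}$ and $|g'(0)|\gtrapprox_{\eps}\delta^{c\eps}$ give $M\gtrapprox_{\eps}\delta^{c\eps}$ immediately.

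The remaining case, and the main obstacle, is when $\ell$ is simultaneously nearly a null direction and nearly tangent to $SL_2$ along $B(0,2)$ — a priori $\Sigma$ could then be a long arc near a null line of $SL_2$ rather than two small balls. This is exactly where the hypothesis $\dist(p_1,p_2)\geq\delta^{1/2}$ is used. The two points $q_1,q_2=\ell\cap SL_2$ are common transversals of $p_1,p_2,p_3$, hence distinct lines in the ruling of the (nondegenerate, by the nondegeneracy established above together with Lemma \ref{regulusThreeLinesFarFromCommonPlane}) regulus generated by $p_1,p_2,p_3$; a rescaled version of Lemma \ref{ThreeSkewLinesTwoTransversals} then shows that the $\geq\delta^{1/2}$ separation of $p_1$ and $p_2$ propagates to $q_1,q_2$ being $\gtrapprox_{\eps}\delta^{1/2}$ separated and skew. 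By Lemma \ref{tangentPlaneOrth} (or Lemma \ref{notCollinear}), $\angle(\overline{q_1q_2},T_{q_i}SL_2)\gtrapprox_{\eps}\delta^{1/2}$, so $|g'(t_i)|\gtrapprox_{\eps}\delta^{1/2}$ at the two roots $t_i$ and $|t_1-t_2|=\dist(q_1,q_2)/|u|\gtrapprox_{\eps}\delta^{1/2}$. A direct inspection of $g(t)=Q(u)(t-t_1)(t-t_2)$ then yields $T$: either $M\gtrapprox_{\eps}\delta^{c\eps}$ and the argument above applies, or the midpoint value of $|g|$ is $\lessapprox_{\eps}\delta^{1-C\eps}$, in which case $T$ is a single interval of length $\lessapprox_{\eps}\delta^{1/2}$ (the ``overshoot'' beyond $t_1,t_2$ being $\lessapprox_{\eps}\delta^{1-C\eps}/|g'(t_i)|\lessapprox_{\eps}\delta^{1/2}$), again giving the desired conclusion with a single ball.
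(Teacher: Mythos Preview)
There is a genuine gap in your first step. You claim $p_1,p_2,p_3$ are $\gtrapprox_\eps 1$ linearly independent and hence $\Sigma\subset N_{\lessapprox_\eps\delta}(\ell)$, but this is false in the regime the lemma is built for, namely $|p_1-p_2|$ as small as $\delta^{1/2}$. In that case the $3\times 4$ matrix with rows $p_1,p_2,p_3$ has two rows within $\delta^{1/2}$ of each other, so every $3\times 3$ minor is $\lesssim\delta^{1/2}$; the linear system then only confines $\Sigma$ to $N_{\lessapprox_\eps\delta^{1/2}}(\ell)$. Your contradiction argument does not rule this out: when the near-relation is $\alpha_1 p_1+\alpha_2 p_2\approx 0$ with $\alpha_3\approx 0$, there is nothing to contradict, since the hypotheses impose skewness only on the pairs $(p_3,p_1)$ and $(p_3,p_2)$, not on $(p_1,p_2)$. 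The ``sub-case'' you mention (re-expressing via $p_1,p_3$) does not help, because writing $p_2\approx\lambda p_1+\mu p_3$ and using $Q(p_2-p_3)\gtrapprox_\eps 1$ gives information about $\lambda,\mu,B(p_1,p_3)$ but no lower bound on $|p_1-p_2|$.

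This error propagates fatally: with $x=\ell(t)+O(\delta^{1/2})$ and $|\nabla Q|\approx 1$ on $B(0,2)$ you only get $|g(t)|\lessapprox_\eps\delta^{1/2}$, and the univariate sublevel estimate then yields arcs of length $\lessapprox_\eps(\delta^{1/2}/M)^{1/2}\lessapprox_\eps\delta^{1/4}$ when $M\gtrapprox_\eps 1$, which is too large by $\delta^{-1/4}$. Your ``remaining case'' analysis of small $M$ does not address this, since the problem already arises in the main case.

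The paper's proof avoids this by breaking the symmetry among the three hyperplanes. It keeps $H_1$ and $H_3$ (which \emph{are} $\gtrapprox_\eps 1$-transverse, by the skewness of $p_1,p_3$) and replaces $H_2$ by the \emph{difference} condition $|B(p_1-p_2,x)|\lessapprox_\eps\delta$, i.e.\ the $\approx\delta^{1/2}$-neighbourhood of a hyperplane through the origin with unit normal proportional to $\delta^{-1/2}(p_1-p_2)$. The point of Lemma~\ref{notCollinear} is precisely that this normalized vector is $\gtrapprox_\eps 1$-independent of $p_1,p_3$, so the three hyperplanes now intersect $\gtrapprox_\eps 1$-transversely. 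The $\delta^{1/2}$-thickening is thus isolated in a single, well-understood direction, and the paper then intersects this anisotropic tube with $N_\delta(SL_2)$ to get the two $\delta^{1/2}$-balls. Your symmetric treatment of $H_1,H_2,H_3$ loses exactly this anisotropy.
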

\begin{proof}
First, note that if $H\subset\RR^4$ is a hyperplane passing through the origin, then at each point $p\in H\cap SL_2\cap B(0,1),$ the tangent planes $T_pH$ and $T_pSL_2$ make an angle $\gtrsim 1$ (i.e.~$H$ and $SL_2$ intersect $O(1)$--transversely).  Next, note that \eqref{hyperplaneAndSL2Intersection} can be written
\begin{align}
& |(a_1,b_1,c_1,d_1)\cdot(a,b,c,d)-1|\lessapprox_{\eps}\delta,\label{1stHyperplane}\\
& |(a_3,b_3,c_3,d_3)\cdot(a,b,c,d) -1|\lessapprox_{\eps}\delta,\label{3rdHyperplane}\\
& |\delta^{-1/2}(a_1-a_2, b_1-b_2,c_1-c_2,d_1-d_2)\cdot(a,b,c,d)|\lessapprox_{\eps}\delta^{1/2},\label{inHyperplaneThroughOrigin}\\
&|ad-bc - 1|\lessapprox_{\eps}\delta.\label{liesInSl2}
\end{align}

If $p=(a,b,c,d)$ satisfies \eqref{1stHyperplane} and \eqref{3rdHyperplane}, then $p$ lies in the $\approx_{\eps}\delta$ neighborhood of a plane in $\RR^4$. If $p$ satisfies \eqref{inHyperplaneThroughOrigin} and \eqref{liesInSl2}, then $p$ lies in the ($\gtrapprox_{\eps} 1$) transverse intersection of the $\approx_{\eps}\delta$--neighborhood of a non-degenerate quadric hypersurface and the $\approx_{\eps}\delta^{1/2}$--neighborhood of a hyperplane. This hyperplane is $\gtrapprox_{\eps} 1$ transverse to the plane given by \eqref{1stHyperplane} and \eqref{3rdHyperplane}. This is because by Lemma \ref{notCollinear}, the points $p_1,\ p_3$, and $\delta^{-1/2}(a_1-a_2,b_1-b_2,c_1-c_2, d_1-d_2)$ are $\gtrapprox_\eps 1$--far from being collinear, so the corresponding hyperplanes intersect $\gtrapprox_{\eps} 1$--transversely.
\end{proof}

\begin{cor}\label{tubesBehaveLineLines}
Let $\tubes$ be a set of $\delta$ tubes whose corresponding $\delta$-balls in parameter space are $\delta^{1/2}$ separated and intersect $SL_2$. Let $\tube_A,\tube_B\in\tubes$ be $\gtrapprox_{\eps}1$ separated and skew. If $\tube_1,\tube_2\in H(\tube_A)$, then $|H(\tube_1,\tube_2,\tube_B)|\lessapprox_{\eps}1$.
\end{cor}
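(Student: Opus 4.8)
The plan is to push the problem into the parameter space $\RR^4$ of lines and then invoke Lemma \ref{tripleHairbrush}. Fix lines $L_1,L_2,L_B$ coaxial with $\tube_1,\tube_2,\tube_B$ and let $p_1,p_2,p_B\in\RR^4$ be the corresponding parameters; since the $\delta$-balls of the tubes in $\tubes$ meet $SL_2$, we may move each tube by $O(\delta)$ and assume $p_1,p_2,p_B\in SL_2\cap B(0,2)$. The crucial translation is the following: if $\tube\in\tubes$ has parameter $q=(a,b,c,d)$, then the condition $\tube\cap\tube_i\neq\emptyset$ forces the $2\times 2$ determinant $(a-a_i)(d-d_i)-(b-b_i)(c-c_i)$ to have magnitude $\lesssim\delta$ (this determinant is, up to a factor, the volume of the thin tetrahedron spanned by the endpoints of $L$ and $L_i$, as in Lemma \ref{CurvatureAtAPoint}), and when $q$ and $p_i$ both lie on $SL_2$ this determinant equals $2-(d_i,-c_i,-b_i,a_i)\cdot q$, exactly as in \eqref{vectorDot}. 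Thus $q$ lies $\lesssim\delta$-close to an affine hyperplane whose normal is the conormal of $SL_2$ at $p_i$, and after the linear change of coordinates $(a,b,c,d)\mapsto(d,-c,-b,a)$, which preserves $SL_2$ (up to the harmless rescaling replacing $SL_2$ by $Z(ad-bc-c_0)$), these three hyperplane conditions together with the condition that $q$ lie near $SL_2$ become precisely the system \eqref{hyperplaneAndSL2Intersection} of Lemma \ref{tripleHairbrush}.

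I would then check the hypotheses of Lemma \ref{tripleHairbrush} for the triple $p_1,p_2,p_B$. The separation requirement $\dist(p_1,p_2)\geq\delta^{1/2}$ is automatic, since the $\delta$-balls of distinct tubes of $\tubes$ are $\delta^{1/2}$-separated in parameter space. The skewness requirements are where the hypotheses on $\tube_A,\tube_B$ enter: because $\tube_1,\tube_2\in H(\tube_A)$ and $\tube_B$ is $\gtrapprox_\eps 1$ separated and skew from $\tube_A$, in the configurations where the corollary is applied $\tube_1,\tube_2,\tube_B$ are pairwise $\geq\delta^{\eps}$ separated and skew, so $p_B$ serves as the distinguished point $p_3$ of Lemma \ref{tripleHairbrush} (if instead one of $\tube_1,\tube_2$ is the tube skew and separated from the other two, relabel accordingly). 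Lemma \ref{tripleHairbrush} then shows that the set of parameters satisfying the system above — and in particular the parameters of all tubes in $H(\tube_1,\tube_2,\tube_B)$, which meet the three conditions with error $\lesssim\delta\leq\delta^{1-C\eps}$ — is contained in a union of two balls of radius $\lessapprox_\eps\delta^{1/2}$ in $\RR^4$.

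To finish: a ball of radius $R$ in $\RR^4$ contains at most $O\big((R\delta^{-1/2})^4+1\big)$ points of a $\delta^{1/2}$-separated set, and here $R\lessapprox_\eps\delta^{1/2}$, so the two balls together contain $\lessapprox_\eps 1$ of the (pairwise $\delta^{1/2}$-separated) parameters of the tubes of $\tubes$; hence $|H(\tube_1,\tube_2,\tube_B)|\lessapprox_\eps 1$. The one genuinely delicate point is the verification in the second paragraph that some one of $\tube_1,\tube_2,\tube_B$ is $\geq\delta^{\eps}$ separated and skew from the other two, so that Lemma \ref{tripleHairbrush} applies; for three mutually near-parallel or near-coincident tubes the determinant analysis degenerates and the conclusion genuinely fails, so this transversality must be read off from the separation and skewness data carried along with each use of the corollary.
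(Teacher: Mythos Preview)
Your proposal is correct and follows essentially the same route as the paper: translate the triple-hairbrush condition into the hyperplane-plus-quadric system of Lemma~\ref{tripleHairbrush}, apply that lemma with $p_3$ corresponding to $\tube_B$, and then use the $\delta^{1/2}$-separation of the parameter balls to bound the count. The paper's proof is three sentences and simply asserts that ``since $\tube_A$ and $\tube_B$ are $\gtrapprox 1$ skew, we can apply Lemma~\ref{tripleHairbrush}''; you are right to flag that the skewness of $\tube_B$ from each of $\tube_1,\tube_2$ is not literally forced by the stated hypotheses alone and must be read in from the context in which the corollary is used, and your explicit unpacking of the determinant-to-hyperplane dictionary (and the coordinate swap $(a,b,c,d)\mapsto(d,-c,-b,a)$ needed to match the exact form of \eqref{hyperplaneAndSL2Intersection}) fills in detail the paper suppresses.
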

\begin{proof}
Let $L_1$ and $L_2$ be the lines coaxial with $\tube_1$ and $\tube_2$, respectively. Since the tubes in $\tubes$ are $\gtrsim\delta^{1/2}$--separated, the lines $L_1$ and $L_2$ satisfy \eqref{distanceBetweenPoints12}. Since $\tube_A$ and $\tube_B$ are $\gtrapprox 1$ skew, we can apply Lemma \ref{tripleHairbrush} and conclude that the tubes in $H(\tube_1,\tube_2,\tube_B)$ are contained in the union of two balls of radius $\lessapprox\delta^{1/2}$. But since the tubes in $\tubes$ are $\delta^{1/2}$ separated in parameter space, this implies that  $|H(\tube_1,\tube_2,\tube_B)|\lessapprox_{\eps}1$.
\end{proof}

\begin{lem}\label{reductionToPTCurve}
Let $\tubes$ be the set of  $\delta$--tubes whose corresponding $\delta$-balls in parameter space are $\delta^{1/2}$ separated and intersect $SL_2\cap B(0,1)$ (in particular, this means $|\tubes|\lesssim\delta^{-3/2}$). Suppose that
$$
|\{(\tube_1,\tube_2)\in\tubes^2\colon \angle\big(v(\tube_1), v(\tube_2)\big)\geq\delta^{C\eps},\ N_{\delta^{1-C\eps}}(\tube_1)\cap N_{\delta^{1-C\eps}}(\tube_2)\neq\emptyset\}|\geq\delta^{-5/2+C\eps}.
$$

Then there exists a set of $\delta^{1/2}$ separated points $\pts\subset B(0,1)\subset\RR^2$ with $|\pts|\leq\delta^{-1}$ and a set $\mathcal{C}$ of algebraic curves of degree $\leq 2$ with $|\mathcal{C}|\leq\delta^{-1}$. The pair $(\pts,\mathcal{C})$ has the following properties:
\begin{itemize}
\item The points and curves have two degrees of freedom: If $p,q\in\pts$ are distinct points, then
$$
|\{\gamma\in\mathcal{C}\colon \dist(p,\gamma)\leq\delta^{1-C\eps},\ \dist(q,\gamma)\leq\delta^{1-C\eps}\}|\lessapprox_{\eps} 1.
$$
Note that the above bound does not depend on $\dist(p,q)$!
\item The points and curves determine many incidences:
$$
|\{(p,\gamma)\in\pts\times\mathcal{C}\colon \dist(p,\gamma)\leq\delta^{1-C\eps}\}|\gtrapprox_\eps\delta^{-3/2}.
$$
\end{itemize}
\end{lem}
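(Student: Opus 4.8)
The plan is to mimic the construction in the Heisenberg case (Lemma~\ref{findingPtsAndLinesLem}), using the crucial new feature that every tube in $\tubes$ now lies within $\lessapprox_\eps\delta^{1/2}$ of the fixed quadric threefold $SL_2 = Z(ad-bc-1)\subset\RR^4$ (Lemma~\ref{tubesNearSL2}). Consequently the parameter points of the tubes incident to a single fixed tube, rather than filling a three-dimensional hypersurface as over a generic background, are confined near a \emph{two}-dimensional quadric surface. This lets us use only one ``anchor'' tube for the points and one for the curves, and it is responsible for the curves being conics rather than lines.

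\textbf{Combinatorial routing.} From the hypothesis there are $\gtrsim\delta^{-5/2+C\eps}$ ``good'' pairs $(\tube_1,\tube_2)$, meaning $\angle(v(\tube_1),v(\tube_2))\geq\delta^{C\eps}$ and $N_{\delta^{1-C\eps}}(\tube_1)\cap N_{\delta^{1-C\eps}}(\tube_2)\neq\emptyset$. A standard iterative refinement of $\tubes$ lets me assume $\gtrapprox_\eps\delta^{-5/2}$ good pairs remain while every remaining tube occurs in $\gtrapprox_\eps\delta^{-1}$ of them. Since a good pair $(\tube_1,\tube_2)$ forces $\tube_1\in H(\tube_A)$ for each of its $\gtrapprox_\eps\delta^{-1}$ good partners $\tube_A$, and similarly for $\tube_2$, there are $\gtrapprox_\eps\delta^{-9/2}$ quadruples $(\tube_A,\tube_1,\tube_2,\tube_B)$ with $(\tube_1,\tube_2)$ good, $\tube_1\in H(\tube_A)$, $\tube_2\in H(\tube_B)$; as $|\tubes|^2\lesssim\delta^{-3}$, pigeonholing in $(\tube_A,\tube_B)$ produces one pair $\tube_A,\tube_B$ --- which, after a further harmless refinement, I take to be $\gtrapprox_\eps 1$ separated and skew, and with each $\tube_i$ separated and skew from $\tube_B$ --- carrying $\gtrapprox_\eps\delta^{-3/2}$ good pairs $(\tube_1,\tube_2)$ with $\tube_1\in H(\tube_A)$ and $\tube_2\in H(\tube_B)$.

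\textbf{Coordinatization.} With $SL_2$ normalized as above, the parameter points of tubes in $H(\tube_A)\cap\tubes$ lie within $\lessapprox_\eps\delta^{1/2}$ of the two-dimensional quadric surface $V_A = SL_2\cap\{P : B(P,P_{\tube_A})=1\}$, where $B$ is the polar bilinear form of $ad-bc$, so that $B(P,P')=1$ is exactly the incidence relation between the lines $\ell_P,\ell_{P'}\subset\RR^3$ when $P,P'\in SL_2$ (cf.\ \eqref{vectorDot}). Fix a degree-$\leq 2$ rational parameterization $\phi^{-1}\colon\RR^2\to V_A$ of bounded complexity; after refining $H(\tube_A)\cap\tubes$ to a subset whose $\phi$-images are $\delta^{1/2}$-separated (losing only a $\delta^{-C\eps}$ factor) and dilating into $B(0,1)$, these images become $\pts$, with $|\pts|\leq\delta^{-1}$, and the tubes $\tube_2$ from the routing step, via the curves $\gamma_{\tube_2} = \phi\big(\{P\in V_A : B(P,P_{\tube_2})=1\}\big)$, become $\mathcal C$, with $|\mathcal C|\leq\delta^{-1}$. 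Because $B(\,\cdot\,,P_{\tube_2})=1$ is linear in $P$ and $\phi^{-1}$ has degree $\leq 2$, each $\gamma_{\tube_2}$ is an algebraic curve of degree $\leq 2$; tracking scales through $\phi$ (which distorts distances by $\lessapprox_\eps 1$ on the relevant region) turns the $\gtrapprox_\eps\delta^{-3/2}$ good pairs into $\gtrapprox_\eps\delta^{-3/2}$ $\delta^{1-C\eps}$-incidences between $\pts$ and $\mathcal C$. The two-degrees-of-freedom bound is then precisely Corollary~\ref{tubesBehaveLineLines}: if $p,q\in\pts$ are both $\delta^{1-C\eps}$-incident to a curve $\gamma$, then $\tube_p,\tube_q\in H(\tube_A)$ each meet $\tube_\gamma\in H(\tube_B)$, so $\tube_\gamma\in H(\tube_p,\tube_q,\tube_B)$, a set of size $\lessapprox_\eps 1$, with no dependence on $\dist(p,q)$ because that corollary needs only that $\tube_p,\tube_q$ be $\delta^{1/2}$-separated in parameter space.

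\textbf{Main obstacle.} I do not expect any isolated hard estimate; the difficulty is the simultaneous bookkeeping. The refinements that (a) preserve $\gtrapprox_\eps\delta^{-3/2}$ incidences, (b) force the $\phi$-images to be $\delta^{1/2}$-separated inside $B(0,1)$, and (c) secure the separated/skew hypotheses needed for Corollary~\ref{tubesBehaveLineLines} (and, beneath it, Lemma~\ref{tripleHairbrush}) interact and must be carried out in a compatible order. One must also verify that on the portion of $V_A$ actually used the chart $\phi$ is bi-Lipschitz up to $\delta^{\pm C\eps}$ factors --- avoiding the singular loci of $V_A$ and of the parameterization --- so that the incidence scale $\delta^{1-C\eps}$ and the separation scale $\delta^{1/2}$ survive and the curves do not acquire spurious higher degree.
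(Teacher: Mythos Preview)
Your proposal is correct and follows essentially the same route as the paper, modulo a harmless symmetric swap of roles: the paper picks $\tube_A,\tube_B$ by the same pigeonhole count, then takes the \emph{points} from $H(\tube_B)$ (living on the quadric surface $W=SL_2\cap\{(d_1,-c_1,-b_1,a_1)\cdot(a,b,c,d)=2\}$) and the \emph{curves} from $H(\tube_A)$, whereas you take the points from $H(\tube_A)$ and the curves from $H(\tube_B)$; both invoke Corollary~\ref{tubesBehaveLineLines} for the two-degrees-of-freedom bound, and your orientation in fact matches the lemma statement slightly more directly. The only cosmetic difference is that the paper lands in $\RR^2$ via a generic linear projection $\pi\colon\RR^4\to\RR^2$ that is $O(1)$ bi-Lipschitz on a small surface patch of $W$, rather than via an explicit rational chart $\phi^{-1}$ as you propose---this sidesteps the need to check degree and regularity of the parameterization and is a small simplification worth adopting.
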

\begin{proof}
First, we will abuse notation slightly and replace each tube in $\tubes$ by its $\delta^{1-C\eps}$ neighborhood. Select $\tube_A,\tube_B\in\tubes$ that are $\gtrapprox_{\eps}1$ separated and skew, and with
$$
|\{(\tube,\tube^\prime)\in H(\tube_A)\times H(\tube_B)\colon \tube\cap \tube^\prime\neq\emptyset\}|\gtrapprox_\eps\delta^{-3/2}.
$$
For each $\tube\in H(\tube_A)$, let $\beta(\tube)$ be the set of points $(a,b,c,d)\in\RR^4$ satisfying
\begin{equation*}
\begin{split}
&(a_1,-b_1,-c_1,d_1)\cdot(a,b,c,d)=2,\\
&(a_2,-b_2,-c_2,d_2)\cdot(a,b,c,d)=2,\\
&ad-bc = 1,
\end{split}
\end{equation*}
where $(a_1,b_1,c_1,d_1)$ is the point corresponding to $\tube_B$ and $(a_2,b_2,c_2,d_2)$ is the point corresponding to $\tube$. Observe that if $(a,b,c,d)\in\beta(\tube)$, then
$$
(a,b,c,d)\in  SL_2,\quad \Big|\begin{array}{cc} a-a_1&b-b_1\\ c-c_1&d-d_1 \end{array}\Big|=0,\quad \Big|\begin{array}{cc} a-a_2&b-b_2\\ c-c_2&d-d_2 \end{array}\Big|=0.
$$

This means that $\beta(\tube)\subset\RR^4$ corresponds to the set of lines in $SL_2$ that intersect the lines coaxial with $\tube$ and $\tube_B$, so $N_{\delta^{1-C\eps}}(\beta(\tube))$ corresponds to the set of lines in the $\delta^{1-C\eps}$--neighborhood of $SL_2$ that intersect the $\delta^{1-C\eps}$ neighborhoods of $\tube$ and $\tube_B$. By Corollary \ref{tubesBehaveLineLines}, if $\tube_1,\tube_2\in H(\tube_A)$ are distinct, then $\beta(\tube_1)\cap\beta(\tube_2)$ contains the points corresponding to $\lessapprox_\eps 1$ tubes from $H(\tube_B)$.

We can assume that at least a $\gtrsim 1$ fraction of the tubes from $H(\tube_B)$ are contained in a set of the form $N_{\delta}(W)$, where
$$
W\subset \{(a,b,c,d)\in SL_2\colon (a_1,-b_1,-c_1,d_1)\cdot(a,b,c,d)=2\}
$$
is a surface patch with diameter $\leq 1/100$. Let $\pi\colon\RR^4\to\RR^2$ be a projection that sends $W\to\pi(W)\subset\RR^2$ with bi-Lipschitz constant $O(1)$. For each $\tube\in\tubes$, define $\gamma(\tube)$ to be the Zariski closure of $\pi(\beta(\tube)\cap W)$. This is an algebraic curve of degree $\leq 2$. Define
$$
\mathcal{C}=\{\gamma(\tube)\colon \tube\in H(\tube_A)\},
$$
and define
$$
\pts = \{\pi(p(\tube))\colon \tube\in H(\tube_B),\ p(\tube)\in W \},
$$
where $p(\tube)\in\RR^4$ is the point in $SL_2\subset \RR^4$ corresponding to the line coaxial with $\tube$.
\end{proof}
\subsection{A planar incidence bound}
In this section, we will show that the point-curve configuration constructed in the previous section cannot exist. This will conclude the proof of Proposition \ref{killingSl2Prop}. We will use the discrete polynomial partitioning theorem from \cite{GK}. Though the theorem works in any dimension, we will only state the planar version here
\begin{thm}[Discrete polynomial partitioning; \cite{GK}, Theorem 4.1]\label{partitioningThm}
Let $\pts\subset\RR^2$ be a set of $n$ points. Then for each $D\geq 1$, there is a polynomial $P(x,y)$ of degree at most $D$ so that $\RR^2\backslash Z(P)$ is a union of $O(D^2)$ open connected sets (called cells), and each of these cells contains $O(n/D^2)$ points from $\pts.$
\end{thm}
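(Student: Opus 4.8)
The plan is to prove this via the classical Guth--Katz polynomial partitioning argument, which rests on the polynomial ham sandwich theorem. First I would recall the discrete polynomial ham sandwich theorem of Stone and Tukey: given $m$ finite sets $X_1,\dots,X_m$ in $\RR^2$, there is a nonzero polynomial of degree $O(m^{1/2})$ whose zero set simultaneously bisects all of them (bisects in the sense that at most half the points of each $X_i$ lie strictly in $\{P>0\}$ and at most half lie strictly in $\{P<0\}$). The key point is that the space of polynomials of degree $\le d$ in two variables has dimension $\binom{d+2}{2}\sim d^2/2$, so one can bisect roughly $d^2/2$ sets using a degree-$d$ polynomial by passing to the Veronese embedding $\RR^2\to\RR^{\binom{d+2}{2}-1}$ and applying the ordinary ham sandwich theorem there (each hyperplane in the Veronese image pulls back to a degree-$\le d$ hypersurface in $\RR^2$).

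Given this, I would build $P$ iteratively. Start with the single set $\pts$ and find a polynomial $P_1$ of degree $O(1)$ bisecting it; then $\RR^2\setminus Z(P_1)$ has at most $2$ cells, each containing at most $n/2$ points. At stage $j$ we have at most $2^j$ cells (connected components of the complement of $Z(P_1\cdots P_j)$), each with at most $n/2^j$ points; apply the polynomial ham sandwich theorem to all $\le 2^j$ of these point sets at once to get $P_{j+1}$ of degree $O(2^{j/2})$ that bisects every cell's point set. Set $P = \prod_{j=1}^{k} P_j$. After $k$ steps the degree is $\sum_{j=1}^k O(2^{j/2}) = O(2^{k/2})$, the number of cells is at most $2^k = O((\deg P)^2)$, and each cell contains at most $n/2^k = O(n/(\deg P)^2)$ points. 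Choosing $k = \lceil \log_2 D\rceil$ (so that $2^{k}\sim D$, hence $\deg P \lesssim D$; one can rescale or pad to land exactly in degree $\le D$) gives the stated conclusion. A minor subtlety to handle carefully: one must verify that the number of connected components of $\RR^2\setminus Z(P_1\cdots P_j)$ is controlled — here one uses that removing a zero set can only increase the component count multiplicatively, and more precisely that bisecting splits each old cell into at most two new cells, so the count stays $\le 2^j$ rather than appealing to Milnor--Thom bounds.

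The main obstacle, and the only genuinely nontrivial input, is the polynomial ham sandwich theorem itself — specifically establishing that a single degree-$O(m^{1/2})$ polynomial can bisect $m$ given finite point sets simultaneously. This is where the Borsuk--Ulam theorem enters (via the Veronese/Stone--Tukey argument), and it is the step that cannot be reduced to elementary counting. Everything else — the dyadic iteration, the bookkeeping of cell counts and point counts, the degree accounting — is routine once that lemma is in hand. Since Theorem \ref{partitioningThm} is quoted directly from \cite{GK} (Theorem 4.1), for the purposes of this paper we simply invoke it; the sketch above indicates the proof for completeness.
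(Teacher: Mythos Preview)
The paper does not prove Theorem~\ref{partitioningThm} at all; it simply quotes it from \cite{GK} as a black box, so there is no ``paper's own proof'' to compare against. Your sketch is exactly the standard Guth--Katz argument and is correct in substance.

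One small arithmetic slip: you wrote ``choosing $k=\lceil\log_2 D\rceil$ (so that $2^k\sim D$, hence $\deg P\lesssim D$).'' But with $2^k\sim D$ you get $\deg P\sim 2^{k/2}\sim D^{1/2}$ and only $\sim D$ cells with $\sim n/D$ points each, which is not the stated conclusion. You want $2^{k/2}\sim D$, i.e.\ $k\approx 2\log_2 D$, so that $2^k\sim D^2$; then the degree is $O(D)$, the number of cells is $O(D^2)$, and each cell has $O(n/D^2)$ points. This is purely a bookkeeping error and does not affect the validity of the approach.
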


In addition to the points contained in each cell, some of the points may lie on set $Z(P)$, which is called the ``boundary'' of the partition. The following theorem will help us deal with these points.

\begin{thm}[Wongkew, \cite{Won}]
Let $V\subset\RR^d$ be a real algebraic variety of codimension $m$ whose defining polynomials have degree $\leq D$. Then there exist constants $\{C_j\}$ depending only on $d$ so that for each $\rho>0$,
$$
|N_{\rho}(V)\cap B(0,1)|\leq\sum_{j=m}^d c_j D^j\rho^j,
$$
where $|\cdot|$ denotes $d$-dimensional Lebesgue measure.
\end{thm}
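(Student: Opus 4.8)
The plan is a double induction: an outer induction on the ambient dimension $d$ and, for fixed $d$, an inner induction on the dimension $k=\dim V=d-m$. First two harmless reductions. Writing $V$ as the union of its irreducible components (there are at most a $d$-dependent power of $D$ of them, which only inflates the $c_j$) I may assume $V$ is pure of dimension $k$. And I may assume $D\rho\le 1$: otherwise $(D\rho)^d\ge 1\ge|B(0,1)\cap N_\rho(V)|$, so the claimed bound holds once $c_d$ exceeds $\omega_d=|B(0,1)|$. For the base case $k=0$ the set $V$ is finite with $|V|\le D^d$ by the standard B\'ezout-type degree bound for a zero-dimensional set cut out by polynomials of degree $\le D$, hence $|N_\rho(V)\cap B(0,1)|\le\omega_d D^d\rho^d$, which is exactly the $j=d$ term.

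For the inductive step I would fix a unit vector $e$, chosen generically so that $\ell(x)=x\cdot e$ is Morse on the smooth locus $V_{\mathrm{reg}}$; inspecting the conormal variety of $V$ (which has dimension $d-1$) shows that for generic $e$ the critical set $\{x\in V_{\mathrm{reg}}:e\perp T_xV\}$ is zero-dimensional, so the ``critical locus'' $\tilde B=V_{\mathrm{sing}}\cup\{x\in V_{\mathrm{reg}}:d\ell|_{T_xV}=0\}$ has dimension $\le k-1$. Moreover $\tilde B$ sits inside a real algebraic variety cut out by the defining polynomials of $V$ together with the maximal minors of the matrix with rows their gradients and $e$; those minors have degree $\le dD$, so this enclosing variety has dimension $\le k-1$ and defining degree at most a $d$-dependent multiple of $D$. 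Now I split
$$
N_\rho(V)\cap B(0,1)\ \subseteq\ \big(N_{C\rho}(\tilde B)\cap B(0,1)\big)\ \cup\ \big((N_\rho(V)\setminus N_{C\rho}(\tilde B))\cap B(0,1)\big)
$$
for a constant $C=C(d)$. The first piece is controlled by the inner inductive hypothesis applied to $\tilde B$: since $\dim\tilde B\le k-1$ (so codimension $\ge m+1$ in $\RR^d$) and its defining degree is $\le C_d D$, I get a bound $\sum_{j=m+1}^{d}c_j' D^j\rho^j$ (absorbing powers of $C_d$ into the $c_j'$). For the second piece I use Fubini along the pencil $H_t=\{\ell=t\}$, $t\in[-1,1]$: for all but finitely many $t$ the slice $V_t=V\cap H_t$ is a variety of dimension $\le k-1$ and codimension $m$ in $H_t\cong\RR^{d-1}$ with defining degree $\le D$; the geometric heart of the argument (stated below) is that a point $p\in H_t$ with $\dist(p,V)<\rho$ and $\dist(p,\tilde B)\ge C\rho$ satisfies $\dist_{H_t}(p,V_t)\le C_d\rho$. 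Granting that, the outer inductive hypothesis at ambient dimension $d-1$ bounds the $(d-1)$-dimensional measure of the corresponding slice by $\sum_{j=m}^{d-1}c_j'' D^j\rho^j$, and integrating in $t$ costs only a factor $2$. Adding the two contributions gives $\sum_{j=m}^{d}c_jD^j\rho^j$ with $c_j=c_j(d)$.

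The one genuinely delicate step — and the one I expect to be the main obstacle — is the transversality claim that, outside $N_{C\rho}(\tilde B)$, a point within $\rho$ of $V$ lies within $O_d(\rho)$ of the slice $V\cap H_t$; equivalently, that the submersivity $\bigl\|d\ell|_{T_xV}\bigr\|$ is bounded below by an absolute constant away from the $C\rho$-neighbourhood of the critical locus, with $C$ depending only on $d$ and \emph{not} on $V$. The naive worry is that the ``nearly critical'' set is not itself algebraic, so this uniformity is not a formal consequence of the degree bookkeeping (which, by contrast, goes through cleanly since hyperplane sections and Jacobian minors inflate the degree only by a $d$-dependent factor). This is exactly the uniform local-structure phenomenon supplied by the Yomdin--Comte--Gromov theory of algebraic sets of bounded complexity; the route I would take is a rescaling/compactness argument — zoom into a ball on which $\ell|_V$ is nearly critical, renormalise to unit scale (affine rescaling preserves the degree), and invoke compactness of the bounded-dimensional space of degree-$\le D$ varieties to extract a uniform lower bound. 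Once that uniform transversality is in hand, the induction above closes and yields the stated estimate with constants depending only on $d$.
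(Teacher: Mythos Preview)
The paper does not prove this theorem; it is quoted from Wongkew \cite{Won} and used as a black box, so there is no proof here to compare against. I can still assess your sketch on its own merits.

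Your inductive slicing strategy has a genuine gap at precisely the step you flagged as delicate, and the problem is worse than you suggest: the transversality claim is not merely hard to prove uniformly, it is \emph{false}, already for a single fixed variety of degree $2$. Take $V=\{y=x^2\}\subset\RR^2$ and $e=(0,1)$. This direction is Morse-generic --- the unique critical point of $\ell=y$ on $V$ is the origin, with Hessian $2$ --- so $\tilde B=\{(0,0)\}$. Now set $t=\rho$ and $p=(10C\rho,\rho)\in H_t$. One checks that $\dist(p,V)\approx|a^2-t|=|100C^2\rho^2-\rho|\approx\rho$ and $\dist(p,\tilde B)=\rho\sqrt{100C^2+1}>C\rho$, so both of your hypotheses hold. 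But $V_\rho=\{(\pm\sqrt\rho,\rho)\}$, so $\dist_{H_\rho}(p,V_\rho)=\sqrt\rho-10C\rho\approx\sqrt\rho$, which exceeds $C_d\rho$ for every fixed $C_d$ once $\rho<C_d^{-2}$. Your ``equivalent'' reformulation fails just as plainly: on the parabola $\|d\ell|_{T_xV}\|\approx 2|x|$, which at $|x|=10C\rho$ is $\approx 20C\rho\to 0$. Since the counterexample is a single degree-$2$ curve, no compactness argument over degree-$\le D$ varieties can rescue it; under your proposed rescaling the parabola degenerates to a line on which $\ell$ is constant, so the compact limit is exactly the degenerate case and yields no lower bound.

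The underlying geometry is that near a critical value $t_0$ the slice $V_t$ drifts at speed $\sim|t-t_0|^{-1/2}$, so $N_\rho(V)\cap H_t$ has width $\sim\sqrt\rho$ there rather than $\sim\rho$; the pointwise inclusion $(N_\rho(V)\setminus N_{C\rho}(\tilde B))\cap H_t\subset N_{C_d\rho}(V_t)$ simply cannot hold. The \emph{integrated} contribution of this bad region is still of the right order (as one sees by computing $\int|N_\rho(V)\cap H_t|\,dt$ directly for the parabola), so the slicing strategy is not hopeless --- but it cannot close through your metric inclusion. Proofs of this bound instead work combinatorially: one route covers $V\cap B(0,1)$ by $\rho$-grid cubes and counts them via B\'ezout-type bounds on intersections with affine $m$-planes; another keeps the slicing but controls $|N_\rho(V)\cap H_t|$ through the number of connected components of $V$ inside the slab $\{|\ell-t|\le\rho\}$, which is bounded by Milnor--Thom in terms of $d$ and $D$ alone, rather than through the location of $V_t$.
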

\begin{thm}[Harnack]
Let $P(x,y)$ be a bivariate polynomial of degree $\leq D$. Then $Z(P)$ has $O(D^2)$ connected components.
\end{thm}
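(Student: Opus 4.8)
The plan is to prove only the weak quadratic bound $O(D^2)$ (not the sharp Harnack count $\binom{D-1}{2}+1$), by an elementary Morse-theoretic sweep. \emph{Step 1: reduce to $P$ squarefree.} Assume $P$ is nonconstant (otherwise the claim is trivial) and factor $P=\prod_i P_i^{a_i}$ into $\RR$-irreducibles, so $Z(P)=\bigcup_i Z(P_i)$. Each connected component of $Z(P)$ contains at least one connected component of some $Z(P_i)$, and distinct components of $Z(P)$ contain distinct ones, so $b_0(Z(P))\le\sum_i b_0(Z(P_i))$, where $b_0$ denotes the number of connected components. Since $\sum_i\deg P_i\le\deg P\le D$, it is enough to bound $b_0(Z(P_i))$ by $O((\deg P_i)^2)$; hence I may assume $P$ is squarefree. \emph{Step 2: normalize coordinates.} After a generic rotation of $\RR^2$, no irreducible factor of $P$ defines a horizontal line; consequently $P$ and $\partial_x P$ share no common factor, so Bézout gives $\#\big(Z(P)\cap Z(\partial_x P)\big)\le D(D-1)$, and no horizontal line lies inside $Z(P)$.

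\emph{Step 3: the sweep.} Consider the height function $h(x,y)=y$ restricted to $Z(P)$. Wherever $\partial_x P\ne 0$, the implicit function theorem writes $Z(P)$ locally as a graph $x=g(y)$, so $h$ has no local extremum there. To control unbounded behaviour, intersect with a large ball $B(0,R)$ for generic large $R$, so that the conic $\partial B(0,R)$ meets $Z(P)$ in at most $2D$ points (Bézout again). Now classify the connected components $C$ of $Z(P)$: either $C\subset B(0,R)$, in which case $C$ is compact and $h|_C$ attains a minimum, necessarily at a point of $Z(P)\cap Z(\partial_x P)$ (by the graph observation, since $C$ is open in $Z(P)$), with distinct $C$'s using distinct such points --- at most $D(D-1)$ of these; or $C$ meets $\partial B(0,R)$, and since distinct components meet it in disjoint sets there are at most $2D$ of these. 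Altogether $b_0(Z(P))\le D(D-1)+2D=O(D^2)$, which in particular confirms a posteriori that the component set is finite.

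The only place that needs genuine care is Step 3's bookkeeping for components that escape to infinity or have unattained infimal height; the large-ball truncation above is one clean way to handle it, but one should check carefully that $R$ can be chosen generic and that the $h|_C$-minimum of a compact component really does sit on $Z(P)\cap Z(\partial_x P)$ rather than at a spurious point where $C$ touches $\partial B(0,R)$ (this is why the two cases are kept disjoint). If one wishes to sidestep all of this, the statement is immediate from the Milnor--Thom theorem --- the sum of the Betti numbers of a real algebraic subset of $\RR^n$ cut out by polynomials of degree $\le D$ is $O_n(D^n)$ --- applied with $n=2$, which yields $b_0(Z(P))=O(D^2)$ in one line; that is the route I would actually cite in the paper, with the sweep argument recorded as the self-contained alternative.
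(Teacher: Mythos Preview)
The paper does not prove this statement; it is invoked as the classical Harnack curve theorem and used as a black box (together with Wongkew's volume bound) to deduce Corollary~\ref{WonkewCor}. Your self-contained sweep argument is correct, and your remark that Milnor--Thom gives the bound in one line is the standard citation here.

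Two small comments on the write-up. In Step~1 you can shortcut the factor-by-factor bookkeeping: simply replace $P$ by its squarefree part $\prod_i P_i$, which has the same zero set and degree $\le D$. In Step~3, the ``components entirely outside $B(0,R)$'' case deserves one more sentence: the bound $D(D-1)+2D$ holds for the components meeting $\overline{B(0,R)}$ for \emph{every} generic $R$, and since any finite collection of components meets $\overline{B(0,R)}$ once $R$ is large enough, the total count is bounded by the same quantity. You allude to this in your caveat, but it is worth making explicit, since otherwise a component like the line $\{x=100\}$ is not accounted for when $R$ is small.
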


\begin{cor}\label{WonkewCor}
Let $P(x,y)$ be a bivariate polynomial of degree $\leq D$. Then for each $\rho>0$,
$$
\mathcal{E}_\rho\big(Z(P)\cap B(0,1)\big)\lesssim D\rho^{-1}+D^2.
$$
\end{cor}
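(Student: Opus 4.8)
The plan is to derive this directly from Wongkew's theorem by a standard volume–packing estimate; Harnack's theorem is not strictly needed for the exponents claimed. First I would dispose of the boundary regimes. If $\rho \geq 1$, then $B(0,1) \subseteq B(0,\rho)$, so $\mathcal{E}_\rho(Z(P)\cap B(0,1)) \leq 1 \leq D^2$ (using $D \geq 1$), and the estimate holds. We may therefore assume $0 < \rho < 1$. We may also assume $P \not\equiv 0$ (otherwise the statement is not being asserted, or, if one prefers, $Z(P) \cap B(0,1) = \emptyset$ and the covering number is $0$), so that $Z(P)$ is a proper algebraic subvariety of $\RR^2$, hence of codimension at least $1$, cut out by a polynomial of degree $\leq D$.

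Next I would invoke Wongkew's theorem with $d = 2$ and $V = Z(P)$. Since $Z(P)$ has codimension at least $1$, that theorem produces absolute constants $c_1, c_2$ with $|N_s(Z(P)) \cap B(0,1)| \leq c_1 D s + c_2 D^2 s^2$ for all $s > 0$. Rescaling coordinates by the factor $2$ replaces $P(x,y)$ by $P(2x,2y)$, a polynomial of the same degree, and multiplies two-dimensional Lebesgue measure by $4$; applying the previous bound to this rescaled polynomial gives
$$
|N_\rho(Z(P)) \cap B(0,2)| \lesssim D\rho + D^2 \rho^2 .
$$

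Finally I would run the packing argument. Let $\{p_1, \dots, p_N\}$ be a maximal $\rho$-separated subset of $Z(P) \cap B(0,1)$. By maximality the balls $B(p_1,\rho), \dots, B(p_N,\rho)$ cover $Z(P) \cap B(0,1)$, so $\mathcal{E}_\rho(Z(P)\cap B(0,1)) \leq N$. On the other hand the balls $B(p_1,\rho/2), \dots, B(p_N,\rho/2)$ are pairwise disjoint, each has area an absolute constant times $\rho^2$, and each lies in $N_{\rho/2}(Z(P)) \cap B(0,1+\rho/2) \subseteq N_\rho(Z(P)) \cap B(0,2)$ since $\rho < 1$. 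Comparing areas with the displayed inequality gives $N \rho^2 \lesssim D\rho + D^2\rho^2$, i.e.\ $N \lesssim D\rho^{-1} + D^2$, which is the asserted bound.

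There is no substantial obstacle here: the only points needing a little care are the boundary regime $\rho \geq 1$ and the harmless rescaling used so that Wongkew's theorem applies on a ball slightly larger than $B(0,1)$, while the remaining step is the textbook fact that the $\rho$-covering number of a bounded set is controlled by the area of its $\rho$-neighborhood. (Alternatively one could use Harnack's theorem to bound the number of connected components of $Z(P)$ by $O(D^2)$ and combine it with an integral-geometric bound of $O(D)$ on the total length of $Z(P)\cap B(0,1)$; this route is longer and yields the same exponents.)
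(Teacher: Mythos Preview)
Your proof is correct and is essentially the natural route from Wongkew's theorem via a packing/volume argument; the paper states the corollary without proof, immediately after quoting Wongkew and Harnack, and your derivation is the standard way to fill that in. Your observation that Harnack is not actually needed for the stated bound is accurate: the $D^2$ term already falls out of the $D^2\rho^2$ term in Wongkew's area estimate once you divide by $\rho^2$, so the component count is redundant here (it would only be needed in the alternative ``length $+$ component count'' route you sketch at the end).
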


\begin{lem}\label{discreteST}
Let $\pts\subset B(0,1)\subset \RR^2$ be a set of $\leq\delta^{-1}$ points that are $\delta^{1/2}$ separated. Let $\mathcal{C}$ be a set of $\leq\delta^{-1}$ algebraic curves of degree $\leq C_0$. Suppose that for any two points $p,q\in\pts$, there are $\leq A$ curves $\gamma\in\mathcal{C}$ with $\dist(p,\gamma)\leq\delta^{1-C\eps},\ d(q,\gamma)\leq\delta^{1-C\eps}$. Then
$$
|\{(p,\gamma)\in\pts\times\gamma\colon \dist(p,\gamma)\leq\delta^{1-C\eps}\}|\lessapprox_{\eps} A^{1/2}\delta^{-4/3}.
$$
\end{lem}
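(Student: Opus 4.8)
\textbf{Proof plan for Lemma \ref{discreteST}.}

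The plan is to run a standard polynomial-partitioning (Guth--Katz) argument adapted to the discretized setting, with the twist that the $A$-fold multiplicity bound (which is independent of the distance between the two points) is what forces the count to be governed by $A^{1/2}$ rather than just $1$. Let $I=|\{(p,\gamma)\in\pts\times\mathcal{C}\colon\dist(p,\gamma)\leq\delta^{1-C\eps}\}|$ be the quantity to bound. First I would perform a dyadic pigeonholing so that I may assume every curve in $\mathcal{C}$ is $\delta^{1-C\eps}$--incident to $\sim t$ points for a single dyadic value $t$, at the cost of a $|\log\delta|$ factor which is absorbed into $\lessapprox_{\eps}$; then $I\approx_{\eps} t|\mathcal{C}|$. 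The goal becomes showing $t|\mathcal{C}|\lessapprox_{\eps} A^{1/2}\delta^{-4/3}$.

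Next I would apply Theorem \ref{partitioningThm} to the point set $\pts$ with a degree $D$ to be chosen at the end (it will be a small power of $\delta^{-1}$, something like $D\sim\delta^{-\kappa}$ for an appropriate $\kappa$, so that $D\leq\delta^{-1/2}$ to keep cell counts meaningful given the $\delta^{1/2}$--separation). This partitions $\RR^2$ into $O(D^2)$ cells, each containing $O(|\pts|/D^2)\leq O(\delta^{-1}/D^2)$ points. I would split the incidences into (i) cellular incidences, where the point lies in the interior of a cell, and (ii) boundary incidences, where the point lies in $N_{\delta^{1-C\eps}}(Z(P))$. For the cellular part: a curve of degree $\leq C_0$ enters at most $O(D)$ cells (by B\'ezout, since crossing from one cell to another means meeting $Z(P)$), so the number of (cell, curve) pairs is $O(D|\mathcal{C}|)$; within each cell, since a cell has $O(\delta^{-1}/D^2)$ points and each pair of points supports at most $A$ curves, the Cauchy--Schwarz incidence bound inside a cell gives $O(A^{1/2}(\delta^{-1}/D^2)^{3/2}+ \delta^{-1}/D^2 + (\#\text{curves in cell}))$ incidences; summing over cells and using that the total number of (cell,curve) pairs is $O(D|\mathcal{C}|)$ yields a cellular contribution of roughly $A^{1/2}D^{-1}\delta^{-3/2}+D|\mathcal{C}|$ (modulo the lower-order terms, which I would check are dominated). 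Here the crucial point is that the in-cell count uses the flat (B\'ezout + Cauchy--Schwarz) estimate: $m$ points and $n$ curves of bounded degree, with at most $A$ curves through any pair of points, determine $\lessapprox A^{1/2}m^{3/2}+m+n$ incidences.

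For the boundary part I would use Corollary \ref{WonkewCor}: $Z(P)\cap B(0,1)$ can be covered by $O(D\delta^{-1/2}+D^2)$ balls of radius $\delta^{1/2}$, and since $\pts$ is $\delta^{1/2}$--separated each such ball holds $O(1)$ points, so $|\pts\cap N_{\delta^{1-C\eps}}(Z(P))|\lessapprox_{\eps} D\delta^{-1/2}+D^2\leq D\delta^{-1/2}$ for $D\leq\delta^{-1/2}$. Then the boundary incidences are counted by viewing $Z(P)$ as the union of its $O(D^2)$ irreducible components (Harnack / B\'ezout): a curve $\gamma\in\mathcal{C}$ either is a component of $Z(P)$ — there are $O(D^2)$ such, contributing $O(D^2 t)\leq O(D^2|\pts|)$, which I'd need $D$ small enough to control, or I absorb these curves into a separate trivial bound — or it meets $Z(P)$ in $O(D)$ points, so the boundary points lying near $\gamma$ are confined to $O(D)$ balls of radius $\delta^{1/2}$ along $\gamma$, giving $O(D)$ incidences per such curve and $O(D|\mathcal{C}|)$ total. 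Collecting: $I\lessapprox_{\eps} A^{1/2}D^{-1}\delta^{-3/2}+D|\mathcal{C}|+D\delta^{-1/2}+(\text{lower order})$, and using $|\mathcal{C}|\leq\delta^{-1}$ together with $I\approx_{\eps}t|\mathcal{C}|$ — actually I would instead optimize directly in $D$, balancing $A^{1/2}D^{-1}\delta^{-3/2}$ against $D|\mathcal{C}|\leq D\delta^{-1}$, which gives $D\sim A^{1/4}\delta^{-1/4}$ and hence $I\lessapprox_{\eps}A^{1/4}\delta^{-1/4}\cdot\delta^{-1} = A^{1/4}\delta^{-5/4}$; since $\delta^{-5/4}\leq\delta^{-4/3}$ and $A^{1/4}\leq A^{1/2}$ this is stronger than claimed, so the stated bound certainly follows (and I would double-check the arithmetic and simply keep whichever clean balancing gives $A^{1/2}\delta^{-4/3}$, which has slack). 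I expect the main obstacle to be the bookkeeping of the boundary term — in particular handling curves that are components of $Z(P)$ and making sure the $\delta^{1/2}$--separation of $\pts$ is exploited correctly so that the Wongkew/Harnack covering genuinely caps the number of boundary points, rather than the combinatorics of the cell decomposition, which is routine.
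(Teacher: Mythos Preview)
Your overall strategy---polynomial partitioning at degree $D$ a small power of $\delta^{-1}$, a cellular/boundary split, and a K\H{o}v\'ari--S\'os--Tur\'an/Cauchy--Schwarz bound in each piece---is exactly the paper's. But two of your quantitative steps are wrong, and together they are what manufactures the suspiciously strong answer $A^{1/4}\delta^{-5/4}$.

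First, the in-cell Cauchy--Schwarz bound you quote, $I(m,n)\lessapprox A^{1/2}m^{3/2}+m+n$, is not what Cauchy--Schwarz actually gives. From ``at most $A$ curves through any two points'' one has $\sum_\gamma r(\gamma)^2\le I+Am^2$, and Cauchy--Schwarz against the $n$ curves yields only
\[
I\lesssim A^{1/2}m\,n^{1/2}+n;
\]
the factor $n^{1/2}$ in the main term cannot be removed (your form is strictly smaller whenever $m\ll n\ll Am^2$, and the standard $K_{2,A}$-free extremal constructions show it fails in that range). With the correct in-cell bound, summing over the $O(D^2)$ cells via Cauchy--Schwarz produces a cellular contribution of order $A^{1/2}\delta^{-3/2}D^{-1/2}+D|\mathcal{C}|$, not $A^{1/2}\delta^{-3/2}D^{-1}+D|\mathcal{C}|$.

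Second, the boundary treatment via B\'ezout does not survive discretization. Your dichotomy ``$\gamma$ is a component of $Z(P)$, or $\gamma$ meets $Z(P)$ in $O(D)$ points and hence carries $O(D)$ boundary incidences'' is fine for exact incidences, but here nothing prevents a curve $\gamma\in\mathcal{C}$ from lying within $\delta^{1-C\eps}$ of an irreducible component of $Z(P)$ along its entire length while being literally distinct from it; then every one of the (up to $\sim\delta^{-1/2}$) boundary points along that component is $\delta^{1-C\eps}$-incident to $\gamma$ as well, and the $O(D)$ count collapses. The paper bypasses this issue entirely by re-applying the \emph{same} Cauchy--Schwarz bound to the boundary set: Corollary~\ref{WonkewCor} together with the $\delta^{1/2}$-separation of $\pts$ gives $|\pts_0|\lessapprox_{\eps} D\delta^{-1/2}$, whence $I(\pts_0,\mathcal{C})\lessapprox_{\eps} A^{1/2}|\pts_0|\,|\mathcal{C}|^{1/2}\lessapprox_{\eps} A^{1/2}D\delta^{-1}$. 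Balancing this against the corrected cellular term forces $D$ of order roughly $\delta^{-1/3}$ and yields exactly $A^{1/2}\delta^{-4/3}$, with no slack left over.
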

\begin{proof}
Apply Theorem \ref{partitioningThm} with $D=\delta^{-1/6}$ to the set $\pts$, and let $P$ be the resulting partitioning polynomial. Then $\lesssim |\pts|D^{-2}\leq \delta^{-2/3}$ points from $\pts$ are contained in each connected component of $\RR^2\backslash Z(P)$. Define $\pts_0=\pts\cap N_{\delta^{1-C\eps}}(Z(P))$, and for each connected component $\Omega$ of $\RR^2\backslash Z(P)$, define $\pts_{\Omega}=\pts\cap\Omega\backslash\pts_0$. By Corollary \ref{WonkewCor} with $\rho=\delta^{1-C\eps}$, we have $|\pts_0|\lessapprox_{\eps} D^2\delta^{-1/2}\lesssim \delta^{-5/6},$ and $|\pts_i|\lesssim\delta^{-2/3}$ for each index $i\geq 1$.

Let
$$
\mathcal{C}_\Omega=\{\gamma\in\mathcal{C}\colon\gamma\cap\Omega\neq\emptyset\}.
$$
Then
$$
\sum_{\Omega}|\mathcal{C}_{\Omega}|\lesssim D\delta^{-1}.
$$
If $\mathcal{P}^\prime$ is a set of points and $\mathcal{C}^\prime$ is a set of curves, define
$$
I(\mathcal{P}^\prime,\ \mathcal{C}^\prime)=|\{(p,\gamma)\in\mathcal{P}^\prime\times\mathcal{C}^\prime\colon d(p,\gamma)\leq\delta^{1-C\eps}\}|.
$$
We have
\begin{equation}
\begin{split}
I(\pts,\mathcal{C})&=I(\pts_0,\mathcal{C})+\sum_{\Omega}I(\pts_\Omega,\mathcal{C}_\Omega)\\
&\lesssim A^{1/2}|\pts_0|\ |\mathcal{C}|^{1/2} + \sum_{\Omega}A^{1/2}|\pts_\Omega|\ |\mathcal{C}_\Omega|^{1/2}\\
&\lessapprox_{\eps} A^{1/2}\delta^{-4/3} + A^{1/2}\Big(\sum_{\Omega}|\pts_\Omega|^2\Big)^{1/2}\Big(|\mathcal{C}_\Omega|\Big)^{1/2}\\
&\lesssim A^{1/2}\delta^{-4/3} + A^{1/2}(\delta^{-1}D^{-2}) (\delta^{-1/2})\\
&\lesssim A^{1/2}\delta^{-4/3}.\qedhere
\end{split}
\end{equation}
\end{proof}
We are now ready to prove Proposition \ref{killingSl2Prop}. For the reader's convenience, we re-state it here.
\begin{killingSl2PropEnv}
There are positive constants $\eps_0,\delta_0$ so that if $0<\eps\leq\eps_0$ and $0<\delta\leq\delta_0$, then there cannot exist an $\eps$--extremal collection of $\delta$ tubes of $SL_2$ type.
\end{killingSl2PropEnv}
\begin{proof}
Suppose there existed an $\eps$-extremal collection of tubes $(\tubes,Y)$ of $SL_2$ type. Apply Lemma \ref{reductionToPTCurve} to $(\tubes,Y)$. We obtain a set $(\pts,\mathcal{C})$ that contradicts Lemma \ref{discreteST}.
\end{proof}

\section{Wrap-up}
\begin{proof}[Proof of Theorem \ref{mainThm}]
Let $C$ be the constant from the statement of Proposition \ref{killingHeisenbergProp}. Let $\alpha_0,\eps_0,\delta_0$ be the output of  Proposition \ref{killingSl2Prop} for this value of $C$. Apply Proposition \ref{killingHeisenbergProp} with this value of $\alpha$, and let $c_1,\delta_1$ be the corresponding output. Let $0<\delta\leq\min(\delta_0,\delta_1)$ and $0<\eps\leq \min(\eps_0, c_1\alpha_0)$.

Suppose that there exists an $\eps$--extremal set of $\delta$ tubes; call this set $(\tubes,Y)$. Apply Lemma \ref{HeisenbergPlusSl2Lem} to $(\tubes,Y),$ and let $\tubes_1,\tubes_2$ be the resulting sets of tubes.

By Proposition \ref{killingHeisenbergProp}, $(\tubes_1,Y)$ cannot be $\eps$--extremal. By Proposition \ref{killingSl2Prop}, $(\tubes_2,Y)$ cannot be $\eps$--extremal. But by Remark \ref{ifExtremalOneOfT1T2Extremal}, at least one of $\tubes_1$ or $\tubes_2$ must be $\eps$--extremal. This is a contradiction. We conclude that if $0<\delta\leq\min(\delta_0,\delta_1)$ and $0<\eps\leq\min(\eps_0,c_1\alpha_0)$, then an $\eps$--extremal collection of $\delta$--tubes cannot exist.

To phrase this result in the language of Theorem \ref{mainThm}, let $c=\delta_0^3$ and let $C=3/\eps$. If $\delta\geq\delta_0$ or if $\lambda\leq\delta^{\eps}$, then   \eqref{volumeBdEqn} is just the trivial assertion that
$$
\Big|\bigcup_{\tube\in\tubes}Y(\tube)\Big|\geq \delta^3.
$$
On the other hand, if $\delta\leq\delta_0$, $\lambda\geq\delta^{\eps}$, and if inequality \eqref{volumeBdEqn} failed, then this would mean there exists a $\eps$--extremal collection of $\delta$--tubes. As we have shown, such a collection cannot exist.
\end{proof}

\appendix

\section{The $SL_2$ example}\label{SL2Appendix}
In this appendix we will describe some of the properties of the $SL_2$ example. In $\RR^3$, the example would have the following (hypothetical) form. Let $\mathcal{L}_{\operatorname{coarse}}\subset B(0,1)\subset\RR^4$ be a set of $\delta^{-3/2}$ points $(a,b,c,d)\in\RR^4$ that are $\delta^{1/2}$-separated and satisfy $ad-bc=1$. Let
$$
\tubes_{\operatorname{fat}}=\{B(0,1)\cap N_{\delta^{1/2}}(L)\colon L\in\mathcal{L}_{\operatorname{coarse}}\}.
$$
This is a set of $\delta^{-3/2}$ ``fat tubes'' of length $\sim 1$ and thickness $\sim\delta^{1/2}$, each of which is contained in the unit ball in $\RR^3$. For each point $p\in B(0,1)\subset\RR^3$, there are roughly $\delta^{-1/2}$ fat tubes from $\tubes_{\delta^{1/2}}$ passing through $p$, and there is a plane $\Pi_p$ containing $p$ so that each of the fat tubes containing $p$ makes an angle $\lesssim \delta^{1/2}$ with $\Pi_p$.

We will place $\delta^{-1/2}$  $\delta$-tubes inside each of the fat tubes. The $\delta$-tubes will have the property that whenever two fat tubes intersect, all of the $\delta$-tubes contained inside them also intersect. The $\delta$-tubes inside each fat tube are contained in a regulus strip $S$. This strip has the property that for each $p\in S$, $\angle(T_pS,\ \Pi_p)\lesssim\delta^{1/2}$. Thus the tangent space of the regulus strip always ``agrees'' with the direction of the plane $\Pi_p$ that is specified by the $SL_2$ structure of the fat tubes. In particular, this means that whenever two regulus strips (each contained in their respective fat tube) intersect, they must be tangent, and thus the intersection has large volume.

At scale $\delta$, the union of tubes looks rather empty---the union of $\delta$ tubes has volume roughly $\delta^{1/2}$. At scale $\delta^{1/2}$. however, the union of tubes looks large: there are about $\delta^{-1/2}$ fat tubes passing through each point in the unit ball.

If we cover the unit ball by $\sim\delta^{-3/2}$ cubes of side-length $\delta^{1/2}$, then the union of the $\delta$--tubes will intersect each of these cubes in $O(1)$ rectangular prisms of dimensions $\delta^{1/2}\times\delta^{1/2}\times\delta$ (such rectangular prisms are called grains).

\subsection{Constructing the $SL_2$ example}
While Theorem \ref{mainThm} asserts that the $SL_2$ example cannot exist in $\RR^3$, it is possible to construct an analogue of the $SL_2$ example in a slightly different setting.

Let $R=\FP[t]/(t^2)$. Note that $R$ is not a field; not only do certain elements of $R$ fail to have multiplicative inverses, $R$ also also contains nilpotent elements. Thus the geometry of $R^3$ differs dramatically from that of $\RR^3$. Nonetheless, it is possible to define an analogue of points, lines, and planes in $R^3$ that share some of the properties of their counterparts in $\RR^3$.

We will write points $(x,y,z)\in R^3$ as $(x_1+tx_2, y_1+ty_2, z_1+tz_2)$, with $x_1,x_2,y_1,y_2,z_1,z_2\in\FP$. We define a plane in $R^3$ to be a set of the form
\begin{equation}\label{defnOfPlane}
\{(x,y,z)\in R^3\colon (u,v,w)\cdot(x,y,z) = s\}
\end{equation}
that contains $|R|^2 = p^4$ elements. Note that not every set of the form \eqref{defnOfPlane} contains $p^4$ elements. For example, if $s,u,v,w \in t\FP$, then the set \eqref{defnOfPlane} will contain $p^5$ elements.

We define a line in $R^3$ to be a set of the form
\begin{equation}\label{defnOfLine}
\{(a,b,0)+s(c,d,1)\colon s\in R\}
\end{equation}
(to be slightly pedantic, we should also include lines whose directions lie in the $xy$ plane but this will not affect our arguments). We will identify the line \eqref{defnOfLine} with the point $(a,b,c,d)\in R^4$.

Define
$$
\mathcal{L}=\{(a+\alpha at, b+\alpha bt, c+\alpha ct, d + \alpha dt)\in R^4\colon a,b,c,d,\alpha\in \FP, ad-bc=1\}.
$$
This set of lines satisfies the following analogue of the Wolff axioms:
\begin{itemize}
\item $|\mathcal{L}|=|R|^2$.
\item At most $|R|$ lines from $\mathcal{L}$ lie in any plane.
\item Each pair of lines from $\mathcal{L}$ intersect in at most one point.
\item If two lines intersect, then they are contained in (at least one) common plane.
\item The intersection of two planes contains at most one line from $\mathcal{L}$.
\item every triples of pairwise intersecting, non-concurrent lines from $\mathcal{L}$ are contained in at least (and thus exactly) one common plane.
\end{itemize}

Wolff's hairbrush argument from \cite{wolff} shows that the union of any set of lines satisfying the above axioms must the above size $\gtrsim|R|^{5/2}$. On the other hand, every line in $\mathcal{L}$ is contained in
$$
X = \{(x_1+x_2t, y_1+y_2t, z_1+z_2t)\in R^3\colon z_2 = x_1y_2 - x_2y_1\},
$$
which has size $|R|^{5/2}$.

\section{Acknowledgments}
The authors would like to thank Terry Tao for helpful comments and suggestions to an earlier version of this manuscript. The authors would also like to thank the anonymous referee for comments and corrections to an earlier version of this manuscript.


\begin{thebibliography}{99}
\bibitem{BCT}J.~Bennett, A.~Carbery, T.~Tao. On the Multilinear Restriction and Kakeya conjectures. \emph{Acta Math.} 196(2): 261--302. 2006.
\bibitem{Bourgain} J.~Bourgain. The discretized sum-product and projection theorems. \emph{J. d'Anal, Math.} 112(1):193--236. 2010.
\bibitem{Bourgain3}---.  Besicovitch type maximal operators and applications to Fourier analysis. \emph{GAFA}. 1(2): 147--187. 1991.
\bibitem{BG} J.~Bourgain, L.~Guth. Bounds on oscillatory integral operators based on multilinear estimates. \emph{GAFA}. 21(6): 1239--1295. 2011.
\bibitem{BKT}J.~Bourgain, N.~Katz, T.~Tao. A sum-product estimate in finite fields, and applications. \emph{GAFA} 14(1): 27--57, 2004.
\bibitem{Davies}R.~Davies. Some remarks on the Kakeya problem. \emph{Proc. Cambridge Philos. Soc.} 69(3): 417--421. 1971.
\bibitem{Dvir}Z.~Dvir. On the size of Kakeya sets in finite fields. \emph{J. Amer. Math. Soc.} 22: 1093--1097. 2009.
\bibitem{Guth}L.~Guth. The endpoint case of the Bennett-Carbery-Tao multilinear Kakeya conjecture. \emph{Acta Math.} 205(2): 263--286. 2010.
\bibitem{Guth2}---. Degree reduction and graininess for Kakeya-type sets in $\mathbb{R}^3$. \emph{Rev. Mat. Iberoam.} 32(2): 447-494, 2016.
\bibitem{GK} L.~Guth, N.~Katz. On the {E}rd{\H{o}}s distinct distance problem in the plane. \emph{Ann. of Math.} 181, 155--190, 2015.
\bibitem{GZ} L.~Guth, J.~Zahl. Polynomial Wolff axioms and Kakeya-type estimates in $\RR^4$. \emph{Proc. London Math. Soc.} doi:10.1112/plms.12138, 2018.
\bibitem{HC} D.~Hilbert, S.~Cohn-Vossen. Geometry and the Imagination. New York: Chelsea, 1999.
\bibitem{KLT}N.~Katz, I.~\L{}aba, T.~Tao. An improved bound on the Minkowski dimension of Besicovitch sets in $\RR^3$. \emph{Ann. of Math.} 152: 383--446, 2000.
\bibitem{KT}N.~Katz, T.~Tao. Some connections between Falconer’s distance set conjecture and sets of Furstenburg type. \emph{New York J. Math.}, 7: 149--187, 2001.
\bibitem{KT2}---. Recent progress on the Kakeya conjecture. \emph{Publ. Mat.} 46, 161--179, 2002.
\bibitem{T} T.~Tao.  ``Stickiness, Graininess, Planiness, and a Sum-Product approach to the Kakeya
Problem.''   Blog post:  https://terrytao.wordpress.com/2014/05/07/stickiness-graininess-planiness-and-a-sum-product-approach-to-the-kakeya-problem .
\bibitem{T2} ---. ``The two-ends reduction for the Kakeya maximal conjecture.''  Blog post:  https://terrytao.wordpress.com/2009/05/15/the-two-ends-reduction-for-the-kakeya-maximal-conjecture .
\bibitem{TV}T.~Tao, V.~Vu. \emph{Additive Combinatorics}. Cambridge Studies in Advanced Mathematics \textbf{105}, Cambridge University Press, 2006.
\bibitem{wolff}T.~Wolff. An improved bound for Kakeya type maximal functions. \emph{Rev. Mat. Iberoam.} 11(3): 651--674. 1995.
\bibitem{wolff2}---. A mixed norm estimate for the X-ray transform. \emph{Rev. Mat. Iberoam.} 14(3): 561--600. 1998.
\bibitem{wolff3}---. Recent work connected with the Kakeya problem. Prospects in mathematics (Princeton, NJ, 1996), 129--162, Amer. Math. Soc., Providence, RI, 1999.
\bibitem{Won}R.~Wongkew. Volumes of tubular neighbourhoods of real algebraic varieties. \emph{Pacific J. Math.} (159), 177--184, 2003.
\end{thebibliography}
\end{document}